\newcommand{\cb}{ }
\def\namedlabel#1#2{\begingroup
#2%
\def\@currentlabel{#2}%
\phantomsection\label{#1}\endgroup
}
\def\@tocline#1#2#3#4#5#6#7{\relax
  \ifnum #1>\c@tocdepth 
  \else
    \par \addpenalty\@secpenalty\addvspace{#2}%
    \begingroup \hyphenpenalty\@M
    \@ifempty{#4}{%
      \@tempdima\csname r@tocindent\number#1\endcsname\relax
    }{%
      \@tempdima#4\relax
    }%
    \parindent\z@ \leftskip#3\relax \advance\leftskip\@tempdima\relax
    \rightskip\@pnumwidth plus4em \parfillskip-\@pnumwidth
    #5\leavevmode\hskip-\@tempdima
      \ifcase #1
       \or\or \hskip 1.9em \or \hskip 2em \else \hskip 3em \fi%
      #6\nobreak\relax
    \dotfill\hbox to\@pnumwidth{\@tocpagenum{#7}}\par
    \nobreak
    \endgroup
  \fi}
\theoremstyle{definition}
\newtheorem{theorem}{Theorem}[section]
\newtheorem{hypothesis}[theorem]{Hypothesis}
\newtheorem{question}[theorem]{Question}
\newtheorem{corollary}[theorem]{Corollary}
\newtheorem{lemma}[theorem]{Lemma}
\newtheorem{exercise}[theorem]{Exercise}
\newtheorem{proposition}[theorem]{Proposition}
\newtheorem{notation}[theorem]{Notation}
\newtheorem{disc}[theorem]{Discussion}
\theoremstyle{definition}
\newtheorem{definition}[theorem]{Definition}
\newtheorem{example}[theorem]{Example}
\newtheorem{conjecture}[theorem]{Conjecture}
\newtheorem{remark}[theorem]{Remark}
\numberwithin{equation}{subsection}
\newcommand{\m}{\mathfrak{m}}
\newcommand{\n}{\mathfrak{n}}
\newcommand{\NN}{\mathbb{N}}
\newcommand{\CC}{\mathbb{C}}
\newcommand{\cV}{\mathcal{V}}
\newcommand{\cR}{\mathcal{R}}
\newcommand{\reg}{\operatorname{reg}}
\newcommand{\Spec}{\operatorname{Spec}}
\newcommand{\MaxSpec}{\operatorname{Max}}
\newcommand{\Hom}{\operatorname{Hom}}
\newcommand{\ini}{\operatorname{in}}
\newcommand{\Tor}{\operatorname{Tor}}
\newcommand{\Ass}{\operatorname{Ass}}
\newcommand{\Char}{\operatorname{char}}
\newcommand{\Min}{\operatorname{min}}
\newcommand{\Ht}{\operatorname{ht}}	
\newcommand{\e}{\operatorname{e}}	
\newcommand{\ord}{\operatorname{ord}}
\newcommand{\height}{\operatorname{height}}
\newcommand{\dsp}[1]{{\langle #1\rangle}}
\renewcommand{\!}[1]{{#1}}
\newcommand{\ls}{\leqslant}%
\newcommand{\gs}{\geqslant}
\newcommand{\ds}{\displaystyle}
\newcommand{\p}{\mathfrak{p}}
\newcommand{\q}{\mathfrak{q}}
\newcommand{\ps}[1]{\llbracket {#1} \rrbracket}
\newcommand{\J}{\mathcal J} 
\newcommand{\Jac}{\operatorname{Jac}}
\newcommand{\Der}{\operatorname{Der}}
\title{Symbolic powers of ideals}
\author[Dao]{Hailong Dao${^1}$}
\address{Department of Mathematics, University of Kansas, Lawrence, KS 66045-7523,
USA}
\email{hdao@math.ku.edu}
\author[De Stefani]{Alessandro De Stefani}
\address{Department of Mathematics, Royal Institute of Technology (KTH), Stockholm, 100 44, Sweden}
\email{ad9fa@virginia.edu}
\author[Grifo]{Elo\'isa Grifo}
\address{Department of Mathematics, University of Virginia, Charlottesville, VA 22904-4135, USA}
\email{eloisa.grifo@virginia.edu}
\author[Huneke]{Craig Huneke${^2}$}
\address{Department of Mathematics, University of Virginia, Charlottesville, VA 22904-4135, USA}
\email{huneke@virginia.edu}
\author[N\'u\~nez-Betancourt]{Luis N\'u\~nez-Betancourt${^3}$}
\address{Centro de Investigaci\'on en Matem\'aticas, Guanajuato, GTO, Mexico.}
\email{luisnub@cimat.mx}
\thanks{{$^1$}The first author was partially supported by NSA Grant H98230-16-1-0012}
\thanks{{$^2$}The fourth author was partially supported by the NSF Grant 1460638}
\thanks{{$^3$}The fifth author was partially supported by the NSF Grant 1502282 }
\begin{document}
\maketitle

\begin{abstract}
We survey classical and recent results on symbolic powers of ideals. We focus on properties and problems of symbolic powers over regular rings, on the comparison of symbolic and regular powers, and on the combinatorics of the symbolic powers of monomial ideals. In addition, we present some new results on these aspects of the subject. 
\end{abstract}

\tableofcontents



\section{Introduction}

In this survey, we discuss different algebraic, geometric, and combinatorial aspects of the symbolic powers. Specifically, we focus on the properties and problems of symbolic powers over regular rings, on the comparison of symbolic and regular powers, and on the combinatorics of the symbolic powers of monomial ideals. 

Given an ideal $I$ in a Noetherian domain $R$, its $n$-th symbolic power is defined by 
$$
I^{(n)}=\bigcap_{\p\in \Ass_R(R/I)} (I^n R_\p\cap R).
$$
For many purposes, one can focus on symbolic powers of  prime ideals. If $\p$ is a prime ideal, then $\p^{(n)}$ is the $\p$-primary component of $\p^n$. 

Symbolic powers do not, in general, coincide with the ordinary powers.
From the definition it follows that $\p^n\subseteq \p^{(n)}$ for all $n$, but the converse may fail. The simplest such example can be constructed ``generically" by letting $\p = (x,y)$ in the hypersurface
defined by $x^n-yz= 0$. It is easy to see that in this example $y$ is in $\p^{(n)}$ but is not
even in $\p^2$.  It is a bit more subtle in a polynomial ring; however,  for the prime ideal $\p=(x^4-yz, y^2-xz, x^3y-z^2)\subseteq K[x,y,z]$, $\p^{(2)}\neq \p^2.$

The study and use of symbolic powers has a long history in commutative algebra. Krull's
famous proof of his principal ideal theorem uses them in an essential way.  Of course they
first arose after primary decompositions were proved for Noetherian rings. Zariski
used symbolic powers in his study of the analytic normality of algebraic varieties. 
Chevalley's famous lemma comparing topologies states that in a complete local domain
the symbolic powers topology of any prime is finer than the $\m$-adic topology.
A crucial step in the vanishing theorem on local cohomology of Hartshorne and Lichtenbaum uses
that for a prime $\p$ defining a curve in a complete local domain, the powers of $\p$ are
cofinal with the symbolic powers of $\p$. This important property of being cofinal
was further developed by Schenzel in the 1970s, and is a critical point for much of
this survey.  Irena Swanson proved an important refinement which showed that
when the symbolic power topology of a prime $\p$ is cofinal with the usual powers, there is a
{\it linear} relationship between the two: there exists a constant $h$ such that for
all $n$, $\p^{(hn)}\subseteq \p^n$. 

In case the base ring is a polynomial ring over a field, one may interpret the $n$-th symbolic power as the sheaf of all function germs over $X=\Spec(R)$ vanishing to order greater than or equal to $n$ at $Z=\cV(\p)$. Furthermore,  if  $X$ is a smooth variety over a perfect field, then
\begin{equation}\label{EqSymbPowerSmooth}
\p^{(n)}=\{f\in R \;|\; f\in \m^n \hbox{ for every closed point }\m\in Z \}
\end{equation}  
by the Zariski-Nagata Theorem \cite{ZariskiHolFunct,Nagata}. The closed points in Equation \ref{EqSymbPowerSmooth} can be taken only in the smooth locus of $Z$ \cite{EisenbudHochster}  (see Subsection \ref{SecZariskiNagata} for details). In this survey, we present a characteristic-free proof of the Zariski-Nagata Theorem, which is based on the work of Zariski. Our approach uses the general definition of differential operators given by Grothendieck  \cite{EGA}. As a consequence of this approach, we present a specific constant for a uniform version of the uniform Chevalley Theorem \cite{HKVFinExt} for direct summands of polynomial rings (see Theorem \ref{ThmZNDS} and Corollary \ref{CorZNDet}). We point out that this result can be seen as a weaker version of the Zariski-Nagata Theorem which holds for complete local domains.

Equation \ref{EqSymbPowerSmooth} is used together with Euler's Formula to deduce $\p^{(2)}\subseteq \m \p$ for a homogeneous prime ideal $\p$ in a polynomial ring over a characteristic zero field, where $\m$ is the maximal homogeneous ideal. This says that the minimal homogeneous generators of $\p$ cannot have order at least $2$ at all the closed points of the variety defined by $\p$. Eisenbud and Mazur \cite{EM} conjectured that the same would hold for local rings of equal characteristic zero. We devote Subsection \ref{SubsecEM} to this conjecture. As shown in \cite{EM}, the fact that the symbolic square of a prime does not contain any minimal generators has close connections with the notion of evolutions. In Subsection \ref{SubsecEM}, we define what an evolution is, and state what it means for it to be trivial. The existence of non-trivial evolutions for certain kinds of rings played a crucial role in Wiles's proof of Fermat's Last Theorem \cite{Wiles}.

An important property of symbolic powers over regular rings containing fields, which was surprising at the time, is the theorem of Ein, Lazarfeld, and Smith \cite{ELS} in the geometric case in characteristic zero, and Hochster and Huneke \cite{HHpowers} in general for $d$-dimensional regular rings containing a field, which establishes that

\begin{equation}\label{EqUniform}
\p^{(dn)}\subseteq \p^{n}.
\end{equation}

The fact that the constant $d$ is uniform, so independent  of $\p$, is remarkable.
 This theorem motivated the following question:

\begin{question}
Let $(R,\m,K)$ be a complete local domain. Does there exist a constant $c$, depending only on $R$, such that $\p^{(cn)}\subseteq \p^{n}$ for every prime ideal $\p$?
\end{question}

A positive answer to this question would establish that $\p$-adic and $\p$-symbolic typologies are uniformly equivalent (see \cite{HunekeRaicu} for a survey on uniformity). In Section \ref{SecUniform}, we discuss the case where this question has a positive answer.
In particular, we review the work of Huneke, Katz and Validashti on isolated singularities \cite{HKV} and finite extensions \cite{HKVFinExt}. We take advantage of this work to answer a question of Takagi about direct summands given by finite group actions (see Theorems \ref{UniformDirectSummand} and Corollary \ref{CorTakagi}).

In the final section of this survey we discuss the combinatorics that are encoded in the symbolic powers of monomial ideals. First, we discuss the work of  Minh and Trung \cite{TrungMinhCM}, and Varbaro \cite{Varbaro} which characterizes the Cohen-Macaulayness property of the symbolic powers in terms of the corresponding simplicial complex (see Theorem \ref{ThmCMMatroid}).

We also discuss the equality $I^{(n)}=I^{n}$ for square-free monomial ideals. This problem was related to a conjecture of Conforti and Cornu\'elos \cite{CC} on the max-flow and min-cut properties by Gitler, Valencia and  Villarreal \cite{GVV} and Gitler, Reyes, and Villarreal \cite{GRV}. The Conforti-Cornu\'elos Conjecture is known in the context of symbolic powers as the Packing Problem (see Subsection \ref{SubsectionPackProb}), and it is a central problem in this theory. 
In subsection \ref{SubsectionPackProb}, we give a relative version of the Packing Problem, and study $I^{(n)}=I^{n}$  separately for each $n$. We discuss the theorem that establishes that if $I_G$ is a monomial edge ideal  associated to a graph $G$, then  $I^{(n)}=I^{n}$ for every $n$  if and only if $G$ is bipartite.

We point out that the research and literature on this topic is vast, rich, and active. For this reason, we cannot cover all the aspects about symbolic powers in this survey. In particular,
we do not cover the very large amount of material concerning symbolic powers and fat points.
See \cite{BoH} or \cite{HH13} for material and references. 

{\cb
Although this article is largely expository, we present several results that are new, to the best of our knowledge.  These  include an example where radical ideals of extensions do not have uniform behavior (Example \ref{uniform powers counterexample}) and uniform bounds for direct summands of polynomial rings (Theorem \ref{ThmZNDS}, Corollary \ref{CorZNDet} and Theorem \ref{UniformDirectSummand}). Also in Theorem \ref{k-packed edge ideals}, we give an alternative proof for a characterization of $k$-packed edge ideals. In order to  distinguish the new results from those which are already known, we make a citation explicitly after the numbering. 
}
\vspace{.4cm}

\section{Symbolic Powers on Regular Rings}

\subsection{Zariski-Nagata Theorem}
\label{SecZariskiNagata}

In this section, we prove the Zariski-Nagata Theorem for regular rings
\cite{Nagata,ZariskiHolFunct}. This fundamental theorem establishes that the $n$-th symbolic power of an irreducible variety, or a prime ideal, consists precisely of the set of elements whose order is at least $n$ in every closed point in the variety. Specifically,
$$
\p^{(n)} \,\,\, =\bigcap_{\substack{\m\in \MaxSpec(R) \\ \p\subseteq \m}} \m^n
$$
whenever $\p$ is a prime ideal in a polynomial ring over a field.
The first step in proving this fact is showing that $\p^{(n)}\subseteq \m^n$ holds in a regular local ring, even in mixed characteristic. The previous fact is also known as the Zariski-Nagata Theorem. We present a proof of this fact based on rings of differential operators over polynomial rings over any ground field. We point out that this method also works for power series rings.

\begin{definition}
Let $R$ be a finitely generated $K$-algebra.
The $K$-linear differential operators of $R$ of order $n$, $D^{n}_R\subseteq \Hom_K(R,R)$, are defined inductively as follows. The differential operators of order zero are $D^{0}_R = R\cong\Hom_R(R,R)$. We say that $\delta\in \Hom_K(R,R)$ is an operator of order less than or equal to $n$ if $[\delta,r]=\delta r-r\delta$
is an operator of order less than or equal to $n-1$ for all $r \in D^{0}_R$. The ring of $K$-linear differential operators is defined by $D_R=\displaystyle\bigcup_{n\in\NN}D^{n}_R$. If $R$ is clear from the context, we drop the subscript referring to the ring.
\end{definition}

\begin{definition}
Let $R$ be a finitely generated $K$-algebra.
Let $I$ be an ideal of $R$, and let $n$ be a positive integer.
We define the $n$-th $K$-linear differential power of $I$ by
$$
I^\dsp{n}=\{f\in R \, | \, \delta(f)\in I \hbox{ for all } \delta\in D^{n-1}_R\}.
$$
\end{definition}

\begin{remark}
Since $D^{n-1}_R\subseteq D^n_R,$ it follows that $I^\dsp{n+1}\subseteq I^\dsp{n}.$ If $I\subseteq J,$
then $I^\dsp{n}\subseteq J^\dsp{n}$ for every $n\in\NN$.
\end{remark}

In order to prove the Zariski-Nagata Theorem, we need some properties of the differential powers.

\begin{proposition}
Let $R$ be a finitely generated $K$-algebra.
Let $I$ be an ideal of $R$, and $n$ be a positive integer.
Then, $I^\dsp{n}$ is an ideal.
\end{proposition}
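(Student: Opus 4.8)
The plan is to show that $I^{\dsp{n}}$ is closed under addition and under multiplication by arbitrary elements of $R$. For addition, note that every $\delta \in D^{n-1}_R$ is $K$-linear, so if $f, g \in I^{\dsp{n}}$ then $\delta(f+g) = \delta(f) + \delta(g) \in I$, hence $f + g \in I^{\dsp{n}}$; similarly $\delta(0) = 0 \in I$, so $0 \in I^{\dsp{n}}$. The substantive point is closure under scalar multiplication: given $f \in I^{\dsp{n}}$ and $r \in R$, I must show $\delta(rf) \in I$ for every $\delta \in D^{n-1}_R$.

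The key step is the identity $\delta(rf) = r\delta(f) + [\delta, r](f)$, which is just the definition of the commutator $[\delta,r] = \delta r - r \delta$ applied to $f$. Since $f \in I^{\dsp{n}}$ and $I$ is an ideal, $r\delta(f) \in I$. For the second term, $[\delta,r]$ is by definition an operator of order $\leq n-2$, hence lies in $D^{n-2}_R \subseteq D^{n-1}_R$, so $[\delta,r](f) \in I$ again because $f \in I^{\dsp{n}}$. Therefore $\delta(rf) \in I$, and $rf \in I^{\dsp{n}}$.

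The only mild subtlety — and the step I would be most careful about — is the base case $n = 1$: here $D^{n-2}_R = D^{-1}_R$ should be interpreted as $\{0\}$ (or one simply observes that $I^{\dsp{1}} = \{f \in R \mid \delta(f) \in I \text{ for all } \delta \in D^0_R = R\} = I$ directly, since $\delta = r$ acts by multiplication and $rf \in I$ forces $f \in I$ when $1 \in D^0_R$, giving $I^{\dsp{1}} = I$). For $n \geq 2$ the commutator argument above goes through verbatim. I would present the general computation once and remark on the $n=1$ case, so the proof is a short two-line verification rather than anything requiring real work; no genuine obstacle arises.
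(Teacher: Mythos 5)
Your proof is correct and follows essentially the same approach as the paper: the identity $\delta(rf) = [\delta,r](f) + r\delta(f)$ together with $[\delta,r] \in D^{n-2}_R$ is exactly the argument given. The only cosmetic difference is that the paper justifies $[\delta,r](f)\in I$ via $f \in I^{\dsp{n}} \subseteq I^{\dsp{n-1}}$ while you use $D^{n-2}_R \subseteq D^{n-1}_R$ — these are equivalent — and you additionally spell out the harmless $n=1$ edge case, which the paper leaves implicit.
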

\begin{proof}
It is straightforward to verify that $f,g\in I^\dsp{n}$ implies that $f+g\in I^\dsp{n}.$ It then suffices to show that $rf\in I^\dsp{n}$ for $r\in R$ and $f\in I^\dsp{n}$. Let $\delta\in D^{n-1}.$
Then, $\delta(rf)=[\delta, r](f)+r\delta(f).$ Since $f\in I^\dsp{n}\subseteq I^\dsp{n-1},$ and $[\delta, r]\in D^{n-2}$, we have that 
$[\delta, r](f)\in I$. Since $f\in I^\dsp{n}$, $\delta(f)\in I.$
We conclude that $\delta(rf)\in I.$ Hence, $rf\in I^\dsp{n}.$
\end{proof}

\begin{proposition}\label{LemmaPowerCont}
Let $R$ be a finitely generated $K$-algebra.
Let $I$ be an ideal of $R$, and $n$ be a positive integer.
Then, $I^n\subseteq I^\dsp{n}$.
\end{proposition}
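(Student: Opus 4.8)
The plan is to prove the containment by induction on $n$, using the Leibniz-type identity for differential operators that already appeared in the proof that $I^\dsp{n}$ is an ideal.

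For the base case $n=1$, note that $D^0_R = R$ acts on $R$ by multiplication, so $f \in I^\dsp{1}$ means exactly that $rf \in I$ for every $r \in R$; taking $r = 1$ shows $I^\dsp{1} = I$, and hence $I^1 = I \subseteq I^\dsp{1}$. For the inductive step, assume $n \geq 2$ and that $I^{n-1} \subseteq I^\dsp{n-1}$. Since $I^\dsp{n}$ is an ideal by the preceding proposition, and $I^n$ is generated as an ideal by products $a_1 a_2 \cdots a_n$ with each $a_i \in I$, it suffices to show that each such product lies in $I^\dsp{n}$, i.e. that $\delta(a_1 \cdots a_n) \in I$ for every $\delta \in D^{n-1}_R$.

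Write $a = a_1$ and $g = a_2 \cdots a_n$, so that $g \in I^{n-1}$ and $a_1 \cdots a_n = ag$. The key step is the identity $\delta(ag) = [\delta, a](g) + a\,\delta(g)$, which is just a rearrangement of the definition $[\delta,a] = \delta a - a\delta$. The second summand $a\,\delta(g)$ lies in $I$ because $a \in I$. For the first summand, the definition of differential operators of order at most $n-1$ gives $[\delta, a] \in D^{n-2}_R$; since $g \in I^{n-1} \subseteq I^\dsp{n-1}$ by the inductive hypothesis, and membership in $I^\dsp{n-1}$ precisely says that every operator in $D^{n-2}_R$ carries $g$ into $I$, we conclude $[\delta, a](g) \in I$. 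Therefore $\delta(ag) \in I$, which completes the induction.

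There is no serious obstacle here; the only points requiring a little care are handling the base case $n=1$ correctly (so that $D^{n-2}_R$ is never invoked with a negative index) and noting at the outset that, because $I^\dsp{n}$ is an ideal, it is enough to test the containment on the monomial generators $a_1\cdots a_n$ of $I^n$ rather than on a general element.
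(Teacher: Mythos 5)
Your proof is correct and uses essentially the same induction and the same commutator identity $\delta(ag) = [\delta,a](g) + a\,\delta(g)$ as the paper's argument, differing only in bookkeeping (indexing the induction from $n-1$ to $n$ rather than $n$ to $n+1$) and in making explicit the reduction to monomial generators via the fact that $I^\dsp{n}$ is an ideal, which the paper leaves implicit.
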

\begin{proof}
We proceed by induction on $n$.\\

\noindent $\underline{n=1:}$ In this case, $I=I^\dsp{n}$ because $D^0=R.$\\

\noindent$\underline{n\Longrightarrow n+1:}$ Let $f\in I$, $g\in I^{n}$ and $\delta\in D^{n}.$
Then, $\delta(fg)=[\delta, f](g)+f\delta(g).$ Since $[\delta, r]\in D^{n-1}$ and $g\in I^{n}\subseteq I^\dsp{n}$ by the induction hypothesis, we have that 
$[\delta, f](g)\in I.$ Then, $\delta(fg)\in I,$ and so, $fg\in I^\dsp{n+1}.$
Hence, $I^{n+1}\subseteq I^\dsp{n+1}.$
\end{proof}

\begin{proposition}\label{Prop DSP Primary}
Let $R$ be a finitely generated $K$-algebra.
Let $\p$ be a prime ideal of $R$, and $n$ be a positive integer.
Then,  $\p^{\langle n\rangle}$ is $\p$-primary.
\end{proposition}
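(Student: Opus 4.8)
The plan is to handle the radical of $\p^{\langle n\rangle}$ and the primary condition separately. For the radical, Proposition~\ref{LemmaPowerCont} gives $\p^n\subseteq\p^{\langle n\rangle}$, while the chain $\p^{\langle n\rangle}\subseteq\p^{\langle n-1\rangle}\subseteq\cdots\subseteq\p^{\langle 1\rangle}$ (from the Remark above) together with the equality $\p^{\langle 1\rangle}=\p$ --- valid since $D^0_R=R$ acts by multiplication, so requiring $\delta(f)\in\p$ for every $\delta\in D^0_R$ (in particular $\delta=\mathrm{id}$) forces $f\in\p$ --- gives $\p^{\langle n\rangle}\subseteq\p$. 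Thus $\p^n\subseteq\p^{\langle n\rangle}\subseteq\p$, and taking radicals yields $\rad(\p^{\langle n\rangle})=\p$ for every $n$. It remains to show that each $\p^{\langle n\rangle}$ is a primary ideal; I would do this by induction on $n$, using the fact that an ideal $Q\neq R$ is primary if and only if $fg\in Q$ and $f\notin\rad(Q)$ together imply $g\in Q$.

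The base case $n=1$ is immediate, since $\p^{\langle 1\rangle}=\p$ is prime. For the inductive step, assume $\p^{\langle n\rangle}$ is $\p$-primary, and suppose $fg\in\p^{\langle n+1\rangle}$ with $f\notin\p=\rad(\p^{\langle n+1\rangle})$; the goal is $g\in\p^{\langle n+1\rangle}$. Since $\p^{\langle n+1\rangle}\subseteq\p^{\langle n\rangle}$ and $\p^{\langle n\rangle}$ is $\p$-primary with $f\notin\p=\rad(\p^{\langle n\rangle})$, the induction hypothesis already gives $g\in\p^{\langle n\rangle}$. Now fix an arbitrary $\delta\in D^n_R$. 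Exactly as in the proof of Proposition~\ref{LemmaPowerCont}, write $\delta(fg)=[\delta,f](g)+f\delta(g)$, where $[\delta,f]\in D^{n-1}_R$. Here $\delta(fg)\in\p$ because $fg\in\p^{\langle n+1\rangle}$ and $\delta\in D^{n}_R=D^{(n+1)-1}_R$, and $[\delta,f](g)\in\p$ because $g\in\p^{\langle n\rangle}$ and $[\delta,f]\in D^{n-1}_R$. Subtracting, $f\delta(g)\in\p$, and since $\p$ is prime and $f\notin\p$ we get $\delta(g)\in\p$. As $\delta\in D^n_R$ was arbitrary, $g\in\p^{\langle n+1\rangle}$, completing the induction.

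I do not anticipate a genuine obstacle: the argument runs parallel to the two preceding propositions. The only points needing a little care are the reduction of the primary condition to the case $f\notin\rad(\p^{\langle n\rangle})$ --- which is why the radical computation must be carried out first, and for all $n$ at once --- and the observation that passing to the commutator $[\delta,f]$ lowers the order by one, so that the weaker conclusion $g\in\p^{\langle n\rangle}$ supplied by the induction hypothesis is exactly what the Leibniz-type identity requires.
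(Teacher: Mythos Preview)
Your proof is correct and follows essentially the same inductive argument as the paper: reduce to $\p^{\langle 1\rangle}=\p$, pass from $rf\in\p^{\langle n+1\rangle}\subseteq\p^{\langle n\rangle}$ to $f\in\p^{\langle n\rangle}$ via the induction hypothesis, and then use the commutator identity $\delta(rf)=[\delta,r](f)+r\delta(f)$ together with primeness of $\p$ to conclude $\delta(f)\in\p$. The only difference is that you make the radical computation $\p^n\subseteq\p^{\langle n\rangle}\subseteq\p$ explicit up front, whereas the paper leaves this implicit (it follows from the preceding Proposition and Remark); this is a matter of exposition, not substance.
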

\begin{proof}
Once more, we use induction on $n$.\\

\noindent $\underline{n=1:}$ In this case, $\p^\dsp{n}=\p$ is a prime ideal. \\

\noindent$\underline{n\Longrightarrow n+1:}$ 
Let $r\not\in \p$ and $f\in \p$ such that $rf\in \p^\dsp{n+1}.$
Let  $\delta\in D^{n}.$ 
Then, $\delta(rf)=[\delta, r](f)+r\delta(f)\in \p.$
Since $rf\in \p^\dsp{n+1}\subseteq \p^\dsp{n},$
we have that $f\in \p^\dsp{n}$ by the induction hypothesis.
Then, $[\delta, r](f)\in \p$ , because $[\delta, r]\in D^{n-1}$. We conclude that $r\delta(f)=\delta(rf)-[\delta, r](f)\in \p.$ Then,  $r\delta(f)\in \p$, and so, $\delta(f)\in \p$, because $\p$ is a prime ideal and $r\not\in \p.$
Hence, $f\in \p^\dsp{n+1}.$
\end{proof}

\begin{remark}\label{RemDiffPowerMax}
Let $K$ be a field, $R$ be either $K[x_1,\ldots,x_d]$ or $K[[x_1,\ldots,x_d]]$, and $\m=(x_1,\ldots,x_d).$
In this case, 
$$D^n_R=R\left\langle \frac{1}{\alpha_1 !}\frac{\partial^{\alpha_1}}{\partial x^{\alpha_1}_1}\cdots \frac{1}{\alpha_d !}\frac{\partial^{\alpha_d}}{\partial x^{\alpha_d}_d} \ \bigg| \ \alpha_1+\ldots+\alpha_d \ls n\right\rangle$$
If $f\not\in \m^n,$ then $f$ has a monomial of the form $x^{\alpha_1}_1\cdots x^{\alpha_d}_d$, with nonzero coefficient $\lambda \in K$, which is minimal among all monomials appearing in $f$ under the graded lexicographical order. Applying the differential operator 
$\frac{1}{\alpha_1 !}\frac{\partial^{\alpha_1}}{\partial x^{\alpha_1}_1}\cdots \frac{1}{\alpha_d !}\frac{\partial^{\alpha_d}}{\partial x^{\alpha_d}_d}$ 
maps $\lambda x^{\alpha_1}_1\cdots x^{\alpha_d}_d$ to the nonzero element $\lambda \in K$, and any other monomial appearing in $f$ either to a non constant monomial or to zero. Consequently, $f\not\in \m^{\dsp{n}}.$
Hence, $\m^{\dsp{n}}\subseteq \m^n.$ 
Since $\m^n\subseteq \m^{\dsp{n}}$ 
by Lemma \ref{LemmaPowerCont},
we conclude that $\m^{\dsp{n}} =  \m^n$.
\end{remark}

\begin{exercise}\label{ExercisePowerm}
Let $K$ be a field of characteristic zero, $R$ be either $K[x_1, \ldots,x_d]$ or $K[[x_1,\ldots,x_d]]$, and $\m=(x_1,\ldots,x_d).$ Then, $(\m^t)^{\dsp{n} }=\m^{n+t-1}$.
\end{exercise}

\begin{theorem}[{Zariski-Nagata Theorem for polynomial and power series rings \cite{ZariskiHolFunct}}]
Let $K$ be a field, $R$ be either $K[x_1,\ldots,x_d]$ or $K[[x_1,\ldots,x_d]]$, and $\m=(x_1,\ldots,x_d).$
Let $\p\subseteq \m$ be a prime ideal.
Then, for any positive integer $n$, we have $\p^{(n)}\subseteq \m^n.$
Furthermore, if $\Char(K)=0$ and $\p$ is a prime ideal such that $\p\subseteq \m^t,$ then $\p^{(n)}\subseteq \m^{n+t-1}.$
\end{theorem}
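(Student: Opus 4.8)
The plan is to sandwich $\p^{(n)}$ between $\p^\dsp{n}$ and $\m^n$, using the differential powers as an intermediary, so that all the work has already been done in the preceding propositions and remarks.

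\textbf{Step 1: $\p^{(n)}\subseteq \p^\dsp{n}$.} By Proposition~\ref{LemmaPowerCont} we have $\p^n\subseteq \p^\dsp{n}$, and by Proposition~\ref{Prop DSP Primary} the ideal $\p^\dsp{n}$ is $\p$-primary. Now I would invoke the standard fact that $\p^{(n)}=\p^n R_\p\cap R$ is the \emph{smallest} $\p$-primary ideal containing $\p^n$: if $Q$ is any $\p$-primary ideal with $\p^n\subseteq Q$, then $\p^n R_\p\subseteq QR_\p$, hence $\p^{(n)}=\p^nR_\p\cap R\subseteq QR_\p\cap R=Q$, the last equality because a $\p$-primary ideal is contracted from $R_\p$. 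Applying this with $Q=\p^\dsp{n}$ gives $\p^{(n)}\subseteq\p^\dsp{n}$.

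\textbf{Step 2: $\p^\dsp{n}\subseteq \m^n$.} Since $\p\subseteq\m$, monotonicity of the differential power operation (the Remark following the definition of $I^\dsp{n}$) yields $\p^\dsp{n}\subseteq\m^\dsp{n}$, and Remark~\ref{RemDiffPowerMax} identifies $\m^\dsp{n}=\m^n$ via the explicit computation with the Hasse--Schmidt derivations $\tfrac{1}{\alpha_1!}\partial_1^{\alpha_1}\cdots\tfrac{1}{\alpha_d!}\partial_d^{\alpha_d}$. Chaining Steps 1 and 2 gives $\p^{(n)}\subseteq\p^\dsp{n}\subseteq\m^\dsp{n}=\m^n$, which is the first assertion; note that no hypothesis on $K$ beyond being a field is needed here.

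\textbf{Step 3: the characteristic-zero refinement.} If $\Char(K)=0$ and $\p\subseteq\m^t$, then again by monotonicity $\p^\dsp{n}\subseteq(\m^t)^\dsp{n}$, and Exercise~\ref{ExercisePowerm} gives $(\m^t)^\dsp{n}=\m^{n+t-1}$; this is the only place characteristic zero enters, since in positive characteristic the divided-power operators behave differently and the exercise fails. Combining with Step 1, $\p^{(n)}\subseteq\p^\dsp{n}\subseteq(\m^t)^\dsp{n}=\m^{n+t-1}$.

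The main point requiring care is really just Step 1 — the identification of $\p^{(n)}$ as the minimal $\p$-primary ideal over $\p^n$, a routine consequence of localization and primary decomposition; everything else is direct assembly of the lemmas, so I do not expect a genuine obstacle here.
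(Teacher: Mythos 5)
Your proof is correct and takes essentially the same route as the paper: sandwich $\p^{(n)}$ between $\p^\dsp{n}$ and $\m^n$ using Proposition~\ref{Prop DSP Primary}, Remark~\ref{RemDiffPowerMax}, and Exercise~\ref{ExercisePowerm}. The only difference is that you make explicit the standard fact — left implicit in the paper — that $\p^{(n)}$ is the smallest $\p$-primary ideal containing $\p^n$, which is exactly the right thing to spell out.
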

\begin{proof}
We have that $\m^{\dsp{n}}=\m^n$ by Remark \ref{RemDiffPowerMax}.
Then, $\p^{(n)}\subseteq \p^\dsp{n}\subseteq \m^n $ by Proposition \ref{Prop DSP Primary}.
The second claim follows from the fact that 
$\p^{(n)}\subseteq \p^\dsp{n}\subseteq (\m^t)^{\dsp{n}}=\m^{n+t-1}$ by Exercise \ref{ExercisePowerm}.
\end{proof}

In order to show a more general version of Zariski-Nagata, we use the Hilbert-Samuel multiplicity:

\begin{definition}
Let $(R,\m,K)$ be a $d$-dimensional local ring.
The Hilbert-Samuel multiplicity of $R$ is defined by
$$
\e(R)=\lim\limits_{n\to\infty}\frac{d! \, \lambda(R/\m^n)}{n^{d}}.
$$
\end{definition}

The Hilbert-Samuel multiplicity is an important invariant which detects and measures singularities. For instance, under suitable hypotheses, $\e(R)=1$ if and only if $R$ is a regular ring. If $R$ is a regular local ring and $f\in\m$, then $\e(R/fR)=\ord(f)$, where $\ord(f)=\max\{t\in\NN \mid f\in \m^t\}$.
Furthermore, under mild assumptions, we have that $\e(R_\p)\ls \e(R)$ for every $\p\in \Spec(R).$

\begin{theorem}[{General version of Zariski-Nagata \cite{ZariskiHolFunct,Nagata}}]\label{ThmGralZN}
Let $(R,\m,K)$ be a regular local ring, and  $\p\subseteq \m$ be a prime ideal.
Then, $\p^{(n)}\subseteq \m^n.$
\end{theorem}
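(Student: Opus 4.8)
The plan is to localize at $\p$ so as to land in the regular local ring $R_\p$, and then to climb back up to $\m$ using the behaviour of the Hilbert--Samuel multiplicity under localization.

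I may assume $f\in\p^{(n)}$ is nonzero, since otherwise $f\in\m^n$ trivially. As $\p$ is prime, $\Ass(R/\p)=\{\p\}$, so $\p^{(n)}=\p^nR_\p\cap R$; in particular $\p^{(n)}\subseteq\p\subseteq\m$, whence $f\in\m$, and moreover $f\in\p^nR_\p=(\p R_\p)^n$. Thus in the regular local ring $(R_\p,\p R_\p)$ we have $\ord_{R_\p}(f)\geq n$, and therefore, by the identity $\e(S/fS)=\ord(f)$ for a regular local ring $S$ and $0\neq f\in\m_S$ recalled before the statement (applied with $S=R_\p$), we obtain $\e(R_\p/fR_\p)\geq n$.

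Next I would transfer this inequality to $\m$. Note that $R_\p/fR_\p=(R/fR)_\p$. The ring $A:=R/fR$ is the quotient of a regular local ring by a nonzero element, and its completion $\widehat{A}=\widehat{R}/f\widehat{R}$ is equidimensional: $\widehat{R}$ is a complete regular local ring, in particular a catenary domain, so every minimal prime $\q$ of the nonzero principal ideal $f\widehat{R}$ has height one by Krull's theorem, hence $\dim\widehat{R}/\q=\dim\widehat{R}-1$. Thus $A$ is formally equidimensional, so the localization inequality $\e(A_\q)\leq\e(A)$ recalled before the statement applies with $\q=\p A$, and we obtain
$$
n\;\leq\;\e(R_\p/fR_\p)\;=\;\e\big((R/fR)_\p\big)\;\leq\;\e(R/fR)\;=\;\ord_\m(f),
$$
the last equality again being the order--multiplicity identity for the regular local ring $R$ and $0\neq f\in\m$. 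Hence $f\in\m^n$, as desired.

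The step I expect to be the main obstacle is the middle inequality of the display, the monotonicity $\e(A_\q)\leq\e(A)$, since it holds only under a hypothesis such as formal equidimensionality of $A$; this is precisely why the argument is carried out with the hypersurface $A=R/fR$ rather than with an arbitrary quotient $R/J$ for $J\subseteq\p$, over which one has no such control. The remaining ingredients, namely the identification of the order of a principal element in a regular local ring with the multiplicity of the hypersurface it cuts out (recorded before the statement) and the compatibility of completion with passing to $R/fR$, are standard and require no further work.
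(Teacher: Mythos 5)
Your proof is correct and follows essentially the same route as the paper's: reduce to a multiplicity statement via the identity $\ord(f)=\e(S/fS)$ for a regular local ring $S$, then compare $\e\big((R/fR)_\p\big)$ with $\e(R/fR)$ using monotonicity of multiplicity under localization. The one thing you add that the paper leaves implicit (behind the phrase ``under mild assumptions'') is the verification that $R/fR$ is formally equidimensional, which is exactly the hypothesis needed for Nagata's inequality $\e(A_\q)\leq\e(A)$; this is a genuine and correctly handled detail, not a change of approach.
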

\begin{proof}
For any element $f\in \m$, we have that
$$
\max\{t\in\NN \mid f\in \p^{(t)}\}=
\max\{t\in\NN \mid f\in \p^tR_\p \}=\e((R/fR)_\p)\ls \e(R/fR)
=\max\{t\in\NN \mid f\in \m^t\}.
$$
As a consequence, we obtain that $f\in\p^{(n)}$ implies that $f\in\m^n.$
\end{proof}

We now present Eisenbud's and Hochster's \cite{EisenbudHochster} proof  for 
Zariski's Main Lemma on Holomorphic Functions \cite{ZariskiHolFunct}. We point out that this result works in greater generality (cf. \cite{EisenbudHochster}).

\begin{theorem}[{Zariski \cite{ZariskiHolFunct}, Eisenbud-Hochster \cite{EisenbudHochster}}]\label{ThmZariskiHolFunct}
Let $K$ be a field and $R$ be a finitely generated regular $K$-algebra.
Then, 
$$
\p^{(n)} \,\,\, = \,\,\bigcap_{\substack{\m\in \MaxSpec(R) \\ \p\subseteq \m}} \m^n
$$
for every prime ideal $\p\subseteq R$.
\end{theorem}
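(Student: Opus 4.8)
The plan is to prove the two inclusions separately; the containment ``$\subseteq$'' is formal, while ``$\supseteq$'' carries all the content. For ``$\subseteq$'', fix a maximal ideal $\m$ with $\p\subseteq\m$. Since symbolic powers commute with localization, $\p^{(n)}R_\m=(\p R_\m)^{(n)}$, and because $R_\m$ is a regular local ring, Theorem \ref{ThmGralZN} gives $(\p R_\m)^{(n)}\subseteq (\m R_\m)^n$. As $\m$ is maximal, $\m^n$ is $\m$-primary, so $\m^nR_\m\cap R=\m^n$; contracting to $R$ we obtain $\p^{(n)}\subseteq\m^n$, and intersecting over all such $\m$ completes this direction.

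For ``$\supseteq$'' I would argue by contraposition. Suppose $f\notin\p^{(n)}$; then $f\ne 0$, and setting $m:=\max\{t\mid f\in\p^tR_\p\}$ we have $m<n$, that is, $f\in\p^{(m)}\setminus\p^{(m+1)}$. It then suffices to produce a maximal ideal $\m\supseteq\p$ with $f\notin\m^{m+1}$, since that forces $f\notin\m^n$. Two observations drive the search. First, $M:=\p^{(m)}/\p^{(m+1)}$ is torsion-free over the domain $R/\p$: if $s\notin\p$ kills the class of $h\in\p^{(m)}$, then $sh\in\p^{m+1}R_\p$, hence $h\in\p^{m+1}R_\p$ and the class of $h$ is already $0$. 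Since $M$ is a finitely generated $R/\p$-module, the nonzero image $\bar f$ of $f$ in $M$ remains nonzero in $M\otimes_{R/\p}\kappa(\q)$ for all $\q$ in a dense open set $U_1\subseteq\Spec(R/\p)$. Second, the regular locus $U_2$ of $R/\p$ is also a dense open set, since it contains the generic point of the irreducible scheme $\Spec(R/\p)$ and the regular locus of a finitely generated $K$-algebra is open; and for every maximal ideal $\m\supseteq\p$ with $(R/\p)_\m$ regular, $R_\m$ is regular as well, so $\p R_\m$ is generated by part of a regular system of parameters of $R_\m$. This makes $\operatorname{gr}_{\p R_\m}(R_\m)$ a polynomial ring over $(R/\p)_\m$, forces $\p^{(k)}R_\m=\p^kR_\m$ for all $k$, and identifies the natural map
$$
M\otimes_{R/\p}\kappa(\m)=(\p^mR_\m/\p^{m+1}R_\m)\otimes\kappa(\m)\longrightarrow\m^mR_\m/\m^{m+1}R_\m
$$
with the inclusion $\kappa(\m)[T_1,\dots,T_c]_m\hookrightarrow\kappa(\m)[x_1,\dots,x_d]_m$ for a suitable choice of coordinates, hence injective; well-definedness of this map is precisely the containment ``$\subseteq$'' applied with $n=m$ and $n=m+1$. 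Now choose a closed point of $\Spec(R/\p)$ inside the nonempty open set $U_1\cap U_2$, which is possible because $R/\p$ is Jacobson, and let $\m$ be the corresponding maximal ideal of $R$. By the first observation $\bar f$ has nonzero image in $M\otimes_{R/\p}\kappa(\m)$, and by the second its image in $\m^mR_\m/\m^{m+1}R_\m$ is nonzero; that is, $f\notin\m^{m+1}$, as required.

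Several ingredients I would merely cite: that symbolic powers commute with localization; that a height-$c$ prime with regular quotient inside a regular local ring is generated by part of a regular system of parameters, which in turn makes the associated graded ring a polynomial ring and forces ordinary and symbolic powers to coincide locally; and that finitely generated algebras over a field are Jacobson with open regular locus. The real obstacle is the passage from the generic point of $V(\p)$, where the order of $f$ is controlled by construction, to a closed point: one must combine the torsion-freeness of $\p^{(m)}/\p^{(m+1)}$ (so the order of $f$ does not degenerate on a whole hyperplane of specializations) with the linearity of $\p$ along the regular locus (so that the comparison of associated graded modules is injective) in order to exhibit a closed point of $V(\p)$ at which the order of $f$ is still exactly $m$.
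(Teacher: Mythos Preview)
Your proof is correct and rests on exactly the same ingredients as the paper's: Theorem~\ref{ThmGralZN} for the inclusion $\p^{(n)}\subseteq\m^n$, openness of the regular locus of $R/\p$, the fact that at such a point $\p R_\m$ is generated by part of a regular system of parameters (so the associated graded is polynomial and $\p^{(k)}R_\m=\p^kR_\m$), and the Jacobson property to produce a closed point in the good open set.

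The organization, however, is different. The paper works with the whole module $M=R/\p^{(n)}$ and its filtration $M_i=\p^iM$: after one global localization making $R/\p$ regular and each $M_i/M_{i+1}$ free over $R/\p$, it proves the local claim $\m^nM\cap M_i\subseteq\m M_i$ for every $\m$, combines this with $\bigcap_\m \m(M_i/M_{i+1})=0$ (freeness plus Jacobson), and finishes by a ``largest $i$'' argument. You instead fix the $\p$-adic order $m$ of the offending element $f$ at the outset and search for a single maximal ideal where that order survives: your torsion-freeness of $\p^{(m)}/\p^{(m+1)}$ together with generic freeness replaces the paper's global localization, and the injectivity of $M\otimes\kappa(\m)\to\m^m/\m^{m+1}$ plays the role of the paper's local containment. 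Your contrapositive, pointwise packaging is a bit more geometric and avoids the filtration bookkeeping; the paper's version never singles out the order of a particular element and handles all graded pieces at once.
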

\begin{proof}
The containment
$$\p^{(n)} \,\, \subseteq \bigcap_{\substack{\m\in \MaxSpec(R)  \\ \p\subseteq \m}} \m^n$$
follows from the Zariski-Nagata Theorem.
In order to prove the other containment, it suffices to show that
$$
\bigcap_{\substack{\m\in \MaxSpec(R) \\ \p\subseteq \m}}  \left(\m^n \cdot R/\p^{(n)}\right)=0.
$$

Let $M=R/\p^{(n)}$ and $M_i=\p^i\cdot R/\p^{(n)}.$ 
As an $R/\p$-module, the locus where $M_{i}/M_{i+1}$ is a free module is open.
In addition, the regular locus 
of $R/\p$ is open. Therefore, there exists  $f\not\in \p$ such that  $(M_{i}/M_{i+1})_f$ is a free $(R/\p)_f$-module and $(R/\p)_f$ is regular.
We note that
$$
\bigcap_{\substack{\m\in \MaxSpec(R) \\ \p\subseteq \m}}  \left(\m^n \cdot R/\p^{(n)}\right) \subseteq \bigcap_{\substack{\m\in \MaxSpec(R) \\ \p\subseteq \m}}  \left(\m^n \cdot R/\p^{(n)}\right)_f,
$$
because $\m^n \cdot R/\p^{(n)}\subseteq (\m^n \cdot R/\p^{(n)})_f$, as
$f\not\in \p$ and $\Ass_{R/\p^{(n)}}=\{\p\}.$ Then we can replace $R$ by $R_f$ and assume that $R/\p$ is regular and $M_{i}/M_{i+1}$ is a free $R/\p$-module.

We claim that 
\begin{equation}\label{EqLocalInt}
\m^nM\cap M_i\subseteq \m M_i
\end{equation}
for every maximal ideal $\m$ in $R$. If suffices to show this equality locally. If $\q\neq \m,$
then $(\m^n M\cap M_i)_\q=(M_i)_\q = (\m M_i)_\q$. Thus, it suffices to show our claim for  $\q = \m.$ Since $(R/\p)_\m$ and $R_\m$ are regular, there exists a regular sequence $x_1,\ldots, x_d\in R_\m$ such that $(x_1,\ldots, x_c)R_\m=\p R_\m$ and $(x_1,\ldots, x_d)R_\m=\m R_m.$
Furthermore, $\p^{(n)}R_\m=\p^n R_\m$.
Then, 
$$
(\m^nM\cap M_i)_\m= \left(\frac{(x_1,\ldots, x_d)^n}{(x_1,\ldots, x_c)^n} R_\m\right)\cap\left( \frac{(x_1,\ldots, x_c)^i}{(x_1,\ldots, x_c)^n}R_\m\right)\subseteq
\left( \frac{(x_1,\ldots, x_d)(x_1,\ldots, x_c)^i}{(x_1,\ldots, x_c)^n}R_\m\right),
$$
which proves our claim.

We note that 
\begin{equation}\label{IntersectionMi}
\bigcap_{\substack{\m\in \MaxSpec(R) \\ \p\subseteq \m}} \left(\m (M_i/M_{i+1})\right)=0,
\end{equation}
because $M_i/M_{i+1}$ is $R/\p$-free and $$\p=\bigcap_{\substack{\m\in \MaxSpec(R) \\ \p\subseteq \m}} \m,$$
as $R$ is a Jacobson ring.

We consider an element 
$$
v\in \bigcap_{\substack{\m\in \MaxSpec(R) \\ \p\subseteq \m}}  \left( \m^n \cdot R/\p^{(n)}\right).
$$
We want to show that $v=0.$
If $v\neq 0,$ then we pick the largest integer $i$ such that $v\in M_i.$
From previous considerations, we deduce that
\begin{align*}
M_i\bigcap \left(\bigcap_{\substack{\m\in \MaxSpec(R) \\ \p\subseteq \m}}  \left(\m^n  \cdot R/\p^{(n)}\right)\right)& = \bigcap_{\substack{\m\in \MaxSpec(R) \\ \p\subseteq \m}} \left(M_i\cap  (\m^n \cdot R/\p^{(n)})\right)\\
&\subseteq  \bigcap_{\substack{\m\in \MaxSpec(R) \\ \p\subseteq \m}} \m M_i\hbox{ by Equation \ref{EqLocalInt}};\\
&\subseteq M_{i+1}\hbox{ by Equation \ref{IntersectionMi}}.
\end{align*}
Then, $v\in M_{i+1},$ which contradicts our choice for $i$. Hence, $v=0$.

\end{proof}

We now give an exercise that is helpful for the next theorem.

\begin{exercise}\label{ExecDiffPowerInt}
Let $\{I_\alpha\}_{\alpha\in A}$ be an indexed family of ideals.
Then, 
$$
\bigcap_{\alpha\in A} I^\dsp{n}_{\alpha}=\left( \bigcap_{\alpha\in A} I_{\alpha}\right)^{\dsp{n}}
$$
for every positive integer $n$.
\end{exercise}

As a consequence of Theorem \ref{ThmZariskiHolFunct}, we can show that differential powers and symbolic powers are the same for polynomial rings over any perfect field. This is usually presented only for fields of characteristic zero (see for instance \cite[Theorem 3.14]{Eisenbud}). 

\begin{proposition}[Zariski-Nagata]\label{PropSymbPowersDiffPowers}
Let $R=K[x_1,\ldots,x_d]$ be a polynomial ring over $K$.
If $K$ is a perfect field and $\p\subseteq R$ a prime ideal, then
$$
\p^{(n)}=\p^\dsp{n}.
$$
\end{proposition}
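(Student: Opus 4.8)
The plan is to reduce the equality $\p^{(n)} = \p^{\dsp{n}}$ to the case of maximal ideals, where it follows from Remark~\ref{RemDiffPowerMax}, by using the Zariski--Nagata description of $\p^{(n)}$ from Theorem~\ref{ThmZariskiHolFunct} together with the behavior of differential powers under intersections from Exercise~\ref{ExecDiffPowerInt}. First I would record the two inclusions that are essentially formal. The inclusion $\p^{(n)} \subseteq \p^{\dsp{n}}$ is immediate for any finitely generated $K$-algebra: Proposition~\ref{Prop DSP Primary} says $\p^{\dsp{n}}$ is $\p$-primary and contains $\p^n$ (Proposition~\ref{LemmaPowerCont}), so $\p^{\dsp{n}}$ is a $\p$-primary ideal containing $\p^n$, whence it contains the $\p$-primary component $\p^{(n)}$ of $\p^n$. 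So the entire content is the reverse inclusion $\p^{\dsp{n}} \subseteq \p^{(n)}$.

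For the reverse inclusion, the key move is to express both sides as intersections over maximal ideals. By Theorem~\ref{ThmZariskiHolFunct}, since $R$ is a finitely generated regular $K$-algebra,
$$
\p^{(n)} = \bigcap_{\substack{\m \in \MaxSpec(R) \\ \p \subseteq \m}} \m^n.
$$
Applying Exercise~\ref{ExecDiffPowerInt} to the family $\{\m\}_{\m \supseteq \p}$ of maximal ideals containing $\p$, and using that $\p = \bigcap_{\m \supseteq \p} \m$ because $R$ is Jacobson, we get
$$
\p^{\dsp{n}} = \left(\bigcap_{\substack{\m \in \MaxSpec(R) \\ \p \subseteq \m}} \m\right)^{\dsp{n}} = \bigcap_{\substack{\m \in \MaxSpec(R) \\ \p \subseteq \m}} \m^{\dsp{n}}.
$$
Comparing the two displays, it suffices to show $\m^{\dsp{n}} = \m^n$ for every maximal ideal $\m$ of $R$. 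This is exactly where perfectness of $K$ enters, and it is the step I expect to be the main obstacle: Remark~\ref{RemDiffPowerMax} proves $\m^{\dsp{n}} = \m^n$ only when $\m = (x_1,\ldots,x_d)$, i.e.\ for a maximal ideal with residue field $K$ in ``coordinate'' position. For a general maximal ideal $\m$ of a polynomial ring over a non-algebraically-closed perfect field, the residue field $R/\m$ is a finite separable extension of $K$, and one cannot simply change coordinates over $K$ to put $\m$ in the form $(x_1,\ldots,x_d)$.

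To handle a general maximal ideal $\m$, I would pass to a splitting field: let $L/K$ be a finite Galois (or at least finite separable normal) extension such that $R \otimes_K L$ has all maximal ideals lying over $\m$ in coordinate form. The plan is to show $(\m^{\dsp{n}}) \otimes_K L = (\m (R\otimes_K L))^{\dsp{n}}$, i.e.\ that differential powers commute with the flat base change $K \to L$ — this uses that $D_{R\otimes_K L} = D_R \otimes_K L$, which holds because $L/K$ is separable so that forming $L$-linear differential operators and $K$-linear differential operators agree after the base change. Then $\m(R\otimes_K L) = \m_1 \cap \cdots \cap \m_r$ is an intersection of maximal ideals in coordinate position (after an $L$-linear change of coordinates for each), so by Remark~\ref{RemDiffPowerMax} and Exercise~\ref{ExecDiffPowerInt} we get $(\m(R\otimes_K L))^{\dsp{n}} = \bigcap_i \m_i^{\dsp{n}} = \bigcap_i \m_i^n = (\m(R\otimes_K L))^n = \m^n (R\otimes_K L)$. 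Faithful flatness of $R \to R\otimes_K L$ then descends the equality $\m^{\dsp{n}} = \m^n$ back to $R$. The technical heart is verifying $D_{R\otimes_K L} = D_R \otimes_K L$ and hence the base-change compatibility of $(-)^{\dsp{n}}$; everything else is assembling the pieces already available in the excerpt.
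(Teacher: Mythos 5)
Your proposal is correct and follows essentially the same route as the paper's own proof: both express $\p^{(n)}$ via Theorem~\ref{ThmZariskiHolFunct} as $\bigcap_{\m\supseteq\p}\m^n$, invoke Exercise~\ref{ExecDiffPowerInt} and the Jacobson property to rewrite $\p^{\dsp{n}}$ as $\bigcap_{\m\supseteq\p}\m^{\dsp{n}}$, and reduce to $\m^{\dsp{n}}=\m^n$ for each maximal ideal, which the paper (like you) handles by base change to $R\otimes_K\overline{K}$ using perfectness of $K$ together with Remark~\ref{RemDiffPowerMax}. Your discussion of why separability makes differential powers compatible with the base change is a useful elaboration of what the paper leaves implicit, but the structure of the argument is the same.
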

\begin{proof}
Let $\m\subseteq R$ be a maximal ideal. Using the fact that $K$ is perfect and Remark \ref{RemDiffPowerMax}, one can show that $\m^\dsp{n}=\m^n$ for every positive integer $n$ by going to $R\otimes_K\overline{K}$.
Then,
\begin{align*}
\p^{(n)}&=
\bigcap_{\substack{\m\in \MaxSpec(R) \\ \p\subseteq \m}} \m^n\hbox{ by Theorem \ref{ThmZariskiHolFunct}};\\
&=\bigcap_{\substack{\m\in \MaxSpec(R) \\ \p\subseteq \m}} \m^\dsp{n} =\left(\bigcap_{\substack{\m\in \MaxSpec(R) \\ \p\subseteq \m}} \m\right)^\dsp{n}\hbox{ by Exercise \ref{ExecDiffPowerInt}};\\
&={\p}^\dsp{n} \hbox{ because }R/\p\hbox{ is a Jacobson ring.}\\
\end{align*}
\end{proof}

\begin{exercise}
Let $R=K[x_1,\ldots,x_d]$ be a polynomial ring over $K$, and
 $K$ be a perfect field and $I\subseteq R$ be a radical ideal. Prove that
$I^{(n)}=I^\dsp{n}.$ Show that this theorem does hold not if $I$ is not radical (hint: find an example where $\Char(K)$ is prime).
\end{exercise}

We can also characterize the symbolic powers in terms of join of ideals.
This characterization is used  to compute symbolic powers of prime ideals in polynomial rings. We start by recalling the definition of the join of two ideals.

\begin{definition}
Let $R=K[x_1,\ldots,x_d]$ be a polynomial ring over $K$.
For ideals $I, J\subseteq R$, we consider the ideals $I[\underline{y}]\subseteq K[y_1,\ldots,y_d]$ and 
 $J[\underline{z}]\subseteq K[z_1,\ldots,z_d]$, the ideals obtained from changing the variables in $I$ and $J$.
We define the join ideal of $I$ and $J$ by
$$
I*J=( I[\underline{y}],J[\underline{z}], x_1-y_1-z_1,\ldots, x_d-y_d-z_d)\bigcap K[x_1,\ldots,x_d].
$$
\end{definition}
Suppose that $K=\CC.$ Let $V$ and $W$ denote  the vanishing set of $I$ and $J$ in $\CC^n$ respectively.  
Then, the vanishing set of $I*J$ is the Zariski closure of  
$$
\bigcup_{v\in V, w\in W}<v,w>,
$$
where $<v,w>$ denotes the complex line that joins $v$ and $w$.

\begin{theorem}[{Sullivant \cite[Proposition 2.8]{Sullivant}}]
Let $K$ be a perfect field.
Let $R=K[x_1,\ldots,x_d]$ be a polynomial ring, and $\eta=(x_1,\ldots,x_n).$
Then,
$$
\p^{(n)}=\p*\eta^n
$$
for every prime ideal $\p\subseteq R$. 
\end{theorem}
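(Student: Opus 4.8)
The plan is to reduce the statement to the Zariski–Nagata characterization of symbolic powers (Theorem~\ref{ThmZariskiHolFunct}) and then to unwind the definition of the join ideal. Write $S = K[y_1,\dots,y_d,z_1,\dots,z_d]$ and $\phi\colon S \to R$ for the $K$-algebra map sending $y_i \mapsto x_i$, $z_i \mapsto 0$; equivalently, first pass to $T = K[\underline y,\underline z]/(x_i - y_i - z_i)$, which is a polynomial ring in $2d$ variables, and then $\p^{(n)} = \p * \eta^n$ asks us to intersect the ideal $(\p[\underline y] + \eta^n[\underline z])T$ back down to $R = K[\underline x] \hookrightarrow T$. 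The key translation is that a point of $V(\p[\underline y] + \eta^n[\underline z])$ in $T$ is a pair $(v,w)$ with $v \in V(\p)$ and $w \in V(\eta^n)$; but $V(\eta^n) = V(\eta) = \{0\}$ set-theoretically, so the naive vanishing locus is just $V(\p)$ sitting diagonally, and the content of the statement is entirely in the non-reducedness carried by $\eta^n$. This is why one cannot argue purely with varieties, and why the hypothesis that $K$ is perfect (so that Proposition~\ref{PropSymbPowersDiffPowers} and Theorem~\ref{ThmZariskiHolFunct} are available over $K$, and $\m^{\langle n\rangle} = \m^n$ for maximal ideals) is needed.

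First I would prove the containment $\p * \eta^n \subseteq \p^{(n)}$. Fix a maximal ideal $\m \supseteq \p$ of $R$; by Theorem~\ref{ThmZariskiHolFunct} it suffices to show $\p * \eta^n \subseteq \m^n$. Take $f \in \p * \eta^n$, so $f(\underline x) = f(\underline y + \underline z) \in \p[\underline y] + \eta^n[\underline z]$ inside $K[\underline y,\underline z]$. Now specialize $\underline y$ to a point of $V(\m)$ (after base change to $\overline K$ if $\m$ is not rational, using perfectness) — more precisely, reduce modulo $\m[\underline y]$: the image of $\p[\underline y]$ is zero (since $\p \subseteq \m$), so $f(\underline y + \underline z)$ reduces into $\eta^n[\underline z]$, i.e. $f(\underline a + \underline z) \in (z_1,\dots,z_d)^n$ for the relevant point $\underline a$. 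Expanding $f$ in a Taylor series around $\underline a$, this says exactly that all partial derivatives of $f$ of order $< n$ vanish at $\underline a$, i.e. $f \in \m^n$ (again after the harmless base change, using $\m^{\langle n\rangle} = \m^n$). Since $\m$ was arbitrary, $f \in \p^{(n)}$.

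Conversely, for $\p^{(n)} \subseteq \p * \eta^n$, take $f \in \p^{(n)} = \p^{\langle n\rangle}$ (Proposition~\ref{PropSymbPowersDiffPowers}). I want to show $f(\underline y + \underline z) \in \p[\underline y] + \eta^n[\underline z]$ in $K[\underline y,\underline z]$. Expand $f(\underline y + \underline z) = \sum_{\alpha} \frac{1}{\alpha!}\,(\partial^\alpha f)(\underline y)\, \underline z^\alpha$ (in characteristic zero this is literal; in general, over a perfect field, use the divided-power/Hasse derivative operators $\frac{1}{\alpha!}\partial^\alpha \in D^{|\alpha|}_R$ appearing in Remark~\ref{RemDiffPowerMax}, which land in $D^{n-1}_R$ when $|\alpha| < n$). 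The terms with $|\alpha| \geq n$ lie in $\eta^n[\underline z]$. For the terms with $|\alpha| < n$, the definition of $\p^{\langle n\rangle}$ gives $(\partial^\alpha f)(\underline x) \in \p$ for all $|\alpha| \le n-1$, hence $(\partial^\alpha f)(\underline y) \in \p[\underline y]$; so each such term lies in $\p[\underline y]$. Therefore $f(\underline y + \underline z) \in \p[\underline y] + \eta^n[\underline z]$, which exhibits $f$ as an element of the join intersected back with $R$, i.e. $f \in \p * \eta^n$.

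\medskip
\textbf{The main obstacle} I anticipate is bookkeeping in positive characteristic: the naive Taylor expansion $f(\underline y+\underline z) = \sum \frac{1}{\alpha!}\partial^\alpha f \cdot \underline z^\alpha$ fails as written, and one must consistently replace $\frac{1}{\alpha!}\partial^\alpha$ by the integral Hasse–Schmidt derivatives $D^{(\alpha)}$, check that they generate $D^{n-1}_R$ as used in Remark~\ref{RemDiffPowerMax}, and verify that the ``$|\alpha| < n$ terms go into $\p[\underline y]$'' step really only uses those operators of order $\le n-1$ so that $f \in \p^{\langle n\rangle}$ applies. The base change to $\overline K$ in the first containment (to reach all closed points of $V(\p)$ via Theorem~\ref{ThmZariskiHolFunct}) is routine given perfectness, since $R \otimes_K \overline K$ is again a polynomial ring, $\p(R\otimes_K\overline K)$ is radical, and $\m^n$ contracts correctly; but it must be stated carefully. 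Everything else is a direct unravelling of the two definitions via a single change of variables $\underline x = \underline y + \underline z$.
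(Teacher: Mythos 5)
Your proof is correct, and it takes a genuinely different route from the paper's sketch. The paper reduces both containments at once to the statement $\m * \eta^n = \m^n$ for maximal ideals (proved over $\overline{K}$ and descended by perfectness), and then appeals to the identity $\left(\bigcap_{\m\supseteq\p}\m\right)*\eta^n = \bigcap_{\m\supseteq\p}\left(\m*\eta^n\right)$; the nontrivial inclusion $\supseteq$ there (that an element in every $\m*\eta^n$ is actually in $\left(\bigcap\m\right)*\eta^n$) is exactly what carries the weight, and the sketch does not prove it, leaving it to Sullivant's paper. You avoid that step entirely: for the inclusion $\p*\eta^n\subseteq\p^{(n)}$ you reduce modulo each $\m[\underline{y}]$ and read off order-$n$ vanishing at the corresponding closed point, which is essentially the easy half ($\m*\eta^n\subseteq\m^n$) applied uniformly; and for $\p^{(n)}\subseteq\p*\eta^n$ you bypass the lattice identity altogether by invoking Proposition~\ref{PropSymbPowersDiffPowers} ($\p^{(n)}=\p^{\langle n\rangle}$) and the Hasse--Schmidt/Taylor expansion $f(\underline{y}+\underline{z})=\sum_\alpha\bigl(D^{(\alpha)}f\bigr)(\underline{y})\,\underline{z}^\alpha$, splitting terms by $|\alpha|<n$ versus $|\alpha|\geq n$. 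This trades the join--intersection commutation (whose proof is genuinely external to this paper's toolkit) for the differential-operator machinery the paper has already built, which makes your argument more self-contained here and makes the mechanism of the join formula transparent: $\eta^n[\underline{z}]$ absorbs high-order Taylor terms while $\p[\underline{y}]$ absorbs the low-order ones precisely because of the differential-power criterion. The one place that still needs care --- and which you correctly flag --- is the descent from $\overline{K}$ back to $K$ in the first inclusion: you must either run the reduction-mod-$\m[\underline{y}]$ argument for all embeddings of $R/\m$ into $\overline{K}$ (equivalently, all maximal ideals of $\overline{K}[\underline{x}]$ over $\m$, which are Galois-conjugate) and then use $\m^n\overline{K}[\underline{x}]\cap R=\m^n$ by faithful flatness and separability, or perform the whole argument after base change; perfectness is what guarantees $\m\overline{K}[\underline{x}]$ is radical so that these contractions behave. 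Your use of $\m^{\langle n\rangle}=\m^n$ for the Taylor step is also fine, though at a rational point of $\overline{K}^d$ it is literally a linear change of coordinates and needs no differential operators at all.
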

\begin{proof}[Proof Sketch:]
Let $K=\overline{K}$ denote the algebraic closure of $K$.
We have that $\m*\eta^n=\m^n$ for every maximal ideal $\m\subseteq R\otimes_K \overline{K}$. 
Since $K$ is perfect, $\m\otimes_K \overline{K}$ is the intersection of maximal ideals in $R\otimes_K \overline{K}.$
Using this fact together with the faithful flatness of field extensions, we deduce that $\m*\eta^n=\m^n$ for every maximal ideal $\m\subseteq R$.
Then, we have
$$
\p*\eta^n  =\left( \bigcap_{\substack{\m\in \MaxSpec(R) \\ \p\subseteq \m}} \m\right)*\eta^n =  \bigcap_{\substack{\m\in \MaxSpec(R) \\ \p\subseteq \m}} \m * \eta^n=\bigcap_{\substack{\m\in \MaxSpec(R) \\ \p\subseteq \m}} \m^n=\p^{(n)},
$$
where the last equality follows from Theorem \ref{ThmZariskiHolFunct}.
\end{proof}

\begin{remark}
The previous theorem gives an algorithm to compute the symbolic powers of radical ideals in a polynomial ring over a perfect field.
\end{remark}

\subsection{Uniform Bounds}
\label{SubsecRegRingUniformBounds}
The following striking result by Ein, Lazarsfeld and Smith shows that it is possible to find a uniform constant$, c$, that guarantees that $\p^{(cn)}\subseteq \p^n$ for smooth varieties over $\CC$, as the following theorem makes explicit. We will revisit this theme of uniformity in Section \ref{SecUniform}.

\begin{theorem}[{Ein-Lazarsfeld-Smith \cite{ELS}}] \label{USP-Poly-CharZero}
If $\p$ is a prime ideal of codimension $h$ in the coordinate ring of a smooth algebraic variety over $\CC$, then $\p^{(hn)} \subseteq \p^n$ for all $n \gs 1$. 
\end{theorem}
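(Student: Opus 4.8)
The plan is to prove this via asymptotic multiplier ideals, following the Ein--Lazarsfeld--Smith argument. Throughout, let $X$ be the smooth variety, $R$ its coordinate ring, and $\p$ the prime of codimension $h$, with $Z = V(\p)$ the corresponding subvariety. First I would recall the construction of the multiplier ideal $\mathcal{J}(\mathfrak{a})$ (or $\mathcal{J}(c \cdot \mathfrak{a})$) of an ideal sheaf $\mathfrak{a}$ with coefficient $c \geqslant 0$, via a log resolution $\mu : X' \to X$ with $\mathfrak{a} \cdot \OO_{X'} = \OO_{X'}(-F)$: one sets $\mathcal{J}(c \cdot \mathfrak{a}) = \mu_* \OO_{X'}(K_{X'/X} - \lfloor cF \rfloor)$. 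The two inputs I need are (i) the containment $\mathcal{J}(\mathfrak{a}) \subseteq \mathfrak{a}$ (more generally $\mathcal{J}(c \cdot \mathfrak{a}) \subseteq \mathfrak{a}$ when $c \geqslant 1$), and (ii) the subadditivity/multiplicativity behavior, together with the key uniform bound on symbolic powers in terms of multiplier ideals.

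The heart of the argument is the following two facts about the symbolic power $\p^{(m)}$. First, a local/infinitesimal computation (using that $X$ is smooth and $Z$ has codimension $h$, so that locally at a generic point of $Z$ the ideal is generated by a regular sequence of length $h$) shows that for every $m$,
$$
\mathcal{J}\!\left( \tfrac{1}{m}\, \p^{(m)} \right) \subseteq \p,
$$
and more robustly that forming multiplier ideals of the graded family $\{\p^{(m)}\}_m$ is controlled: one has $\p^{(m)} \subseteq \mathcal{J}\!\left( \tfrac{c}{m} \cdot \p^{(cm)}\right)$-type relations coming from the fact that $\p^{(km)} \supseteq (\p^{(m)})^k$ and from the behavior of multiplier ideals under scaling the coefficient, namely $\mathcal{J}(c \cdot \mathfrak{a}^k) = \mathcal{J}(ck \cdot \mathfrak{a})$. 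Second, and crucially, the uniform bound: since $Z$ has codimension $h$ in the smooth variety $X$, for all $\ell \geqslant 1$ and all $m$ one has
$$
\mathcal{J}\!\left( \tfrac{\ell}{m} \cdot \p^{(m)} \right) \subseteq \p^{(\ell - h + 1)}
$$
when $\ell \geqslant h$; equivalently $\mathcal{J}(c \cdot \p^{(m)}) \subseteq \p^{(\lceil m(c - \text{something})\rceil)}$ in the relevant range. The clean consequence, setting $\ell = hn$ and $m$ large and divisible, is that the asymptotic multiplier ideal $\mathcal{J}(n \cdot \|\p^{(\bullet)}\|) := \mathcal{J}\big(\tfrac{n}{m} \cdot \p^{(m)}\big)$ for $m \gg 0$ satisfies both $\mathcal{J}(n \cdot \|\p^{(\bullet)}\|) \subseteq \p^{(n(h-1)+1)} \subseteq \ldots$ — wait, I should instead use the bound in the form $\mathcal{J}\big(\tfrac{hn}{m}\p^{(m)}\big) \subseteq \p^{(n)}$ together with subadditivity.

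So the steps, in order, are: (1) Establish $\mathcal{J}(\mathfrak{a}) \subseteq \mathfrak{a}$ and the rescaling identity $\mathcal{J}(\mathfrak{a}^k) \subseteq \mathcal{J}(k \cdot \mathfrak{a})$ — actually equality $\mathcal{J}(\mathfrak{a}^k) = \mathcal{J}(k\cdot\mathfrak{a})$ — and subadditivity $\mathcal{J}((c+c')\cdot \mathfrak{a}) \subseteq \mathcal{J}(c \cdot \mathfrak{a})\,\mathcal{J}(c'\cdot\mathfrak{a})$, hence $\mathcal{J}(kc \cdot \mathfrak{a}) \subseteq \mathcal{J}(c\cdot\mathfrak{a})^k$. (2) Define the asymptotic multiplier ideal $\mathcal{A}_n := \mathcal{J}(n \cdot \|\mathfrak{a}_\bullet\|)$ of the graded system $\mathfrak{a}_m := \p^{(m)}$ (which is a graded system since $\p^{(a)}\p^{(b)} \subseteq \p^{(a+b)}$), and note $\mathfrak{a}_m \subseteq \mathcal{A}_m$ for all $m$, with $\mathcal{A}_{kn} \subseteq \mathcal{A}_n^{\,k}$ (from subadditivity). (3) Prove the key uniform containment $\mathcal{A}_{hn} \subseteq \p^{(n)}$: pick $m$ large and divisible so that $\mathcal{A}_{hn} = \mathcal{J}(\tfrac{hn}{m}\p^{(m)})$; localize at a generic point of each minimal prime of $\p$ — there $\p R_{\p} = (x_1,\dots,x_h)$ is generated by a regular sequence, $\p^{(m)}R_\p = \p^m R_\p$, and a direct log-resolution computation (blowing up the coordinate subspace) gives $\mathcal{J}(\tfrac{hn}{m}\cdot (x_1,\dots,x_h)^m) = (x_1,\dots,x_h)^{\lfloor hn/h \rfloor - h + 1 + \ldots}$; the upshot is $\mathcal{J}(\tfrac{hn}{m}\p^{(m)}) R_\p \subseteq \p^n R_\p$, and since this multiplier ideal is $\p$-primary (being contained in $\p$ and unmixed), intersecting over the primes of $\p$ gives $\mathcal{A}_{hn} \subseteq \p^{(n)}$. (4) Combine:
$$
\p^{(hn)} \subseteq \mathcal{A}_{hn} \subseteq \mathcal{A}_n^{\,?}
$$
— more directly, $\p^{(hn)} \subseteq \mathcal{A}_{hn}$, and then $\mathcal{A}_{hn} \subseteq \p^{(n)}$ by Step (3) with roles arranged so the codimension-$h$ loss is exactly absorbed, giving $\p^{(hn)} \subseteq \p^n$ after also using $\mathcal{A}_{hn} \subseteq \mathcal{A}_n \subseteq \p$ iteratively, or cleanly: $\mathcal{A}_{hn} \subseteq \mathcal{J}(h \cdot \|\mathfrak{a}_\bullet\|)^n \subseteq$ ... the cleanest route is $\p^{(hn)} \subseteq \mathcal{A}_{hn}$ and $\mathcal{A}_{hn} \subseteq (\mathcal{A}_{h})^n \subseteq \p^n$ once one checks $\mathcal{A}_h \subseteq \p$ via Step (3) at level $n=1$.

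The main obstacle I anticipate is Step (3): the uniform containment $\mathcal{A}_{hn} \subseteq \p^{(n)}$ (the analogue of the Ein--Lazarsfeld--Smith ``Theorem A''). This requires (a) the local structure theory — that at a generic point of $Z$ the symbolic and ordinary powers of $\p$ agree and $\p$ is a complete intersection, which is where codimension $h$ and smoothness of $X$ enter and which pins down the precise constant; (b) an explicit multiplier ideal computation for powers of a complete-intersection ideal, done on the blow-up of the smooth center $Z$, tracking the discrepancy $K_{X'/X} = (h-1)E$ against $\lfloor \tfrac{hn}{m}\cdot mE\rfloor$; and (c) a globalization/primary-decomposition step showing that a $\p$-primary multiplier ideal agreeing with $\p^n$ after localizing at $\p$ is actually contained in $\p^{(n)}$. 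Points (a) and (b) together explain why the constant is exactly $h = \operatorname{codim}\p$ and not smaller, and getting the floor functions to line up is the one genuinely delicate computation; everything else (subadditivity, the containment $\mathcal{J}(\mathfrak{a})\subseteq\mathfrak{a}$, and basic properties of asymptotic multiplier ideals) is formal once the resolution-theoretic definitions are in place.
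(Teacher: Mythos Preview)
Your outline is essentially the original Ein--Lazarsfeld--Smith argument via asymptotic multiplier ideals, and once the self-corrections settle down the final route you land on is the right one: $\p^{(hn)} \subseteq \mathcal{A}_{hn} \subseteq (\mathcal{A}_h)^n \subseteq \p^n$, where $\mathcal{A}_m = \mathcal{J}(m\cdot\|\p^{(\bullet)}\|)$, using (i) the containment $\mathfrak{a}_m \subseteq \mathcal{A}_m$, (ii) subadditivity, and (iii) the local computation $\mathcal{A}_h \subseteq \p$ at the generic point of $Z$ via the blow-up discrepancy $K_{X'/X}=(h-1)E$. Your Step~(3) as first stated (the containment $\mathcal{A}_{hn}\subseteq\p^{(n)}$ directly) is not how the argument runs; it goes through subadditivity and the $n=1$ case as you eventually say. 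So the proposal is correct.

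However, the paper does not prove this theorem at all: Theorem~\ref{USP-Poly-CharZero} is simply quoted from \cite{ELS}. What the paper does prove is the subsequent Theorem~\ref{USP-Poly-CharP} (Hochster--Huneke), for regular local rings containing a field, and the method is completely different from yours. The paper's argument is a direct characteristic-$p$ computation: for $q=p^e$ one writes $q=an+r$, shows $\p^{hn}f^a\subseteq\p^{(hq)}$ for $f\in\p^{(hn)}$, uses the pigeonhole principle together with flatness of Frobenius to get $\p^{(hq)}\subseteq\p^{[q]}$, and concludes via a tight-closure-style argument that $f\in\p^n$. The characteristic-zero case (hence in particular the smooth complex variety case of Theorem~\ref{USP-Poly-CharZero}) is then obtained by reduction to positive characteristic. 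Your multiplier-ideal route works only in characteristic zero (it needs resolution of singularities and Kawamata--Viehweg-type vanishing), while the paper's Frobenius route works uniformly for regular local rings containing any field and avoids resolutions entirely; on the other hand, your approach is the one that gives the sharper statements for graded families and explains the constant $h$ geometrically via the discrepancy.
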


Hochster and Huneke extended Ein-Lazarsfeld-Smithf Theorem to regular local rings containing a field, by reduction to characteristic $p >0$ methods, and using tight closure arguments.

\begin{theorem}[{Hochster-Huneke \cite{HHpowers}}]\label{USP-Poly-CharP}
Let $(R,\m)$ be a regular local ring containing a field, let $\p$ be a prime ideal, and let $h$ be the height of $\p$. Then $\p^{(hn)} \subseteq \p^n$ for all $n \gs 1$. 
\end{theorem}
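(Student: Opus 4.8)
The plan is to reduce to the case of characteristic $p > 0$ and then deploy the theory of tight closure, specifically the Briançon–Skoda-type behavior of tight closure together with a localization/colon-capturing argument on symbolic powers. First I would recall the key tight closure input: if $R$ is a regular ring of characteristic $p > 0$ and $I$ is any ideal, then $I$ is tightly closed, and moreover for an ideal generated by $h$ elements one has $\overline{I^{h+n-1}} \subseteq I^{n}$ (the Briançon–Skoda theorem via tight closure, due to Hochster–Huneke). The heart of the matter is therefore to show, in characteristic $p$, that $\p^{(hn)}$ lies in the tight closure of $\p^n$ (hence in $\p^n$ itself, by regularity). To prove the characteristic $0$ statement from the characteristic $p$ one, I would invoke the standard reduction-to-characteristic-$p$ machinery: spreading out $R$ and $\p$ over a finitely generated $\mathbb{Z}$-algebra, and using that an inclusion of ideals holds over a field of characteristic $0$ if and only if it holds over a Zariski-dense set of closed fibers of positive characteristic.

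The core positive-characteristic argument would run as follows. Fix $f \in \p^{(hn)}$; I want $f \in \p^n$. Using that $R$ is regular, it suffices to exhibit a single $c \neq 0$ (independent of $q = p^e$) with $c f^q \in (\p^n)^{[q]}$ for all large $q$, since then $f$ is in the tight closure of $\p^n$, which equals $\p^n$. The strategy is to choose $c \notin \p$; then since $f^q \in \p^{(hnq)}$, working in the localization $R_\p$ we have $c f^q \in \p^{hnq} R_\p$, and the point is to descend this back to $R$ and compare $\p^{hnq}$ with $(\p^n)^{[q]} = (\p^{[q]})^n$. Here one uses that $\p^{[q]}$ is generated (up to radical, or after localizing at $\p$ where $\p$ is generated by a regular sequence of length $h$) by $h$ elements whose $q$-th powers generate, and that $\p^{hnq} \subseteq \overline{(\p^{[q]})^n}$ by a Briançon–Skoda / integral-dependence estimate: an element of $\p^{hnq}$ is integral over the ideal generated by $nq \cdot$-th powers arranged so that the $h$-generated Briançon–Skoda bound $\overline{(\p^{[q]})^{nq \cdot ?}}$... — more precisely one shows $\p^{hnq} \subseteq (\p^{[q]})^n$ holds after inverting an element of $R \setminus \p$, using that $\p^{[q]}$ is, locally at $\p$, generated by a regular sequence of length $h$, so that $(\p^{[q]})^{nq} \subseteq \overline{(\p^{[q]})^{nq}}$ and the $h$-generated Briançon–Skoda theorem gives $\overline{((\p^{[q]})^{n})^{h}} \subseteq \ldots$; the combinatorics must be arranged so the exponent $hnq$ on the symbolic side matches. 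The element one needs to invert, cleared of denominators, furnishes the multiplier $c$.

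The step I expect to be the main obstacle is precisely controlling the multiplier $c$ uniformly in $q$: a priori, writing $c f^q \in \p^{hnq} R_\p$ and clearing denominators produces a $c$ depending on $q$, which is useless for a tight-closure argument. The resolution is to choose $c$ once and for all in the defining ideal of the non-regular locus of $R/\p$ (or as an element killing the relevant Tor/colon obstruction), exploiting that $R_\p$ is a regular local ring so $\p R_\p$ is generated by a regular sequence, and that regularity is an open condition — this is exactly the kind of Noetherian "spreading out" and colon-capturing bookkeeping that makes these arguments delicate. Once a $q$-independent $c$ is secured, regularity of $R$ (tight closure of ideals is trivial) closes the argument in characteristic $p$, and the reduction-mod-$p$ descent then yields the characteristic $0$ and mixed-characteristic-over-a-field statements as stated.
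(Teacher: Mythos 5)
Your high-level plan is the right one and matches the paper: reduce to characteristic $p>0$, and produce a $q$-independent multiplier $c$ with $cf^q\in(\p^n)^{[q]}$ for all $q=p^e$, then conclude. But the execution has real gaps, and several of the specific moves you propose are not the ones that make the argument work.

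First, your proposed choice $c\notin\p$ is the wrong one, and the "multiplier in the defining ideal of the non-regular locus of $R/\p$" is a red herring in this argument. The paper never needs to clear denominators from the localization at $\p$, because the key containment $\p^{(hq)}\subseteq\p^{[q]}$ is a genuine inclusion of ideals in $R$: one verifies it after localizing at $\p$ and then promotes it globally using that $\Ass_R(R/\p^{[q]})=\{\p\}$, a consequence of the flatness of Frobenius over a regular ring. You do not mention this step, and without it your plan really does produce a $q$-dependent multiplier (the exact problem you flag as the main obstacle). Second, the local containment $\p^{hq}R_\p\subseteq\p^{[q]}R_\p$ is not a Briançon--Skoda estimate at all; it is an elementary pigeonhole count using only that $\p R_\p$ is generated by $h$ elements (a monomial of degree $hq$ in $h$ variables has some variable raised to the power $\geqslant q$). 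Your appeal to Briançon--Skoda for $h$-generated ideals, which you yourself leave with literal ellipses and a question mark, is both unnecessary and not set up correctly here (globally $\p$ need not be generated by $h$ elements). Third, the multiplier $c$ actually comes for free once the above is in place: from $f\in\p^{(hn)}$ and Euclidean division $q=an+r$ one gets $\p^{hn}f^a\subseteq\p^{(hq)}\subseteq\p^{[q]}$; raising to the $n$-th power and multiplying by $f^r$ yields $\p^{n^2h}f^q\subseteq(\p^n)^{[q]}$, so any nonzero $c\in\p^{n^2h}$ works --- note $c$ lies inside $\p$, not outside. Finally, the conclusion in the paper does not literally quote "regular rings are weakly $F$-regular"; it uses flatness of Frobenius once more to rewrite $\bigcap_q\bigl((\p^n)^{[q]}:f^q\bigr)=\bigcap_q(\p^n:f)^{[q]}$, and then Krull intersection forces either $f\in\p^n$ or $c=0$, a contradiction.

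So: right framework, but the pigeonhole count, the associated-primes/flatness argument that makes the containment global, and the Euclidean-division bookkeeping that manufactures the fixed $c\in\p^{n^2h}$ are all missing, and the specific fixes you propose (choosing $c\notin\p$, or $c$ cutting out the singular locus of $R/\p$, or a Briançon--Skoda bound) would not close those gaps.
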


\begin{proof} 
If $\p$ is a maximal ideal, then symbolic powers coincide with regular powers, and the statement is clear. If $\dim(R) \ls 1$, then either $\p=0$ or $\p$ is a maximal ideal, and the statement follows. Therefore, we assume that $\p$ is neither maximal nor zero. We first assume that $\Char(R) = p>0$. 
Fix $n \gs 1$ and let $f \in \p^{(hn)}$. For all $q=p^e$ write $q=an+r$ for some $a \in \NN$ and $0\ls r <n$. Then $f^a \in (\p^{(hn)})^a \subseteq \p^{(han)}$. Thus, $\p^{hn}f^a \subseteq \p^{hr} f^a \subseteq \p^{(han+hr))} = \p^{(hq)}$. We now want to show that $\p^{(hq)} \subseteq \p^{[q]}$ for all $q$. Since $\Ass_R(R/\p^{[q]}) = \{\p\}$ by the flatness of Frobenius, we can check the containment just after localizing at $\p$. Since $\p R_\p$ is the maximal ideal of the regular local ring $R_\p$, it is generated by $h$ elements. Furthermore, $\p^{(hq)}R_\p = \p^{hq}R_\p$. By the pigeonhole principle, $\p^{hq}R_\p \subseteq \p^{[q]}R_\p$, and this shows the containment in $R$ as well. Taking $n$-th powers yields $\p^{n^2h}f^{an} \subseteq (\p^n)^{[q]}$ for all $q$, and multiplying by $f^r$ finally yields $\p^{n^2h}f^q \subseteq (\p^n)^{[q]}$. Choose any non-zero $c \in \p^{n^2h}$, then $cf^q \in (\p^n)^{[q]}$. Therefore 
\[
\ds c \in \bigcap_{q} \left( (\p^n)^{[q]}:f^q\right) = \bigcap_q (\p^n:f)^{[q]},
\]
where the last equality follows from the flatness of Frobenius. Thus, either $f \in \p^n$, and we are done, or $\p^n:f \subseteq \m$, so that $c \in \bigcap_q \m^{[q]} \subseteq \bigcap_q \m^{q}= (0)$, which is a contradiction.
The result in characteristic zero follows by reduction to prime characteristic (see \cite{HHCharZero}).
\end{proof}

The previous two theorems can be restated for polynomial rings as follows.
If $\p$ is a prime ideal in $R=K[x_1,\ldots,x_d],$ then
$$
\bigcap_{\substack{\m\in \MaxSpec(R) \\ \p\subseteq \m}} \m^{dn} \subseteq 
\left(\bigcap_{\substack{\m\in \MaxSpec(R) \\ \p\subseteq \m}} \m\right)^n.
$$
This is a surprising fact, because the intersection is infinite.

The remarkable containment given by \ref{USP-Poly-CharP} and \ref{USP-Poly-CharZero} might not, however, be the best possible. Given a prime ideal $\p$ in a regular local ring and an integer $a$, one may ask what is the smallest $b \gs a$ such that $\p^{(b)} \subseteq \p^a$.

\begin{question}[Huneke]\label{23question}
Given a codimension $2$ prime ideal $\p$ in a regular local ring, does the containment
$$\p^{(3)} \subseteq \p^2$$
always hold?
\end{question}

Over the past decade, there has been a lot of work towards answering different versions of this question. If we consider the previous question  for a radical ideal, $I$, the containment of the third symbolic power in the square has been shown   to not hold in general \cite{counterexamples}, with an example later extended \cite{HaSe} \footnote{{\cb Akesseh \cite{Akesseh} and  Walker \cite{RobertHH,Robert17}  has also made recent progress regarding Question \ref{23question}.}}. 
However, the containment does hold when $I$ is a monomial ideal in a polynomial ring  \cite[8.4.5]{Seshadri}, or an ideal defining a set of general points in $\mathbb{P}^2$ \cite{HH13} and in $\mathbb{P}^3$ \cite{Dumnicki2015}. 

Harbourne has extended the question to higher powers \cite{HH13,Seshadri}:\footnote{{\cb The third and fourth authors \cite{GH} recently answered Question \ref{23question} affirmatively and proved Conjecture \ref{Harbourne} for ideals defining $F$-pure rings.}}
{\cb
\begin{conjecture}[Harbourne]\label{Harbourne}
	Given a radical homogeneous ideal $I$ in $k[\mathbb{P}^N]$, let $h$ be the maximal height of an associated prime of $I$. Then for all $n \geqslant 1$, 
$$I^{(hn-h+1)} \subseteq I^n.$$
\end{conjecture}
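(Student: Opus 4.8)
\emph{Proof proposal (research plan).}
The strategy would be the one behind Theorems \ref{USP-Poly-CharZero} and \ref{USP-Poly-CharP}, namely tight closure in prime characteristic, pushed to its sharpest form. First I would reduce to the local prime case: writing the minimal primary decomposition of the radical ideal as $I = \bigcap_i \p_i$, one has $I^{(m)} = \bigcap_i \p_i^{(m)}$, so it is natural to aim for $\p^{(h_\p n - h_\p + 1)} \subseteq \p^n$ for each prime $\p$, where $h_\p = \height\p \le h$; but since $I^n$ can be strictly smaller than $\bigcap_i \p_i^n$ precisely when $I^n$ has embedded primes, recovering the radical case from the prime case is \emph{not} formal and is itself part of the problem. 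Granting that, one localizes and completes to assume $(R,\m)$ is complete regular local and $I = \p$ is prime of height $h$, and then invokes the reduction to prime characteristic of \cite{HHCharZero} --- a step that must be handled with care, since $\p^{(n)}$ need not commute with reduction modulo $p$. Thus the core problem becomes: for $\p$ prime of height $h$ in a regular local ring of characteristic $p > 0$ and $n \ge 2$, show $\p^{(hn-h+1)} \subseteq \p^n$.

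In characteristic $p$ the first ingredient is the sharp Frobenius containment
$$\p^{(h(q-1)+1)} \subseteq \p^{[q]} \qquad \text{for all } q = p^e,$$
forced by the pigeonhole principle after localizing at $\p$: if $\p R_\p$ is generated by a regular system of parameters $x_1,\dots,x_h$, every monomial of degree $h(q-1)+1$ in the $x_i$ is divisible by some $x_i^q$. A parallel count for $\p^n$ shows, again after localizing at $\p$, that $\p^{(N)}R_\p \subseteq (\p^n)^{[q]}R_\p$ as soon as $N \ge (n+h-1)q - (h-1)$; since $(hn-h+1)q \ge (n+h-1)q-(h-1)$ for every $q$ whenever $n \ge 2$, this yields
$$\p^{((hn-h+1)q)}R_\p \subseteq (\p^n)^{[q]}R_\p \qquad \text{for all } q.$$
Now if $f \in \p^{(hn-h+1)}$ then $f^q \in (\p^{(hn-h+1)})^{[q]} \subseteq (\p^{(hn-h+1)})^q \subseteq \p^{((hn-h+1)q)}$, so one would like to conclude that $f^q \in (\p^n)^{[q]}$ for all $q$ and then finish exactly as in the proof of Theorem \ref{USP-Poly-CharP} (with $c=1$: $1 \in \bigcap_q (\p^n:f)^{[q]}$, so either $f\in\p^n$ or $1\in\bigcap_q\m^{[q]}=0$).

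The obstacle --- and this is the heart of why the conjecture is open --- is that the containment $\p^{((hn-h+1)q)} \subseteq (\p^n)^{[q]}$ has only been verified \emph{after localizing at} $\p$, whereas $(\p^n)^{[q]}$ need not be unmixed: precisely when $\p^{(n)} \neq \p^n$ the ideal $\p^n$ has embedded primes, and by flatness of Frobenius these persist in $(\p^n)^{[q]}$, so localization at $\p$ does not detect the global containment. What one needs is a multiplier $c \neq 0$ \emph{independent of} $q$ with $c\,\p^{((hn-h+1)q)} \subseteq (\p^n)^{[q]}$ for all $q$, and producing such a $c$ in general is the open problem. When $R/\p$ is $F$-pure this can be done: Fedder's criterion supplies elements of $\p^{[q]}:\p$ outside $\m^{[q]}$ together with compatible Frobenius splittings, which can be assembled into a uniform multiplier, and this is how the third and fourth authors settled the conjecture for $F$-pure rings \cite{GH}. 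In general one would need a replacement for this mechanism valid for arbitrary primes; alternatively one could work asymptotically through the Waldschmidt constant $\widehat\alpha(I)$ and the resurgence, where the conjecture is equivalent to sharp lower bounds on $\widehat\alpha(I)$, or, for ideals of points, exploit the projective geometry as in \cite{HH13, Dumnicki2015}. In every approach the essential difficulty is the uniform-in-$q$ control of the embedded primes of $\p^n$, and the non-prime reduction discussed above runs into exactly the same phenomenon.
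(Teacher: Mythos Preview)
There is nothing to compare against: the paper states Conjecture~\ref{Harbourne} as an open conjecture and provides no proof. The only remark the paper makes is the footnote that the conjecture has been established for ideals defining $F$-pure rings \cite{GH}, which you also cite and correctly identify as the case where a uniform Frobenius multiplier can be produced via Fedder's criterion.

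Your proposal is not a proof but a research plan, and you are explicit about this. As such it is honest and the diagnosis of the obstacle is accurate: the containment $\p^{((hn-h+1)q)}\subseteq(\p^n)^{[q]}$ holds after localizing at $\p$ by a monomial count, but $(\p^n)^{[q]}$ inherits the embedded primes of $\p^n$ by flatness of Frobenius, so the local containment does not globalize without a uniform multiplier independent of $q$. This is exactly why the argument in Theorem~\ref{USP-Poly-CharP} yields only $\p^{(hn)}\subseteq\p^n$ and not the sharper exponent $hn-h+1$. Your observation that the radical-to-prime reduction is also nontrivial (because $I^n$ can be strictly smaller than $\bigcap_i\p_i^n$) is likewise correct and often underappreciated.

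One small correction: you write that ``the conjecture is equivalent to sharp lower bounds on $\widehat\alpha(I)$''. The implication goes only one way --- Harbourne's conjecture implies the Chudnovsky-type bound $\widehat\alpha(I)\ge(\alpha(I)+h-1)/h$, but the converse is not known and is almost certainly weaker, since the Waldschmidt constant is an asymptotic invariant while the conjecture concerns every $n$.
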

}
For a survey on the containment problem see \cite{ContainmentSurvey}.

Harbourne and Bocci introduced the resurgence of an ideal as an asymptotic measure of the best possible containment as the following definition makes explicit. 

\begin{definition}[{Harbourne-Bocci \cite{BoH,resurgence2}}]
Let $I\subseteq K[x_1,\ldots,x_n]$ be an homogeneous ideal.
The resurgence of $I$ is defined by
$$
\rho(I)=\sup\left\{ \frac{n}{m}\; |\;  I^{(n)} \not\subseteq I^m\right\}.
$$
\end{definition}

By Theorems \ref{USP-Poly-CharZero} and \ref{USP-Poly-CharP}, $\rho(I)\ls \dim(R)$.  However, computing the resurgence of an ideal might be a very difficult task -- instead, one may find bounds in terms of other invariants. One of these invariants is the Waldschmidt constant, which measures the asymptotic growth of the minimal degrees of the symbolic powers of the given ideal.

\begin{definition}[{Waldschmidt \cite{Waldschmidt}}]
Let $I\subseteq K[x_1,\ldots,x_n]$ be an homogeneous ideal, and 
$\alpha(I)=\min\{ t \;|\; I_t \neq 0\}$. The Waldschmidt constant of $I$ is then defined to be
$$
\widehat{\alpha}(I)=\lim\limits_{m\to\infty}\frac{\alpha(I^{(n)})}{n}.
$$
\end{definition}

We point out that Waldschmidt showed that the previous limit exits. Bocci and Harbourne have showed that  $\alpha(I)/\widehat{\alpha}(I) \ls \rho(I)$  \cite[Theorem 1.2]{BoH}. 
It is worth mentioning that the Zariski-Nagata Theorem (Theorem \ref{ThmGralZN}) guarantees that $1\ls \widehat{\alpha}(I)$.

There are several cases where the Waldschmidt constant has been computed \cite{GHV2013,Points2014,DHNSST2015,FHL2015,WaldschmidtMonomials} or bounded \cite{DHST2014,Dumnicki2015}.
{\cb We point out that the function $\reg(R/I^{(n)})$ has also been studied  \cite{Cutkosky,HLN02,LT10,HT16,MTRegSymb}. }

\subsection{Eisenbud-Mazur Conjecture}
\label{SubsecEM}
In this section we survey a famous conjecture of Eisenbud and Mazur, that can be stated in terms of containments involving symbolic powers. Given any ideal $I$ inside a ring $R$, we always have an inclusion $I^{(2)} \subseteq I$. However, it is natural to ask whether something more precise can be said about the containment. Note that, if $K$ is a field of characteristic zero, and $I \subseteq K[x_1,\ldots,x_n]$ is a homogeneous ideal, then for any homogeneous $f \in I^{(2)}$, say of degree $D$, we have
\[
\ds f = \frac{1}{D}\sum_{i=1}^n x_i \frac{\partial{f}}{\partial x_i} \in (x_1,\ldots,x_n) I,
\]
since $\partial(f)/\partial x_i \in I$ for all $i$ by Exercise \ref{ExecDiffPowerInt} and Proposition \ref{PropSymbPowersDiffPowers}. In other words, $f$ is never a minimal generator of the ideal $I$, whenever $f \in I^{(2)}$. One can wonder whether this is true more generally. 
\begin{conjecture}[Eisenbud-Mazur \cite{EM}] \label{EisMaz_conj} Let $(R,\m)$ be a localization of a polynomial ring $S = K[x_1,\ldots,x_n]$ over a field $K$ of characteristic zero. If $I \subseteq R$ is a radical ideal, then $I^{(2)} \subseteq \m I$. 
\end{conjecture}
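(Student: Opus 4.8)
The plan is to recall why this is genuinely a conjecture and not a theorem, and thus to explain honestly that what I can offer is a proof of the known \emph{partial} cases together with the reduction that makes the problem tractable. First I would reduce to the case where $R$ itself is a localization at a maximal ideal $\m$ of $S=K[x_1,\ldots,x_n]$ and, by a standard flatness argument, further to the case where the residue field is algebraically closed; after a translation we may take $\m=(x_1,\ldots,x_n)$. The content to be proved is then: for a radical ideal $I\subseteq R$, no minimal generator of $I$ lies in $I^{(2)}$, i.e. $I^{(2)}\subseteq \m I$. The key conceptual tool, following Eisenbud--Mazur, is to translate this into a statement about \emph{derivations}: if $I^{(2)}\not\subseteq\m I$ then there is an $f\in I^{(2)}$ that is a minimal generator, and one produces from this data a nontrivial \emph{evolution} of $R/I$, equivalently a nonzero element of a suitable module of ``differentials along the symbolic square.'' Concretely, the homogeneous characteristic-zero case already recorded in the excerpt is the prototype: Euler's formula $f=\frac1D\sum_i x_i\,\partial f/\partial x_i$ exhibits $f\in\m I$ once we know $\partial f/\partial x_i\in I$, and the latter is exactly $I^{(2)}=I^{\langle 2\rangle}$ from Proposition~\ref{PropSymbPowersDiffPowers} together with Exercise~\ref{ExecDiffPowerInt}.

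For the cases I can actually settle, I would carry out the following steps. First, handle the graded situation verbatim as above: if $I\subseteq S$ is homogeneous and $f\in I^{(2)}$ is homogeneous of degree $D>0$, then each $\partial f/\partial x_i$ lies in $I^{\langle 1\rangle}=I$, so Euler's identity gives $f\in (x_1,\ldots,x_n)I$; hence in the graded setting $I^{(2)}\subseteq\m I$ unconditionally (in characteristic zero). Second, deduce the case where $I$ is \emph{generated by forms in fewer than all of the variables} or, more usefully, where $R/I$ admits a grading compatible with the maximal ideal — here one repeats the Euler argument with the grading-derivation in place of the full Euler operator. Third, reduce the general local statement to a question purely about the module of Kähler differentials: one shows that a counterexample yields a nonzero element of $\mathrm{Tor}$ or of $I^{(2)}/\m I$ that survives in $\Omega_{R/I}$, i.e. a nontrivial evolution, and then invokes the known positive results (complete intersections, almost complete intersections, prime ideals of small codimension, Cohen--Macaulay ideals of codimension two, and the licci case) where such evolutions are known to be trivial. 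Throughout, the passage from symbolic squares to differential operators via $I^{(2)}=I^{\langle 2\rangle}$ is what licenses replacing ``$f\in I^{(2)}$'' by ``$\delta(f)\in I$ for every $\delta\in D^1$'' and in particular ``$\partial f/\partial x_i\in I$.''

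The main obstacle — and the reason this remains a conjecture — is precisely the step where no grading is available: in a genuinely local ring there is no Euler operator, so one cannot write $f$ as an $\m$-combination of its partials, and the module of differentials $\Omega_{R/I}$ can fail to detect the minimal generator. Equivalently, the hard part is showing that every evolution of an equicharacteristic-zero complete local domain (or reduced ring) is trivial, which is open in general; the positive cases correspond exactly to situations where one has enough control on $\Omega$ or on the deformation theory of $R/I$. So my proof proposal is: prove the graded and grading-compatible cases completely via Euler's formula and the identification $I^{(2)}=I^{\langle 2\rangle}$, set up the evolutions/differentials reformulation in the local case, and then state that the conjecture reduces to the triviality of evolutions, which is verified in the classical list of cases but open in full generality. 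I do not expect to be able to close the general case, and the write-up should make that limitation explicit rather than claim a complete proof.
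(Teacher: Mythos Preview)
This statement is a \emph{conjecture}, and the paper does not prove it; immediately after stating it the paper notes that it ``is open in most cases when the base field $K$ has characteristic $0$'' and proceeds to discuss exactly the material you outline---the homogeneous case via Euler's formula and the identification $I^{(2)}=I^{\langle 2\rangle}$, Kunz's positive-characteristic counterexample, and the reformulation in terms of evolutions (Proposition~\ref{PropositionLenstra}, Theorem~\ref{Theorem_Kernel_alpha}, Corollary~\ref{EisMaz_conj_coroll}). Your proposal is therefore appropriate and aligned with the paper: you correctly recognize that no complete proof is possible, and the partial results and reductions you describe match the paper's own treatment.
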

Conjecture \ref{EisMaz_conj} easily fails if the ambient ring is not regular. For example, if $R=\CC[x,y,z]/(xy-z^2)$ and $I=(x,z)$, then $x \in I^{(2)} \smallsetminus \m I$. The assumption on the characteristic is also needed. In fact, E. Kunz provided a counterexample to Conjecture \ref{EisMaz_conj} for any prime integer $p$:

\begin{example} [\cite{EM}] Let $p$ be a prime integer, and consider the polynomial $f= x_1^{p+1}x_2 - x_2^{p+1}-x_1x_3^p+x_4^p \in \mathbb{F}_p[x_1,x_2,x_3,x_4]$. Note that $f$ is a quasi-homogeneous polynomial, and $f \notin (x_1,x_2,x_3,x_4)\sqrt{\Jac(f)}$. Let $I$ be the kernel of the map
\[
\xymatrixcolsep{5mm}
\xymatrixrowsep{0.2mm}
\xymatrix{
\mathbb{F}_p[x_1,x_2,x_3,x_4] \ar[rrr] &&& \mathbb{F}_p[t] \\
x_1 \ar@{|->}[rrr] &&&  t^{p^2} \\
x_2 \ar@{|->}[rrr] &&& t^{p(p+1)} \\
x_3 \ar@{|->}[rrr] &&& t^{p^2+p+1} \\
x_4 \ar@{|->}[rrr] &&& t^{(p+1)^2}.
}
\]
Then, $f \in I^{(2)} \smallsetminus (x_1,x_2,x_3,x_4)I.$
\end{example} 
Conjecture \ref{EisMaz_conj} is open in most cases when the base field $K$ has characteristic $0$. The most recent results in this direction, to the best of our knowledge, are due to A. A. More, who proves Conjecture \ref{EisMaz_conj} for certain primes in power series rings \cite{AAMore}.

Conjecture \ref{EisMaz_conj} is related to and motivated by the existence of non-trivial evolutions. In order to explain this connection, we first recall some basic facts about derivations and modules of K{\"a}hler differentials. For a more exhaustive and detailed treatment we refer the reader to \cite{Kunz_diff}.

Let $K$ be a Noetherian ring, and let $R$ be a $K$-algebra, essentially of finite type over $K$, and let $M$ be a finitely generated $R$-module. A $K$-derivation $\partial:R \to M$ is a $K$-linear map that satisfies the Leibniz rule:
\[
\ds \partial(rs) = \partial(r)s +  r \partial(s)
\]
for all $r,s \in R$. The set of all derivations $\Der_K(R,M)$ is an $R$-module. When $M=R$, we denote $\Der_K(R):=\Der_K(R,R)$. As in the previous section, recall that $D^n_R$ denotes the set of $K$-linear differential operators of $R$ of order at most $n$.
\begin{lemma}\label{der_diff} Every element $\delta \in D^1_R$ can be written as the sum of a derivation $\partial \in \Der_K(R)$ and an operator $\mu_r \in D_R^0$, that is, multiplication by some element $r \in R$.
\end{lemma}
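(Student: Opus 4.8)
The plan is to exploit the fact that a first-order differential operator $\delta \in D^1_R$ is characterized by the condition that $[\delta, r] = \delta r - r\delta$ lies in $D^0_R$ for every $r \in R$; that is, for each $r$ there is an element $\lambda(r) \in R$ such that $\delta(rs) - r\delta(s) = \lambda(r)s$ for all $s \in R$. First I would set $r_0 := \delta(1) \in R$ and let $\mu_{r_0} \in D^0_R$ denote multiplication by $r_0$. Then define $\partial := \delta - \mu_{r_0}$, i.e. $\partial(s) = \delta(s) - r_0 s$. The claim is that $\partial$ is a $K$-derivation, which immediately gives $\delta = \partial + \mu_{r_0}$ as desired.

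To verify that $\partial \in \Der_K(R)$, note first that $\partial$ is $K$-linear (being the difference of two $K$-linear maps) and that $\partial(1) = \delta(1) - r_0 = 0$. For the Leibniz rule, I would use the commutator condition on $\delta$. Since $\delta \in D^1_R$, for each $r \in R$ the operator $[\delta, r]$ has order zero, so it is multiplication by the element $[\delta,r](1) = \delta(r) - r\delta(1) = \delta(r) - r r_0 = \partial(r)$. Thus for all $r, s \in R$ we get $\delta(rs) - r\delta(s) = \partial(r)\, s$. Now compute
\[
\partial(rs) = \delta(rs) - r_0 rs = \bigl(r\delta(s) + \partial(r)s\bigr) - r_0 rs = r\bigl(\delta(s) - r_0 s\bigr) + \partial(r)s = r\,\partial(s) + \partial(r)\,s,
\]
which is precisely the Leibniz rule. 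Hence $\partial$ is a $K$-derivation and $\delta = \partial + \mu_{r_0}$.

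There is essentially no serious obstacle here; the only thing to be careful about is the bookkeeping with the evaluation-at-$1$ trick and making sure the commutator identity $[\delta,r] = \mu_{\partial(r)}$ is applied correctly (in particular that $[\delta,r]$ being an order-zero operator means it equals multiplication by its value at $1$, which uses $D^0_R = R$). One should also note this is really the standard splitting $D^1_R = \Der_K(R) \oplus D^0_R$ as $R$-modules when $R$ contains no relevant torsion, but for the lemma as stated we only need the (non-unique, unless we insist $\delta(1)=0$ for the derivation part) decomposition, and the construction above makes the choice canonical by taking $r = \delta(1)$.
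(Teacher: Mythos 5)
Your proof is correct and follows essentially the same route as the paper: set $\partial := \delta - \mu_{\delta(1)}$, observe $\partial(1)=0$, and use the order-zero property of the commutator to derive the Leibniz rule. The only cosmetic difference is that you work with $[\delta,r]$ while the paper works with $[\partial,r]$; the algebra is otherwise identical.
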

\begin{proof}
Note that multiplication by elements of $R$ and derivations are differential operators of order zero and one, respectively. Now let $\delta \in D^1_R$. Let $\partial := \delta - \mu_{\delta(1)}$, where $\mu_{\delta(1)}(r) = \delta(1)r$ for all $r \in R$. Then $\partial$ is still a differential operator of order at most one, and it is clear that $\partial(\lambda) = 0$ for all $\lambda \in K$, by $K$-linearity of $\delta$. Since $\partial$ has order one, for all $r,s \in R$ we have
\[
\ds \mu_t (s) = [\partial,r](s) = \partial(rs) - r \partial(s)
\]
where $\mu_t$ is multiplication by some element $t \in R$. Applying this identity to $s=1$, using that $\partial(1)= 0$, we obtain that $t= \partial(r)$. Therefore
\[
\ds \partial(rs) = \partial(r)s + r \partial(s)
\]
for all $r,s \in R$, proving that $\partial \in \Der_K(S)$.
\end{proof}
Consider now the multiplication map $R \otimes_K R \to R$, and let $\mathcal{I}$ be its kernel. $\mathcal{I}$ is generated, both as a left and right $R$-module, by elements of the form $x \otimes 1 - 1 \otimes x$. In addition, one can show that $r (x \otimes 1 - 1 \otimes x) + \mathcal{I}^2 = (x \otimes 1 - 1 \otimes x) r + \mathcal{I}^2$, for all $r,x \in R$. We define
\[
\ds \Omega_{R/K}:= \mathcal{I}/\mathcal{I}^2
\]
which is an $R$-module (the actions on the left and on the right are the same, given the previous comment). The module of differentials comes equipped with a universal derivation $d_{R/K}:R \to \Omega_{R/K}$, which is the map that sends $r \in R$ to $r \otimes 1 - 1 \otimes r$. For any $R$-module $M$, we have an isomorphism
\[
\Der_K(R,M) \cong \Hom_R(\Omega_{R/K},M).
\]
In fact, every derivation $\partial \in \Der_K(R,M)$ can be written as $\partial = \varphi \ \circ \ d_{R/K}$, for some $R$-linear homomorphism $\varphi \in \Hom_R(\Omega_{R/K},M)$. Conversely, $\psi \circ d_{R/K}: R \to M$ is a $K$-derivation for any $\psi \in \Hom_R(\Omega_{R/K},M)$. We now recall how to explicitly describe $\Omega_{R/K}$ when we have a presentation of $R$ over $K$.

\begin{itemize}
\item If $R=K[x_1,\ldots,x_n]$ is a polynomial ring over $K$, then one can show that
\[
\ds \Omega_{R/K} \cong R dx_1 \oplus \ldots \oplus R dx_n
\]
is a free $R$-module of rank $n$, with basis labeled by symbols $dx_i$. In this case the universal derivation $d_{R/K}:R \to \Omega_{R/K}$ turns out to be the standard differential. More explicitly, for $f \in K[x_1,\ldots,x_n]$, we have
\[
\ds d_{R/K}(f) = \sum_{i=1}^n \frac{\partial(f)}{\partial x_i} dx_i.
\]
\item If $R=S/I$, where $S=K[x_1,\ldots,x_n]$ is a polynomial ring and $I \subseteq S$ is an ideal, then
\[
\ds \Omega_{R/K} \cong \frac{\Omega_{S/K}}{I \Omega_{S/K} + S \cdot d_{S/K}(I)},
\]
where $d_{S/K}(I) = \{d_{S/K}(f) \mid f \in I\}$. The universal derivation is the map $d_{R/K}: R \to \Omega_{R/K}$ induced by $d_{S/K}:S \to \Omega_{S/K}$ on $R$. 
\item If $R = T_W$, where $T=K[x_1,\ldots,x_n]/I$ and $W$ is a multiplicatively closed set, we have
\[
\ds \Omega_{R/K} \cong \left(\Omega_{T/K}\right)_W
\]
and $d_{R/K}$ obeys the classical quotient rule. 
\end{itemize}
These rules are helpful to compute the module of differentials $\Omega_{R/K}$ when $R$ is essentially of finite type over $K$. Given ring homomorphisms $K \to T \to R:=T/I$, we obtain an exact sequence
\begin{equation} \label{exact_seq_derivations}
\xymatrixcolsep{5mm}
\xymatrixrowsep{2mm}
\xymatrix{
\ds I/I^2 \ar[rr]^-{\alpha} && \Omega_{T/K} \otimes_T R \ar[rr]^-\beta && \Omega_{R/K} \ar[rr] &&  0
}
\end{equation}
where $\alpha(i+I^2) = d_{T/K}(i)+ I \Omega_{T/K}$ and $\beta(d_{T/K}(t) \otimes r) = d_{R/K}(t) \cdot r$. Here we are using the same notation for elements in a module and classes in quotients of the same module.

We are now finally in a position to define evolutions.
\begin{definition} 
Let $R$ be a local $K$-algebra, where $K$ is a field. An evolution of $R$ is a surjective homomorphism $T \to R$ of $K$-algebras such that $R \otimes_T \Omega_{T/K} \to \Omega_{R/K}$ is an isomorphism. The evolution is said to be trivial if $T \to R$ is an isomorphism, and $R$ is said to be evolutionary stable if every evolution of $R$ is trivial.
\end{definition}

Evolutions appear in the study of Hecke algebras and in the work of Wiles on Galois deformations, in relation with his proof of Fermat's Last Theorem. They have also been studied by Scheja-Storch \cite{SchejaStorch} and B{\"o}ger \cite{Boger}, under slightly different perspectives. Evolutions were formally introduced by Mazur \cite{Mazur} in relation with the work of Wiles on semistable curves \cite{Wiles}. This comes from a desire to compare some universal deformation ring with a particular quotient of it, which arises as a completion of a Hecke algebra. In many cases the induced quotient map is an evolution, and it was crucial to establish that it is  trivial. When introducing evolutions, Mazur asked whether any ring arising in such a way is actually evolutionary stable. In this direction, further work of  Wiles and Taylor-Wiles \cite{TaylorWiles} showed that any such evolution is trivial. However, Mazur's more general question whether reduced algebras essentially of finite over fields of characteristic zero are evolutionary stable is still open. Some partial results have been established  \cite{EM, Hubl, Hubl_monomial, HunekeRibbe, HublHuneke}. 

We start off by justifying the relation between evolutions and the Eisenbud-Mazur Conjecture \ref{EisMaz_conj}. We closely follow the original arguments by Eisenbud and Mazur \cite{EM}.

We first present a very useful criterion for existence of non-trivial evolutions, in terms of minimality, which is due to H. Lenstra. 

\begin{definition} 
Let $T$ be a ring, and let $\phi:M \to N$ be an epimorphism of $T$-modules. We say that $\phi$ is minimal if there is no proper submodule $M' \subseteq M$ such that $\phi(M') = N$.
\end{definition}

The following proposition actually works for local algebras essentially of finite type over any Noetherian ring. However, we just focus on algebras essentially of finite type over a field. 

\begin{proposition}[{Lenstra \cite[Proposition 1]{EM}}] \label{PropositionLenstra} Let $R$ be a local $K$-algebra, essentially of finite type over $K$. Then $R$ is evolutionary stable if and only if for some (equivalently all) presentations $R = S/I$, where $S$ is a localization of a polynomial ring over $K$, the map
\[
\ds \widetilde{\alpha}: I/I^2 \to \ker\left(R \otimes_S \Omega_{S/K} \to \Omega_{R/K}\right) \to 0
\]
induced from the exact sequence (\ref{exact_seq_derivations}) is minimal.
\end{proposition}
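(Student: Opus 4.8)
The plan is to translate minimality of $\widetilde\alpha$ into a Nakayama-type kernel condition, then to match evolutions of $R$ with suitable ideals $J\subsetneq I$, and finally to reduce an arbitrary evolution to this matched form by enlarging the presentation. \textit{Step 1 (minimality as a kernel condition).} For an epimorphism $\phi\colon M\to N$ of finitely generated modules over a Noetherian local ring $(A,\n)$, $\phi$ is minimal if and only if $\ker\phi\subseteq\n M$: if $x\in\ker\phi$ is part of a minimal generating set of $M$, let $M'$ be spanned by the remaining generators, so $M'\subsetneq M$ but $\phi(M')=N$; conversely $\ker\phi\subseteq\n M$ and $\phi(M')=N$ force $M=M'+\ker\phi\subseteq M'+\n M$, whence $M=M'$. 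Taking $A=R$ and $\phi=\widetilde\alpha$, minimality of $\widetilde\alpha$ becomes the condition $\ker\widetilde\alpha\subseteq\m\,(I/I^2)$.

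\textit{Step 2 (special evolutions).} Fix a presentation $R=S/I$ with $S$ local. For an ideal $J\subseteq I$, apply $-\otimes_{S/J}R$ to the sequence (\ref{exact_seq_derivations}) written for $S/J$; using right-exactness one computes
\[
R\otimes_{S/J}\Omega_{(S/J)/K}\;\cong\;\big(R\otimes_S\Omega_{S/K}\big)\big/\widetilde\alpha(\overline J),\qquad \overline J:=(J+I^2)/I^2\subseteq I/I^2,
\]
and under this identification the comparison map to $\Omega_{R/K}=(R\otimes_S\Omega_{S/K})/\widetilde\alpha(I/I^2)$ is the obvious surjection, which is an isomorphism exactly when $\widetilde\alpha(\overline J)=\widetilde\alpha(I/I^2)$. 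Since $S/J\to R$ is trivial precisely when $J=I$, and since $J\subsetneq I$ forces $\overline J\subsetneq I/I^2$ (Nakayama: $J+I^2=I$ implies $J=I$, as $I\subseteq\m_S$), Step 1 yields the key equivalence: for any local presentation $R=S/I$, the map $\widetilde\alpha$ fails to be minimal if and only if $R$ admits a nontrivial evolution of the form $S/J\twoheadrightarrow R$. In particular, non-minimality of $\widetilde\alpha$ for some presentation already produces a nontrivial evolution, which gives the implication ``evolutionarily stable $\Rightarrow$ $\widetilde\alpha$ minimal for every local presentation.''

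\textit{Step 3 (reducing a general evolution).} Conversely, let $\rho\colon T\twoheadrightarrow R$ be a nontrivial evolution (we may assume $T$ is local and essentially of finite type over $K$, localizing at the preimage of $\m$ if needed). Because $S$ is a localization of a polynomial ring and $T$ is local, $\pi\colon S\to R$ lifts to $\sigma\colon S\to T$ with $\rho\sigma=\pi$: send each variable to a preimage of its image in $R$, and note that anything inverted in $S$ maps to a unit in $R$, hence lies outside $\m_T$ and is a unit in $T$. Since $T$ is essentially of finite type over $\sigma(S)$, we may write $T\cong\widetilde S/\widetilde J$, where $\widetilde S$ is obtained from $S$ by adjoining finitely many variables and localizing, hence is again a localization of a polynomial ring over $K$ (and may be taken local); then $R\cong\widetilde S/\widetilde I$ with $\widetilde J\subseteq\widetilde I$, and $\widetilde J\subsetneq\widetilde I$ because $\rho$ is nontrivial. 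By Step 2 applied to the presentation $\widetilde S$, the map $\widetilde\alpha_{\widetilde S}$ is not minimal. It remains to see that minimality of $\widetilde\alpha$ is independent of the presentation — which also furnishes the ``equivalently all'' clause — by analyzing the two elementary passages relating $\widetilde S$ to $S$. Localizing $S$ at elements that are already units in $R$ leaves $\widetilde\alpha$ literally unchanged (an $R$-module is unaffected by inverting units of $R$). Adjoining one variable $z$, with $z-g\in\widetilde I$ for a suitable $g\in S$, replaces $I/I^2$ by $I/I^2\oplus R\,\overline{z-g}$, where $R\,\overline{z-g}$ is a free rank-one summand, and $R\otimes_S\Omega_{S/K}$ by $(R\otimes_S\Omega_{S/K})\oplus R\,dz$, with $\widetilde\alpha$ unchanged on $I/I^2$ and $\overline{z-g}\mapsto dz-dg$; inspecting this block shape shows $\ker\widetilde\alpha$ is unchanged and lies in the $I/I^2$-summand, so the condition $\ker\widetilde\alpha\subseteq\m\,(I/I^2)$ transfers in both directions. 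Hence $\widetilde\alpha_S$ is not minimal, completing the proof.

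\textit{Main obstacle.} The substantive work is all in Step 3: realizing a general evolution as a quotient $\widetilde S/\widetilde J\twoheadrightarrow\widetilde S/\widetilde I$ of an \emph{enlarged} polynomial presentation — the lift $\sigma$ need not be surjective, which is precisely why one cannot stay with the original $S$ — and proving that minimality of $\widetilde\alpha$ is genuinely presentation-independent. The latter is not deep, but it is where one must actually compute with conormal modules and Kähler differentials, in particular with the behaviour of $I/I^2$ under adjunction of a variable, including the claim that $\overline{z-g}$ spans a free rank-one summand of $\widetilde I/\widetilde I^2$.
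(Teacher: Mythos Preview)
Your proof is correct and follows the same core idea as the paper's: the heart of both is your Step~2, showing that for a fixed presentation $R=S/I$, evolutions of the form $S/J\to R$ correspond to proper submodules $\overline J\subsetneq I/I^2$ with $\widetilde\alpha(\overline J)=\widetilde\alpha(I/I^2)$, whence minimality of $\widetilde\alpha$ is equivalent to the nonexistence of such evolutions.

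Where you go further is exactly where the paper stops short. The paper's proof literally concludes with ``$R$ has no non-trivial evolutions \emph{of the form $S/J$} if and only if $\widetilde\alpha$ is minimal,'' and dispatches presentation independence with ``we leave it to the reader.'' You supply both missing pieces: your Step~3 shows that an arbitrary evolution $T\to R$ can be realized as $\widetilde S/\widetilde J\to\widetilde S/\widetilde I$ for an enlarged polynomial presentation $\widetilde S$ (by lifting $S\to T$ and adjoining generators), and you verify presentation independence by the two elementary moves (localization and adjoining a variable), computing the block form of $\widetilde\alpha$ and checking that $\ker\widetilde\alpha$ is unchanged. This is precisely the content the paper suppresses, and your treatment of it is clean; in particular, the observation that $\overline{z-g}$ contributes a free rank-one summand on which $\widetilde\alpha$ hits $dz$ with coefficient~$1$, so the kernel stays inside the $I/I^2$ factor, is the right way to see it.
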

\begin{proof} We leave it to the reader to show that $\widetilde{\alpha}$ being minimal or not is independent of the chosen presentation. Let $S/I$ be any presentation of $R$, with $S$ a localization of a polynomial ring in finitely many variables over $K$. Let $J \subseteq I$ be an ideal, so that we have a surjection $A:=S/J \to S/I = R \to 0$. A diagram chase on the sequences (\ref{exact_seq_derivations}) induced by the surjections $S \to A \to R \to 0$ shows that $A\to R$ is an evolution if and only if the surjective map $\widetilde{\alpha} : I/I^2 \to \ker\left(\Omega_{S/K} \otimes_S R\right)$ carries $(J + I^ 2)/I^2$ onto the same image as $I /I^2$. In addition, by Nakayama's Lemma we have that $J = I$ if and only if $(J + I^2)/I^2 = I/I^2$. Therefore $R$ has no non-trivial evolutions of the form $S/J$ if and only if no proper submodule $(J+I^2)/I^2 \subsetneq  I/I^2$ has the same image as $I/I^2$ via $\widetilde{\alpha}$, if and only if $\widetilde{\alpha}$ is minimal.

\end{proof}

Under certain assumptions, we can explicitly identify the kernel of the map $\alpha$ considered above.

\begin{theorem}[{Eisenbud-Mazur \cite[Theorem 3]{EM}}] \label{Theorem_Kernel_alpha}
Let $(S, \m)$ be a localization of a polynomial ring in finitely many variables over $K$, and let $I$ be an ideal of $S$. If $R := S/I$ is reduced and generically separable over $K$, then the kernel of $\alpha \!: I/I^2 \to R \otimes_K \Omega_{S/K}$ is $I^{(2)}/I^2$. 
\end{theorem}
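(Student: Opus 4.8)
The plan is to recast the statement about $\alpha$ as the equality $I^{(2)}=I^{\dsp{2}}$ of the symbolic square with the second differential power, and then to prove that equality one minimal prime at a time. First I would compute $\ker\alpha$ explicitly. Since $S$ is a localization of $K[x_1,\dots,x_n]$, the module $\Omega_{S/K}$ is free on $dx_1,\dots,dx_n$ and $d_{S/K}(f)=\sum_i\frac{\partial f}{\partial x_i}\,dx_i$, so $\alpha(f+I^2)=\sum_i\frac{\partial f}{\partial x_i}\,dx_i\otimes 1$ in $\Omega_{S/K}\otimes_S R=\Omega_{S/K}/I\Omega_{S/K}$; this vanishes exactly when $\frac{\partial f}{\partial x_i}\in I$ for all $i$. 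On the other hand, by Lemma~\ref{der_diff} every $\delta\in D^1_S$ is a multiplication operator plus a $K$-derivation, and $\Der_K(S)$ is free on $\frac{\partial}{\partial x_1},\dots,\frac{\partial}{\partial x_n}$ since it is dual to $\Omega_{S/K}$; hence $I^{\dsp{2}}=\{f\in I \mid \frac{\partial f}{\partial x_i}\in I\text{ for all }i\}$. Therefore the preimage in $I$ of $\ker\alpha$ equals $I^{\dsp{2}}$, and the theorem becomes equivalent to $I^{(2)}=I^{\dsp{2}}$.

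Next I would pass to minimal primes. Since $I$ is radical, $\Ass_S(S/I)=\MIN(I)$, and localizing shows $I^{(2)}=\bigcap_{\p\in\MIN(I)}\p^{(2)}$, while Exercise~\ref{ExecDiffPowerInt} gives $I^{\dsp{2}}=\big(\bigcap_{\p\in\MIN(I)}\p\big)^{\dsp{2}}=\bigcap_{\p\in\MIN(I)}\p^{\dsp{2}}$. So it is enough to prove $\p^{(2)}=\p^{\dsp{2}}$ for each $\p\in\MIN(I)$. One inclusion is automatic: by Propositions~\ref{LemmaPowerCont} and~\ref{Prop DSP Primary}, $\p^{\dsp{2}}$ is a $\p$-primary ideal containing $\p^2$, and $\p^{(2)}=\p^2 S_\p\cap S$ is the smallest such, so $\p^{(2)}\subseteq\p^{\dsp{2}}$.

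The heart of the argument is the reverse inclusion $\p^{\dsp{2}}\subseteq\p^{(2)}$, which is where the hypotheses on $R$ are used. Here I would localize at $\p$: set $A:=S_\p$, a regular local ring of dimension $h:=\Ht(\p)$ with residue field $\kappa:=\kappa(\p)$, and consider the second fundamental exact sequence for $K\to A\to\kappa=A/\p A$, i.e.\ the instance of~(\ref{exact_seq_derivations})
\[
\p A/\p^2 A\ \xrightarrow{\ \bar\alpha\ }\ \Omega_{A/K}\otimes_A\kappa\ \longrightarrow\ \Omega_{\kappa/K}\ \longrightarrow\ 0,
\]
with $\bar\alpha(\bar g)=\sum_i\frac{\partial g}{\partial x_i}\,dx_i\otimes 1$. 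Here $\dim_\kappa\big(\Omega_{A/K}\otimes_A\kappa\big)=n$ by freeness, $\dim_\kappa\big(\p A/\p^2 A\big)=h$ by regularity of $A$, and $\operatorname{trdeg}_K\kappa=n-h$. Because $R$ is reduced and generically separable over $K$, the extension $\kappa/K$ is separably generated, and the one external fact I would invoke is that then $\dim_\kappa\Omega_{\kappa/K}=\operatorname{trdeg}_K\kappa=n-h$. Counting dimensions in the right-exact sequence gives $\dim_\kappa\IM(\bar\alpha)=n-(n-h)=h=\dim_\kappa(\p A/\p^2 A)$, so $\bar\alpha$ is injective. Finally, if $f\in\p^{\dsp{2}}$ then $\frac{\partial f}{\partial x_i}\in\p\subseteq\p A$ for all $i$, so $\bar\alpha(\bar f)=0$ and hence $f\in\p^2 A=\p^2 S_\p$, giving $f\in\p^2 S_\p\cap S=\p^{(2)}$. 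The main obstacle is this last step — essentially the injectivity of $\bar\alpha$ — which rests on the dimension count and thus on the characterization of separably generated field extensions; the remaining points (compatibility of the $\partial/\partial x_i$ with localization, and the fact that $\p^{\dsp{2}}$ and $\p^{(2)}$ are $\p$-primary so that it suffices to compare them inside $S_\p$) are routine.
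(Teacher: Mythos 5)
The paper states this theorem without proof, citing Eisenbud--Mazur, so there is no in-paper argument to compare against; I assess your proof on its own terms, and it is correct. Your route — identify $\ker\alpha$ with $I^{\dsp{2}}/I^2$ via Lemma~\ref{der_diff} and the freeness of $\Omega_{S/K}$, decompose both $I^{(2)}$ and $I^{\dsp{2}}$ over $\MIN(I)$ using that $I$ is radical, get $\p^{(2)}\subseteq\p^{\dsp{2}}$ from Propositions~\ref{LemmaPowerCont} and~\ref{Prop DSP Primary}, and prove the reverse inclusion by showing the conormal map $\p S_\p/\p^2 S_\p\to\Omega_{S_\p/K}\otimes_{S_\p}\kappa(\p)$ is injective by a rank count — is sound. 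Each dimension input checks out: $\dim_\kappa \p S_\p/\p^2 S_\p=\Ht(\p)$ by regularity, $\dim_\kappa\Omega_{S_\p/K}\otimes\kappa=n$ by freeness, $\operatorname{trdeg}_K\kappa(\p)=n-\Ht(\p)$ by the dimension formula for the underlying polynomial ring, and the equality $\dim_\kappa\Omega_{\kappa(\p)/K}=\operatorname{trdeg}_K\kappa(\p)$ is exactly where generic separability enters (for a finitely generated field extension, equality holds iff the extension is separably generated). This is essentially the argument in the Eisenbud--Mazur source and is also consistent with the paper's later observation (after Theorem~\ref{Hubl1}) that $I^{(2)}=I^{\dsp{2}}$ for radical ideals over perfect fields.

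One small stylistic point: the paper's displayed target $R\otimes_K\Omega_{S/K}$ is evidently shorthand for $\Omega_{S/K}\otimes_S R$ as in sequence~(\ref{exact_seq_derivations}); you silently and correctly read it that way, but it is worth flagging so a reader doesn't think the tensor is genuinely over $K$.
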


As a consequence of Theorem \ref{Theorem_Kernel_alpha}, we can now fully justify the connection between Conjecture \ref{EisMaz_conj} and the existence of non-trivial evolutions.

\begin{corollary} \label{EisMaz_conj_coroll} Let $(S,\m)$ be a localization of a Noetherian polynomial ring over a field $K$, and let $I$ be a radical ideal of $S$. If $R=S/I$ is generically separable over $K$, then $R$ is evolutionary stable if and only if $I^{(2)} \subseteq \m I$.
\end{corollary}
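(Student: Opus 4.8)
The plan is to combine Lenstra's criterion (Proposition \ref{PropositionLenstra}) with the identification of the relevant kernel in Theorem \ref{Theorem_Kernel_alpha}, reducing the statement to an elementary fact about minimal surjections of finitely generated modules over a Noetherian local ring. Since $I$ is radical, $R = S/I$ is reduced, and it is generically separable over $K$ by hypothesis; hence Theorem \ref{Theorem_Kernel_alpha} applies and the map $\alpha\colon I/I^2 \to \Omega_{S/K} \otimes_S R$ from the exact sequence (\ref{exact_seq_derivations}) (with $T = S$) has kernel exactly $I^{(2)}/I^2$. By exactness of (\ref{exact_seq_derivations}), the image of $\alpha$ equals $\ker\big(\Omega_{S/K}\otimes_S R \to \Omega_{R/K}\big)$, so corestricting $\alpha$ to its image yields precisely the surjection $\widetilde{\alpha}$ that appears in Proposition \ref{PropositionLenstra}, and it has kernel $L := I^{(2)}/I^2$ inside $M := I/I^2$.

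By Proposition \ref{PropositionLenstra}, $R$ is evolutionary stable if and only if $\widetilde{\alpha}$ is minimal, that is, if and only if no proper submodule $M' \subsetneq M$ satisfies $M' + L = M$. The key lemma I would isolate is the following: for a finitely generated module $M$ over a Noetherian local ring $(S,\m)$ and a submodule $L \subseteq M$, one has $M' + L = M \Rightarrow M' = M$ for every submodule $M'$ if and only if $L \subseteq \m M$. The easy direction is Nakayama's lemma: if $L \subseteq \m M$ and $M' + L = M$, then $M' + \m M = M$, whence $M' = M$. For the converse, if $L \not\subseteq \m M$ I would choose $\ell \in L \setminus \m M$, extend $\ell$ to a minimal generating set $\ell = e_1, e_2, \ldots, e_k$ of $M$ (possible since $\ell \notin \m M$), and set $M' = S e_2 + \cdots + S e_k$; then $M' \subsetneq M$ by minimality of $k$, while $M' + L \supseteq M' + S\ell = M$, exhibiting $\widetilde{\alpha}$ as non-minimal.

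Finally I would translate $L \subseteq \m M$ into the desired containment of ideals. Since $R$ is a nonzero local $K$-algebra, $I \subseteq \m$, so $I^2 \subseteq \m I$ and therefore $\m M = (\m I + I^2)/I^2 = \m I/I^2$, while $L = I^{(2)}/I^2$ and $I^{(2)} \supseteq I^2$. Thus $L \subseteq \m M$ holds if and only if $I^{(2)} \subseteq \m I + I^2 = \m I$. Combining the three equivalences gives that $R$ is evolutionary stable exactly when $I^{(2)} \subseteq \m I$, as claimed. I do not expect a genuine obstacle here: essentially all the substance is carried by Proposition \ref{PropositionLenstra} and Theorem \ref{Theorem_Kernel_alpha}, and the remaining points — the minimal-surjection lemma and the bookkeeping identifying $\widetilde{\alpha}$ with the corestriction of $\alpha$ so that Theorem \ref{Theorem_Kernel_alpha} computes its kernel — are routine.
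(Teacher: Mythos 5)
Your argument is correct and is essentially the paper's own proof, with the details filled in: you invoke Proposition \ref{PropositionLenstra} to reduce evolutionary stability to minimality of $\widetilde{\alpha}$, use Theorem \ref{Theorem_Kernel_alpha} to identify $\ker(\widetilde{\alpha})=I^{(2)}/I^2$, and then characterize minimality of a surjection from $I/I^2$ as "kernel $\subseteq\m(I/I^2)$," which unwinds to $I^{(2)}\subseteq\m I$. The paper compresses this last step to the one-line observation that a surjection is minimal exactly when its kernel contains no minimal generator of the source, which is the same fact your Nakayama-plus-basis-extension lemma proves.
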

\begin{proof}
It is enough to observe that minimality of a surjective map $f \!:A \to B$ is equivalent to the fact that $\ker(f)$ does not contain any minimal generator of $A$.
\end{proof}

Over the complex numbers, Conjecture \ref{EisMaz_conj} can be equivalently restated even more explicitly.
\begin{proposition}[{Eisenbud-Mazur \cite[Corollary 2]{EM}}]
There exists a reduced local $\CC$-algebra $R$ of finite type whose localization at the origin is {\it not} evolutionary stable if and only if there exists a power series $f \in \CC\ps{x_1,\ldots,x_n}$ without a constant term such that
\[
\ds f \notin (x_1,\ldots,x_n)\sqrt{ (f,\partial_1(f),\ldots ,\partial_n(f))}.
\]
\end{proposition}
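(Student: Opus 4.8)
The plan is to translate the statement about evolutionary stability into the kernel description of Theorem~\ref{Theorem_Kernel_alpha}, and then carry out a Noetherianity/limit argument to pass from complete local data to finitely generated data. First I would use Corollary~\ref{EisMaz_conj_coroll}: a reduced local $\CC$-algebra $R$ of finite type that is generically separable over $\CC$ (automatic in characteristic zero) fails to be evolutionary stable precisely when $I^{(2)} \not\subseteq \m I$ for a radical ideal $I$ with $R = S/I$, $S$ a localization of a polynomial ring at the origin. So the whole claim reduces to showing that such a radical $I$ with $I^{(2)} \not\subseteq \m I$ exists if and only if there is a power series $f \in \CC\ps{x_1,\ldots,x_n}$ without constant term satisfying $f \notin (x_1,\ldots,x_n)\sqrt{(f,\partial_1(f),\ldots,\partial_n(f))}$.

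For the ``only if'' direction, given such a radical ideal $I \subseteq S$ with some $f \in I^{(2)}$ that is a minimal generator of $I$, I would pass to the $\m$-adic completion $\widehat{S} = \CC\ps{x_1,\ldots,x_n}$. Completion is faithfully flat, preserves minimal generation, and (because $\Ass(S/I^{(2)}) $ is controlled) preserves the symbolic square in the sense that $f$ remains a non-minimal-generator obstruction; the key point is to check that $f \in I^{(2)}$ forces $\partial_i(f) \in I$ for all $i$ (this is the characteristic-zero differential-power identity, via Proposition~\ref{PropSymbPowersDiffPowers} and Exercise~\ref{ExecDiffPowerInt}, applied after reducing to the polynomial setting), hence $(f, \partial_1(f), \ldots, \partial_n(f)) \subseteq I$, so $\sqrt{(f,\partial_1(f),\ldots,\partial_n(f))} \subseteq I$, and therefore $f \notin \m I \supseteq \m \sqrt{(f,\partial_1 f, \ldots, \partial_n f)}$. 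For the ``if'' direction, starting from a power series $f$ with $f \notin (x_1,\ldots,x_n)\sqrt{(f,\partial_1 f,\ldots,\partial_n f)}$, I would set $J = \sqrt{(f,\partial_1 f,\ldots,\partial_n f)}$, a radical ideal in $\CC\ps{\underline{x}}$, observe that $f \in J^{(2)}$ because every $\delta \in D^1$ applied to $f$ lands in $J$ (using Lemma~\ref{der_diff} to reduce $\delta$ to a derivation plus a multiplication, and the fact that derivations of $f$ are $\CC$-combinations of the $\partial_i(f) \in J$), and note that $f \notin \m J$ by hypothesis; then one uses Artin-type approximation / the fact that the relevant data (finitely many generators of $J$, the membership $f \in J^{(2)}$, the non-membership $f \notin \m J$) are detected at a finite level to descend to a radical ideal in a localization of a polynomial ring, giving the desired $R$ of finite type.

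The main obstacle I expect is the descent from the power series ring to the finitely generated (localized polynomial) setting in the ``if'' direction: one must produce an \emph{algebraic} radical ideal whose completion captures the analytic obstruction, and here the subtlety is that radicality and the symbolic-square membership do not a priori descend from a formal power series solution --- this is exactly where an Artin approximation argument (or the General Néron Desingularization / Popescu's theorem, which is how Eisenbud and Mazur handle it) is needed to replace the formal solution $f$ by an algebraic one lying in a ring essentially of finite type over $\CC$, while preserving that the ideal generated is radical and that $f$ sits in the symbolic square but not in $\m$ times the ideal. The ``only if'' direction is comparatively routine once the differential-power machinery of the previous subsection is invoked, and the equivalence with evolutionary stability is already packaged in Corollary~\ref{EisMaz_conj_coroll}.
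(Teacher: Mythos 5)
The survey paper states this proposition with a citation to Eisenbud--Mazur but provides no proof, so there is no in-text argument to compare against; what follows assesses your proposal on its own merits.

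Your overall architecture is sound and matches what one expects: reduce evolutionary stability to the containment $I^{(2)} \subseteq \m I$ via Corollary~\ref{EisMaz_conj_coroll}, use the differential-power machinery to identify the Jacobian ideal $(f,\partial_1 f,\ldots,\partial_n f)$ sitting inside $I$ when $f \in I^{(2)}$, and use an approximation theorem to descend from $\CC\ps{\underline x}$ to a ring of finite type. The forward direction (finite type $\Rightarrow$ power series) is essentially complete once you note that radicals commute with completion for the excellent ring $S$, so $f \notin \m\sqrt{(f,\nabla f)}$ in $S$ lifts by faithful flatness to $\widehat S$. There is, however, a genuine gap in the reverse direction, in two layers.

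First, your claim that $f \in J^{(2)}$ for $J = \sqrt{(f,\nabla f)} \subseteq \CC\ps{\underline x}$ leans on the identity $J^{\dsp 2} = J^{(2)}$, but Proposition~\ref{PropSymbPowersDiffPowers} and Exercise~\ref{ExecDiffPowerInt} are proved only for polynomial rings, and the proof there depends essentially on $R/\p$ being Jacobson, which fails for $\CC\ps{\underline x}$. (The same caveat applies if you instead route through Theorem~\ref{Hubl1} or Corollary~\ref{EisMaz_conj_coroll}: both are stated for localizations of finitely generated algebras.) A power-series version does hold, but it requires a separate argument via the universally finite module of differentials and the kernel description of Theorem~\ref{Theorem_Kernel_alpha} in the complete case, and you should at least flag that this is being used beyond the generality established in the paper. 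Second, and more seriously, the Artin approximation step is stated but not set up in a way that could succeed. You describe it as producing ``an algebraic radical ideal whose completion captures the analytic obstruction,'' i.e.\ finding an algebraic $J'$ with $J'\widehat S = J$; such a $J'$ need not exist, and radicality is not the property at stake. The right formulation is to approximate $f$ itself by an algebraic power series $g$ and then work with the new ideal $\sqrt{(g,\nabla g)}$ in the essentially-finite-type ring, which is automatically radical and for which the polynomial-ring differential-power theory applies directly; the delicate point is then preserving the \emph{non}-membership $g \notin \m\sqrt{(g,\nabla g)}$, which is not a system of polynomial equations and so does not fall under the standard Artin approximation template. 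That is the real crux, and it would need to be addressed --- for instance by bounding the exponent in the radical and the Artin--Rees type behavior of $\m$ times the Jacobian ideal uniformly, or by re-expressing the non-membership valuatively --- rather than cited as a black box.
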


We now present some results due to H{\"u}bl \cite{Hubl}, that  lead to new versions of the Eisenbud-Mazur conjecture. We closely follow his treatment of these topics.

\begin{theorem}[{H{\"u}bl  \cite[Theorem 1.1]{Hubl}}] \label{Hubl1} Let $K$ be a Noetherian ring, and let $R$ be a local algebra essentially of finite type over $K$. The following conditions are equivalent:
\begin{enumerate}
\item \label{Hubl1,1} $R$ is evolutionary stable.
\item \label{Hubl1,2} Assume $(S,\m)$ is a local algebra, essentially of finite type and smooth over $K$, and $I \subseteq S$ is such $R=S/I$. If $f \in I$ and $\partial(f) \in I$ for all $\partial \in \Der_K(S)$, then $f \in \m I$.
\end{enumerate}
\end{theorem}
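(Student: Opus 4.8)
The plan is to prove the equivalence of conditions (\ref{Hubl1,1}) and (\ref{Hubl1,2}) by passing through Lenstra's criterion (Proposition \ref{PropositionLenstra}) and unwinding the relevant exact sequences of K\"ahler differentials. Throughout I would fix a presentation $R = S/I$ with $(S,\m)$ local, essentially of finite type and smooth over $K$ (such a presentation exists and, by the remark in Proposition \ref{PropositionLenstra}, the formulation of evolutionary stability in terms of minimality of $\widetilde{\alpha}$ does not depend on the choice). The first step is to translate condition (\ref{Hubl1,2}) into a statement about the kernel of the map $\alpha\colon I/I^2 \to \Omega_{S/K}\otimes_S R$ from the exact sequence (\ref{exact_seq_derivations}). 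Using the identification $\Der_K(S,M)\cong \Hom_S(\Omega_{S/K},M)$ and the fact that $S$ is smooth (so $\Omega_{S/K}$ is a finitely generated projective, hence locally free, $S$-module), an element $f\in I$ satisfies $\partial(f)\in I$ for all $\partial\in\Der_K(S)$ if and only if $d_{S/K}(f)$ maps to zero in $\Omega_{S/K}\otimes_S R = \Omega_{S/K}/I\Omega_{S/K}$; indeed $\varphi(d_{S/K}(f))\in I$ for every $\varphi\in\Hom_S(\Omega_{S/K},S)$ forces $d_{S/K}(f)\in I\Omega_{S/K}$ precisely because $\Omega_{S/K}$ is locally free and a projective module is detected by its functionals. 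Hence $f+I^2$ lies in $\ker(\alpha)$ exactly when $\partial(f)\in I$ for all $\partial\in\Der_K(S)$.

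The second step is to compare $\alpha$ and $\widetilde{\alpha}$. By construction $\widetilde{\alpha}$ is just $\alpha$ with its target restricted to $\ker(\beta)=\ker(\Omega_{S/K}\otimes_S R\to\Omega_{R/K})$, so $\ker(\widetilde{\alpha}) = \ker(\alpha)$ as submodules of $I/I^2$. Therefore, by the observation in the proof of Corollary \ref{EisMaz_conj_coroll}, $\widetilde{\alpha}$ is minimal if and only if $\ker(\alpha)$ contains no minimal generator of $I/I^2$, i.e.\ if and only if $\ker(\alpha)\subseteq \m\cdot(I/I^2) = (\m I + I^2)/I^2$, which (since $I^2\subseteq \m I$) is the same as $\{f\in I : f+I^2\in\ker(\alpha)\}\subseteq \m I$. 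Combining this with the first step: $\widetilde{\alpha}$ is minimal $\iff$ every $f\in I$ with $\partial(f)\in I$ for all $\partial\in\Der_K(S)$ lies in $\m I$. Invoking Proposition \ref{PropositionLenstra}, the left-hand side is equivalent to $R$ being evolutionary stable, which gives the equivalence (\ref{Hubl1,1})$\Leftrightarrow$(\ref{Hubl1,2}).

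The main obstacle I anticipate is the smoothness bookkeeping in the first step: the passage from "$\partial(f)\in I$ for all derivations $\partial$" to "$d_{S/K}(f)\in I\Omega_{S/K}$" uses that $\Omega_{S/K}$ is locally free of finite rank over the local ring $S$, hence in fact free, so that a vector is zero mod $I$ iff all its coordinate functionals land in $I$; one must be careful that $S$ being essentially of finite type and smooth over a merely Noetherian $K$ (not a field) still guarantees this, which it does since smoothness gives $\Omega_{S/K}$ projective of constant rank and local rings have no nontrivial projectives. A secondary subtlety is checking that the formulation in (\ref{Hubl1,2}) is genuinely independent of the chosen smooth presentation $S/I$; this follows from the corresponding independence statement in Proposition \ref{PropositionLenstra} together with the fact that $\ker(\widetilde\alpha)=\ker(\alpha)$, but it is worth a sentence in the writeup. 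Everything else is a diagram chase on (\ref{exact_seq_derivations}) of the type already carried out in the proof of Proposition \ref{PropositionLenstra}.
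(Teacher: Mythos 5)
Your proof is correct, but it is organized quite differently from the paper's. The paper proves the equivalence directly from the definition of an evolution: for (\ref{Hubl1,1})$\Rightarrow$(\ref{Hubl1,2}) it takes an $f\in I$ with $\partial(f)\in I$ for all $\partial$ but $f\notin\m I$, constructs an explicit $J$ with $I=J+(f)$ and $I/J\cong S/\m$, and verifies by hand that $S/J\to R$ is a non-trivial evolution; for (\ref{Hubl1,2})$\Rightarrow$(\ref{Hubl1,1}) it starts from a non-trivial evolution $S/J\to R$ with $I=J+(f)$, compares the two descriptions of $\Omega_{R/K}$, and replaces $f$ by a corrected element $h=f-\sum s_ig_i\in I\setminus\m I$ satisfying $d(h)\in I\Omega_{S/K}$. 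You instead factor the whole equivalence through Lenstra's criterion (Proposition \ref{PropositionLenstra}): you observe that $\ker(\widetilde\alpha)=\ker(\alpha)$, that under smoothness $f+I^2\in\ker(\alpha)$ iff $\partial(f)\in I$ for all $\partial$, and that minimality of $\widetilde\alpha$ is exactly $\ker(\alpha)\subseteq\m\cdot(I/I^2)$. This is more efficient and makes the logic cleaner, at the cost of being less explicit than the paper (which exhibits the evolution and the witness $h$ directly). In effect, the paper re-derives the content of Lenstra's criterion inline, while you treat it as a black box. Both routes rely on the same essential observation -- that freeness of $\Omega_{S/K}$ over the local ring $S$ converts ``$\partial(f)\in I$ for all $\partial$'' into ``$d_{S/K}(f)\in I\Omega_{S/K}$.''

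One point to be tightened in a final writeup, which you already flag: Proposition \ref{PropositionLenstra} as stated in the paper concerns presentations $R=S/I$ with $S$ a \emph{localization of a polynomial ring} over $K$, whereas condition (\ref{Hubl1,2}) is phrased for $S$ local, essentially of finite type and \emph{smooth} over $K$, a strictly larger class (e.g.\ \'etale localizations). To make the chain of equivalences airtight you would need either to note that the proof of Lenstra's criterion uses only smoothness of $S$ (so the ``some iff all'' statement extends verbatim to smooth presentations), or to reduce condition (\ref{Hubl1,2}) to the polynomial-localization case. The paper's proof does not dwell on this either, so this is not a defect unique to your argument, but it is the one place where an appeal to the cited proposition does not literally cover what is needed.
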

\begin{proof}
Assume (\ref{Hubl1,1}), and write $R=S/I$ for some radical ideal $I$. Let $f \in I$ be such that $\partial(f) \in I$ for all $\partial \in \Der_K(S)$. Assume, by way of contradiction, that $f \notin \m I$. It follows that we can find $J \subseteq I$ such that $I = J+(f)$, so that $I/J \cong S/\m$. Let $T:=S/J$, and consider the surjection $T \to R$; we claim that this is an evolution, and this  gives a contradiction. In fact, if we let $\mathcal{I}:=I/J \subseteq T$, we have an exact sequence
\[
\xymatrixcolsep{5mm}
\xymatrixrowsep{2mm}
\xymatrix{
\mathcal{I}/\mathcal{I}^2 \cong (f + J)/J \ar[r]^-\alpha & \Omega_{S/K}/ \mathcal{I} \Omega_{S/K} \cong \Omega_{T/K} \otimes_T R \ar[r] & \Omega_{S/K} \ar[r] & 0.
}
\]
If $\widetilde{f}$ is the image of $f$ inside $\mathcal{I}/\mathcal{I}^2$, we have that $\alpha(\widetilde{f}) = df + \mathcal{I}\Omega_{S/K}$. Note that $\Omega_{S/K}$ is free over $S$, because $S$ is essentially of finite type and smooth over $K$. Since $\partial(f) \in I$ for all $\partial \in \Der_K(S)$, and $\Der_K(S) \cong \Hom_S(\Omega_{S/K},S)$, we have that $d(f) \in I \Omega_{S/K}$. We then have that $\alpha(\widetilde{f}) = 0$ in $\Omega_{S/K}/\mathcal{I}\Omega_{S/K}$, which shows that $T \to R$ is an evolution.

For the converse, assume (\ref{Hubl1,2}), and consider any evolution $T=S/J \to S/I = R$ of $R$. By way of contradiction, assume that the evolution is non-trivial, so that $J \subsetneq I$. Without loss of generality, we can assume that $I = J+(f)$, for some $f \notin I$ such that $f \m \subseteq J$. Clearly, $f \notin \m I$, otherwise $I = J+(f) \subseteq J+ \m I \subseteq J$, contradicting the non-triviality of the evolution. We want to show that there exists $h \in I \smallsetminus \m I$ such that $\partial(h) \in I$ for all $\partial \in \Der_K(S)$. To find such $h$, note that
\[
\ds \Omega_{R/K} \cong \frac{\Omega_{S/K}}{I \Omega_{S/K} + S \cdot d(I)} = \frac{\Omega_{S/K}}{J \Omega_{S/K} + f \Omega_{S/K} + S \cdot d(I)}.
\]
On the other hand, since $T \to R$ is an evolution, we have
\[
\ds \Omega_{R/K} \cong \frac{\Omega_{T/K}}{f \Omega_{T/K}} \cong \frac{\Omega_{S/K}}{J \Omega_{S/K} + S \cdot d(J) + f \Omega_{S/K}}.
\]
This shows that
\[
\ds d(f) \in d(I) \subseteq J \Omega_{S/K} + S \cdot d(J) + f \Omega_{S/K} = I \Omega_{S/K} + S \cdot d(J).
\]
Let $g_1,\ldots,g_r \in J$ be such that $d(f) - \sum_i s_i d(g_i) = \eta \in I \Omega_{S/K}$, where $s_i \in S$. Set $h:= f-\sum_i s_ig_i$, and note that
\begin{itemize}
\item $d(h) = d(f) - \sum_i s_id(g_i) - \sum_i d(s_i)g_i = \eta - \sum_i d(s_i)g_i \in I \Omega_{S/K}$
\item $I = J + (f) = J + (h)$; therefore, $h \in I$. In addition, $h \notin \m I$; otherwise, $f \in \m I$.
\end{itemize}
Since $d(h) \in I \Omega_{S/K}$, and $\Der_K(S) \cong \Hom_S(\Omega_{S/K},S)$, we have that $\partial(h) \in I$ for all $\partial \in \Der_K(S)$. This concludes the proof.
\end{proof}
In light of Exercise \ref{ExecDiffPowerInt} and Proposition \ref{PropSymbPowersDiffPowers}, we see that, under the assumptions that $S=K[x_1,\ldots,x_n]$, $I \subseteq S$ is radical and $K$ is perfect, we have that $I^{(2)} = I^{\dsp{2}}$. Therefore Theorem \ref{Hubl1} becomes just a restatement of Corollary \ref{EisMaz_conj_coroll}.

\begin{theorem}[{H{\"u}bl \cite[Theorem 1.2]{Hubl}}]
 \label{Hubl2} 
 Let $K$ be a field of characteristic zero, and let $S$ be a smooth algebra, essentially of finite type over $K$. For a radical ideal $I \subseteq S$ and $f \in I$, the following conditions are equivalent:
\begin{enumerate}
\item \label{Hubl2,1} $\partial(f) \in I$ for all $\partial \in \Der_K(S)$;
\item \label{Hubl2,2} $f \in I^{(2)}$;
\item \label{Hubl2,3} $f^n \in I^{n+1}$ for some $n \in \NN$.
\end{enumerate}
\end{theorem}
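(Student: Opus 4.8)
The plan is to run the cycle $(3)\Rightarrow(2)\Rightarrow(1)\Rightarrow(3)$, and the first thing I would record is that $(1)$ and $(2)$ are both detected at the minimal primes $\p_1,\dots,\p_t$ of the radical ideal $I$: since $IS_{\p_i}=\p_iS_{\p_i}$ we have $I^{(2)}=\bigcap_i\big(\p_i^2S_{\p_i}\cap S\big)$, and since $S/\p_i\hookrightarrow S_{\p_i}/\p_iS_{\p_i}$, a derivation or differential operator of $S$ sends $f$ into $I$ exactly when it sends $f$ into $\p_iS_{\p_i}$ for every $i$; moreover each $S_{\p_i}$ is regular local with residue field separably generated over $K$, as $\Char K=0$. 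For $(3)\Rightarrow(2)$: if $f^n\in I^{n+1}$, localizing at $\p_i$ gives $f^n\in(\p_iS_{\p_i})^{n+1}$, and since $\operatorname{gr}_{\p_iS_{\p_i}}S_{\p_i}$ is a polynomial ring, hence a domain, $\ord(f^n)=n\,\ord(f)\ge n+1$, forcing $\ord(f)\ge 2$, i.e.\ $f\in(\p_iS_{\p_i})^2$; as this holds for all $i$, $f\in I^{(2)}$.

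For $(1)\Leftrightarrow(2)$ I would use Lemma~\ref{der_diff}: for $f\in I$, the condition ``$\partial(f)\in I$ for all $\partial\in\Der_K(S)$'' is the same as ``$\delta(f)\in I$ for all $\delta\in D^1_S$'', i.e.\ $f\in I^{\dsp 2}$. One direction, $(2)\Rightarrow(1)$, then needs nothing further: writing a member of $\p_i^2S_{\p_i}$ as a sum of products of two elements of $\p_iS_{\p_i}$ and using the Leibniz rule gives $\partial(f)\in\p_iS_{\p_i}$ for all $i$, hence $\partial(f)\in I$. For $(1)\Rightarrow(2)$ I would invoke that $I^{(2)}=I^{\dsp 2}$ for radical ideals, which is Proposition~\ref{PropSymbPowersDiffPowers} together with Exercise~\ref{ExecDiffPowerInt} in the polynomial case and extends to the smooth essentially-of-finite-type case by localizing: since $S$ is smooth, $\Omega_{S/K}$ is locally free, so $\Der_K(S_{\p_i})\cong\Hom_{S_{\p_i}}(\Omega_{S_{\p_i}/K},S_{\p_i})$, and $(\p_iS_{\p_i})^{\dsp 2}=(\p_iS_{\p_i})^2$ follows as in Remark~\ref{RemDiffPowerMax} from the fact that derivations separate $\p_iS_{\p_i}/(\p_iS_{\p_i})^2$ (the conormal sequence for $K\to S_{\p_i}\to\kappa(\p_i)$ is left exact and split because $\kappa(\p_i)$ is separably generated over $K$). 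Thus $(1)\Leftrightarrow(2)$.

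The remaining implication $(2)\Rightarrow(3)$ is, I expect, the crux. From $f\in I^{(2)}$ one immediately gets $f^m\in(I^{(2)})^m\subseteq I^{(2m)}$ for all $m\ge 1$, so everything comes down to passing from a high symbolic power to an ordinary power with a strict gain over the trivial bound $f^m\in I^m$. When the big height of $I$ equals $1$ this is at hand: Theorems~\ref{USP-Poly-CharZero} and \ref{USP-Poly-CharP} give $I^{(2m)}\subseteq I^{2m}\subseteq I^{m+1}$. In general I would reformulate $(3)$ via the Rees valuations of $I$: $(3)$ holds iff $v(f)>v(I)$ for every Rees valuation $v$ of $I$ --- given this, since there are finitely many such $v$, one has $f^m\in\overline{I^{\,m+\lfloor m\varepsilon\rfloor}}$ for some fixed rational $\varepsilon>0$, and the Brian\c{c}on--Skoda theorem in the $d$-dimensional regular ring $S$ then yields $f^m\in I^{m+1}$ once $m\gg 0$; the converse direction is immediate. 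For a Rees valuation centered at a minimal prime $\p_i$, the inequality $v(f)>v(I)$ is clear from $f\in\p_i^2S_{\p_i}$ and $IS_{\p_i}=\p_iS_{\p_i}$, which give $v(f)\ge 2v(I)$. The real obstacle is a Rees valuation centered at a prime $\q$ strictly larger than the minimal primes of $I$ it contains --- precisely the situation responsible for $I^{(2)}\ne I^2$; here I would localize at $\q$ and feed in the Zariski--Nagata containment $I^{(2)}S_{\q}\subseteq(\q S_{\q})^2$ (Theorem~\ref{ThmGralZN}, refined in characteristic zero when $I$ has higher order at $\q$) together with the uniform symbolic-power bounds, following H\"ubl's valuation-theoretic analysis in \cite{Hubl}; a reduction to positive characteristic with the pigeonhole arguments of the proof of Theorem~\ref{USP-Poly-CharP} is a possible alternative.
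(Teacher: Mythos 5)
Your $(3)\Rightarrow(2)$ (order argument in $\operatorname{gr}_{\p_iS_{\p_i}}S_{\p_i}$) and your $(1)\Leftrightarrow(2)$ (via $D^1_S$ and $I^{(2)}=I^{\dsp 2}$) are correct and match the paper up to phrasing, and your framing of $(2)\Rightarrow(3)$ by Rees valuations --- namely, that it suffices to show $v(f)>v(I)$ for every Rees valuation $v$ of $I$, and that this in turn yields $f^n\in I^{n+1}$ (whether by Brian\c{c}on--Skoda as you suggest, or by the paper's cleaner observation that $1/f$ then lies in the degree-zero chart $S[It]_{(ft)}$) --- is also the right skeleton.

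The gap is exactly where you suspect: establishing $v(f)>v(I)$ when $v$ has a non-minimal center $\q\supsetneq\p_i$. The route you propose there does not close it. The containment $I^{(2)}S_\q\subseteq(\q S_\q)^2$ from Zariski--Nagata gives only $v(f)\gs 2\,v(\q)$, which does not dominate $v(I)$; and in terms of $\q$-adic order it only gives $\ord_\q(f)\gs 2$, which is $\gs v(I)$ but not $>v(I)$. Concretely, take $I=(xy,xz,yz)\subseteq K[x,y,z]$ and the Rees valuation $v=\ord_\m$ with center $\m=(x,y,z)$: here $v(I)=2$, and Zariski--Nagata gives $\ord_\m(f)\gs 2$ for $f\in I^{(2)}$ --- no strict improvement. (The true statement is that the minimal generators of $I^{(2)}$ here have $\m$-order $\gs 3$, but Zariski--Nagata by itself does not see this.) Appealing to ``uniform symbolic-power bounds'' or ``reduction to positive characteristic'' does not supply the missing strict inequality either, and the positive-characteristic route is suspect given that the theorem itself is a characteristic-zero phenomenon.

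The paper's key move, which you do not reproduce, is to use condition $(1)$ rather than the bare membership $(2)$ as the input for this step. Because $\Omega_{S/K}$ is free (smoothness) and $\Der_K(S)\cong\Hom_S(\Omega_{S/K},S)$, condition $(1)$ says $d_{S/K}(f)\in I\,\Omega_{S/K}$; this transports along the $S$-linear maps $\Omega_{S/K}\to\Omega_{V_i/K}\to\widetilde{\Omega}_{\widehat{V_i}/K}$ for each Rees valuation ring $V_i$. Writing $\widehat{V_i}\cong\ell_i\ps{t_i}$, one gets $\partial f/\partial t_i\in I\widehat{V_i}$, and in characteristic zero differentiation by the uniformizer drops order by exactly one, so $v_i(f)=v_i(\partial f/\partial t_i)+1\gs v_i(I)+1$ for \emph{every} Rees valuation uniformly, regardless of its center. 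This is the mechanism that replaces your Zariski--Nagata step, and it is where the proof actually lives; the rest of the valuation bookkeeping you have right.
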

\begin{proof} For simplicity, we prove only the case when $K$ is algebraically closed, and $S=K[x_1,\ldots,x_n]$.

Assume (\ref{Hubl2,1}). Let $\delta \in \mathcal{D}^1_K(S)$ be a differential operator of order at most one. By Lemma \ref{der_diff} we have that
\[
\ds \delta(f) = \partial(f) + \mu_{\delta(1)} (f) = \partial(f) + \delta(1) \cdot f \in I
\]
for all $\delta \in \mathcal{D}^1_K(S)$. Therefore, we obtain that $f \in I^{\langle 2 \rangle}$. Given that $K$ is a field of characteristic zero, and that $S$ is a localization of a finite algebra over $K$, we obtain that $f \in I^{(2)}$ by  Exercise \ref{ExecDiffPowerInt} and Proposition \ref{PropSymbPowersDiffPowers}, as desired.

Conversely, if $f \in I^{(2)} = I^{\langle 2 \rangle}$ we have that $\delta(f) \in I$ for all $\delta \in \mathcal{D}^2_K(S)$. Let $\partial \in \Der_K(S)$ be a derivation; then $\partial$ is, in particular, an element of $\mathcal{D}^2_K(S)$. In particular, $\partial(f) \in I$. Thus (\ref{Hubl2,1}) and (\ref{Hubl2,2}) are equivalent.

Now assume (\ref{Hubl2,1}). Let $Q$ be the field of fractions of $S$, and let $v_1,\ldots,v_t$ be the Rees valuations of $I$, with associated valuation rings $S \subseteq V_i \subseteq Q$. Note that $V_i$ is essentially of finite type over $S$. Since $\partial(f) \in I$ for all $\partial \in \Der_K(S)$, and $\Omega_{S/K}$ is free over $S$ by smoothness, we have that $d_{S/K}(f) \in I \Omega_{S/K}$. As the canonical map $\Omega_{S/K} \to \Omega_{V_i/K}$ is $S$-linear, we have that $d_{V_i/K}(f) \in I \Omega_{V_i/K}$. Similarly, we have that $d_{\widehat{V_i}/K}(f) \in I \widetilde{\Omega}_{\widehat{V_i}/K}$, after passing to completions (see \cite{Kunz_diff} for a definition of the module of differentials in the complete case). Since $\widehat{V_i}$ is a DVR containing a field, there exists a parameter $t_i \in \widehat{V_i}$ and a field $\ell_i$ such that $\widehat{V_i} \cong \ell_i \ps{t_i}$. Then
\[
\ds d_{\widehat{V_i}/K}(f) \in I \widetilde{\Omega}_{\widehat{V_i}/K} = I \widehat{V_i} dt_i,
\]
and thus $\partial(f)/\partial(t_i) \in I \cdot \widehat{V_i}$. This means that
\begin{equation} \label{equation_rees_valuations}
\ds v_i(f) = v_i(\partial(f)/\partial(t_i))+1 \gs v_i(I)+1,
\end{equation}
for all Rees valuations $v_i$ of $I$.
Now write $I=(f,g_1,\ldots,g_t)$, for some elements $g_i \in I$, and let $\mathcal{R} = S[It]_{(ft)} = S\left[\frac{g_1}{f},\ldots \frac{g_t}{f}\right]$ be the homogeneous localization of the Rees algebra at $(ft)$. We claim that $f$ is a unit in $\mathcal{R}$. If not, then $f \in \p$ for some $\p$ of height one. Let $\mathcal{S}$ be the integral closure of $\mathcal{R}$. Then $\mathcal{S}$ is finite over $\mathcal{R}$, since $\mathcal{R}$ is excellent. Then, there exists $Q \in \Spec(\mathcal{S})$ such that $Q \cap \mathcal{R} = \p$. Then $\mathcal{S}_Q$ is a DVR, with valuation $v$. In particular, $v$ is a Rees valuation of $I$, and $v(f) = v(I)$. This contradicts (\ref{equation_rees_valuations}). Therefore $1/f \in \mathcal{R}$, which means that there exists a polynomial $F(T_1,\ldots,T_m)$, say of degree $n+1$, such that $1/f = F(g_1/f,\ldots,g_m/f)$. It follows that
\[
\ds f^n = f^{n+1} \cdot \frac{1}{f} = f^{n+1} \cdot F\left(\frac{g_1}{f},\ldots,\frac{g_m}{f}\right) \in I^{n+1},
\]
since the numerator of $F\left(\frac{g_1}{f},\ldots,\frac{g_m}{f}\right)$ is a polynomial of degree $n+1$ in the $g_i$'s. Finally, assume (\ref{Hubl2,3}), so that $f+I^2$ is nilpotent in $G=\rm{gr}_I(S)$. This means that $f+I^2 \subseteq \mathcal{Q}$ for all $\mathcal{Q} \in \Spec(G)$. Let $\p$ be a minimal prime over $I$, then $G_\p = \rm{gr}_{\p S_\p}(S_\p)$, because $I S_\p = \p S_\p$. Since $S_\p$ is a regular local ring, with maximal ideal $\p S_\p$, we have that $G_\p$ is a domain. As a consequence, $\ker(G \to G_\p)$ is a prime ideal of $G$, so that $f+I^2 \in \ker(G \to G_\p)$. Since this happens for all $\p \in \Min(I)$, we have that $f + I^2 \in \cap_{\p \in \Min(I)} \ker(I/I^2 \to (I/I^2)_\p) =  I^{(2)}/I^2$.

\end{proof}

{\cb
\begin{corollary}
	Let $K$ be a field of characteristic zero, and let $S$ be a smooth algebra, essentially of finite type over $K$. Given a radical ideal $I \subseteq S$, there exists $N>0$ such that $\left( I^{(2)} \right)^{n} \subseteq I^{n+1}$ for all $n \geqslant N$.
\end{corollary}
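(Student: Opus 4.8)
The plan is to bootstrap the element-wise equivalence of Theorem \ref{Hubl2} into a uniform statement, using only that $S$ is Noetherian (so that $I^{(2)}$ is finitely generated) and some elementary pigeonholing on exponents. If $I = S$ the assertion is trivial, so I assume $I \subsetneq S$ and write $I^{(2)} = (f_1,\ldots,f_m)$. Since each $f_i$ lies in $I^{(2)} \subseteq I$, the implication (\ref{Hubl2,2}) $\Rightarrow$ (\ref{Hubl2,3}) of Theorem \ref{Hubl2} provides an integer $n_i$ with $f_i^{\,n_i} \in I^{n_i+1}$ (equivalently, the class of $f_i$ is nilpotent in $\operatorname{gr}_I(S)$); replacing $n_i$ by $n_i+1$ if necessary we may take $n_i \geq 1$. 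Because $f_i \in I$, multiplying $f_i^{\,n_i} \in I^{n_i+1}$ by $f_i^{\,k-n_i} \in I^{k-n_i}$ yields the monotone version $f_i^{\,k} \in I^{k+1}$ for every $k \geq n_i$.

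Next I would set $N := 1 + \sum_{i=1}^m (n_i - 1)$ and verify $(I^{(2)})^n \subseteq I^{n+1}$ for all $n \geq N$. The ideal $(I^{(2)})^n$ is generated by the monomials $f_1^{a_1}\cdots f_m^{a_m}$ with $a_1 + \cdots + a_m = n$, so it suffices to place each such monomial in $I^{n+1}$. Since $n \geq N > \sum_i (n_i-1)$, the exponents cannot all satisfy $a_i \leq n_i - 1$; pick an index $j$ with $a_j \geq n_j$. Then $f_j^{a_j} \in I^{a_j+1}$ by the previous paragraph, while $f_i^{a_i} \in I^{a_i}$ for every $i$ simply because $f_i \in I$. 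Multiplying these containments gives
\[
f_1^{a_1}\cdots f_m^{a_m} \in I^{(a_j+1) + \sum_{i \neq j} a_i} = I^{n+1},
\]
which proves the claim.

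I do not expect a genuine obstacle here: the only real content is recognizing that Theorem \ref{Hubl2} makes each generator of $I^{(2)}$ nilpotent modulo $I^2$ and that finitely many such nilpotency indices $n_i$ can be amalgamated into the single threshold $N$. The mild care needed is bookkeeping of powers of $I$ — using $f_i \in I$ for the ``default'' containment $f_i^{a_i}\in I^{a_i}$ and harvesting the extra $+1$ from the coordinate $j$ where $a_j$ is forced to be large — together with the standard fact that $(I^{(2)})^n$ is generated by the degree-$n$ monomials in a fixed generating set, so that checking the inclusion on those monomials is enough.
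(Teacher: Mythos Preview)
Your argument is correct and is precisely the deduction the paper has in mind: the corollary is stated without proof immediately after Theorem~\ref{Hubl2}, and the intended argument is exactly that each generator of $I^{(2)}$ has nilpotent image in $\operatorname{gr}_I(S)$ by (2)$\Rightarrow$(3), so the finitely generated ideal they span is nilpotent---which is what your pigeonhole bookkeeping spells out explicitly. One could phrase it slightly more compactly by observing that once $(I^{(2)})^N\subseteq I^{N+1}$ holds for a single $N$, the containment propagates to all $n\geq N$ via $(I^{(2)})^{n+1}=I^{(2)}\cdot (I^{(2)})^{n}\subseteq I\cdot I^{n+1}=I^{n+2}$, but your direct verification on monomials is equivalent and equally clean.
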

}

Theorem \ref{Hubl2} leads to the following new conjecture, which is equivalent to the Eisenbud-Mazur Conjecture \ref{EisMaz_conj} under the assumptions of Theorem \ref{Hubl2}.

\begin{conjecture}[{H{\"u}bl \cite[Conjecture 1.3]{Hubl}}]
\label{conj_Hubl} 
Let $(R,\m)$ be a regular local ring, and let $I \subseteq R$ be a radical ideal. Let $f \in I$ be such that $f^n \in I^{n+1}$, for some $n \in \NN$. Then $f \in \m I$.
\end{conjecture}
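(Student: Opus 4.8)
Since Conjecture~\ref{conj_Hubl} is open, the following is a plan rather than a proof, split into a formal reduction and a hard core that, in equal characteristic zero, coincides with the Eisenbud--Mazur Conjecture~\ref{EisMaz_conj}.

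First I would reduce to the complete case: since $R \to \widehat R$ is faithfully flat, $f \in I$, $f^n \in I^{n+1}$, and the conclusion $f \in \m I$ all pass between $R$ and $\widehat R$, so (assuming $R$ excellent, so that $I\widehat R$ stays radical) we may take $R = \widehat R$, hence $R \cong k\ps{x_1,\ldots,x_d}$ in equal characteristic and $R$ a complete regular local ring in mixed characteristic. Next, observe that ``$f^n \in I^{n+1}$ for some $n$'' says exactly that $f + I^2$ is nilpotent in $G := \operatorname{gr}_I(R)$, and that the final part of the proof of Theorem~\ref{Hubl2} is characteristic-free: for a minimal prime $\p$ of $I$ one has $IR_\p = \p R_\p$ (a localization of the radical ideal $I$ with radical $\p R_\p$), so $G_\p = \operatorname{gr}_{\p R_\p}(R_\p)$ is a polynomial ring over $\kappa(\p)$, the kernel of $G \to G_\p$ is prime, and nilpotence of $f+I^2$ forces $f+I^2 \in \bigcap_{\p \in \MIN(I)} \ker(I/I^2 \to (I/I^2)_\p) = I^{(2)}/I^2$. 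Hence it suffices to prove: if $f \in I$ has $f+I^2$ nilpotent in $G$, then $f \in \m I$; and along the way one may use that such an $f$ lies in $I^{(2)}$. Equivalently: the degree-one part of the nilradical of $G$ lies in $\m \cdot (I/I^2)$; equivalently, every minimal generator $f$ of $I$ satisfies $v(f) = v(I)$ for some Rees valuation $v$ of $I$.

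The two characteristics would then be handled separately. In equal characteristic zero, $I^{(2)} = I^\dsp{2}$ by Proposition~\ref{PropSymbPowersDiffPowers} and Exercise~\ref{ExecDiffPowerInt} over a smooth presentation, so $\partial(f) \in I$ for all $\partial \in \Der_k(R)$, and by Proposition~\ref{PropositionLenstra} the problem becomes showing that $\widetilde\alpha$ is minimal, i.e.\ that $I^{(2)}/I^2$ contains no minimal generator of $I/I^2$; I would first redo the understood cases (complete intersections, monomial ideals, and the primes of More~\cite{AAMore}) and then try to extend minimality across the ``horizontal'' minimal primes of $G$, exploiting that $f+I^2$ lies in \emph{every} minimal prime of $G$, not merely the ``vertical'' ones $\ker(G \to G_\p)$ that account for $I^{(2)}$. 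In mixed characteristic the differential detour is unavailable --- Kunz's example shows membership in $I^{(2)}$ is already too weak --- so I would argue with the Rees algebra: writing $I = (f, g_1, \ldots, g_t)$, the hypothesis makes $f$ a unit in the homogeneous localization $\mathcal R = R[It]_{(ft)} = R[g_1/f, \ldots, g_t/f]$, as in the proof of Theorem~\ref{Hubl2}; analyzing the finite normalization of $\mathcal R$ and the Rees valuations of $I$ should promote this to $v(f) \ge v(I)+1$ for every Rees valuation $v$, after which one must show that this numerical condition, together with $R$ regular and $I$ radical, forces $f \in \m I$.

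The main obstacle is the final step in equal characteristic zero: it is precisely Eisenbud--Mazur, for which no structural description of $I^{(2)}$ modulo $\m I$ is known, so an unconditional proof should not be expected. Realistic targets are: making the reduction to the complete equicharacteristic-zero power series case rigorous; settling new families via the Lenstra criterion of Proposition~\ref{PropositionLenstra}; and, in mixed characteristic, genuinely using that $f^n \in I^{n+1}$ is strictly stronger than $f \in I^{(2)}$ and is purely valuation-theoretic, so as to bypass the failure of Eisenbud--Mazur in positive characteristic.
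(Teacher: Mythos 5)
This statement is labeled as a \emph{conjecture} in the paper, and the paper offers no proof of it: it only records (via Theorem \ref{Hubl2}) that in the smooth, essentially-of-finite-type, equal-characteristic-zero setting the hypothesis $f^n \in I^{n+1}$ is equivalent to $f \in I^{(2)}$, so that Conjecture \ref{conj_Hubl} there coincides with the Eisenbud--Mazur Conjecture \ref{EisMaz_conj}, and that no counterexamples are known in general. So there is no proof in the paper to compare yours against, and your submission, as you yourself say, is a research plan rather than a proof.

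Judged as a proof, it has an unavoidable gap: the core implication --- that $f+I^2$ nilpotent in $\operatorname{gr}_I(R)$ (equivalently, $v(f)>v(I)$ for every Rees valuation $v$) forces $f \in \m I$ --- is never established in any case beyond those already known, and in equal characteristic zero you correctly identify it as exactly Eisenbud--Mazur. The reductions you do carry out are sound and match the paper's own discussion: the last paragraph of the proof of Theorem \ref{Hubl2} is indeed characteristic-free and gives $f \in I^{(2)}$ from $f^n\in I^{n+1}$ using only that $R$ is regular and $I$ radical, and the translation into minimality of $\widetilde\alpha$ via Proposition \ref{PropositionLenstra} is the paper's Corollary \ref{EisMaz_conj_coroll}. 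Two caveats on the parts you present as ``formal'': (i) the reduction to the complete case is not automatic for an arbitrary regular local ring, since $R/I$ reduced need not imply $\widehat{R}/I\widehat{R}$ reduced without excellence, whereas the conjecture as stated assumes only that $R$ is regular local; (ii) your proposed mixed-characteristic route via Rees valuations still ends with ``one must show that this numerical condition forces $f\in\m I$,'' which is again the entire difficulty --- and note that the paper's derivation of $v_i(f)\ge v_i(I)+1$ from the differential condition uses the equal-characteristic structure $\widehat{V_i}\cong \ell_i\ps{t_i}$, so even that intermediate step would need a separate argument in mixed characteristic. In short: a reasonable and honest survey of the landscape, but not a proof, and it could not be one given the current state of the problem.
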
 
While Conjecture \ref{EisMaz_conj} is known to be false for rings of positive characteristic, we are not aware of any counterexamples to Conjecture \ref{conj_Hubl}.

\subsection{Intersection of symbolic powers and Serre's intersection multiplicity}

In this section, we discuss an intriguing connection between symbolic powers and local intersection theory. Let $(R,\m)$ be a regular local ring and $\p,\q \in \Spec R$ such that $\sqrt{\p+\q}=\m$. Most of the following materials follow from the work of Sather-Wagstaff  \cite{Wagstaff02}. Motivated by intersection multiplicities of two subvarieties inside an ambient affine or projective space, Serre proposed the following definition.

\begin{definition}[{Serre \cite{SerreLA}}]
The intersection multiplicity of $R/\p$ and $R/\q$ is defined by
$$
 \chi^R(R/\p,R/ \q) := \sum_{0}^{\dim R} \lambda (\Tor_i^R(R/\p,R/\q)).
$$
\end{definition}

The definition makes sense since all the Tor modules have finite length. For unramified regular local rings (for instance, if $R$ contains a field), Serre proved  the following properties.

\begin{theorem}[{Serre \cite{SerreLA}}]
Let $R$ be an unramified regular local ring and $\p,\q\in \Spec R$ such that $\sqrt{\p+\q}=\m$. Then we have:
\begin{enumerate}
\item (Dimension inequality) $\dim R/\p +\dim R/\q \ls \dim R$;
\item (Non-negativity) $\chi^R(R/\p,R/\q)\gs 0$;
\item (Vanishing) $\chi^R(R/\p,R/\q) =0$ if $\dim R/\p +\dim R/\q < \dim R$;
\item (Positivity) $\chi^R(R/\p,R/\q) >0$ if $\dim R/\p +\dim R/\q = \dim R$.
\end{enumerate}
\end{theorem}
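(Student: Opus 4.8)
The plan is to establish all four statements simultaneously via Serre's method of \emph{reduction to the diagonal}. First I would reduce to the case that $R$ is complete: since $R\to\hat R$ is faithfully flat, $\hat R/\p\hat R=\widehat{R/\p}$ has the same dimension as $R/\p$, one has $\sqrt{\p\hat R+\q\hat R}=\hat{\m}$, and $\Tor_i^{\hat R}(\hat R/\p\hat R,\hat R/\q\hat R)\cong\Tor_i^R(R/\p,R/\q)$ (these modules already have finite length), so neither $\chi$ nor any of the relevant dimensions changes. Write $M:=\hat R/\p\hat R$ and $N:=\hat R/\q\hat R$. By the Cohen structure theorem, since $R$ is unramified regular, $\hat R\cong C\ps{x_1,\ldots,x_n}$ where $C$ is either a field ($n=d:=\dim R$) or a complete discrete valuation ring with uniformizer $p$ ($n=d-1$); in either case $C$ is a principal ideal ring and $\hat R$ is flat over $C$.

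Next I would set $B:=\hat R\,\widehat{\otimes}_C\,\hat R\cong C\ps{x_1,\ldots,x_n,y_1,\ldots,y_n}$ and consider the multiplication map $\mu\colon B\twoheadrightarrow\hat R$, whose kernel $\Delta$ is generated by the obvious $B$-regular sequence $x_1-y_1,\ldots,x_n-y_n$. The core identity is that, taking $\hat R$-free resolutions $F_\bullet\to M$ and $G_\bullet\to N$, the complex $F_\bullet\,\widehat{\otimes}_C\,G_\bullet$ is a $B$-free resolution of $M\,\widehat{\otimes}_C\,N$ (here $C$-flatness is used — over a DVR this step needs at least one of $M,N$ to be $C$-torsion-free), while $(F_\bullet\,\widehat{\otimes}_C\,G_\bullet)\otimes_B\hat R\cong F_\bullet\otimes_{\hat R}G_\bullet$. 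Together with the fact that the Koszul complex $K_\bullet(x_1-y_1,\ldots,x_n-y_n;B)$ resolves $B/\Delta$, this gives
\[
\Tor_i^{\hat R}(M,N)\;\cong\;\Tor_i^{B}\!\big(M\,\widehat{\otimes}_C\,N,\ B/\Delta\big)\;\cong\;H_i\!\big(K_\bullet(x_1-y_1,\ldots,x_n-y_n;\,M\,\widehat{\otimes}_C\,N)\big),
\]
so that $\chi^{\hat R}(M,N)$ is the Euler characteristic of a Koszul complex on $n$ elements evaluated on the module $M\,\widehat{\otimes}_C\,N$.

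From here two classical facts finish the unramified equicharacteristic case. First, the dimension inequality: with $P:=M\,\widehat{\otimes}_C\,N$ one has $\dim P=\dim M+\dim N$ (when $C$ is a field; the mixed-characteristic variants are analogous), whereas $P/\Delta P=M\otimes_{\hat R}N=\hat R/(\p\hat R+\q\hat R)$ has dimension $0$; since $\Delta$ has $n$ generators, Krull's height theorem gives $\dim P-n\leq\dim(P/\Delta P)=0$, i.e.\ $\dim R/\p+\dim R/\q\leq d$, which is statement (1). Second, the Auslander--Buchsbaum--Serre theorem on Koszul Euler characteristics: if $P$ is a finitely generated module over a Noetherian local ring and $a_1,\ldots,a_r$ satisfy $\lambda(P/(a_1,\ldots,a_r)P)<\infty$, then $\dim P\leq r$ and $\sum_i(-1)^i\lambda\big(H_i(a_1,\ldots,a_r;P)\big)$ equals the Samuel multiplicity $e_{(a_1,\ldots,a_r)}(P)$, which is $\geq 0$, equals $0$ precisely when $\dim P<r$, and is $>0$ when $\dim P=r$. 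Taking $r=n$ and $P=M\,\widehat{\otimes}_C\,N$ gives non-negativity (2) at once, and since $\dim P=\dim M+\dim N$ the conditions $\dim P<n$ and $\dim P=n$ are exactly $\dim R/\p+\dim R/\q<d$ and $\dim R/\p+\dim R/\q=d$, giving the vanishing (3) and positivity (4) statements.

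The step I expect to be the real obstacle is the unramified mixed-characteristic case. As long as $p$ fails to lie in at least one of $\p,\q$ — equivalently, at least one of $M,N$ is $C$-torsion-free — the reduction to the diagonal goes through over $C=V$ with $n=d-1$ Koszul elements, a short computation gives $\dim(M\,\widehat{\otimes}_V\,N)=\dim M+\dim N-1$, and the multiplicity theorem with $r=n=d-1$ again delivers (1)--(4) after the (shifted) dimension bookkeeping. The one genuinely different case is $p\in\p\cap\q$: then $M$ and $N$ are both modules over the equicharacteristic regular local ring $\overline R:=\hat R/p\hat R$ (of dimension $d-1$), the change-of-rings spectral sequence $\Tor_a^{\overline R}\big(M,\Tor_b^{\hat R}(\overline R,N)\big)\Rightarrow\Tor_{a+b}^{\hat R}(M,N)$ has exactly two nonzero rows $b=0,1$ — both equal to $\Tor_\bullet^{\overline R}(M,N)$ because $pN=0$ — and their contributions to the Euler characteristic cancel, forcing $\chi^R(R/\p,R/\q)=0$; this is consistent with (1)--(4) since here $\dim R/\p+\dim R/\q\leq\dim\overline R=d-1<d$. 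Carrying out these reductions carefully — in particular verifying the dimension formula for $\widehat{\otimes}_C$ in the flat case and the collapse or cancellation of the change-of-rings spectral sequences — is the technical heart; everything else is formal once the reduction to the diagonal is in place. (Without the unramifiedness hypothesis the problem is much deeper: vanishing (3) is a theorem of Roberts and of Gillet--Soul\'e, non-negativity (2) is due to Gabber, and positivity (4) remains open in general; none of that is needed here.)
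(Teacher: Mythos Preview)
The paper does not give a proof of this theorem: it is a survey, and the result is simply stated with attribution to Serre \cite{SerreLA}, followed by a reference to Roberts' book for a comprehensive account. There is therefore no ``paper's own proof'' to compare against.

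Your outline is the classical Serre argument via reduction to the diagonal, and it is essentially correct. The reduction to the complete case, the identification of $\hat R$ as $C\ps{x_1,\ldots,x_n}$ via Cohen's theorem (using unramifiedness), the isomorphism $\Tor_i^{\hat R}(M,N)\cong H_i(K_\bullet(x_j-y_j;\,M\,\widehat{\otimes}_C\,N))$, and the appeal to the Auslander--Buchsbaum--Serre multiplicity formula for Koszul Euler characteristics are exactly the ingredients in Serre's original proof. Your treatment of the unramified mixed-characteristic case is also standard: splitting on whether $p$ lies in $\p\cap\q$, using $C$-flatness of one factor when it does not, and the change-of-rings spectral sequence cancellation when it does. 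One small caution: in the reduction to the complete case you should note that $\p\hat R$ and $\q\hat R$ need not be prime, but the dimension and $\chi$ statements are unaffected since $\Tor$ and dimension are preserved; alternatively one works with the modules $M,N$ directly rather than with primes, which is what you in effect do. The parenthetical remark at the end about Roberts, Gillet--Soul\'e, and Gabber in the ramified case is accurate and matches the paper's own commentary following the theorem.
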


These results and their potential extensions have been considered by many researchers  over the last fifty years. For a comprehensive account we refer to P. Roberts' book on this topic \cite{RobertsBook}. 

The non-negativity property part of Serre's Theorem was  extended to all regular local rings by Gabber \cite{Ber}. In an attempt to extend Gabber's argument to prove the positivity property, which still remains a conjecture, Kurano and Roberts proved
the following theorem.

\begin{theorem}[{Kurano-Roberts \cite{KuranoRoberts}}]
Let $(R,\m)$ be ramified regular local ring and $\p,\q\in \Spec R$ such that $\sqrt{\p+\q}=\m$. Assume that the positivity property holds for $R$ and $\dim R/\p +\dim R/\q = \dim R$. Then for each $n\gs 1$, $\p^{(n)}\cap \q \subseteq \m^{n+1}$. 
\end{theorem}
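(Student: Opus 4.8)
The plan is to reduce the statement to a version of the Zariski--Nagata Theorem by exploiting the regularity of $R$ and the positivity hypothesis. First I would observe that since $R$ is regular and $\dim R/\p + \dim R/\q = \dim R$, the prime $\q$ behaves, along the generic point of $R/\q$, like a parameter ideal: localizing at $\q$, the ring $R_\q$ is regular local and $\q R_\q$ is its maximal ideal. The key is that an element $x \in \p^{(n)} \cap \q$ automatically lies in $\p^{(n)}$, so by the general Zariski--Nagata Theorem (Theorem~\ref{ThmGralZN}) we would get $x \in \m^n$; the content of the statement is the extra $+1$ in the exponent, and this is exactly where $x \in \q$ and the positivity property must enter.

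The second step is to set up the multiplicity/intersection-theoretic input. Following Sather-Wagstaff \cite{Wagstaff02} and Kurano--Roberts \cite{KuranoRoberts}, I would consider, for a nonzero $x \in \p^{(n)} \cap \q$, the ideal $\q + (x)$ and compare $\chi^R(R/\p, R/(\q+(x)))$ with $\chi^R(R/\p, R/\q)$. Because $x \in \p^{(n)}$, the element $x$ vanishes to high order at $\p$, which should force a strict drop (or a controlled change) in the intersection multiplicity; combined with the positivity property assumed for $R$, this drop translates into a lower bound on $\ord_\m(x)$. Concretely, I expect the argument to run: the positivity property gives $\chi^R(R/\p, R/\q) > 0$, while the condition $x \in \p^{(n)}$ forces the intersection multiplicity of $R/\p$ with the hypersurface section cut by $x$ to be at least $n \cdot \chi^R(R/\p,R/\q) \gs n$, and on the other hand this multiplicity is bounded above by something like $\ord_\m(x) \cdot \ee(R) = \ord_\m(x)$ (using $\ee(R) = 1$ for a regular ring), unless $\ord_\m(x) \gs n+1$; pushing the bookkeeping through yields $x \in \m^{n+1}$.

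The third step is to handle the passage between symbolic powers and Tor-length computations carefully: one needs that $\p^{(n)} R_\p = \p^n R_\p$ (true since $R_\p$ is regular local), and that intersecting with $\q$ and reducing mod $x$ interacts well with the associativity formula for multiplicities, so that the order of vanishing at $\p$ genuinely contributes a factor of $n$. I would also need to verify the edge cases where $x = 0$ (trivial) or where $\dim R/\p + \dim R/(\q+(x))$ drops below $\dim R - 1$, in which case the vanishing property applies and the estimate must be argued slightly differently — probably by induction on $n$ or by a direct degeneration argument.

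The main obstacle I anticipate is the precise comparison $\chi^R(R/\p, R/(\q+(x))) \gs (\text{something}) \cdot \chi^R(R/\p,R/\q)$ with the correct constant, i.e.\ showing that membership in the $n$-th symbolic power of $\p$ really does scale the intersection multiplicity by a factor of at least $n$ rather than just increasing it by some uncontrolled amount. This is where one must combine the local-algebra input (Serre's conditions, the associativity formula, the behavior of multiplicities under hyperplane sections) with the defining property of symbolic powers, and it is the heart of the Kurano--Roberts argument; everything else is either Zariski--Nagata or routine reductions.
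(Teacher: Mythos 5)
The paper does not actually prove this theorem; it cites Kurano--Roberts and moves on. So you should be judged on the merits of your proposal, and the closest model in the paper is the sketch it does give for the Sather--Wagstaff theorem (Theorem \ref{SSW}), which illuminates what the correct reduction looks like: for $0 \neq f \in \p^{(m)} \cap \q^{(n)}$ with $f \notin \m^{m+n}$, one sets $A = R/fR$, $P = \p A$, $Q = \q A$, observes $\e(A_P) \gs m$, $\e(A_Q) \gs n$, $\e(A) = \ord_\m(f) < m+n$, and derives a contradiction from a dimension inequality applied to $A$.

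Your proposal has a concrete error at its central step. You propose to compare $\chi^R(R/\p, R/(\q + (x)))$ with $\chi^R(R/\p, R/\q)$ and extract information from "the drop." But $x \in \p^{(n)} \cap \q \subseteq \q$, so $\q + (x) = \q$ and the two quantities are identically equal; there is no drop, and the step is vacuous. Nor can you fall back on $\chi^R(R/\p, R/xR)$, because $x \in \p^{(n)} \subseteq \p$ forces $R/\p \otimes_R R/xR \cong R/\p$, which has positive dimension, so Serre's $\chi$ is not even defined for that pair. The heuristic that $x \in \p^{(n)}$ "scales the intersection multiplicity by $n$" does not correspond to a lemma one can invoke this way.

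The right move is to quotient the \emph{ambient ring}, not $\q$: set $A = R/xR$, $P = \p A$, $Q = \q A$. If $x \notin \m^{n+1}$, then $\e(A) = \ord_\m(x) \ls n$ (regularity of $R$), while $x \in \p^{(n)}$ gives $\e(A_P) = \ord_{\p R_\p}(x) \gs n$ and $x \in \q$ gives $\e(A_Q) \gs 1$. So $\e(A) < \e(A_P) + \e(A_Q)$. In the equicharacteristic case Sather--Wagstaff's Theorem \ref{SSW2} then yields $\dim A/P + \dim A/Q \ls \dim A = \dim R - 1$, contradicting $\dim R/\p + \dim R/\q = \dim R$. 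In the ramified setting that theorem is unavailable, and this is precisely where the positivity property does the work in Kurano--Roberts: positivity of $\chi^R$ substitutes for the equicharacteristic dimension-inequality machinery. Your broad instinct — multiplicities, positivity, Zariski--Nagata handling the easy $\p^{(n)} \subseteq \m^n$ part — is reasonable, but the specific comparison you set up cannot even get started.
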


Thus, the simple containment $\p^{(n)}\cap \q \subseteq \m^{n+1}$ when $\dim R/\p +\dim R/\q = \dim R$ is a consequence of the positivity property. They conjectured that this holds for all regular local rings. Although this also stays open,  a stronger statement was proved for regular local rings containing a field by Sather-Wagstaff.

\begin{theorem}[{Sather-Wagstaff \cite[Theorem 1.6]{Wagstaff02}}]\label{SSW}
Let $(R,\m)$ be a regular local ring containing a field. Let $\p,\q\in \Spec R$ be such that $\sqrt{\p+\q}=\m$ and $\dim R/\p +\dim R/\q = \dim R$. Then 
for $m,n>0$, $\p^{(m)} \cap \q^{(n)}\subseteq \m^{m+n}$.
\end{theorem}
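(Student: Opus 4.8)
The plan is to reduce the statement, via the Zariski--Nagata theorem, to a super-additivity of orders, and then to feed it into Serre's positivity property for intersection multiplicities, which is a theorem for equicharacteristic regular local rings.

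\emph{Reduction to orders.} For $0\neq g\in R$ and a prime $\mathfrak r\subseteq R$ set $\ord_{\mathfrak r}(g)=\max\{t:g\in\mathfrak r^{(t)}\}$, with $\ord_{\mathfrak r}(g)=0$ if $g\notin\mathfrak r$. The displayed identity in the proof of Theorem~\ref{ThmGralZN} gives $\ord_{\mathfrak r}(g)=\e\big((R/gR)_{\mathfrak r}\big)$, and for $\mathfrak r=\m$ this is $\ord(g)=\e(R/gR)$. Thus $f\in\p^{(m)}\cap\q^{(n)}$ means exactly $\ord_\p(f)\gs m$ and $\ord_\q(f)\gs n$, while $f\in\m^{m+n}$ means $\ord_\m(f)\gs m+n$; so it suffices to prove
\[
\ord_\m(f)\ \gs\ \ord_\p(f)+\ord_\q(f)\qquad\text{for all }0\neq f\in R.
\]
Passing to the completion is harmless: $R\to\widehat R$ is faithfully flat and preserves $\ord_\m$, and if $\mathfrak P$ is a minimal prime of $\p\widehat R$ then $R_\p\to\widehat R_{\mathfrak P}$ is flat local with $R_\p$ regular, so $\p\widehat R_{\mathfrak P}$ is $\mathfrak P\widehat R_{\mathfrak P}$-primary and $\ord_{\mathfrak P}(f)\gs\ord_\p(f)$. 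Choosing minimal primes $\mathfrak P\supseteq\p\widehat R$ and $\mathfrak Q\supseteq\q\widehat R$, the pair $(\mathfrak P,\mathfrak Q)$ again satisfies the hypotheses (excellence of $R$ preserves the dimensions, and $\sqrt{\mathfrak P+\mathfrak Q}=\widehat\m$ because $\mathfrak P+\mathfrak Q\supseteq(\p+\q)\widehat R$). So I may assume $R=k\ps{x_1,\dots,x_d}$ is a complete regular local domain.

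\emph{Reduction to a prime element.} Apply the associativity formula for Hilbert--Samuel multiplicities to $R/fR$ over $R$, over $R_\p$ and over $R_\q$:
\[
\ord_\m(f)=\sum_{\mathfrak q}\mu_{\mathfrak q}\,\e(R/\mathfrak q),\qquad
\ord_\p(f)=\sum_{\mathfrak q\subseteq\p}\mu_{\mathfrak q}\,\e\big((R/\mathfrak q)_\p\big),\qquad
\ord_\q(f)=\sum_{\mathfrak q\subseteq\q}\mu_{\mathfrak q}\,\e\big((R/\mathfrak q)_\q\big),
\]
the sums being over the height-one primes $\mathfrak q$ with $f\in\mathfrak q$, and $\mu_{\mathfrak q}=\ord_{R_{\mathfrak q}}(f)$. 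Since $R/\mathfrak q$ is a complete local domain, $\e$ does not increase under localization, so $\e(R/\mathfrak q)\gs\e\big((R/\mathfrak q)_\p\big)$ and $\e(R/\mathfrak q)\gs\e\big((R/\mathfrak q)_\q\big)$; comparing the three displays, the desired inequality follows term by term except possibly for the $\mathfrak q=(g)$ with $\mathfrak q\subseteq\p\cap\q$, where one is left to prove $\e(R/(g))\gs\e(R_\p/(g))+\e(R_\q/(g))$, that is, $\ord_\m(g)\gs\ord_\p(g)+\ord_\q(g)$ for the prime element $g$. (If $\Ht\p=1$ or $\Ht\q=1$ one checks from $\sqrt{\p+\q}=\m$ and $\dim R/\p+\dim R/\q=\dim R$ that no such $\mathfrak q$ exists, so here $\Ht\p,\Ht\q\gs 2$.)

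\emph{The prime-element case, and the main obstacle.} This last case is the heart of the matter. In $R/(g)$ the images of $\p$ and $\q$ satisfy $\Ht(\p/(g))+\Ht(\q/(g))=\dim R/(g)-1$, yet their sum is $\m$-primary; this ``excess intersection'' cannot be detected by height estimates, so the inequality $\ord_\m(g)\gs\ord_\p(g)+\ord_\q(g)$ is of exactly the type governed by Serre's positivity property, which is available here because $R$ is equicharacteristic (Serre in the unramified case; Gillet--Soul\'e and Roberts in general). Following Kurano--Roberts, positivity gives $\chi^R(R/\p,R/\q)>0$, and degenerating $V(\p)$ and $V(\q)$ to their tangent cones in $\operatorname{gr}_\m R$ while tracking the Samuel multiplicity of $\p+\q$ forces $\ord_\m(g)\gs\ord_\p(g)+1$ whenever $g\in\q$; this closes the argument when $\ord_\q(g)=1$. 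The remaining possibility $g\in\q^{(2)}$ is where the full force of Sather-Wagstaff's argument enters: one must improve ``$+1$'' to ``$+\ord_\q(g)$'', either by excluding this possibility under the standing hypotheses or by iterating the positivity input along $\q\supseteq\q^{(2)}\supseteq\cdots$. I expect this improvement to be the only genuinely hard point; the preceding reductions are formal.
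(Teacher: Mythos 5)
There is a genuine gap, and the student acknowledges it explicitly: the prime-element case, which is where the proposal says ``the full force of Sather-Wagstaff's argument enters,'' is never closed. The reduction via the associativity formula to a prime element $g$ is valid but does not actually simplify the problem much---you are left to prove $\ord_\m(g)\gs\ord_\p(g)+\ord_\q(g)$, which is the same inequality you started with (just with $R/gR$ now a domain). The attempted closure via Serre positivity and a tangent-cone degeneration only reaches ``$+1$'' rather than ``$+\ord_\q(g)$,'' and the final sentence concedes that the improvement to the full bound is unproven. That improvement is precisely the content of the theorem, so this is not a minor loose end.

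More structurally, you are on a different (and, as written, harder) route than the paper. The paper does not invoke Serre's positivity property at all---positivity is used in the \emph{Kurano--Roberts} result stated just above, which gives the weaker containment $\p^{(n)}\cap\q\subseteq\m^{n+1}$. For the full Theorem \ref{SSW}, the paper instead cites Sather-Wagstaff's dimension inequality (Theorem \ref{SSW2}): if $(A,\n)$ is quasi-unmixed and $A/P,A/Q$ are analytically unramified, $\sqrt{P+Q}=\n$, and $\e(A)<\e(A_P)+\e(A_Q)$, then $\dim A/P+\dim A/Q\ls\dim A$. With that lemma in hand, the proof is a short contradiction argument: pass to the completion (exactly as you do), suppose $f\in\p^{(m)}\cap\q^{(n)}$ but $f\notin\m^{m+n}$, set $A=R/fR$, $P=\p A$, $Q=\q A$, note $\e(A_P)\gs m$, $\e(A_Q)\gs n$, $\e(A)<m+n$; then Theorem \ref{SSW2} forces $\dim R/\p+\dim R/\q\ls\dim A=\dim R-1$, contradicting the hypothesis. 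There is no need for the associativity-formula decomposition, no reduction to prime elements, and no appeal to positivity. Your proposal essentially reproves the opening reductions and then, at the point where the paper hands off to Theorem \ref{SSW2}, tries to substitute positivity-based estimates that it cannot push through.
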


Note that without the condition $\dim R/\p +\dim R/\q = \dim R$, the conclusion fails easily. As an example, take $R=\CC[[x,y,z]]$, $\p =(x,y), \q=(y,z)$, then $y^n \in \p^{(n)} \cap \q^{(n)}$ for each $n$ but $y^n\notin \m^{2n}$. Also, taking $\p =(x,y), \q=(z)$ we see that the exponent $m+n$ is sharp. 

The proof of Theorem \ref{SSW} relies on the following interesting result, which itself can be viewed as a generalization of Serre's dimension inequality above. For a local ring $A$, let $e(A)$ denote the Hilbert-Samuel multiplicity with respect to the maximal ideal.

\begin{theorem}[{Sather-Wagstaff \cite[Theorem 1.7]{Wagstaff02}}]\label{SSW2}
Let $(A,\n)$ be a quasi-unmixed local ring containing a field and $P, Q\in \Spec A$ such that $A/P, A/Q$ are analytically unramified. Suppose that $\sqrt{P+Q} = \n$ and $\e(A) < \e(A_{P})+\e(A_{Q})$. Then $\dim A/P+\dim A/Q\ls \dim A$. 
\end{theorem}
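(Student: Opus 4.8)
\noindent\emph{Proof strategy (reduction to the diagonal).} The plan is to carry out Serre's reduction to the diagonal relative to a Noether normalization and to bookkeep the resulting estimate with Hilbert--Samuel multiplicities. We will prove the contrapositive, so assume $\dim A/P+\dim A/Q>\dim A$ and aim for $\e(A)\gs\e(A_P)+\e(A_Q)$. First I would reduce to the complete case: completing does not change $\e(A)$, turns quasi-unmixedness into equidimensionality, and makes $\widehat{A/P}$ and $\widehat{A/Q}$ reduced. Choosing a minimal prime $\mathfrak P$ of $P\widehat A$ with $\dim\widehat A/\mathfrak P=\dim A/P$, and $\mathfrak Q$ over $Q\widehat A$ similarly, one has $\mathfrak P\cap A=P$, the map $A_P\to\widehat A_{\mathfrak P}$ is flat local with closed fibre a field (as $P\widehat A$ is radical), so $\e(\widehat A_{\mathfrak P})=\e(A_P)$ and likewise $\e(\widehat A_{\mathfrak Q})=\e(A_Q)$, while $\sqrt{\mathfrak P+\mathfrak Q}=\widehat{\mathfrak n}$. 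After the standard faithfully flat extension making the residue field infinite (which preserves $\e$ and all relevant dimensions), we may therefore assume: $A$ is complete and equidimensional of dimension $d$ with coefficient field $k$; $A/P$, $A/Q$ are domains; $\sqrt{P+Q}=\mathfrak n$; and $e:=\dim A/P+\dim A/Q-d\gs 1$.

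\medskip
\noindent Next I would set up the diagonal. Let $B:=A\,\widehat{\otimes}_k A$ (complete local, $\dim B=2d$), let $\mu\colon B\to A$ be the multiplication map, and $\Delta:=\ker\mu$. Put $C:=A/P\,\widehat{\otimes}_k A/Q$, a quotient of $B$ with $\dim C=\dim A/P+\dim A/Q=d+e$ by the dimension formula for completed tensor products of complete local $k$-algebras, and note $C/\Delta C=A/P\otimes_A A/Q=A/(P+Q)$ has dimension $0$ since $\sqrt{P+Q}=\mathfrak n$. By Cohen's theorem fix a Noether normalization $S=k[[x_1,\dots,x_d]]\hookrightarrow A$, chosen general enough that $\mathfrak n_S A$ is a reduction of $\mathfrak n$; then $\e(A)$, and similarly $\e(A_P)$ and $\e(A_Q)$, are read off (via the associativity formula for multiplicity over $S$) from the fibres of the finite maps $\Spec A\to\Spec S$, $\Spec A/P\to\Spec S$ and $\Spec A/Q\to\Spec S$. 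Now set $\theta_i:=x_i\otimes 1-1\otimes x_i\in\Delta$ and $\Delta_0:=(\theta_1,\dots,\theta_d)\subseteq\Delta$. Then $B/\Delta_0=A\,\widehat{\otimes}_S A$ is finite over $S$ and surjects onto $A$, so $\dim B/\Delta_0=d$ and $\theta_1,\dots,\theta_d$ extends to a system of parameters of $B$; consequently
\[
\dim\big(C/\Delta_0 C\big)\ \gs\ \dim C-d\ =\ e\ \gs\ 1,
\]
while $C/\Delta_0 C=A/P\,\widehat{\otimes}_S A/Q$ has support in $\Spec S$ exactly $V\big((P\cap S)+(Q\cap S)\big)$ and admits the $0$-dimensional $C/\Delta C=A/(P+Q)$ as a quotient.

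\medskip
\noindent Thus $\Spec\big(A/P\,\widehat{\otimes}_S A/Q\big)$ is the honest diagonal (the image of the $0$-dimensional $C/\Delta C$) together with a \emph{positive-dimensional} ``phantom'' part, which can only arise by pairing \emph{distinct} points of $\Spec A/P$ and of $\Spec A/Q$ lying over a common point of $\Spec S$. Here the hypothesis $\sqrt{P+Q}=\mathfrak n$ enters: it says $V(P)\cap V(Q)=\{\mathfrak n\}$ in $\Spec A$, so over a general point of the phantom locus the sheets of $\Spec A/P\to\Spec S$ and the sheets of $\Spec A/Q\to\Spec S$ through it are \emph{disjoint} subsets of the corresponding fibre of $\Spec A\to\Spec S$. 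Comparing the degrees of these fibres through the associativity formula over $S$, together with the inequality $\e(A)\gs\e(A_{\mathfrak r})$ valid in quasi-unmixed rings, should convert this disjointness into the additive bound $\e(A)\gs\e(A_P)+\e(A_Q)$, which is the desired contrapositive.

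\medskip
\noindent The hard part will be exactly this last step. Turning ``disjoint sheets over the phantom locus'' into the genuine multiplicity inequality requires controlling, at the same time, the failure of $A$ to be a free (i.e.\ Cohen--Macaulay) $S$-module, the contribution of the non-diagonal components of $A\,\widehat{\otimes}_S A$, and the possible inseparability of the residue-field extensions of $S\hookrightarrow A$; this is precisely the point at which the equicharacteristic hypothesis and the analytic unramifiedness of $A/P$ and $A/Q$ are used in an essential way. Everything preceding it is standard reductions and dimension formulas.
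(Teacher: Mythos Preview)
The survey you are reading does not actually prove this theorem: it is quoted from \cite{Wagstaff02} without proof, and the only argument the paper gives in this subsection is the short deduction of Theorem~\ref{SSW} \emph{from} Theorem~\ref{SSW2}. So there is no ``paper's own proof'' to compare your proposal against; what follows is an assessment of your sketch on its own merits.

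Your reductions (completion, infinite residue field, equidimensionality, choosing minimal primes over $P\widehat A$ and $Q\widehat A$) are fine and are indeed the standard opening moves for this kind of result. Setting up $B=A\,\widehat{\otimes}_k A$, the diagonal $\Delta$, the regular sequence $\theta_1,\dots,\theta_d$, and the auxiliary ring $C=A/P\,\widehat{\otimes}_k A/Q$ is also the right framework, and the dimension count $\dim(C/\Delta_0 C)\geqslant e\geqslant 1$ is correct. The genuine gap is exactly where you flag it: the passage from the existence of a positive-dimensional ``phantom'' component of $\Spec(A/P\,\widehat{\otimes}_S A/Q)$ to the inequality $\e(A)\geqslant \e(A_P)+\e(A_Q)$ is not justified, and the heuristic you offer does not obviously produce it. In particular, the ``sheets of $\Spec A/P\to\Spec S$'' and ``sheets of $\Spec A/Q\to\Spec S$'' carry information about the generic ranks of $A/P$ and $A/Q$ over $S$, whereas $\e(A_P)$ and $\e(A_Q)$ are multiplicities of the \emph{localizations} $A_P$, $A_Q$ and measure how singular $A$ is along $P$ and $Q$; the associativity formula over $S$ relates $\e(A)$ to contributions at the \emph{minimal} primes of $A$, not at $P$ and $Q$. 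Bridging these two pictures is precisely the content of the theorem, and ``disjoint sheets over the phantom locus'' does not by itself supply that bridge.

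If you want to push this approach through, you should consult Sather-Wagstaff's original argument in \cite{Wagstaff02}. The proof there does use reduction to the diagonal in the equicharacteristic setting, but the endgame is handled by a careful induction (cutting by a superficial element and tracking how $\e(A)$, $\e(A_P)$, $\e(A_Q)$ and the dimensions change) rather than by a direct fibre-degree count. Your outline is a reasonable roadmap up to the last paragraph; the last paragraph needs to be replaced by an actual inductive or specialization argument rather than a geometric plausibility claim.
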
 

Given the above Theorem, \ref{SSW} follows readily. First we can pass to the completion of $R$ using some minimal primes lying over $\p,\q$. Thus we may assume $R$ is complete.  Let $f\in \p^{(n)} \cap \q^{(n)}$. Suppose that $f\notin \m^{m+n}$. Then set $A=R/fR$ and $P=\p A, Q=\q A$. It is clear that $\e(A_P)\gs m, \e(A_Q)\gs n$ and $\e(A)< m+n$. Thus by Theorem \ref{SSW2}, we have 
$$ \dim R/\p +\dim R/\q = \dim A/P + \dim A/Q\ls  \dim A <\dim R,$$ which gives
a contradiction.

\vspace{.4cm}

\section{Uniform Symbolic Topologies Property}
\label{SecUniform}

\subsection{Background on uniformity}

In this subsection we present a few results on uniformity in commutative algebra. We refer to \cite{HunekeRaicu} for a survey on this topic. 

In Section 2.2 we discussed an important uniformity result regarding symbolic powers of prime ideals in regular local rings (Theorems \ref{USP-Poly-CharZero} and \ref{USP-Poly-CharP}). The uniformity results we discuss in this section will be necessary to discuss results of the same flavor as Theorems \ref{USP-Poly-CharZero} and \ref{USP-Poly-CharP} for more general classes of rings.

We start by recalling some assumptions under which the Uniform Brian\c{c}on-Skoda Theorem and Uniform Artin-Rees Lemma hold. 

\begin{hypothesis}\label{Hyp}
We consider a Noetherian reduced ring  $R$ satisfying one of the following conditions
\begin{enumerate}
\item $R$ is essentially of finite type over an excellent ring containing in a field;
\item $R$ is $F$-finite;
\item $R$ is essentially of finite type over $\mathbb{Z}$;
\item $R$ is an excellent Noetherian ring which is the homomorphic image of a regular ring $R$ of finite Krull dimension such that for all $P$, $R/P$ has a resolution of singularities obtained by blowing up an ideal.
\end{enumerate}
\end{hypothesis}
The following results play an important role in the proof of many uniformity results about symbolic powers.

\begin{theorem}[{Uniform Brian\c{c}on-Skoda Theorem, Huneke  \cite{HunekeUniform}}] \label{ThmUBS} 
Let $R$ be a ring satisfying Hypothesis \ref{Hyp}.  Then, there exists $c = c(R)$ such that, for all ideals $I\subseteq R$ and all $n \gs 1$, $\overline{I^{n+c}} \subseteq I^n$.
\end{theorem}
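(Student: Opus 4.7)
The plan is to combine the tight-closure (or multiplier-ideal) form of the Brian\c{c}on-Skoda theorem with the Uniform Artin-Rees Lemma and the theory of reductions of ideals. The starting point is the following pointwise statement, valid in any Noetherian ring: if $I$ is generated by $\ell$ elements, then $\overline{I^{n+\ell}} \subseteq (I^{n})^{*}$, where $(-)^{*}$ denotes tight closure in characteristic $p$ (or its characteristic-zero analogue defined via multiplier ideals under resolution of singularities). If $c \in R^{\circ}$ is a test element, meaning $c \cdot J^{*} \subseteq J$ for every ideal $J$, this yields $c \cdot \overline{I^{n+\ell}} \subseteq I^{n}$ uniformly in $I$ and $n$. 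Under each case of Hypothesis~\ref{Hyp}, such a $c$ exists and depends only on $R$ --- by results of Hochster-Huneke in the $F$-finite or excellent case in characteristic $p$, by Lipman-Sathaye in the general excellent case, and by multiplier-ideal arguments in the characteristic-zero case with resolution of singularities.

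To convert the multiplicative factor $c$ into a pure exponent shift, I would invoke the Uniform Artin-Rees Lemma (available under Hypothesis~\ref{Hyp}): there is a constant $k_{0} = k_{0}(R)$ such that
\[
I^{m} \cap (c) \subseteq c \cdot I^{m - k_{0}}
\]
for every ideal $I \subseteq R$ and every $m \gs k_{0}$. Applied with $m = n + k_{0}$ this gives
\[
c \cdot \overline{I^{n + \ell + k_{0}}} \subseteq I^{n+k_{0}} \cap (c) \subseteq c \cdot I^{n},
\]
and since $c$ may be chosen to be a nonzerodivisor, we conclude $\overline{I^{n+\ell+k_{0}}} \subseteq I^{n}$.

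It remains to bound $\ell$ uniformly in $I$, which is where the theory of reductions enters. After a harmless residue-field extension, every ideal $I$ in a Noetherian (semi-)local ring of dimension $d$ admits a minimal reduction $J \subseteq I$ generated by at most $d$ elements, and $\overline{I^{m}} = \overline{J^{m}}$ for every $m$. Applying the previous containment to $J$ in place of $I$ and using $J \subseteq I$ yields
\[
\overline{I^{n + d + k_{0}}} = \overline{J^{n + d + k_{0}}} \subseteq J^{n} \subseteq I^{n}.
\]
Setting $c(R) := \dim R + k_{0}(R)$ completes the argument in the local case, and the globalization proceeds by standard patching: the finite Krull dimension of $R$ bounds the ``$d$'' that appears, and the containment $\overline{I^{m}} \subseteq I^{n}$ may be checked locally at each associated prime of $I^{n}$.

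The main obstacle is the uniform existence of the test element $c$ and of the Artin-Rees constant $k_{0}$: both must be chosen \emph{once and for all}, independently of the ideal $I$. This is precisely what forces the technical hypotheses of Hypothesis~\ref{Hyp}, since each of the alternatives there provides, by a different mechanism, one or more of the following uniform objects --- a completely stable test element, a Lipman-Sathaye Jacobian element, or a uniform multiplier arising from a single resolution of singularities --- from which both the uniform Artin-Rees statement and the multiplier form of Brian\c{c}on-Skoda can be deduced. Once these two ingredients are in place, the argument above is essentially a clean bookkeeping of exponents.
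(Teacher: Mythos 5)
The paper does not prove this theorem; it is quoted verbatim from Huneke's \emph{Uniform bounds in Noetherian rings} \cite{HunekeUniform}, so there is no in-text argument to compare against. That said, your outline is a faithful reconstruction of the strategy of the cited proof: (i) a Brian\c{c}on--Skoda statement with a fixed multiplier, $c\cdot \overline{I^{n+\ell}}\subseteq I^{n}$, uniform in $I$ and $n$; (ii) Uniform Artin--Rees applied to the pair $(c)\subseteq R$ to trade the multiplier for an exponent shift; (iii) minimal reductions to replace the generator count $\ell$ by $\dim R$. The exponent bookkeeping in your display is correct, and passing to a reduction $J$ is legitimate since $\overline{J^{m}}=\overline{I^{m}}$ for all $m$ when $J$ is a reduction of $I$. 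Two spots deserve more care than you give them. First, the source of the multiplier $c$ is not uniform across the cases of Hypothesis \ref{Hyp}: tight closure handles characteristic $p$, and multiplier ideals handle equal characteristic zero with resolutions, but case (3) (essentially of finite type over $\ZZ$) is mixed characteristic, where neither applies and the Lipman--Sathaye Jacobian theorem is doing all the work; you name it only in passing, and it is the least formal part of your sketch. Second, the local-to-global patching at the end requires that the single element $c$ remain a multiplier in every localization $R_{\p}$ (a \emph{completely stable} test element, or a nonzerodivisor in the Jacobian ideal) and that the Artin--Rees constant for $R$ descend to ideals of $R_{\p}$ and of $R_{\p}(t)$ that are not extended from $R$; both points are true but are exactly where the uniformity could silently be lost, so they should be argued rather than asserted. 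With those caveats, the proposal is sound and matches the original proof's architecture.
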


\begin{theorem}[{Uniform Artin-Rees Lemma, Huneke \cite{HunekeUniform,HunekeUAR}}] \label{ThmUAR} 
Let $R$ be a ring satisfying Hypothesis \ref{Hyp}, and let $N$ and $M$ be $R$-modules.  Then, there exists $c$ such that 
\[
I^nM\cap N\subseteq I^{n-c}N
\]
for every ideal $I\subseteq R$, and $n\geq c$.
\end{theorem}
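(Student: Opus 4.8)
The plan is to reproduce Huneke's argument: first strip the statement down to a minimal case, then feed in the structural consequences of Hypothesis \ref{Hyp} together with the uniform control of integral closures of powers already in hand (Theorem \ref{ThmUBS}). Reductions come first. One reduces to $N \subseteq M$ finitely generated (which is what the statement intends) and, by Noetherian induction, to the case $R$ a domain: passing to $R/P$ over minimal primes $P$ preserves each clause of Hypothesis \ref{Hyp}, and intersecting the finitely many resulting Artin--Rees numbers gives one that works for $R$. Picking a surjection from a finite free module $R^b$ onto $M$ and pulling $N$ back, an application of the ordinary Artin--Rees lemma comparing $I^nM\cap N$ with the image of $I^nR^b\cap\widetilde N$ reduces us to $M=R^b$ free and $N\subseteq R^b$ an arbitrary submodule; a further standard manoeuvre replaces such a pair by finitely many pairs of ideals. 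So the target becomes: produce $c$, independent of $I$, with $I^nR^b\cap N\subseteq I^{n-c}N$ for all $I$ and all $n\geq c$.

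\textbf{The engine.} The substance is to manufacture a single nonzero element $d\in R$, \emph{not depending on $I$}, that uniformly annihilates the discrepancy in Artin--Rees, i.e. $d\cdot(I^nR^b\cap N)\subseteq I^{n-c_0}N$ for a fixed $c_0$ and every $I,n$, and then to induct on $\dim R$ through $R/(d)$ (again of the required type, of smaller dimension modulo nilpotents). In the equicharacteristic affine case this $d$ comes from the Jacobian ideal of $R$ over a Noether normalization: the Lipman--Sathaye theorem forces that ideal into the conductor, so it carries the integral closure $\overline R$ back into $R$, while the uniform Briançon--Skoda Theorem (Theorem \ref{ThmUBS}) bounds $\overline{I^{\,n}}$ by a power of $I$ with a constant that does not see $I$; chaining these with Artin--Rees performed one dimension lower yields the bound on $R$ itself. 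The $F$-finite and finite-type-over-$\ZZ$ cases are handled by the parallel structural inputs (module-finiteness of the normalization from excellence or $F$-finiteness, plus a Jacobian- or conductor-type universal annihilator), and the resolution-of-singularities case by pulling the $I$-adic filtration up to a proper birational model where $I^nM\cap N$ becomes geometrically transparent and descends.

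\textbf{Main obstacle.} The genuinely delicate point is exactly the uniformity in $I$: ordinary Artin--Rees returns a number depending on $I$, and this dependence must be expunged. The device is to argue generically --- write a bounded number of generators of a prospective $I$ with indeterminate coefficients, form the resulting ideal over the polynomial extension $R[\underline t]$, establish the containment there once and for all, and specialize, using that the relevant inclusions survive the faithfully flat specialization maps. That every $I$ arises by specialization from a bounded number of generators, and that all the auxiliary constants (the Briançon--Skoda constant, the conductor/Jacobian exponent, the dimension-drop constant) can be kept under simultaneous control, is precisely where the finite Krull dimension, excellence, and finite-normalization properties packaged into Hypothesis \ref{Hyp} enter. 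I expect this interleaving of generic specialization with the Noetherian induction, together with the bookkeeping of the several constants, to be the principal technical burden.
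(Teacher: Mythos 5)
First, note that the survey states Theorem \ref{ThmUAR} without proof, citing Huneke's \emph{Uniform bounds in Noetherian rings} and its correction; so there is no in-paper proof to compare against, and the assessment must be made against Huneke's published argument. Your overall architecture --- reduce to finitely generated $N\subseteq M$, pass to $R$ a domain by Noetherian induction over minimal primes, reduce to submodules of free modules, manufacture a uniform element $d$ not depending on $I$, induct on $\dim R$ through $R/(d)$, and control the uniformity in $I$ via a polynomial extension $R[\underline{t}]$ with indeterminate coefficients followed by specialization --- is recognizably the skeleton of Huneke's argument, and the generic-specialization device you describe in the last paragraph is indeed the heart of the matter.

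The genuine gap is in ``the engine.'' You propose to produce the uniform annihilator $d$ from the Jacobian ideal via Lipman--Sathaye and to route the argument through the conductor together with the Uniform Brian\c{c}on--Skoda Theorem (Theorem \ref{ThmUBS}). That is the machinery Huneke uses for Theorem \ref{ThmUBS} itself (in positive characteristic it is intertwined with tight closure), not for the Uniform Artin--Rees Lemma; in Huneke's development Artin--Rees is established \emph{before} Brian\c{c}on--Skoda and is not a consequence of it, so taking Theorem \ref{ThmUBS} as a black-box input reverses the logical dependency. More to the point, the mechanism would not deliver the needed containment: Lipman--Sathaye puts $d$ in the conductor, so $d\cdot\overline{R}\subseteq R$, and Brian\c{c}on--Skoda bounds $\overline{I^n}$ by a power of $I$, but $I^nM\cap N$ is not an integral-closure object, and there is no passage from ``$d$ collapses $\overline{I^n}$ into a power of $I$'' to ``$d$ collapses $I^nM\cap N$ into $I^{n-c_0}N$.'' In Huneke's actual proof the uniform element comes from generic flatness: over the generic extension $R[\underline{t}]$ one studies the Rees-type module $\bigoplus_n (I^nM\cap N)/I^nN$ and locates $d$ so that, after inverting $d$, the relevant graded pieces become free and the Artin--Rees number drops to zero over $R_d$; the dimension induction is then closed by passing to $R/(d)$. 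Replacing your Lipman--Sathaye step with this generic-freeness step is not cosmetic --- it is the step that makes the induction close.
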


The Uniform Artin-Rees Lemma is not effective, since we cannot explicitly compute the integer $c$ in the previous theorem.

We now focus on a couple of results about associated primes, which are helpful while dealing with powers of ideals.

\begin{exercise}[{Yassemi \cite[Corollary 1.7]{Yassemi}}]\label{ExContractionAssPrimes}
Let $R\subseteq S$ be Noetherian rings, and $M$ be an $S$-module. Then, $\Ass_R M=\{\q \cap R\;|\; \q \in \Ass_S M\}.$ 
\end{exercise}

\begin{theorem}[{Brodmann \cite{BroAssympAssociatedPrimes}}]
\label{ThmBro}
Let $R$ be a Noetherian ring, and $I$ be an ideal. Then,
\[
A(I):=\bigcup_{n\in \NN} \Ass_R \left( R/ I^n \right)
\]
 is a finite set.
\end{theorem}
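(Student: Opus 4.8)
The statement to prove is that $A(I) = \bigcup_{n \in \NN} \Ass_R(R/I^n)$ is finite for any ideal $I$ in a Noetherian ring $R$. The plan is to pass to the Rees algebra $\mathcal{R} = R[It] = \bigoplus_{n \geq 0} I^n t^n$, which is a finitely generated, hence Noetherian, graded $R$-algebra, and to leverage the graded structure to control the associated primes of all the $I^n$ simultaneously. The key observation is that $\bigoplus_n I^n/I^{n+1} = \mathcal{R}/I\mathcal{R} =: G$ is a finitely generated graded module over the Noetherian ring $\mathcal{R}$, so one can study its associated primes, and relate those to the associated primes of $R/I^n$ for large $n$.

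First I would establish the standard fact that for a finitely generated graded module $M = \bigoplus_n M_n$ over a Noetherian graded ring, the set $\Ass(M)$ is finite (true for any finitely generated module over a Noetherian ring) and consists of homogeneous primes. Next, the heart of the argument: I claim that for $n$ sufficiently large, $\Ass_R(R/I^n)$ stabilizes, and more precisely that $\Ass_R(I^n/I^{n+1})$ is eventually constant. For this, consider $G = \bigoplus_n I^n/I^{n+1}$ as a graded module over $\mathcal{R}$, pick a prime $\mathfrak{p} \in \Ass_R(I^n/I^{n+1})$ for some $n$; the graded prime of $\mathcal{R}$ obtained as the annihilator of a suitable homogeneous element maps down to $\mathfrak{p}$. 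Using that $\Ass_{\mathcal{R}}(G)$ is finite and that each graded prime $P \in \Ass_{\mathcal{R}}(G)$ contributes, for each degree $n$, at most the contracted prime $P \cap R$ to $\Ass_R(I^n/I^{n+1})$, one bounds $\bigcup_n \Ass_R(I^n/I^{n+1})$ by a finite set. Then the short exact sequences $0 \to I^n/I^{n+1} \to R/I^{n+1} \to R/I^n \to 0$ give the containment $\Ass_R(R/I^{n+1}) \subseteq \Ass_R(R/I^n) \cup \Ass_R(I^n/I^{n+1})$, and by induction $\Ass_R(R/I^n) \subseteq \Ass_R(R/I) \cup \bigcup_{k=1}^{n-1}\Ass_R(I^k/I^{k+1})$, which is contained in a fixed finite set independent of $n$. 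Alternatively, and perhaps more cleanly, I would use Exercise \ref{ExContractionAssPrimes} directly: view $\bigoplus_n R/I^n$ or better the module $N = \mathcal{R}_+ \cdot(\text{something})$ — but the cleanest route is to show $\Ass_R(I^n/I^{n+1})$ is eventually periodic/constant via the Artin-Rees-type stabilization of $\Ass$ for the graded module $G$ over $\mathcal{R}$, using that for $n \gg 0$ the module $G_n$ in degree $n$ is "generated in a controlled way."

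The sharpest formulation I would actually carry out: let $P_1, \dots, P_s$ be the (finitely many, homogeneous) associated primes of $G$ over $\mathcal{R}$. For any $n$ and any $\mathfrak{p} \in \Ass_R(I^n/I^{n+1})$, localize at $\mathfrak{p}$; then $\mathfrak{p} R_\mathfrak{p} \in \Ass_{R_\mathfrak{p}}((I^n/I^{n+1})_\mathfrak{p})$, which forces a homogeneous associated prime of $G_\mathfrak{p}$ over $\mathcal{R}_\mathfrak{p}$ lying over $\mathfrak{p} R_\mathfrak{p}$, hence $\mathfrak{p} = P_i \cap R$ for some $i$ by Exercise \ref{ExContractionAssPrimes} applied to $R \subseteq \mathcal{R}$. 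Thus $\bigcup_n \Ass_R(I^n/I^{n+1}) \subseteq \{P_1 \cap R, \dots, P_s \cap R\}$, a finite set, and then the exact-sequence induction above finishes the proof.

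The main obstacle is the comparison between $\Ass_R(I^n/I^{n+1})$ and $\Ass_{\mathcal{R}}(G)$: one must be careful that an associated prime in a single graded piece $G_n$ genuinely produces an associated prime of the whole module $G$ over $\mathcal{R}$ — this requires knowing that a homogeneous element of $G$ whose $R$-annihilator is $\mathfrak{p}$ has an $\mathcal{R}$-annihilator whose radical (or an associated prime thereof) contracts to $\mathfrak{p}$. Handling this correctly — essentially the statement that for a graded module over a Noetherian graded ring, $\bigcup_n \Ass_R(M_n)$ is finite — is the technical core, and is most smoothly done after localizing at a candidate $\mathfrak{p} \in \bigcup_n \Ass_R(M_n)$ so that one may assume $(R,\mathfrak{p})$ is local and argue with depth or with a single homogeneous witnessing element. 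Everything else (Noetherianity of $\mathcal{R}$, finiteness of $\Ass$ for finitely generated modules, the short exact sequence bookkeeping) is routine.
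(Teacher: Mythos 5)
Your proposal follows essentially the same route as the paper's proof: pass to the Rees algebra $\mathcal{R}=R[It]$, observe that $G=\mathcal{R}/I\mathcal{R}=\bigoplus_n I^n/I^{n+1}$ is a finitely generated module over the Noetherian ring $\mathcal{R}$ (hence has finitely many associated primes), contract via Yassemi's result (Exercise \ref{ExContractionAssPrimes}) to conclude $\bigcup_n\Ass_R(I^n/I^{n+1})$ is finite, and then use the short exact sequences $0\to I^n/I^{n+1}\to R/I^{n+1}\to R/I^n\to 0$ to finish. The localization detour you sketch for the "technical core" is unnecessary once you invoke Exercise \ref{ExContractionAssPrimes} directly together with the standard identity $\Ass_R(G)=\bigcup_n\Ass_R(G_n)$ for a graded module (which follows from a finite-submodule exhaustion), which is exactly how the paper streamlines the argument.
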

\begin{proof}
Let $\cR=\oplus_{n\in\NN} I^n $ denote the Rees algebra associated to $I$.
We know that $\cR$ is a Noetherian algebra. Then, $\cR/I \cR=\oplus_{n\in\NN} I^n/I^{n+1} $ is a finitely generated $\cR$-module,
and so, $\Ass_{\cR} \left(\cR/I\cR\right)$ is finite. As a consequence, we have that $\Ass_{R} \left(\cR/I\cR \right)=\bigcup \Ass_R   \left(I^n/I^{n+1} \right)$ is finite by Exercise  \ref{ExContractionAssPrimes}. From the short exact sequences
$0\to I^{n}/I^{n+1}\to R/I^{n+1}\to R/I^{n}\to 0$, we obtain that  
\[
\Ass_R  \left( R/I^{n+1} \right) \subseteq \Ass_R  \left( R/I^{n}\right) \cup \Ass_R  \left( I^n /I^{n+1} \right)
\] 
for every $n\in\NN.$
Then, 
$\bigcup_{n\in \NN} \Ass_R(R/ I^n)\subseteq \bigcup_{n\in\NN} \Ass_R \left( I^n/I^{n+1} \right)$, and the claim follows.
\end{proof}




\subsection{Linear Equivalence of Topologies}

It is helpful to introduce slightly more general topologies. This generality is useful because
under some standard operations such as completion, prime ideals do not necessarily stay
prime. Completion cannot ultimately be avoided as Theorem \ref{ThmSc2} demonstrates.

Let $A$ be a Noetherian ring and $I,J\subseteq A$ ideals. 
Write $I^n : \left<J\right> := \bigcup_{m\gs 1}(I^n : J^m)$. This ideal is the saturation of $I$
with respect to $J$. In terms of the primary decomposition of $I$, this saturation removes all
components which contain $J$. Since the number of associated primes of powers of $I$ are
finite in number by Theorem \ref{ThmBro}, it follows that symbolic powers are always saturations with respect to a fixed suitable
ideal.

Work byf Schenzel \cite{SchenzelTopEquiv,Sc2}, and Huckaba \cite{Huc} investigates conditions under which the $I$-adic and $\{I^n:\left<J \right>\}$ 
topologies are equivalent. Then, one requires for each $n\gs 1$ an integer $m\gs 1$ so that $I^m:\left<J\right> \subseteq I^n$. In particular, a theorem of Schenzel \cite[Theorem 3.2]{Sc2}
says when certain ideal topologies are equivalent. 
In this section, we write $A(I)$ for the union over $n$ of the associated primes of $I^n$, 
 a finite set of prime ideals. We also write $\widehat{T}$ for the completion of the local 
ring $T$ with respect to its maximal ideal. Here is Schenzel's Theorem: 

\begin{theorem}[{Schenzel \cite[Theorem 3.2]{Sc2}}]\label{ThmSc2}
Let $A$ be a Noetherian ring and $I, J\subseteq A$ two ideals. 
Then the following are equivalent.
\begin{enumerate}
\item The $\{I^n:\left<J\right>\}$ topology is equivalent to the $I$-adic topology.
\item  $\dim(\widehat{R_\p}/(I\widehat{R_\p}+z)) > 0$, for all prime ideals $\p\in A(I)\cap V(J)$, and prime ideals $z\in \Ass(\widehat{R_\p})$.
\end{enumerate}
\end{theorem}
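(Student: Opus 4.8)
The first observation is that $I^n\subseteq I^n:\langle J\rangle$ always holds, so the $I$-adic topology is automatically finer than the $\{I^n:\langle J\rangle\}$-topology; the entire content of the equivalence is that, under~(2), it is also coarser, i.e. for each $n$ there is an $m$ with $I^m:\langle J\rangle\subseteq I^n$. The plan is to reduce this to a purely complete-local question. By Brodmann's theorem (Theorem~\ref{ThmBro}) the set $A(I)$ is finite, and for every $m$ the ideal $I^m:\langle J\rangle$ is obtained from a minimal primary decomposition of $I^m$ by deleting exactly the components whose radical lies in $\Ass(A/I^m)\cap V(J)\subseteq A(I)\cap V(J)$. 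Hence the containment $I^m:\langle J\rangle\subseteq I^n$ only needs to be tested at the finitely many primes in $\Ass(A/I^n)\cup\bigl(A(I)\cap V(J)\bigr)$, and a single $m$ works globally as soon as one works after localizing at each $\p\in A(I)\cap V(J)$. A routine bookkeeping with primary decomposition then shows that the two topologies on $A$ agree if and only if, for every $\p\in A(I)\cap V(J)$, the $\{(IA_\p)^m:\langle\p A_\p\rangle\}$-topology on $A_\p$ coincides with the $IA_\p$-adic topology. Finally, passing to the completion $B:=\widehat{A_\p}$ of the local ring $(A_\p,\p A_\p)$: faithful flatness of $A_\p\to B$ both preserves and reflects equality of these filtration topologies, one has $\bigl((IA_\p)^m:\langle\p A_\p\rangle\bigr)B=(IB)^m:\langle\p A_\p B\rangle$ (both are ``$(IA_\p)^m$ with its $\p A_\p$-primary component deleted'', using that completion commutes with this saturation and with $\Ass$), and $\bigcap_m(IB)^m=0$ by Krull. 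This is precisely where completion is unavoidable, since $\Ass(B)$ rather than $\Ass(A_\p)$ records the obstructions. So the theorem reduces to: for a complete local ring $(B,\m)$ and an ideal $I\subseteq B$, the $\{I^m:\langle\m\rangle\}$-topology equals the $I$-adic topology if and only if $\dim(B/(IB+z))>0$ for every $z\in\Ass(B)$.

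One direction of the complete-local statement is a short computation. If $\dim(B/(IB+z))=0$ for some $z\in\Ass(B)$, write $z=\Ann_B(c)$ with $c\neq 0$. Then $B/(IB+z)$ is Artinian, so $\m^s\subseteq IB+z$ for $s\gg0$, hence $\m^{sm}\subseteq I^mB+z$, and therefore $c\,\m^{sm}\subseteq cI^mB+cz=cI^mB\subseteq I^mB$. Thus $c\in I^mB:\langle\m\rangle$ for every $m$, so $0\neq c\in\bigcap_m\bigl(I^mB:\langle\m\rangle\bigr)$; the saturation topology is not even Hausdorff, so it cannot be equivalent to the separated $I$-adic topology.

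For the converse, assuming $\dim(B/(IB+z))>0$ for all $z\in\Ass(B)$, I would proceed in two steps. First, separatedness: to see $\bigcap_m(I^mB:\langle\m\rangle)=0$, reduce modulo a minimal prime $z$ (automatically in $\Ass(B)$), landing in a complete local domain $D$ with $\dim(D/ID)\geq1$; if $ID\neq0$, then $ID$ is not $\m$-primary, so it has a minimal prime different from the maximal ideal, and some Rees valuation $v$ of $ID$ is centered there, hence $v$ vanishes on some element of $\m$; since the integral closures of the powers of $ID$ are cut out by the finitely many Rees valuations, any $a\in\bigcap_m(I^mD:\langle\m D\rangle)$ then satisfies $v(a)\geq m\cdot v(ID)$ for all $m$, forcing $a=0$ in $D$. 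Running over $\Ass(B)$ (the minimal primes directly, the embedded ones via a standard filtration by the nilradical) gives $\bigcap_m(I^mB:\langle\m\rangle)=0$. Second, and this is where the real work lies, one upgrades separatedness to genuine topological equivalence with $\{I^mB\}$: using the finiteness of $\bigcup_m\Ass(B/I^mB)$ (Theorem~\ref{ThmBro}) together with the Artin--Rees lemma applied to the Rees algebra $B[It]$, one controls both the exponents needed to saturate the powers $I^mB$ and the growth of their $\m$-primary components, producing for each $n$ an $m$ with $I^mB:\langle\m\rangle\subseteq I^nB$.

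The routine parts of this program are the primary-decomposition reductions of the first paragraph, the faithfully flat descent, and the short only-if computation of the second paragraph. The hard part will be the if-direction in the complete-local case: the hypothesis merely forbids $IB$ from being $\m$-primary modulo each associated prime of $B$, and this must be leveraged---via the valuation-theoretic description of integral closures in complete local domains and a careful Artin--Rees analysis of the Rees algebra---into the statement that the (possibly embedded) $\m$-primary components of the powers $I^mB$ become negligible within the $I$-adic filtration. I expect that step to be the most technical point of the argument.
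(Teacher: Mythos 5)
The paper does not prove this theorem --- it is stated with a citation to Schenzel's paper \cite[Theorem 3.2]{Sc2}, so there is no in-paper proof against which to compare. Judged on its own terms, your plan has a reasonable overall shape (reduce to a complete local ring, then a Rees-valuation/Chevalley argument), and the short ``only if'' computation in your second paragraph is correct, but there are two genuine gaps.

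The reduction you call ``routine bookkeeping'' is not routine and, as stated, is wrong in one direction. Localizing at $\p \in A(I)\cap V(J)$ converts $I^m:\langle J\rangle$ into $I^m A_\p:\langle J A_\p\rangle$, \emph{not} into $I^m A_\p:\langle\p A_\p\rangle$; and since $J\subseteq\p$, one has $I^m A_\p:\langle\p A_\p\rangle\subseteq I^m A_\p:\langle J A_\p\rangle$, with strict containment whenever $I^m$ has an associated prime $\q$ with $J\subseteq\q\subsetneq\p$, because the $J$-saturation deletes additional primary components at such intermediate primes. Thus controlling the $\p$-saturated topology at each $\p$ does not, by itself, bound the $J$-saturated ideals, so the ``local at every $\p$ implies global'' half of your reduction is exactly as hard as the theorem and needs an explicit argument (an induction up the finite poset $A(I)\cap V(J)$, or an Artin--Rees estimate that accounts for the intermediate components); none is supplied. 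The second gap lies in the hard direction of the complete-local statement, which you acknowledge. You argue separatedness $\bigcap_m(I^mB:\langle\m\rangle)=0$, but Chevalley's theorem then only gives cofinality with $\{\m^n\}$, which is strictly weaker than cofinality with $\{I^n\}$; the upgrade from separatedness to honest equivalence with the $I$-adic topology is the real content of Schenzel's theorem and is left entirely as a promise. Moreover, the reduction of separatedness from $B$ to its quotients $B/z$ ``via a standard filtration by the nilradical'' is shaky: the hypothesis $\dim(B/(IB+z))>0$ is only given for $z\in\Ass(B)$, while a prime filtration of $B$ generally passes through primes outside $\Ass(B)$ where no hypothesis is available, so one must handle embedded primes of $B$ more carefully (e.g.\ by writing them as annihilators and arguing on the cyclic submodules they define, rather than by a generic prime filtration).
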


We say that the topology determined by $\{I^n:\left<J\right>\}$ is \it linearly equivalent \rm to the
topology determined by $I^n$ if there is a constant $h$ such that for all $n\gs 1$,
$$I^{hn}:\left<J\right>\subseteq I^n.$$
This concept is  \it a priori \rm stronger than having equivalent topologies. However,  
Swanson  \cite{Swanson} proved the following beautiful result relating the notions of equivalent and linearly equivalent 
topologies.

\begin{theorem}[{Swanson \cite[Main Theorem 3.3.]{Swanson}}]\label{IS} 
Let $A$ be a Noetherian ring and $I,J$ ideals. Then the $\{I^n:\left<J\right>\}$ 
and $I$-adic topologies are equivalent if and only if there exists $h\gs 1$ such that, for all $n\gs 1$, 
$I^{hn} : \left<J\right> \subseteq I^n$. 
\end{theorem}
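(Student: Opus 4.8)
The plan is to prove the nontrivial direction: assuming the $\{I^n : \langle J \rangle\}$ and $I$-adic topologies are equivalent, produce a single constant $h$ with $I^{hn} : \langle J \rangle \subseteq I^n$ for all $n$. The reverse direction is immediate, since $I^{hn} : \langle J \rangle \subseteq I^n$ for all $n$ forces cofinality of the two filtrations. First I would reduce to the case where $A$ is local and complete by using Theorem \ref{ThmSc2}: equivalence of the two topologies is detected on the completions $\widehat{R_\p}$ for $\p \in A(I) \cap V(J)$, and it suffices to extract a uniform $h$ working simultaneously for this finite collection of primes; passing from $A$ to $\widehat{R_\p}$ and back costs only a bounded comparison of filtrations (a uniform Artin--Rees type estimate, Theorem \ref{ThmUAR}, applied to the relevant ideals), so a uniform bound upstairs yields one downstairs. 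Thus I may assume $(A,\m)$ is a complete Noetherian local ring.

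In the complete local case, the key is to relate the saturation filtration $\{I^n : \langle J \rangle\}$ to an honest $I$-adic filtration on an auxiliary module. Since $A(I)$ is finite (Theorem \ref{ThmBro}), there is a fixed integer $t$ such that $I^n : \langle J \rangle = I^n : J^t$ for all $n$; fix such a $t$. Now consider the $R$-module $M = A/J^t$ together with the submodule picture coming from $I^n M \cap \langle \text{something} \rangle$; more precisely, the ideals $I^n : J^t$ are exactly the preimages in $A$ of $(I^n M :_M J^t)$, and one wants to compare $\{I^n : J^t\}$ with $\{I^n\}$ by a linear function of $n$. Here I would bring in the Artin--Rees lemma in its uniform/linear form: there is a constant $c$ such that $I^n \cap (\text{ideal generated in the Rees-algebra picture}) \subseteq I^{n-c}(\cdots)$; combined with the hypothesis that the topologies are merely equivalent (so that for each $n$ \emph{some} $m(n)$ works), the hard step is upgrading the existence of $m(n)$ to a \emph{linear} bound $m(n) \leq hn$.

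The main obstacle — and the heart of Swanson's argument — is precisely this linearization. The idea I would pursue is the following: form the Rees algebra $\mathcal{R} = \bigoplus_{n} I^n t^n$, which is Noetherian, and package the saturations $I^n : \langle J\rangle$ into a graded module over $\mathcal{R}$ (or over the extended Rees algebra); then finite generation of that graded module over the Noetherian ring $\mathcal{R}$ forces the ``defect'' between $I^n : \langle J\rangle$ and $I^n$ to be controlled in a single degree, and the generators' degrees give the multiplicative constant $h$. Concretely, one shows the graded module $\bigoplus_n (I^n : \langle J \rangle)/I^n$ is Noetherian over $\mathcal{R}$, deduces it is generated in degrees $\leq d$ for some $d$, and then a pigeonhole/degree-shifting argument converts ``equivalent topologies'' into the statement that pushing $d$ steps along the filtration kills all these defect classes, which after iterating yields $I^{hn} : \langle J \rangle \subseteq I^n$ with $h$ explicit in terms of $d$ and $c$. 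The delicate points are (i) verifying the relevant graded module is finitely generated over $\mathcal{R}$ rather than just over $A$, and (ii) bookkeeping the two constants (the Artin--Rees $c$ and the generation degree $d$) into one clean linear bound; the completion and localization reductions at the start are routine by comparison.
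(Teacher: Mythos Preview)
The paper does not prove this theorem at all: it is simply cited from Swanson's paper and stated as background, with no proof or proof sketch given. So there is nothing in the paper to compare your proposal against.

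That said, a few remarks on the proposal itself. Your reduction step and your final Rees-algebra packaging both contain genuine gaps. First, the assertion that the finiteness of $A(I)$ yields a \emph{single} $t$ with $I^n:\langle J\rangle = I^n:J^t$ for all $n$ is not correct: finiteness of the set of associated primes bounds which primes can occur, but the exponent needed to saturate $I^n$ against $J$ can and does grow with $n$ (this is exactly the content of Theorem~\ref{ThmISPrimary}, where the primary components satisfy $\p_j^{cn}\subseteq \q_{n,j}$ with a linear, not constant, exponent). Second, and more seriously, the claim that $\bigoplus_n (I^n:\langle J\rangle)/I^n$ is a finitely generated module over the Rees algebra is exactly what one would need, but you give no argument for it, and it cannot be automatic: if it held without using the hypothesis that the topologies are equivalent, you would have proved linear equivalence unconditionally, which is false. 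The hypothesis of topological equivalence has to enter in a substantive way to force the linearization, and your sketch does not indicate where or how it does so. Swanson's actual argument is considerably more delicate than ``form the obvious graded module and observe it is Noetherian''; the passage from equivalence to linear equivalence is the entire content of her theorem, and your outline does not supply the mechanism.
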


Two comments are in order regarding Theorem \ref{IS}. The first is that $h$ depends \emph{a priori} on $I$.
The second is that the theorem implies that if $\p \subseteq A$ is a prime ideal, and the 
$\p$-symbolic and $\p$-adic topologies are equivalent, then there exists $h\gs 1$ so that 
$\p^{(hn)}\subseteq \p^n$, for all $n$. Swanson's Theorem sets the stage for uniform bounds for regular rings \cite{ELS,HHpowers} discussed in Subsection \ref{SubsecRegRingUniformBounds}. In particular,
if $d$ is the dimension of the regular local ring $R$,
then $\p^{(dn)}\subseteq \p^n$ for all $n\gs 1$ and all primes $\p$. The point is that while $h$ in Theorem \ref{IS} 
may depend on 
the ideal $I$, $h = d$ in Theorem \ref{USP-Poly-CharZero} and \ref{USP-Poly-CharP}  
is independent of the ideal. 

One could reasonably ask to do even better: could there be an integer $t$ such that
for every prime $\p^{(n+t)}\subseteq \p^n$? Even very simple examples
show that this is hopeless. For example, in the hypersurface $x^2-yz = 0$, the prime ideal $\p = (x,y)$
has $2n^{\text{th}}$ symbolic power generated by $y^n$, which is  not in $\p^{2n-t}$ for any fixed constant $t$, as
 $n$ gets large. Even the results of Theorem \ref{USP-Poly-CharZero} and \ref{USP-Poly-CharP} 
 cannot be improved in an asymptotic sense,  as shown
by Bocci and Harbourne \cite{BoH}.  All of these results lead to the following question:

\begin{question} \label{question} Let $(R,\m,K)$ be a complete local domain. Does there exist a positive integer $b=b(R)$ such that $\p^{(bn)}\subseteq \p^n$, for all prime ideals $\p\subseteq R$ and all $n\gs 1$?
\end{question}

We say that a ring satisfies the Uniform Symbolic Topologies Property, abbreviated as USTP, if the answer to the previous question is affirmative.

We now show that complete local domains have the property that the adic and symbolic topologies are linearly equivalent for all prime ideals. However, the constant $c$ which gives the relation $\p^{(cn)}\subseteq \p^n$ depends {\it a priori} on the ideal $\p$. We first make an observation needed for this result.

\begin{remark}\label{exponents} 
Let $R$ be complete local domain.
For ideals $I,J \subseteq R$, suppose there exist integers $d,t \gs 1$ such that $I^{dn}:\langle J\rangle \subseteq I^{n-t}$, for all $n \geq t$. An induction argument shows that for $c := d(t+1)$, $I^{cn} : \langle J\rangle \subseteq I^n$ for all $n\gs 1$. 
\end{remark}

We now show that Question \ref{question} is well-posed.

\begin{proposition}[{Huneke-Katz-Validashti \cite[Proposition 2.4]{HKV}}] \label{equivalent} 
Let $(R,\m,K)$ be a complete local domain and let $\p\subseteq R$ be a prime ideal. Then the $\{\p^{(n)}\}$ topology is linearly equivalent to the $\{\p^n\}$ topology. In particular, there exists $c > 0$ (depending on $\p$) such that $\p^{(cn)}\subseteq \p^n$, for all $n\gs 1$. 
\end{proposition}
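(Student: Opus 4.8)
The plan is to exhibit the symbolic filtration $\{\p^{(n)}\}$ as a saturation filtration, apply Schenzel's criterion (Theorem~\ref{ThmSc2}) to see that it is topologically equivalent to the $\p$-adic filtration, and then upgrade topological equivalence to linear equivalence by Swanson's Theorem~\ref{IS}.

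If $\p^n$ is $\p$-primary for every $n$, then $\p^{(n)}=\p^n$ and there is nothing to prove, so assume otherwise. Since $R$ is a domain, $\sqrt{\p^n}=\p$, so $\p$ is the unique minimal prime of each $\p^n$; hence, by Brodmann's Theorem~\ref{ThmBro}, the set $A(\p):=\bigcup_{n}\Ass_R(R/\p^n)$ is finite and every $\q\in A(\p)\setminus\{\p\}$ satisfies $\q\supsetneq\p$. Set $J:=\bigcap_{\q\in A(\p)\setminus\{\p\}}\q$; then $J\not\subseteq\p$ (otherwise $\p$ would contain one of the $\q$'s), while $J\subseteq\q$ for every embedded prime $\q$ of every $\p^n$. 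A computation with primary decompositions then shows that for each $n$ the saturation $\p^n:\langle J\rangle=\bigcup_m(\p^n:J^m)$ removes exactly the embedded primary components of $\p^n$, leaving the $\p$-primary one; thus $\p^n:\langle J\rangle=\p^{(n)}$, and the $\{\p^{(n)}\}$-topology coincides with the $\{\p^n:\langle J\rangle\}$-topology.

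Next I would apply Theorem~\ref{ThmSc2} with $I=\p$ and the above $J$. Here $A(\p)\cap V(J)=A(\p)\setminus\{\p\}$, so the primes $\p'$ appearing in condition (2) all strictly contain $\p$, whence $\height\p<\height\p'$ by catenarity; and since $R$ is a complete local domain it is excellent, so $R_{\p'}$ is an excellent local domain and $\widehat{R_{\p'}}$ is reduced and equidimensional with $\dim\widehat{R_{\p'}}/z=\dim R_{\p'}=\height\p'$ for every minimal prime $z$. What remains is to check Schenzel's condition $\dim\big(\widehat{R_{\p'}}/(\p\widehat{R_{\p'}}+z)\big)>0$, and I expect this to be the only genuinely delicate point. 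Because $R_{\p'}$ is excellent, its formal fibres are geometrically regular; this forces every minimal prime of $\p\widehat{R_{\p'}}$ to have height exactly $\height\p$, and $\widehat{R_{\p'}}/\p\widehat{R_{\p'}}=\widehat{R_{\p'}/\p R_{\p'}}$ to be equidimensional of dimension $\height\p'-\height\p\gs 1$. The subtle step is ruling out that adjoining a minimal prime $z$ of $\widehat{R_{\p'}}$ could make $\p\widehat{R_{\p'}}+z$ an $\m$-primary ideal --- equivalently, checking that such a $z$ is absorbed into a minimal prime of $\p\widehat{R_{\p'}}$ --- and this is exactly where the geometric regularity of the formal fibres of the excellent local domain $R_{\p'}$ (rather than mere equidimensionality and catenarity, which do not suffice) is used. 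Granting this, Theorem~\ref{ThmSc2} yields that the $\{\p^{(n)}\}$-topology and the $\p$-adic topology are equivalent.

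Finally, Swanson's Theorem~\ref{IS}, applied with $I=\p$ and the same $J$, turns this equivalence into the existence of an integer $h\gs 1$ with $\p^{hn}:\langle J\rangle\subseteq\p^n$ for all $n\gs 1$. Since $\p^{hn}:\langle J\rangle=\p^{(hn)}$ (by the same saturation computation as above), we obtain $\p^{(hn)}\subseteq\p^n$ for every $n$, which is precisely the linear equivalence of the $\{\p^{(n)}\}$- and $\{\p^n\}$-topologies, with $c:=h$ the desired constant --- depending on $\p$ only through the constant furnished by Swanson's theorem. (If one prefers to start from a bound of the shape $\p^{(dn)}\subseteq\p^{n-t}$ for $n\gs t$, Remark~\ref{exponents} upgrades it to $\p^{(cn)}\subseteq\p^n$ with $c=d(t+1)$.)
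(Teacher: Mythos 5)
Your proposal shares the Schenzel-then-Swanson backbone with the paper, but takes a genuinely different route to verifying Schenzel's condition, and that route has a real, unfilled gap.

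The paper does \emph{not} try to verify Schenzel's condition~(2) directly inside $R$. Instead it passes to the integral closure $S$ of $R$: since $S$ is an excellent normal domain, it is locally analytically normal, so $\widehat{S_\q}$ is a \emph{domain} for every prime $\q$. In that situation the quantifier over $z\in\Ass(\widehat{S_\q})$ collapses to the single prime $z=(0)$, and the dimension condition $\dim(\widehat{S_\q}/I\widehat{S_\q})>0$ is immediate from faithful flatness of completion and the fact that $\q$ strictly contains $I=\sqrt{\p S}$. Swanson's theorem then gives linear equivalence in $S$, and one contracts back to $R$ using $I^e\subseteq\p S$, Artin--Rees (which introduces a shift), and Remark~\ref{exponents} to absorb the shift. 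By contrast, you stay in $R$ and try to check $\dim\bigl(\widehat{R_{\p'}}/(\p\widehat{R_{\p'}}+z)\bigr)>0$ directly for each embedded prime $\p'$ and each minimal prime $z$ of $\widehat{R_{\p'}}$.

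Your setup is correct up to that point: identifying $\p^{(n)}$ with the saturation $\p^n:\langle J\rangle$ for $J=\bigcap_{\q\in A(\p)\setminus\{\p\}}\q$ is right, $A(\p)\cap V(J)=A(\p)\setminus\{\p\}$ is right, and the observations that $\widehat{R_{\p'}}$ is reduced and equidimensional and that every minimal prime of $\p\widehat{R_{\p'}}$ has coheight $\height\p'-\height\p\gs 1$ all follow from excellence as you say. But the decisive step is exactly the one you flag and then wave past with ``Granting this'': you must show that for each minimal prime $z$ of $\widehat{R_{\p'}}$ the ideal $\p\widehat{R_{\p'}}+z$ is not primary to the maximal ideal. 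Geometric regularity of formal fibres does not obviously produce this; you would need something like a going-down / lying-over statement for the (non-flat) map $R_{\p'}\to\widehat{R_{\p'}}/z$, or a reduction to the analytically irreducible case --- which is precisely what the paper's passage to the integral closure accomplishes. Also note that your reformulation of the condition (``$z$ is absorbed into a minimal prime of $\p\widehat{R_{\p'}}$'') is sufficient for Schenzel's condition but not equivalent to it, so even the statement of the step to be checked is slightly off. As written, the proof is incomplete at its central point, whereas the paper's detour through $\overline{R}$ makes that point trivial at the modest cost of an Artin--Rees contraction at the end.
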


\begin{proof} 
It suffices to prove the second statement. Let $S$ denote the integral closure of $R$ and set 
$I := \sqrt{\p S}$.
Since $S$ is an excellent normal domain, it is locally analytically normal, so the completion of $S_\q$ is a domain for
all primes $\q$. In particular, by Theorem \ref{ThmSc2}, the 
$\{I^{(n)}\}$ topology is equivalent to the $\{I^n\}$ topology. Here we are writing $I^{(n)}$ for $I^n_U \cap S$, where $U := S\backslash \q_1\cup \cdots \cup \q_r$, for 
$\q_1,\ldots, \q_r$ the primes in $S$ lying over $\p$, so that $I = \q_1\cap \cdots \cap \q_r$. Thus, by \ref{IS}, 
$\p^{(kn)}\subseteq I^{(kn)}\subseteq I^n$ for some fixed $k$ and all $n\gs 1$. On the other hand, there is an $e$ such that $I^e\subseteq \p S$,
so that \[\p^{(ken)}\subseteq \p^nS\cap R\subseteq \p^{n-l},\] 
for some $l$ by Artin-Rees. By Remark \ref{exponents}, taking $c := ke(l+1)$ gives $\p^{(cn)}\subseteq \p^n$, for all $n$. 
\end{proof}

It is worth noting that the previous result is motivated by the work of Swanson.

\begin{theorem}[{Swanson \cite[Theorem 3.4]{Sw1}}] 
\label{ThmISPrimary}
Let $R$ be a Noetherian ring and $I\subseteq R$ be an ideal.
Then, there exists a positive 
integer $c$ and for each $n$ an irredundant primary decomposition 
$\q_{n,1}\cap \cdots \cap \q_{n, s_n}$ of $I^n$ so that for $\p_j := \sqrt{(\q_{n,j})}$, 
$\p_j^{cn}\subseteq \q_{n,j}$, for all $n\gs 1$ and $1\ls j\ls s_n$.
\end{theorem}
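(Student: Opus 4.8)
The statement to prove is Swanson's Theorem~\ref{ThmISPrimary}: for a Noetherian ring $R$ and an ideal $I\subseteq R$, there is a constant $c$ and, for each $n$, an irredundant primary decomposition $I^n=\q_{n,1}\cap\cdots\cap\q_{n,s_n}$ with $\p_j^{cn}\subseteq\q_{n,j}$ for every $j$, where $\p_j=\sqrt{\q_{n,j}}$. The plan is to reduce the problem to a statement about a finite list of saturations and then invoke the linear-equivalence machinery (Theorem~\ref{IS}) one prime at a time. First I would invoke Brodmann's Theorem~\ref{ThmBro} to fix once and for all the finite set $A(I)=\bigcup_n\Ass_R(R/I^n)=\{\p_1,\ldots,\p_t\}$; every primary component appearing in any decomposition of any $I^n$ is $\p_j$-primary for one of these $t$ primes, so it suffices to produce a single $c$ that works simultaneously for all of them.

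Next I would make the key observation, already signposted in the discussion preceding the theorem: a $\p_j$-primary component of $I^n$ can be extracted canonically as a saturation. Concretely, for each $\p=\p_j\in A(I)$ choose an ideal $J=J_\p$ (for instance a finite product or intersection of the other primes in $A(I)$, or any ideal that is contained in exactly those associated primes not contained in $\p$) so that $I^n:\langle J\rangle$ is precisely the intersection of those primary components of $I^n$ whose radicals are contained in $\p$. By choosing $J$ so that among the relevant primes only $\p$ itself is minimal, one arranges that $\Min(I^n:\langle J\rangle)=\{\p\}$ for all $n$; one then further saturates or localizes to peel off the single $\p$-primary component $\q_{n,\p}$. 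The point is that $\q_{n,\p}$ is obtained from $I^n$ by an operation of the form ``saturate with respect to $J_\p$,'' so bounding $\q_{n,\p}$ from below is the same as bounding $I^n:\langle J_\p\rangle$.

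Now I would apply Swanson's linear-equivalence result, Theorem~\ref{IS}: since $A(I)$ is finite, for each $\p\in A(I)$ the $\{I^n:\langle J_\p\rangle\}$-topology is equivalent to the $I$-adic topology (both have the same closure of $0$ once the offending components are removed — this is the standard fact that saturation does not change which ideals are cofinal when the bad primes are excluded, and it can be verified via Theorem~\ref{ThmSc2} or directly), hence there is $h_\p\gs 1$ with $I^{h_\p n}:\langle J_\p\rangle\subseteq I^n$ for all $n$. Combining this with $I^n\subseteq\q_{m,\p}^{\,?}$-type estimates: the containment $\p^n\subseteq\q_{n,\p}$ fails in general, so instead I track the order of vanishing. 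Writing $a_n$ for the largest integer with $\q_{n,\p}\supseteq\p^{a_n}$ is not directly controlled, so the cleaner route is: from $I^{h_\p n}:\langle J_\p\rangle\subseteq I^n$ and the fact that $\q_{n,\p}$ contains $I^n$ while $\q_{kn,\p}$ is cut out of $I^{kn}$ by the same saturation, deduce $\p^{(\,\cdot\,)}$-style comparisons; more precisely, set $c_\p$ large enough (using Remark~\ref{exponents}) that $I^{c_\p n}:\langle J_\p\rangle\subseteq I^n$, and then show $\p^{c_\p n}\subseteq I^{c_\p n}:\langle J_\p\rangle$ localized at $\p$, which gives $\p^{c_\p n}R_\p\subseteq (\q_{n,\p})R_\p$ and hence $\p^{c_\p n}\subseteq\q_{n,\p}$ by $\p$-primariness. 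Finally take $c:=\max_{\p\in A(I)}c_\p$.

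\textbf{Main obstacle.} The delicate point is the bookkeeping in the second step: arranging the auxiliary ideals $J_\p$ so that the saturations $I^n:\langle J_\p\rangle$ really do isolate, across \emph{all} $n$ uniformly, exactly the primary components one wants, and that the resulting decomposition $\q_{n,1}\cap\cdots\cap\q_{n,s_n}$ is genuinely irredundant and genuinely equals $I^n$ (the set $s_n$ of components actually occurring varies with $n$, so one must allow $J_\p$-saturations to be trivial or the whole ring when $\p\notin\Ass(R/I^n)$). Getting a single constant $c$ out of finitely many $h_\p$'s is then routine via Remark~\ref{exponents}; the real content is that finiteness of $A(I)$ plus Theorem~\ref{IS} upgrades the \emph{existence} of equivalent topologies to a \emph{uniform-in-$n$ linear} bound on each primary component.
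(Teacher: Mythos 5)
The paper does not prove this theorem; it states it and cites Swanson's original article, so I can only assess the internal correctness of your argument. The opening reduction (finiteness of $A(I)$ via Brodmann, work one prime $\p\in A(I)$ at a time, take the maximum of the resulting constants) is sound and is the standard scaffolding. The problem is that the logical chain in your third paragraph does not close. You choose $J_\p\not\subseteq\p$, so after localizing at $\p$ the saturation evaporates: $(I^{c_\p n}:\langle J_\p\rangle)R_\p=I^{c_\p n}R_\p$. The containment you then want, $\p^{c_\p n}R_\p\subseteq I^{c_\p n}R_\p$, is false whenever $\p$ is not minimal over $I$ (and even for minimal primes it is not what a constant $c_\p$ with $\p^{c_\p}R_\p\subseteq IR_\p$ would give you). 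Worse, if that containment held, combining it with $I^{c_\p n}:\langle J_\p\rangle\subseteq I^n$ would yield $\p^{c_\p n}\subseteq I^nR_\p\cap R$, which is the intersection of \emph{all} primary components of $I^n$ whose radicals sit inside $\p$, including those belonging to primes $\p'\subsetneq\p$; but $\p^{c_\p n}\subseteq\q_{n,\p'}$ forces $\p\subseteq\p'$, a contradiction as soon as $\p$ is embedded. So the target containment is not merely unproved, it is impossible by this route.

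The deeper issue is that Theorem \ref{IS} compares a saturated filtration to the $I$-adic filtration, which is precisely the wrong kind of estimate for embedded primes: you need to bound how deep a $\p$-\emph{primary component} of $I^n$ can be, and for embedded $\p$ that component is not determined by the saturation $I^n:\langle J_\p\rangle$ (which keeps the smaller components too), nor by localizing at $\p$. What a correct proof must supply, and what your sketch does not, is a uniform linear Artin--Rees type estimate for the embedded components — Swanson obtains this by passing to the Rees or extended Rees algebra, where the infinitely many decompositions of the $I^n$ are governed by finitely many homogeneous primes, and the linearity is extracted there before descending to degree $n$. Also note that your appeal to Theorem \ref{IS} already requires the $\{I^n:\langle J_\p\rangle\}$-topology to be equivalent to the $I$-adic topology, which is a genuine hypothesis (Schenzel's Theorem \ref{ThmSc2} gives the criterion) rather than the ``standard fact'' you assert; you would need to verify it for each $J_\p$. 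For the minimal primes of $I$ your strategy does work (with $c_\p$ such that $\p^{c_\p}R_\p\subseteq IR_\p$), but the embedded case is where the content of the theorem lies, and there the approach breaks down.
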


\subsection{A Uniform Chevalley Theorem}
In order to study the Uniform Symbolic Property, it is  useful to have a general version of the Zariski-Nagata Theorem (Theorem \ref{ThmGralZN}). If $(R,\m,K)$ is not regular, one cannot guarantee that $\p^{(n)}\subseteq \m^n$. Our goal is to show that for complete local domains there exists a constant $h$, not depending of $\p$, such that $\p^{(hn)}\subseteq \m^n,$ which can be seen as a uniform version of 
Chevalley's theorem.

\begin{theorem}[{Chevalley}]
Let $(R,\m,K)$ be a complete local ring, $M$ be a finitely generated module,
and let $\{ M_i \}$ be a nonincreasing sequence of submodules. 
Under these conditions, $\displaystyle\bigcap_{i\in\NN} M_i=0$ if and only if
for every integer $t$ there exists $i$ such that $M_i\subseteq \m^t M.$
\end{theorem}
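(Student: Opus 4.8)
The plan is to prove the two implications separately; the forward one is soft, and the completeness of $R$ is needed only for the converse. For the direction ``for every $t$ there is $i$ with $M_i \subseteq \m^t M$'' $\Rightarrow$ ``$\bigcap_i M_i = 0$'', I would just observe that, the $M_i$ being nonincreasing, $\bigcap_i M_i \subseteq M_i \subseteq \m^t M$ for the appropriate index $i$, so $\bigcap_i M_i \subseteq \bigcap_t \m^t M = 0$ by the Krull intersection theorem applied to the finitely generated module $M$ over the Noetherian local ring $R$.

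For the converse, assume $\bigcap_i M_i = 0$, fix $t$, and aim to produce $i$ with $M_i \subseteq \m^t M$. The key structural input is that each $M/\m^m M$ has finite length (it is finitely generated over the Artinian ring $R/\m^m$), so the nonincreasing chain of submodules $(M_i + \m^m M)/\m^m M$ of $M/\m^m M$ stabilizes; I would choose for each $m \geqslant t$ an index $i_m$ past which it is constant and, after replacing $i_{m+1}$ by $\max(i_{m+1},i_m)$ if needed, arrange $i_t \leqslant i_{t+1} \leqslant \cdots$. Setting $V_m := (M_{i_m}+\m^m M)/\m^m M$, the relation $M_{i_{m+1}}+\m^m M = M_{i_m}+\m^m M$ shows that the canonical surjection $M/\m^{m+1}M \twoheadrightarrow M/\m^m M$ carries $V_{m+1}$ onto $V_m$, so $(V_m)_{m\geqslant t}$ is an inverse system with surjective transition maps, and the goal reduces to showing $V_t = 0$.

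I would then distinguish two cases according to the nondecreasing sequence $(i_m)$. If $(i_m)$ is bounded it is eventually equal to some $i_\infty$, and then $M_i + \m^m M = M_{i_\infty}+\m^m M$ for every $i \geqslant i_\infty$ and all large $m$; since every submodule of $M$ is $\m$-adically closed (Krull intersection again), this forces $M_i = M_{i_\infty}$, hence $\bigcap_i M_i = M_{i_\infty}$, so $M_{i_\infty} = 0 \subseteq \m^t M$ and $i := i_\infty$ works. If instead $i_m \to \infty$, then surjectivity of the transition maps yields a surjection $\varprojlim_m V_m \twoheadrightarrow V_t$, while $\varprojlim_m V_m$ sits inside $\varprojlim_m M/\m^m M = \widehat{M} = M$ — this is the only place completeness is used, via the fact that a finitely generated module over a complete Noetherian local ring is $\m$-adically complete — so $\varprojlim_m V_m$ is a genuine submodule $W$ of $M$. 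Each $x \in W$ satisfies $x \in M_{i_m}+\m^m M$ for all $m \geqslant t$; given any $j$, choosing $m$ with $i_m \geqslant j$ gives $x \in M_j + \m^{m'}M$ for all $m' \geqslant m$, hence $x \in M_j$ by closedness, so $x \in \bigcap_j M_j = 0$. Therefore $W = 0$, so $V_t = 0$, i.e.\ $M_{i_t} \subseteq \m^t M$, which is what we wanted.

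The step I expect to be the main obstacle is the inverse-limit argument in the unbounded case: one must know that a compatible family of classes in the finite-length quotients $M/\m^m M$ lifts to an actual element of $M$ (this is precisely completeness), and then that such an element is trapped in every $M_j$. The supporting manipulations — monotonizing $(i_m)$ and upgrading ``congruent modulo every $\m^m M$'' to ``equal'' via $\m$-adic closedness of submodules — are routine but are what makes the bookkeeping go through.
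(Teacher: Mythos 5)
The paper cites Chevalley's theorem as a classical result and gives no proof of it, so there is nothing to compare your argument against; I can only check it, and it is correct. The easy direction is handled cleanly via the Krull intersection theorem. For the hard direction, the combination of (i) stabilization of the chains $(M_i + \m^m M)/\m^m M$ inside the Artinian quotients $M/\m^m M$, (ii) $\m$-adic closedness of submodules, i.e.\ $\bigcap_m (N + \m^m M) = N$ for $N \subseteq M$, and (iii) completeness in the form $M \cong \varprojlim_m M/\m^m M$ is exactly the right toolkit, and the case split on whether the stabilization indices $(i_m)$ are bounded is genuinely needed: the inverse-limit argument alone would only give $W = M_{i_\infty}$ in the bounded case, and you correctly supply the separate observation that eventual stabilization modulo every $\m^m M$ together with closedness forces $M_i = M_{i_\infty}$ for all $i \geqslant i_\infty$, whence $M_{i_\infty} = \bigcap_i M_i = 0$. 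The surjectivity of $\varprojlim_m V_m \twoheadrightarrow V_t$ is immediate for a tower indexed by $\NN$ with surjective transition maps (lift one step at a time), and the only place completeness enters is, as you say, the identification of $\varprojlim_m V_m$ with a genuine submodule of $M$. This is essentially the standard proof of Chevalley's lemma.
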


As a consequence, if $\{J_n\}_{n\gs 1}$ is a descending collection of ideals with $\bigcap_{n\gs 1} J_n = 0$, then 
the $\{J_n\}$ topology is finer than the $\m$-adic topology. In other words, 
for all $n\gs 1$, there exists $t\gs 1$, such that $J_t\subseteq \m^n$.  Once again, we would like
to understand when the symbolic topology is finer than the $\m$-adic topology in not necessarily
complete rings. To do so, it is convenient to introduce another type of saturation.

\begin{notation} 
Let $S\subseteq R$ be a multiplicatively closed set and $L \subseteq R$ an ideal. We write 
$L:\left< S \right>$ for $LR_S\cap R$. 
\end{notation}

Let $I, J$ be ideals of $R$ and take $s \in J$ with the following property: for all $\p \in A(I)$, 
$s\in \p$ if and only if $J\subseteq \p$. Then $I^n : \left<S\right> = I^n : \left<J\right>$, 
for all $n\gs 1$. Consequently, any result about the $\{I^n:\left<S\right>\}$ 
topology recovers the corresponding result about the $\{I^n:\left<J\right>\}$ topology. Moreover, if 
we let $S$ denote the complement of the union of the associated primes of $I$, then by definition, 
$I^{(n)} = I^n : \left<S\right>$, for all $n$. 

The following proposition is implicit in the work of McAdam \cite{Mc2} and Schenzel \cite{Sc2}. We present the proof given by Huneke, Katz and Validashti \cite{HKV}.

\begin{proposition}[{Huneke-Katz-Validashti \cite[Proposition 2.2]{HKV}}]\label{finerthanm} 
Let $(R,\m,K)$ be a local ring with completion $\widehat{R}$. Let $I\subseteq R$ be an 
ideal and $S\subseteq R$ a multiplicatively closed set. Write $\q_1,\ldots, \q_s$ for 
the associated primes of $\widehat{R}$. Then the $\{I^n : \left< S \right>\}$ topology 
is finer than the $\m$-adic topology if and only if $(I\widehat{R}+\q_i)\cap S = \emptyset$  for all $1\ls i\ls s$.
\end{proposition}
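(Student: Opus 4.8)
The plan is to transport the statement to the completion $\widehat R$, where Chevalley's theorem turns ``finer than the $\m$-adic topology'' into the vanishing of an intersection of ideals, and then to analyse that intersection via the associated primes of $\widehat R$ together with the Artin--Rees lemma. First I would record the flat base-change identity $(I^{n}:\langle S\rangle)\widehat R = I^{n}\widehat R:\langle S\rangle$, valid for every $n$: one writes $I^{n}:\langle S\rangle = \ker\!\big(R\to R_{S}/I^{n}R_{S}\big)$ and uses that $R\to\widehat R$ is flat, so that kernels, localizations, and extensions of ideals are all preserved (on the right, $\langle S\rangle$ now refers to the image of $S$ in $\widehat R$). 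Since $R\to\widehat R$ is faithfully flat, an ideal $J\subseteq R$ satisfies $J\subseteq\m^{n}$ if and only if $J\widehat R\subseteq\m^{n}\widehat R=\widehat\m^{\,n}$; combining these two facts, the $\{I^{n}:\langle S\rangle\}$-topology on $R$ is finer than the $\m$-adic topology if and only if the $\{I^{n}\widehat R:\langle S\rangle\}$-topology on $\widehat R$ is finer than the $\widehat\m$-adic one. Applying Chevalley's theorem to the finitely generated $\widehat R$-module $\widehat R$ with the descending chain $M_{n}:=I^{n}\widehat R:\langle S\rangle$, this is equivalent to $\bigcap_{n\gs 1}M_{n}=0$.

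It thus remains to prove that $\bigcap_{n}M_{n}=0$ if and only if $(I\widehat R+\q_{i})\cap S=\emptyset$ for all $i$. The ``only if'' direction is the easy one: if $s\in(I\widehat R+\q_{i})\cap S$, write $s=a+q$ with $a\in I\widehat R$ and $q\in\q_{i}$, and pick $0\neq y\in\widehat R$ with $\Ann_{\widehat R}(y)=\q_{i}$, which exists because $\q_{i}\in\Ass(\widehat R)$. Expanding $s^{n}y=(a+q)^{n}y$ and using $qy=0$, every term involving a positive power of $q$ vanishes, so $s^{n}y=a^{n}y\in I^{n}\widehat R$; since $s^{n}$ is (the image of) an element of $S$, this gives $y\in M_{n}$ for all $n$, hence $0\neq y\in\bigcap_{n}M_{n}$.

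For the ``if'' direction I would argue by contraposition. Suppose $0\neq x\in\bigcap_{n}M_{n}$. The cyclic submodule $\widehat Rx\cong\widehat R/\Ann(x)$ of $\widehat R$ is nonzero, so $\Ass(\widehat Rx)\neq\emptyset$, and $\Ass(\widehat Rx)\subseteq\Ass(\widehat R)=\{\q_{1},\dots,\q_{s}\}$; choose $\q_{i}\in\Ass(\widehat Rx)$ and an element $z\in\widehat Rx$ with $\Ann_{\widehat R}(z)=\q_{i}$. Then $z\neq 0$ and $z\in\bigcap_{n}M_{n}$, since each $M_{n}$ is an ideal, so for every $n$ there is $s_{n}$ in (the image of) $S$ with $s_{n}z\in I^{n}\widehat R\cap\widehat Rz$. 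By the Artin--Rees lemma applied to $\widehat Rz\subseteq\widehat R$ relative to $I\widehat R$, there is $c\gs 0$ with $I^{n}\widehat R\cap\widehat Rz\subseteq I^{\,n-c}\widehat Rz$ for all $n\gs c$. Since $\Ann(z)=\q_{i}$, the isomorphism $\widehat R/\q_{i}\xrightarrow{\;\sim\;}\widehat Rz$, $\bar t\mapsto tz$, identifies $I^{\,n-c}\widehat Rz$ with the ideal $\overline I^{\,n-c}$ of $\widehat R/\q_{i}$, where $\overline I$ is the image of $I\widehat R$; therefore $s_{n}z\in I^{\,n-c}\widehat Rz$ forces $s_{n}\in I^{\,n-c}\widehat R+\q_{i}$ whenever $n\gs c$. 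Taking $n=c+1$ yields $s_{c+1}\in(I\widehat R+\q_{i})\cap S$, contradicting the assumption.

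I expect the main obstacle to be this last direction: the point is to replace an arbitrary nonzero element of $\bigcap_{n}M_{n}$ by one whose annihilator is \emph{exactly} an associated prime of $\widehat R$, and then to combine the Artin--Rees lemma with the identification $\widehat R/\q_{i}\cong\widehat Rz$ in order to compress the infinitely many divisibility conditions $s_{n}z\in I^{n}\widehat R$ into the single membership $s\in I\widehat R+\q_{i}$. By contrast, the passage to $\widehat R$ via faithful flatness and Chevalley's theorem, and the ``only if'' direction, are routine.
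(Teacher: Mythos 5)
Your proposal is correct and follows essentially the same route as the paper: reduce to the complete case by faithful flatness, convert ``finer than the $\m$-adic topology'' into the vanishing of $\bigcap_n M_n$ via Chevalley's theorem, use Artin--Rees for the hard direction, and produce an element with annihilator contained in some $\q_i$ for the easy one. The only cosmetic difference is that you replace $x$ by $z\in\widehat R x$ with $\Ann(z)=\q_i$ exactly so as to invoke the isomorphism $\widehat R/\q_i\cong\widehat Rz$; the paper (and your own last step) only needs $\Ann(x)\subseteq\q_i$, since $s_n z\in I^{n-c}\widehat R z$ already gives $s_n\in I^{n-c}\widehat R+\Ann(z)$ directly.
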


\begin{proof} 
Since $\widehat{R}$ is faithfully flat over $R$, the $\{I^n :_R \left< S \right>\}$ topology is finer than the $\m$-adic topology if and only if 
the $\{I^n\widehat{R} :_{\widehat{R}} \left< S \right>\}$ topology is finer than the $\m\widehat{R}$-adic topology. Thus, we may assume 
that $R$ is a complete local ring with associated primes $\q_1, \ldots, \q_s$. By Chevalley's Theorem, 
the $\{I^n : \left< S \right>\}$ topology is \emph{not} finer than the $\m$-adic topology if and only if \[\bigcap_{n\gs 1} (I^n : \left< S \right>) 
\not = 0.\]
Suppose  $0 \not = f \in \bigcap_{n\gs 1} (I^n : \left< S \right>).$ Then for each $n\gs 1$, there 
exists $s \in S$ such 
that $s \in (I^n:f)$. By applying the Artin-Rees Lemma to $I^n\cap (f)$, we see that for $n$ large, $s \in (0:f) + I^{n-k}$, for some $k$. 
Taking $\q_j$ so that $(0:f) \subseteq \q_j$, we have 
that $(I+\q_j)\cap S \not = \emptyset$. 

On the other hand, suppose that $(I+\q_j)\cap S \not = \emptyset$, for some $j$. Then for all 
$n\gs 1$, $(I^n+\q_j)\cap S \not = \emptyset$. Let $\q_j = (0:f)$. 
Then for each $n$, there exists $s \in S$ such that $s \in I^n + (0:f)$, i.e., $sf\in I^n$. Thus, 
$0 \not = f \in \bigcap_{n\gs 1} (I^n : \left< S \right>).$ 
\end{proof}

Since $R$ is analytically irreducible,  $I = \p$ is prime. By taking  $S = R - \p$ in the previous  proposition, we obtain that the $\{\p^{(n)}\}$ topology is finer that the $\m$-adic topology.

We can now state and show the Uniform Chevalley Theorem.

\begin{theorem}[{Huneke-Katz-Validashti  \cite[Theorem 2.3]{HKV}}]\label{linearch} 
Let $R$ be an analytically unramified local ring. Then there exists $h \gs 1$ 
with the following property: for all ideals $I\subseteq R$ and all multiplicatively closed sets $S\subseteq R$ such that 
the $\{I^n:\left< S \right>\}$ topology is finer than the $\m$-adic topology, 
$I^{hn}:\left< S \right>\subseteq \m ^n$, for all $n\gs 1$. 
\end{theorem}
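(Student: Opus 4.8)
The plan is to reduce to the complete case, use the Uniform Artin--Rees Lemma (Theorem \ref{ThmUAR}) and the Uniform Brian\c{c}on--Skoda Theorem (Theorem \ref{ThmUBS}) to produce a single uniform constant $h$, and push the conclusion back down to $R$ via faithful flatness. First I would complete: since $\widehat{R}$ is faithfully flat over $R$, it suffices by Proposition \ref{finerthanm} to prove the statement for $\widehat{R}$, so I may assume $(R,\m,K)$ is complete (still reduced, since analytically unramified). Write $\q_1,\ldots,\q_s$ for the minimal primes of $R$; by Proposition \ref{finerthanm} the hypothesis that the $\{I^n:\langle S\rangle\}$ topology is finer than the $\m$-adic topology is equivalent to $(I+\q_i)\cap S=\emptyset$ for all $i$, which says that $S$ avoids every minimal prime of $R/(I+\q_i)$ that contains $I$; equivalently, after reducing mod $\q_i$, the ideal $I(R/\q_i)$ has radical of positive dimension in $\widehat{(R/\q_i)_{\p}}$ for the relevant primes $\p$.

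Next I would extract the uniform constant. Apply the Uniform Artin--Rees Lemma (Theorem \ref{ThmUAR}) to the pair of modules $N=R$ sitting inside $M=R$ (or more precisely to handle $I^n\cap(f)$ as in the proof of Proposition \ref{finerthanm}) to get a constant $c_1=c_1(R)$, and the Uniform Brian\c{c}on--Skoda Theorem (Theorem \ref{ThmUBS}) to get $c_2=c_2(R)$, both depending only on $R$ and not on $I$ or $S$. The key mechanism: for $f\in I^{hn}:\langle S\rangle$ there is $s\in S$ with $sf\in I^{hn}$; I want to conclude $f\in\m^n$ by showing that, modulo each minimal prime $\q_i$, the order of $f$ grows linearly with $n$. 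Working in the domain $R/\q_i$, the hypothesis forces $s$ to lie outside the integral closure data of $I$ in a controlled way; combining Brian\c{c}on--Skoda (to pass from $I^{hn}$ to $\overline{I^{hn}}$ and back) with the fact that $s$ is a nonzerodivisor on $R/\q_i$ avoiding the Rees valuations of $I$ relevant to the topology, one gets a bound $v(f)\geq (\text{const})\cdot hn$ for the valuations $v$, hence $f\in (I\text{-adic order})\geq$ linear in $n$, and then via Artin--Rees/Chevalley $f\in\m^n$ once $h$ is taken large enough. Finally, assemble: choose $h$ as an explicit expression in $c_1$, $c_2$, and the number $s$ of minimal primes (this is where Remark \ref{exponents}-style induction is invoked to absorb the ``$n-t$'' into ``$n$'').

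The main obstacle I expect is the uniformity of the passage from ``the topology is finer than the $\m$-adic topology'' to a quantitative linear comparison that does not see $I$. Proposition \ref{finerthanm} is purely qualitative; turning it into the inequality $I^{hn}:\langle S\rangle\subseteq\m^n$ with $h$ independent of $I$ is exactly the content, and it requires that the Uniform Artin--Rees and Uniform Brian\c{c}on--Skoda constants genuinely do not depend on the ideal---which is why Hypothesis \ref{Hyp} and Theorems \ref{ThmUAR}, \ref{ThmUBS} are the right inputs. A subsidiary technical point is that completion of an analytically unramified local ring is reduced but need not be a domain, so one must work minimal-prime by minimal-prime and control how the finitely many $\q_i$ interact; handling the intersection $\bigcap_i$ cleanly, and checking that the hypothesis on $S$ survives reduction modulo each $\q_i$, is the place where care is needed but no deep new idea beyond Proposition \ref{finerthanm} should be required.
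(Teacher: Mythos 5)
Your proposal correctly reduces to the complete reduced case via Proposition \ref{finerthanm} and correctly observes that one must work minimal prime by minimal prime, which agrees with the opening of the paper's sketch. However, from that point the routes diverge, and the one you propose has a genuine gap precisely at the crucial step.

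The paper's argument, once reduced to a complete local domain, relies on Rees' theory of Rees valuations and degree functions together with Izumi's theorem; the latter is the result that, on a fixed complete local domain, all divisorial valuations centered at $\m$ are linearly comparable to each other (and to the $\m$-adic order) with constants depending only on the ring. That is exactly what produces a single $h$ that works for every ideal $I$ and every $S$. Your plan replaces this by the Uniform Artin--Rees Lemma and the Uniform Brian\c{c}on--Skoda Theorem, and then asserts ``one gets a bound $v(f)\gs (\text{const})\cdot hn$\dots hence $f$ is deep in the $I$-adic topology, and then via Artin--Rees/Chevalley $f\in\m^n$.'' The step hidden in ``via Artin--Rees/Chevalley'' is precisely where the proof has to happen: ordinary Artin--Rees and Chevalley compare the $\{I^n\}$-topology with the $\m$-adic topology with a constant that depends on $I$, and the Uniform Artin--Rees Lemma (Theorem \ref{ThmUAR}) makes the constant uniform in $I$ only for a \emph{fixed} pair of modules $N\subseteq M$; applied to $N=(f)$ with $f$ varying, it gives nothing uniform. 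So the jump from a valuation-theoretic bound on $f$ to $f\in\m^n$ with an $I$-independent $h$ is unjustified as written, and filling it in would essentially require Izumi's theorem, which the proposal never invokes. A secondary issue is that Theorems \ref{ThmUAR} and \ref{ThmUBS} are stated under Hypothesis \ref{Hyp}, whereas Theorem \ref{linearch} assumes only that $R$ is analytically unramified; in mixed characteristic the completion need not obviously satisfy any clause of Hypothesis \ref{Hyp}, so invoking those uniform theorems would silently restrict the generality of the statement, whereas the Rees--Izumi route is characteristic-free.
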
 
\begin{proof}[Proof Ideas:]
The statement reduces to the complete local domain case. In this case, we applied Rees' theory of
Rees valuations and degree functions \cite[Theorem 2.3]{R} together with a theorem of Izumi \cite{Iz} to obtain
the result.  
\end{proof}

As a corollary, we  globalize the statement in Theorem \ref{linearch}. 

\begin{corollary}[{Huneke-Katz-Validashti  \cite{HKV}}]\label{global} 
Let $R$ be a Noetherian ring and $J\subseteq R$ an ideal. 
Suppose that $R_\p$ is analytically unramified for all $\p\in A(J)$. Then there exists 
a positive integer $h$ with the following property: for all ideals 
$I\subseteq R$ and multiplicatively closed sets $S$ for which the 
$\{I^n : \left< S \right>\}$ topology is finer than the $J$-adic topology, 
$I^{hn} : \left< S \right> \subseteq J^n$, for all $n$.
\end{corollary}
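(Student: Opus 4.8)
The plan is to reduce the asserted $J$-adic containment to finitely many containments located at the primes of $A(J)$, each of which is handled by the local statement of Theorem \ref{linearch}; Swanson's uniform primary decomposition, Theorem \ref{ThmISPrimary}, is what lets this be done with a single exponent.

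First I would assemble the ingredients. By Brodmann's theorem (Theorem \ref{ThmBro}), $A(J)=\{\p_1,\ldots,\p_s\}$ is finite, and by hypothesis each localization $R_{\p_i}$ is analytically unramified; thus Theorem \ref{linearch} supplies, for each $i$, an integer $h_i\gs 1$ with the property that whenever an ideal $\mathfrak{a}\subseteq R_{\p_i}$ and a multiplicatively closed set $T\subseteq R_{\p_i}$ make the $\{\mathfrak{a}^n:\left<T\right>\}$ topology finer than the $\p_iR_{\p_i}$-adic topology, one has $\mathfrak{a}^{h_im}:\left<T\right>\subseteq(\p_iR_{\p_i})^m$ for all $m\gs 1$. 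Next, applying Theorem \ref{ThmISPrimary} to $J$ itself, I would fix one positive integer $c$ such that for every $n$ there is an irredundant primary decomposition $J^n=\q_{n,1}\cap\cdots\cap\q_{n,t_n}$ whose radicals $\p_{n,j}=\sqrt{\q_{n,j}}$ all lie in $A(J)$ and satisfy $\p_{n,j}^{\,cn}\subseteq\q_{n,j}$. Finally I would set $h:=c\cdot\max_i h_i$ and claim this $h$ works.

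To verify the claim, fix $I$ and $S$ as in the statement, and an integer $n\gs 1$. Since $J^n=\bigcap_j\q_{n,j}$, it suffices to show $I^{hn}:\left<S\right>\subseteq\q_{n,j}$ for each $j$. Put $\p:=\p_{n,j}$ and choose $i$ with $\p=\p_i$. Because $\q_{n,j}$ is $\p$-primary, this containment may be tested after localizing at $\p$, where — using that saturation and powers commute with localization — it becomes $(IR_\p)^{hn}:\left<S_\p\right>\subseteq\q_{n,j}R_\p$, with $S_\p$ the image of $S$ in $R_\p$. By hypothesis the $\{I^n:\left<S\right>\}$ topology is finer than the $J$-adic one; since $J\subseteq\p$, it is then finer than the $\p$-adic one, and this passes to $R_\p$, so the $\{(IR_\p)^n:\left<S_\p\right>\}$ topology is finer than the $\p R_\p$-adic topology. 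Applying Theorem \ref{linearch} inside $R_\p$ with exponent $cn$ gives
\[
(IR_\p)^{h_i cn}:\left<S_\p\right>\ \subseteq\ (\p R_\p)^{cn}\ =\ \p^{cn}R_\p\ \subseteq\ \q_{n,j}R_\p ,
\]
and as $hn\gs h_icn$ the left-hand side contains $(IR_\p)^{hn}:\left<S_\p\right>$, which finishes the verification.

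The step I expect to require the most care is precisely this reduction: making sure that ``finer than the $J$-adic topology'' localizes correctly at each $\p\in A(J)$, and that a containment into a $\p$-primary ideal is faithfully detected after localizing at $\p$, so that Theorem \ref{linearch} — a statement about the maximal-ideal-adic topology of a local ring — can be brought to bear on the $J$-adic topology of a ring that need be neither local nor regular. The role of Theorem \ref{ThmISPrimary} is exactly to absorb the difference between $J^n$ and powers of its associated primes into the single uniform constant $c$; without it one would have to control the embedded components of $J^n$ by hand.
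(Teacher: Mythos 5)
Your proof is correct and follows essentially the same route as the paper's: combine Swanson's uniform primary decomposition (Theorem \ref{ThmISPrimary}) with the uniform Chevalley theorem (Theorem \ref{linearch}) applied at each of the finitely many primes in $A(J)$, and take $h$ to be (the maximum of the local Chevalley constants) times Swanson's constant $c$. The only cosmetic difference is that the paper records the local conclusion globally as $I^{ln}:\langle S\rangle\subseteq \p^{(n)}$ and then composes with $\p^{(cn)}\subseteq\q_{n,j}$, whereas you perform the contraction from $R_\p$ back to $R$ at the very end — the same computation, organized slightly differently.
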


\begin{proof} 
The point of the proof is that we can combine Theorem \ref{linearch} 
with Theorem \ref{ThmISPrimary}. It follows 
from this that $\p_j^{(cn)}\subseteq \q_{n,j}$, for all $n$ and all $j$. 

On the other hand, if $I$, $S$ are such that the $\{I^n : \left< S \right>\}$ topology 
is finer than the $J$-adic topology, then the $\{I^n : \left< S \right>\}$ topology 
is finer than the symbolic topology $\{\p^{(n)}\}$ for any $\p\in A(J)$. 
By our hypothesis on $A(J)$ and Theorem \ref{linearch}, there is a positive 
integer $l$ such that $(I^{ln}:\left<S\right>)_\p \subseteq \p^n_\p$, for 
all $n$ and all $\p\in A(J)$. Therefore, $I^{ln} : \left< S \right> \subseteq \p^{(n)}$, 
for all $\p\in A(J)$ and $n\gs 1$. Combining this with the previous paragraph and 
setting $h := cl$, it follows 
that $I^{hn}:\left<S\right>\subseteq J^n$, for all $I,S$ and $n\gs 1$.
\end{proof}

\subsection{Isolated Singularities}
The purpose of this subsection is to discuss a theorem of
Huneke, Katz and Valisdashti \cite{HKV} which proves that for a large class 
of isolated singularities, the symbolic topology defined by a prime ideal $\p$ is 
uniformly linearly equivalent to the $\p$-adic topology. We sketch their proof that for such isolated singularities $R$, there exists 
$h\gs 1$, independent of $\p$, such that for all primes $\p\subseteq R$, $\p^{(hn)}\subseteq \p^n$, 
for all $n$.  The following theorem is our focus:


\begin{theorem}\label{mainintro}[Huneke-Katz-Validashti  \cite{HKV}] Let $R$ be an equicharacteristic local domain 
such that $R$ is an isolated singularity. Assume that $R$ is either essentially of finite 
type over a field of characteristic zero or $R$ has positive characteristic and is $F$-finite. 
Then there exists $h\gs 1$ with the following property. For 
all ideals $I\subseteq R$ such that the symbolic topology of $I$ is equivalent to the 
$I$-adic topology, $I^{(hn)}\subseteq I^n$, for all $n\gs 1$.
\end{theorem}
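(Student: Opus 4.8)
The plan is to reduce the theorem to the Uniform Chevalley Theorem (Theorem \ref{linearch}) applied to a carefully chosen ring, namely the completion of $R$ localized and modded out appropriately, so that the isolated-singularity hypothesis forces the relevant topologies to be finer than an $\m$-adic topology. The key point is that an isolated singularity means that $R_\q$ is regular for every non-maximal prime $\q$, and in particular for every $\q$ not equal to $\m$; this is what will let us transfer a uniform bound obtained over an Artinian-type quotient back to a statement about $I^{(hn)} \subseteq I^n$.

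First I would set up the reduction: by faithful flatness of completion (as in the proof of Proposition \ref{finerthanm}) and by passing to $\widehat{R}$, which is still an equicharacteristic complete local domain with an isolated singularity, we may assume $R$ is complete. Then, given an ideal $I$ whose symbolic topology is equivalent to the $I$-adic topology, the goal $I^{(hn)} \subseteq I^n$ with $h$ independent of $I$ must be extracted. The strategy is the following: localize at each minimal prime $\p$ of $I$; since $R$ is an isolated singularity and $\p \neq \m$, the ring $R_\p$ is regular, so $I R_\p$ behaves well and $I^{(n)} R_\p = (IR_\p)^{(n)}$ is controlled. The real content is a global-to-local argument: we need a \emph{uniform} $h$ working simultaneously for all such $I$.

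The main technical device is to apply Theorem \ref{linearch} not to $R$ itself but to a ring where $I$ becomes $\m$-primary, so that ``finer than the $\m$-adic topology'' gives what we want. Concretely, since the symbolic and $I$-adic topologies agree, by Theorem \ref{ThmSc2} and Theorem \ref{IS} we know the symbolic topology is ``well-behaved'', and the isolated singularity hypothesis (via the fact that $R_\p$ is regular for $\p \neq \m$, hence $R_\p$ satisfies Zariski--Nagata, Theorem \ref{ThmGralZN}) lets us compare $I^{(hn)}$ to $\m_\p^{n}$-type conditions in the localizations. One then invokes the globalized Uniform Chevalley statement, Corollary \ref{global}, with $J = I$: this produces a single $h$ (depending only on $R$, via the constant from Theorem \ref{linearch} together with the primary-decomposition constant from Theorem \ref{ThmISPrimary}) such that whenever a topology $\{L^n : \langle S\rangle\}$ is finer than the $I$-adic topology, $L^{hn} : \langle S \rangle \subseteq I^n$. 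Taking $L = I$ and $S$ the complement of the union of the associated primes of $I$, the symbolic-equals-adic hypothesis guarantees the finer-than condition, and we conclude $I^{(hn)} \subseteq I^n$.

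The hard part will be justifying that a single constant $h$ from Theorem \ref{linearch} genuinely suffices across the infinite family of ideals $I$ with equivalent topologies — that is, checking that the hypotheses of Corollary \ref{global} are actually met by the pair $(I, S)$ for \emph{every} such $I$, and that the $F$-finite or characteristic-zero essentially-finite-type hypothesis is what makes $R$ (and its relevant localizations/quotients) satisfy Hypothesis \ref{Hyp} so that the Uniform Briançon--Skoda and Uniform Artin--Rees inputs behind Theorem \ref{linearch} apply. The isolated-singularity assumption is exactly what is needed to ensure that all the rings $R_\p$ for $\p \in A(I) \setminus \{\m\}$ are regular, eliminating any local obstruction and allowing the global argument to close; verifying this compatibility carefully is where the proof's weight lies.
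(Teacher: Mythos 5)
Your plan to apply Corollary \ref{global} with $J = I$ has a fatal flaw: the constant $h$ produced by Corollary \ref{global} depends on the reference ideal $J$. If you take $J = I$, you obtain a constant $h = h(I)$ that works for that particular $I$, which is exactly Proposition \ref{equivalent} — the \emph{non-uniform} statement. Nothing in the Uniform Chevalley Theorem or its globalization bootstraps this into a single $h$ working for all $I$ simultaneously; Chevalley-type results only relate an arbitrary topology to a \emph{fixed} $J$-adic one, and here $J$ must stay fixed.

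The missing idea is the role of the Jacobian ideal and Theorem \ref{HH}. The isolated-singularity hypothesis is used not primarily to say $R_\p$ is regular for $\p \neq \m$, but to ensure the (square of the) Jacobian ideal $J$ is $\m$-primary. Theorem \ref{HH} provides the crucial link between symbolic and ordinary powers of $I$, namely $J^n I^{(kn+ln)} \subseteq (I^{(l+1)})^n$, valid uniformly in $I$. Corollary \ref{global} is then applied to this fixed $J$ (not to $I$) to obtain $I^{(h_0 n)} \subseteq J^n$ for all $I$ in the relevant family, and combining the two (with a clever choice of parameters) yields $J^2 I^{(Br)} \subseteq I^r$. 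Picking a nonzerodivisor $c \in J^2$ gives $I^{(Bn)} \subseteq (I^n : c)$, and only then does the Uniform Artin–Rees Theorem finish, turning the ``up to multiplication by $c$'' containment into an honest $I^{(hn)} \subseteq I^n$. Your proposal invokes uniform Artin–Rees only as something ``behind'' Theorem \ref{linearch}; in fact it is a separate and essential closing step, and the Hochster–Huneke Jacobian estimate is entirely absent from your argument. Without it, there is no mechanism connecting $I^{(\text{big})} \subseteq J^n$ back to $I^n$.
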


There are three crucial ingredients in the proof of this theorem:
the relation between the Jacobian ideal and symbolic powers established in \cite{HHpowers}, the uniform Artin-Rees Theorem (Theorem \ref{ThmUAR}), and a uniform Chevalley Theorem (Theorem \ref{linearch}). 

We now focus on a result needed to prove uniform bounds for isolated singularities. This result comes from
the work of Hochster and Huneke \cite{HHpowers}, where the following property of the Jacobian ideal plays a crucial role 
in the main results concerning uniform linearity of symbolic powers over regular local rings:

\begin{theorem}[{Hochster-Huneke \cite[Theorem 4.4]{HHpowers}}]\label{HH} Let $R$ be a an equidimensional local ring essentially of finite type over a field $K$ of 
characteristic zero. Let $J$ denote the square of the Jacobian ideal of $R$ over $K$. Then there exists $k\gs 1$ 
such that \[J^n I^{(kn+ln)}\subseteq (I^{(l+1)})^n,\] for all ideals $I$ with 
positive grade and all $l, n \gs 1$.
\end{theorem}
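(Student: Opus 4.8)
The plan is to reduce to the known geometric statement about multiplier ideals via a trick that trades the Jacobian ideal for a test element, following the strategy of Hochster and Huneke. First I would recall the key input from the theory of multiplier (or tight closure test) ideals: for a $d$-dimensional regular ring, the symbolic power comparison $I^{(dn)} \subseteq I^n$ of Theorem \ref{USP-Poly-CharZero} and Theorem \ref{USP-Poly-CharP} is really a statement about the containment $I^{(kn+ln)} \subseteq \overline{(I^{(l+1)})^n}$ holding \emph{up to integral closure} on the generic level, and the Jacobian ideal is precisely what measures the failure of $R$ to be regular. More precisely, the Lipman--Sathaye theorem (or its characteristic $p$ analogue in \cite{HHpowers}) tells us that if $R$ is essentially of finite type over a field $K$ with $R$ generically smooth, and $\overline{R}$ is the normalization, then the Jacobian ideal $\mathfrak{J}_{R/K}$ multiplies $\overline{R}$ into $R$ after suitable adjustments, and in fact $\mathfrak{J}_{R/K}$ annihilates the relevant obstruction modules.

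The main steps, in order, would be: (1) Reduce to the case $K$ algebraically closed and $R$ a domain, by base change (using that forming Jacobian ideals and symbolic powers commutes suitably with such extensions). (2) Pass to a Noether normalization $A = K[x_1,\dots,x_d] \hookrightarrow R$ with $R$ module-finite and generically separable over $A$; then the Jacobian ideal $\mathfrak{J} = \mathfrak{J}_{R/A}$ is nonzero and, by Lipman--Sathaye, $\mathfrak{J} \cdot \overline{R}^{1/?}$ (resp. $\mathfrak{J}\cdot(R:_{\mathrm{Frac}(R)}\overline{R})$ in the tight closure setup) lands inside $R$. (3) On the polynomial ring $A$, apply the Ein--Lazarsfeld--Smith / Hochster--Huneke containment (Theorem \ref{USP-Poly-CharZero}, \ref{USP-Poly-CharP}) to the contraction $I \cap A$-type ideals, or rather work directly with the fact that over $A$ one has $\mathfrak{a}^{(dn)} \subseteq \mathfrak{a}^n$ for all ideals $\mathfrak{a}$; the correct version needed is the statement that the $kn$-th symbolic power is contained in the integral closure of the $n$-th ordinary power of the $(l+1)$-st symbolic power, with $k$ the uniform Briançon--Skoda-type constant coming from Theorem \ref{ThmUBS}. (4) Transfer back to $R$ using $\mathfrak{J}$: an element of $I^{(kn+ln)}$, when pushed to $\overline{R}$ and then to $A$ via a trace/contraction argument, lies in the appropriate ideal of $A$; multiplying back by $\mathfrak{J}^n$ (one factor of $\mathfrak{J}$ for each of the $n$ ``levels'' being crossed) brings it into $(I^{(l+1)})^n$. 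The exponent $n$ on $\mathfrak{J}^n$ is exactly what the $n$-fold iteration of the single-step Lipman--Sathaye estimate produces, which is why the statement has $J^n$ with $J = \mathfrak{J}^2$ (the square appears because one needs the Jacobian for both the containment direction and the colon/contraction direction).

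The hard part will be step (4): controlling the powers correctly so that exactly $\mathfrak{J}^n$ (not $\mathfrak{J}^{cn}$ for some larger $c$) suffices, and making sure the ``level shift'' from $I^{(kn+ln)}$ down to $(I^{(l+1)})^n$ is uniform in $l$ and $n$ simultaneously. This requires a careful bookkeeping of a Skoda-type iteration: one writes $I^{(kn+ln)} = I^{(kn)} \cdot I^{(ln)} \cdot (\text{correction})$ heuristically, and at each of the $n$ stages of passing from a symbolic power to an ordinary power one pays a single factor of the Jacobian, invoking the uniform Briançon--Skoda theorem (Theorem \ref{ThmUBS}) to absorb the constant $k$ and the uniform Artin--Rees lemma (Theorem \ref{ThmUAR}) to handle the passage between $R$ and $\overline{R}$. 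In characteristic $p$ one instead runs the argument with Frobenius powers $[q]$ in place of ordinary powers, applies the pigeonhole/flatness argument as in the proof of Theorem \ref{USP-Poly-CharP}, uses that $\mathfrak{J}$ (or its square) is a common test element, and then lets $q \to \infty$; the characteristic zero case follows by reduction to characteristic $p$ as in \cite{HHCharZero}. I expect the combinatorial identity $kn + ln$ versus $n(l+1)$, and verifying it is sharp enough for the iteration to close after exactly $n$ steps, to be the delicate point.
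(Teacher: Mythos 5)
First, note that the survey does not prove Theorem~\ref{HH}: it is quoted as a cited result from Hochster--Huneke \cite[Theorem 4.4]{HHpowers} with no argument supplied, so there is no in-paper proof for your attempt to match. What follows is an assessment of whether your proposed route is viable.

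Your final paragraph correctly identifies the engine of the actual proof: a characteristic-$p$ tight-closure argument in which the Jacobian ideal supplies completely stable test elements, with characteristic zero handled by reduction to characteristic $p$. That part is right and should be the backbone. The middle steps, however, are not a workable reduction. You propose to pass to a Noether normalization $A \hookrightarrow R$, apply Theorems~\ref{USP-Poly-CharZero} and \ref{USP-Poly-CharP} over the polynomial ring $A$ to ``the contraction $I \cap A$-type ideals,'' and transfer back with a Jacobian correction via trace and contraction. But the theorem concerns arbitrary ideals $I \subseteq R$ of positive grade and their symbolic powers computed in $R$; there is no natural ideal of $A$ whose symbolic powers control $I^{(m)}$, and $I \cap A$ certainly does not (for instance $I$ may be a height-one prime not extended from $A$). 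Worse, the survey's own treatment of symbolic powers under extensions shows that relating powers across $A \hookrightarrow R$ forces the degree $[R:A]$ to enter as an exponent (Lemma~\ref{lemma for descent}), and without finiteness the comparison can fail entirely (Example~\ref{uniform powers counterexample}); either way the uniformity in $n$ required by the statement would be destroyed. Noether normalization does appear in the real proof, but only inside the derivation of the test-element property of the Jacobian via Lipman--Sathaye, not as a conduit for the symbolic-power containment itself.

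The Hochster--Huneke argument stays entirely within $R$ and is a direct adaptation of the proof of Theorem~\ref{USP-Poly-CharP}, with its two regularity-dependent steps replaced. The pigeonhole $\p^{hq} R_\p \subseteq \p^{[q]} R_\p$, which used that $\p R_\p$ is generated by $h$ elements in a regular local ring, becomes a pigeonhole on a minimal reduction of $I R_\p$, whose number of generators is bounded by the analytic spread and hence by $\dim R$; this lands only in the integral closure of the relevant Frobenius power, and returning from the integral closure costs a Jacobian factor via the tight-closure Brian\c{c}on--Skoda theorem. The step where regularity gave the containment for free at the end is replaced by multiplication by a test element, again paid for by a factor of the Jacobian ideal. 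Those two payments are the two factors comprising $J = \mathfrak{J}^2$. The exponent on $J^n$ is created at the point where the argument passes to $n$-th powers --- the analogue of the step ``$\p^{n^2 h} f^{an} \subseteq (\p^n)^{[q]}$'' in the proof of Theorem~\ref{USP-Poly-CharP} --- since each of the $n$ factors now requires its own Jacobian correction. Your intuition of ``one Jacobian factor per level'' is therefore roughly right, but it must be attached to this $n$-th-power step inside $R$, not to an iteration carried out over a polynomial subring.
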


Moreover, a parallel result is proved for $F$-finite local rings in characteristic $p$
with isolated singularity \cite{HKV}. In this case, $J$ can be chosen to be a fixed $\m$-primary ideal, though
not necessarily the square of the Jacobian ideal. 

The main point of the proof of the main result on isolated singularities is the following theorem. 

\begin{theorem}[Hunke-Katz-Validashti \cite{HKV}]
\label{lem1} 
Let $R$ be a Noetherian ring in which the 
uniform Artin-Rees lemma holds and $J \subseteq R$ an
ideal with positive grade. Assume that $R_\p$ is analytically unramified for all $\p\in A(J)$. 
Let $\mathcal{J}$ denote the collection of ideals $I \subseteq R$ for which the
$\{I^{(n)}\}_{n\gs 1}$ topology is finer than the $J$-adic topology. Suppose there exists $k \gs 1$ 
with the following property: for all ideals
$I \in \mathcal{J}$,
\[J^n I^{(kn+ln)}\subseteq (I^{(l+1)})^n,\] for all $l, n \gs 1$. Then there exists a positive integer $h$ such 
that for all ideals $I \in \mathcal{J}$, $I^{(hn)}\subseteq I^n$, for all $n$.
\end{theorem}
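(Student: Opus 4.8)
The goal is to extract a uniform exponent $h$ out of the hypothesized inclusion $J^n I^{(kn+ln)}\subseteq (I^{(l+1)})^n$ together with the uniform Artin--Rees lemma. The strategy is a two-step reduction: first use the uniform Artin--Rees lemma to pass from the family $\{(I^{(l+1)})^n\}$ back to ordinary powers $\{I^n\}$ at a uniform cost, and second set the auxiliary parameter $l$ in the hypothesis to a value that both makes $J^n$ disappear and absorbs that cost. I would proceed as follows.

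First I would record what uniform Artin--Rees buys us here. Since $R$ satisfies the uniform Artin--Rees hypothesis, there is a constant $c=c(R)$, independent of everything else, so that for all ideals $\a\subseteq R$ and all $n\geq c$ one has $\a^n\cap L\subseteq \a^{n-c}L$ for a fixed module $L$; more relevantly, one applies the uniform Artin--Rees statement to the pair $J$ and $R$ (or rather to the Rees-type module governing the $I^{(l+1)}$'s) to conclude that there is a uniform $c$ with $(I^{(l+1)})^n\subseteq I^{(l+1)}I^{(l+1)(n-1)}\subseteq\cdots$, and, crucially, that a high enough ordinary power of $I^{(l+1)}$ lands inside $I^{n}$ once $l$ is large compared to $c$. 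The clean way to phrase the needed consequence: because $I\in\mathcal J$ means $\{I^{(n)}\}$ is finer than the $J$-adic topology, and $J$ has positive grade, the uniform Artin--Rees lemma (Theorem~\ref{ThmUAR}) gives a uniform $c$ such that $I^{(n+c)}\subseteq I^{n}J$ — or at least $I^{(n+c)}\cap (\text{something}) \subseteq$ a smaller symbolic power — for all $I\in\mathcal J$ and all $n$. I would pin down the precise uniform statement of this form that is actually used (this is the technical heart and where I expect to spend the most care).

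Second, with $c$ in hand, choose $l$ as a fixed function of $c$ and $k$ — say $l:=c$ or $l:=$ a small explicit multiple — and feed it into the hypothesis: $J^n I^{(kn+ln)}\subseteq (I^{(l+1)})^n$. Now $J^n$ is annihilated "from below'' because, by the finer-topology assumption plus uniform Artin--Rees, $I^{(kn+ln)}$ is itself contained in $J^{n}\cdot(\text{stuff})$, so one can cancel or dominate the $J^n$ on the left; and on the right, $(I^{(l+1)})^n\subseteq I^{(l+1)n-c\cdot(\cdots)}\subseteq I^{n}$ once $l$ was chosen big enough relative to $c$, again by uniform Artin--Rees applied to the symbolic filtration. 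Tracking the arithmetic, one gets $I^{(hn)}\subseteq I^n$ for all $n$ with $h:=k+l$ (or a slightly larger explicit constant built from $k$ and $c$), uniformly over all $I\in\mathcal J$. Finally I would remark that $h$ depends only on $R$ (through $c$) and on $k$, exactly as required, and note that the case $n<c$ or small $n$ is handled trivially by enlarging $h$ if necessary.

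**Main obstacle.** The delicate point is the bookkeeping in the first step: making the uniform Artin--Rees lemma yield a clean, uniform-in-$I$ comparison between the symbolic filtration $\{I^{(n)}\}$, the $J$-adic filtration, and the powers $\{(I^{(l+1)})^n\}$, all with a single constant $c$ valid for every $I\in\mathcal J$ simultaneously. The subtlety is that $I$ varies over an infinite family, so one must be sure the Artin--Rees constant genuinely does not depend on $I$ — this is exactly what the \emph{uniform} Artin--Rees lemma provides, but one has to apply it to the right fixed data (the ideal $J$ and the ring $R$, not to $I$), and then transfer the conclusion to each $I$ via the hypothesis that $\{I^{(n)}\}$ refines the $J$-adic topology together with Theorem~\ref{ThmISPrimary} and Theorem~\ref{linearch}. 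Once that uniformity is secured, the rest is elementary exponent arithmetic.
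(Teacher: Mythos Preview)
Your proposal has a genuine gap in how you plan to invoke the uniform Artin--Rees lemma. Recall that UAR (Theorem~\ref{ThmUAR}) is a statement about a \emph{fixed} pair of finitely generated modules $N\subseteq M$: it provides a constant $q$ with $I^nM\cap N\subseteq I^{n-q}N$ for all ideals $I$. It does \emph{not} let you compare $(I^{(l+1)})^n$ directly with $I^n$, nor does it give anything like ``$I^{(n+c)}\subseteq I^nJ$'' as you suggest, because in those formulations there is no fixed submodule being intersected with. Your first step, ``use UAR to pass from $(I^{(l+1)})^n$ back to $I^n$ at uniform cost,'' therefore does not go through as stated, and without it your exponent arithmetic in the second step has nothing to stand on.

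The paper's argument repairs exactly this point, and uses two hypotheses you never invoke. First, the assumption that $R_\p$ is analytically unramified for $\p\in A(J)$ is what makes Corollary~\ref{global} (the uniform Chevalley theorem) available: it gives a single $h_0$ with $I^{(h_0n)}\subseteq J^n$ for all $I\in\mathcal J$ and all $n$. Feeding this into the hypothesis (once with $l=0$, then again with $n=2$ and $l=h_0r$) bootstraps to a containment $J^2 I^{(Br)}\subseteq I^r$ for a uniform $B$. Second, the positive-grade hypothesis on $J$ now enters: choose a single non-zerodivisor $c\in J^2$, so that $cI^{(Bn)}\subseteq I^n$, equivalently $I^{(Bn)}\subseteq (I^n:c)$. \emph{Now} UAR applies legitimately, with the fixed pair $(c)\subseteq R$: there is a uniform $q$ with $(I^{n+q}:c)\subseteq I^n$ for all ideals $I$, whence $I^{(B(n+q))}\subseteq I^n$ and $h:=B(q+1)$ works. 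The missing idea in your sketch is precisely this reduction to a \emph{fixed} non-zerodivisor multiplier, which is what makes UAR applicable uniformly over the infinite family $\mathcal J$.
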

\begin{proof}[Sketch of proof:]
By Corollary \ref{global},
we can choose $h_0>k$ so that $I^{(h_0n)}\subseteq J^n$, for all ideals $I \in \mathcal{J}$.
Taking $l = 0$ in Theorem \ref{HH} 
gives $J^n I^{(kn)} \subseteq I^n$, for all $n$ and all ideals $I \in \mathcal{J}$. Thus, 
$
I^{(h_0n)} I^{(kn)}\subseteq I^n
$ 
for all  $n$ and all ideals $I \in \mathcal{J}$. 
On the other hand, if $n = 2$ and $l = h_0r$ in Theorem \ref{HH},
we get 
$$
J^2 I^{(2k+2h_0r)}\subseteq (I^{(h_0r+1)})^2\subseteq I^{(h_0r)} I^{(kr)} 
\subseteq I^r,
$$
which holds for all $r$. Thus there exists a positive integer $B$ such that $J^2 I^{(Br)} \subseteq I^r$ for all $r$.
Choose a non-zerodivisor $ c \in J^2$.
Then $c I^{(Bn)}\subseteq I^n$, for all $n$. 
Thus, 
$I^{(Bn)}\subseteq (I^n:c)$ for all $n$ and all $I$. 
By the uniform 
Artin-Rees Theorem (see Theorem \ref{ThmUAR}),
we find
$q \gs 1$ with the property that 
\[I^{(Bn+Bq)}\subseteq (I^{n+q}:c)\subseteq I^n,\]
for all $n$ and all ideals $I \in \mathcal{J}$. Taking $h := B+Bq$ gives 
$I^{(hn)}\subseteq I^n$, for all $I \in \J$ and all $n\gs 1$, as required.
\end{proof}

We now can show the main result in this subsection.

\begin{theorem}[{Huneke-Katz-Validashti \cite[Theorem 1.2]{HKV}}]
\label{main} 
Let $R$ be an equicharacteristic reduced local ring such that $R$ is an isolated singularity. 
Assume either that $R$ is equidimensional and essentially of finite type over a field of characteristic zero, or that 
$R$ has positive characteristic and is $F$-finite. 
Then there exists $h\gs 1$ with the following property: for all ideals $I$ with positive grade for which the
$I$-symbolic and $I$-adic topologies are equivalent,
$I^{(hn)}\subseteq I^n$, for all $n\gs 1$.
\end{theorem}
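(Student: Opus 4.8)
The plan is to reduce Theorem \ref{main} to the machinery already assembled in this subsection, namely Theorem \ref{lem1} together with the Jacobian--symbolic power inequality of Theorem \ref{HH}. First I would observe that, since $R$ is an isolated singularity, the Jacobian ideal $J$ of $R$ (over the field, in characteristic zero) is $\m$-primary; in particular it has positive grade because $R$ is reduced, so that no minimal prime of $R$ contains $J$, and the non-zerodivisor $c \in J^2$ required in the proof of Theorem \ref{lem1} exists. The hypothesis $A(J) = \{\m\}$ (as $J$ is $\m$-primary) is trivially such that $R_\p$ is analytically unramified for all $\p \in A(J)$, since $R$ is excellent. The uniform Artin--Rees lemma (Theorem \ref{ThmUAR}) applies because $R$, being essentially of finite type over a field or $F$-finite, satisfies Hypothesis \ref{Hyp}.

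Next I would identify the collection $\mathcal{J}$ of Theorem \ref{lem1} in our setting. Because $J$ is $\m$-primary, the $J$-adic topology coincides with the $\m$-adic topology, so $\mathcal{J}$ consists exactly of the ideals $I$ whose symbolic topology $\{I^{(n)}\}$ is finer than the $\m$-adic topology. Now if $I$ has positive grade and the $I$-symbolic topology is equivalent to the $I$-adic topology, then in particular $\bigcap_n I^{(n)} \subseteq \bigcap_n I^n = 0$ (the latter by Krull's intersection theorem, $I$ being proper in a Noetherian local domain, or more carefully using that $I$ has positive grade), hence by Chevalley's Theorem the $\{I^{(n)}\}$ topology is finer than the $\m$-adic topology, i.e.\ $I \in \mathcal{J}$. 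So every ideal to which we wish to apply the conclusion lies in $\mathcal{J}$.

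With these identifications in place, the characteristic-zero case follows immediately: Theorem \ref{HH} supplies the constant $k \gs 1$ with $J^n I^{(kn+ln)} \subseteq (I^{(l+1)})^n$ for all ideals $I$ of positive grade and all $l, n \gs 1$ — in particular for all $I \in \mathcal{J}$ — and then Theorem \ref{lem1} produces the uniform $h \gs 1$ with $I^{(hn)} \subseteq I^n$ for all $I \in \mathcal{J}$ and all $n$, which is exactly the assertion for those $I$ with equivalent symbolic and adic topologies. For the positive-characteristic $F$-finite case, one replaces Theorem \ref{HH} by its $F$-finite analogue mentioned in the text (proved in \cite{HKV}): since $R$ has isolated singularity and is $F$-finite, there is a fixed $\m$-primary ideal $J$ — not necessarily the square of the Jacobian — satisfying the same inequality $J^n I^{(kn+ln)} \subseteq (I^{(l+1)})^n$, and the remainder of the argument (the $J$-adic topology equals the $\m$-adic topology, positive grade gives a non-zerodivisor in $J^2$, and Theorem \ref{lem1} applies verbatim) goes through unchanged.

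The only real obstacle is making sure the hypotheses of Theorem \ref{lem1} and Theorem \ref{HH} are genuinely met rather than merely plausible: concretely, that $J$ (or its $F$-finite substitute) has positive grade so a non-zerodivisor $c \in J^2$ exists, that $A(J)$-localizations are analytically unramified, and — most delicately — that passing from ``equivalent topologies'' to ``$I \in \mathcal{J}$'' is legitimate, which is where one invokes Chevalley's Theorem and the positive-grade assumption on $I$ to guarantee $\bigcap_n I^{(n)} = 0$. Once these verifications are dispatched, there is no further computation: the theorem is a formal consequence of \ref{HH}, \ref{lem1}, and \ref{global}. I would also remark, as the text does, that the characteristic-zero statement requires $R$ equidimensional precisely because that is the hypothesis under which Theorem \ref{HH} is stated.
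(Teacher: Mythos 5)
Your proof follows the same route as the paper: establish that $J$, the square of the Jacobian ideal (or its characteristic-$p$ substitute), is $\m$-primary by the isolated-singularity hypothesis, invoke Theorem~\ref{HH} for the key inequality, verify the standing hypotheses of Theorem~\ref{lem1}, and apply it. You supply more of the verification than the paper (e.g.\ identifying $A(J)=\{\m\}$, spelling out positive grade of $J$ from reducedness, and noting Hypothesis~\ref{Hyp}), which is useful, but the structure is the same.

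One technical slip worth correcting: to conclude that an $I$ with equivalent symbolic and adic topologies lies in $\mathcal{J}$, you invoke Chevalley's Theorem via $\bigcap_n I^{(n)}=0$. As stated in the paper Chevalley's Theorem requires $(R,\m)$ to be \emph{complete}, which $R$ is not assumed to be here; you would need to pass to $\widehat{R}$ and handle the usual bookkeeping. But the appeal to Chevalley is unnecessary: equivalence of topologies already gives, for every $n$, some $m$ with $I^{(m)}\subseteq I^n$, and since $I\subseteq\m$ we have $I^n\subseteq\m^n$, so $I^{(m)}\subseteq\m^n$ directly, i.e.\ the symbolic topology is finer than the $\m$-adic (equivalently $J$-adic) topology. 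This is both more elementary and avoids the completeness issue. Also, $R$ is only assumed reduced, not a domain, so drop the parenthetical invoking a ``Noetherian local domain''; $\bigcap_n I^n\subseteq\bigcap_n\m^n=0$ holds in any Noetherian local ring. With these small repairs, your argument is correct and matches the paper's.
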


\begin{proof} The ring $R$ is excellent in both cases, and so $R$ is analytically unramified (see \cite{F-finExc} for the $F$-finite case). Let $d=\dim(R).$

Suppose first that $R$ is essentially of finite type over a field of 
characteristic zero.
 Let $J$ denote the square of the Jacobian ideal. By Theorem \ref{HH}, 
$J^n I^{(dn+ln)}\subseteq (I^{(l+1)})^n$
for all ideals $I$  with positive grade and all $n\gs 1$ and all $l \gs 0$. Thus, since $J$ is $\m$-primary and 
$R$ is analytically unramified, we may use Theorem \ref{lem1} with $k = d$ to 
obtain the desired result. 

The positive characteristic case follows similarly once  an ideal
$J$ is constructed to play a similar role to the Jacobian ideal. \end{proof}

\medskip

As a consequence of the previous theorem, we obtain the Uniform Symbolic Topologies  Property for isolated singularities.

\begin{theorem}[[Hunke-Katz-Validashti \cite{HKV}]\label{HKV}
\label{prime} Let $R$ be an equicharacteristic local domain 
such that $R$ is an isolated singularity. Assume that $R$ is either essentially of finite 
type over a field of characteristic zero or $R$ has positive characteristic, is $F$-finite and 
analytically irreducible. 
Then there exists $h\gs 1$ with the following property: for 
all prime ideals $\p\not = \m$, $\p^{(hn)} \subseteq \p^n$, for all $n$. 
\end{theorem}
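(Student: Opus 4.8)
The plan is to derive Theorem~\ref{prime} from Theorem~\ref{main} by the standard reduction from arbitrary (non-maximal) primes to ideals of positive grade whose symbolic and adic topologies are equivalent. First I would observe that since $R$ is a domain and $\p \neq \m$, every nonzero prime ideal $\p$ has positive grade (it contains a nonzerodivisor, as $R$ is a domain), so $\p$ lies in the class of ideals to which Theorem~\ref{main} applies, \emph{provided} we can check that the $\p$-symbolic and $\p$-adic topologies are equivalent. The zero ideal is trivial. So the entire content of the reduction is this topological equivalence, and that is where the hypotheses ``isolated singularity'' and ``analytically irreducible'' (resp.\ characteristic zero and essentially of finite type) enter.

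The key step is therefore to verify, for every prime $\p \neq \m$, that the $\{\p^{(n)}\}$ topology is equivalent to the $\{\p^n\}$ topology. The natural tool is Schenzel's Theorem~\ref{ThmSc2} (or equivalently Proposition~\ref{finerthanm} together with the observation that equivalence of the two topologies is a finiteness statement that can be checked on the completion): the $\{\p^{(n)}\}$-topology is equivalent to the $\p$-adic topology precisely when $\dim\big(\widehat{R_\q}/(\p\widehat{R_\q} + z)\big) > 0$ for all $\q \in A(\p) \cap V(\text{something})$ and all $z \in \Ass(\widehat{R_\q})$. Here is where the isolated singularity assumption does the work: for $\q \neq \m$, the local ring $R_\q$ is regular, so its completion is a domain (it has a single minimal prime $z = 0$), and the dimension condition becomes $\dim \widehat{R_\q}/\p\widehat{R_\q} = \dim R_\q/\p R_\q$, which is automatically positive when $\p \subsetneq \q$ and is handled directly when $\p = \q$ (the $\p$-primary case, where symbolic and ordinary powers agree in $R_\q$ and the topologies match locally). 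For $\q = \m$ we use that $R$ is analytically irreducible (in the char.\ $p$ case this is assumed outright; in the char.\ $0$, essentially-of-finite-type case the relevant input is excellence and the domain hypothesis, so $\widehat R$ is reduced and equidimensional, and one checks $\dim \widehat R/(\p\widehat R + z) > 0$ using that $\p \neq \m$ and that $\widehat R$ has a unique minimal prime). Once this is in hand, Proposition~\ref{equivalent} (or Swanson's Theorem~\ref{IS}) already gives a \emph{non-uniform} linear equivalence, but what we actually invoke is Theorem~\ref{main}: since the $I$-symbolic and $I$-adic topologies are equivalent for $I = \p$, and $\p$ has positive grade, Theorem~\ref{main} produces the uniform constant $h$ (independent of $\p$) with $\p^{(hn)} \subseteq \p^n$ for all $n$.

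Finally, I would assemble these pieces: fix the constant $h = h(R)$ coming from Theorem~\ref{main} applied to $R$ (which is excellent, hence analytically unramified, and either equidimensional essentially of finite type over a char.\ $0$ field or $F$-finite of char.\ $p$, matching the hypotheses of Theorem~\ref{main} — in the $F$-finite case one additionally uses analytic irreducibility exactly to guarantee the topological equivalence above). For each prime $\p \neq \m$, the topological equivalence verified in the previous step places $\p$ in the family $\mathcal{J}$ of Theorem~\ref{main}, and we conclude $\p^{(hn)} \subseteq \p^n$ for all $n \gs 1$, as claimed.

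The main obstacle I expect is the topological-equivalence verification at the maximal ideal $\q = \m$: away from $\m$ the isolated-singularity hypothesis trivializes everything (regular local rings are analytically normal), but at $\m$ one genuinely needs to control $\Ass(\widehat R)$ and intersect with $V(\p)$, which is exactly why ``analytically irreducible'' is hypothesized in the $F$-finite case and why excellence plus the domain condition is needed in the characteristic-zero case. Everything else is bookkeeping: checking that nonzero primes in a domain have positive grade, that the maximal-ideal case of the theorem is vacuous (symbolic powers equal ordinary powers), and that the hypotheses of Theorem~\ref{main} are met by $R$.
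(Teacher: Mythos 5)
Your overall strategy is correct and matches what the paper implicitly intends: Theorem~\ref{prime} is obtained from Theorem~\ref{main} by checking that each nonzero prime $\p \neq \m$ has positive grade (trivial in a domain) and that the $\p$-symbolic and $\p$-adic topologies are equivalent, then applying the uniform constant from Theorem~\ref{main}. The reduction and the identification of Schenzel's Theorem~\ref{ThmSc2} as the right tool for the topological-equivalence check, with the isolated-singularity hypothesis handling all $\q \in A(\p)$ with $\q \neq \m$ (since $R_\q$ is regular, hence analytically irreducible, and $\dim R_\q/\p R_\q > 0$), are exactly right. You also correctly isolate $\q = \m$ as the crux of the argument.

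That said, two points need repair. First, the parenthetical suggesting Proposition~\ref{finerthanm} is ``equivalent'' to Schenzel's criterion here is not accurate: Proposition~\ref{finerthanm} characterizes when the symbolic topology is finer than the \emph{$\m$-adic} topology, which is a strictly weaker condition than equivalence to the $\p$-adic topology (indeed every topology coarser than the $\p$-adic one is finer than the $\m$-adic one once $\p \subseteq \m$). Schenzel's Theorem~\ref{ThmSc2} is what you actually need, and Proposition~\ref{finerthanm} would not suffice on its own. Relatedly, the ``$\p=\q$ case'' you flag never arises: in Schenzel's criterion one only ranges over $\q \in A(\p)\cap V(J)$, and by construction of $J$ this set consists of associated primes of powers of $\p$ that \emph{properly} contain $\p$.

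Second, and more substantively, your treatment of $\q = \m$ in the characteristic-zero case asserts that ``$\widehat R$ has a unique minimal prime'' as a consequence of ``excellence and the domain hypothesis.'' That inference does not hold: excellence of the domain $R$ gives $\widehat R$ reduced and equidimensional, but not irreducible (the completed nodal curve is the standard obstruction in dimension one, and analyticity of irreducibility is genuinely an extra condition). What actually makes the argument go through is the isolated-singularity hypothesis: the punctured spectrum of $\widehat R$ is regular, so the irreducible components of $\Spec\widehat R$ meet only at the closed point, and one then either deduces that $\widehat R$ is a domain (via a connectedness argument on the punctured spectrum) or verifies the Schenzel condition branch-by-branch using equidimensionality. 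You gesture at this in your closing paragraph, but the inference as written in the main body skips the work. In the $F$-finite case the hypothesis ``analytically irreducible'' is there precisely so that this step is immediate, which is consistent with the way the survey states the two cases asymmetrically.
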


\subsection{Finite Extensions}

We now present a uniform relation between the extension of an ideal and its radical in the case of a finite extension. This is a key ingredient in the proof that the USTP descends for finite extensions.

\begin{lemma}[{Hunke-Katz-Validashti  \cite{HKVFinExt}}]\label{lemma for descent}
Let $R \subseteq S$ be a finite extension of domains, with $R$ integrally closed. Let $e=[S:R]$, i.e. the degree of the quotient field of $S$ over the quotient field of $R$. If $\q \in \Spec(R)$, then $\left(\sqrt{\q S}\right)^e \subseteq \overline{\q S}$. Furthermore,  if $e!$ is invertible, then $\left(\sqrt{\q S}\right)^e \subseteq \q S$.
\end{lemma}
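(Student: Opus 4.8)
The plan is to work with the trace map and norm of the finite extension to control radicals. First I would reduce to a convenient setting: localizing at $\q$, I may assume $(R,\q)$ is local (the containments in question can be checked locally, since $\sqrt{\q S}$ and $\overline{\q S}$ and $\q S$ all commute with localization at primes of $R$); replacing $\q$ by a prime of $S$ lying over it if necessary, I will track $\sqrt{\q S} = \mathfrak{P}_1 \cap \cdots \cap \mathfrak{P}_t$, the intersection of the primes of $S$ lying over $\q$. The integral closedness of $R$ guarantees that $R$ is a direct summand of $S$ as an $R$-module after passing to the appropriate localization, or at least that the trace form is nondegenerate on the generic fiber; this is what will make the argument go through.

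The key step is the following. Let $x \in \sqrt{\q S}$; I want to show $x^e$ lies in the integral closure $\overline{\q S}$. Consider the characteristic polynomial (or the norm) of $x$ with respect to the field extension $\operatorname{Frac}(S)/\operatorname{Frac}(R)$, which has degree $e$. Since $x$ is integral over $R$ and some power of $x$ lies in $\q S$, I would argue that the non-leading coefficients of the characteristic polynomial of $x$ over $R$ lie in $\q$ (using that they can be expressed, up to integrality, in terms of the conjugates of $x$, each of which is "in the radical" in the appropriate normalization). Then the Cayley–Hamilton relation $x^e + a_{e-1}x^{e-1} + \cdots + a_0 = 0$ with $a_i \in \q$ exhibits $x^e$ as lying in $(a_0, a_1 x, \ldots, a_{e-1}x^{e-1}) \subseteq \q S$ — in fact this shows $x^e \in \q S$ directly once the coefficients are known to be in $\q$, but to get the general (non-invertible) statement I would instead only claim the integral-dependence relation $x^e \in \overline{\q S}$ via the determinantal trick: $x \cdot \sqrt{\q S} \subseteq \sqrt{\q S}$ as modules over a Noetherian ring, so $x$ satisfies a monic equation of degree equal to the number of generators, which can be taken to be $e$ by the primitive element / trace argument, with coefficients in $\q S$, and the determinantal formulation of integral dependence then gives $x^e \in \overline{\q S}$ (or even $\left(\sqrt{\q S}\right)^e \subseteq \overline{\q S}$ by running this for a generating set).

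For the sharper conclusion when $e!$ is invertible, I would upgrade $\overline{\q S}$ to $\q S$ using the hypothesis on characteristic: when $e!$ is a unit, the trace map $\frac{1}{e}\operatorname{tr}$ splits $R \hookrightarrow S$ and, more to the point, the elementary symmetric functions of the $e$ conjugates of $x$ can be solved for in terms of power sums $\operatorname{tr}(x^k)$ via Newton's identities, which require exactly division by $1, 2, \ldots, e$. Each power sum $\operatorname{tr}(x^k)$ with $k \geqslant 1$ lies in $\q$ because $x^N \in \q S$ for some $N$ forces $\operatorname{tr}(x^k) \in \q$ after raising to a suitable power and using that $\q = \q R$ is contracted from $S$ and integrally closed in $R$ — here I would need the fact that $R$ integrally closed means $\q S \cap R$-type ideals behave well, specifically that an element of $R$ integral over $\q$ lies in $\q$. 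Hence all $a_i \in \q$ genuinely, and the characteristic equation gives $x^e \in \q S$ outright, so $\left(\sqrt{\q S}\right)^e \subseteq \q S$.

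The main obstacle I anticipate is making the "conjugates of $x$ lie in the radical" step rigorous without circularity: $x$ lives in $S$, not in a Galois closure, so I must either pass to the integral closure of $S$ in a normal closure $L$ of $\operatorname{Frac}(S)/\operatorname{Frac}(R)$ — tracking how $\q S$ and $\sqrt{\q S}$ extend there and descending the conclusion using that $R$ is integrally closed (so $L^{\text{integral closure}} \cap \operatorname{Frac}(R) $-contractions are controlled) — or else phrase everything in terms of the characteristic polynomial over $R$ directly and argue about its coefficients using the determinant trick on the $R$-module $\sqrt{\q S}$, avoiding conjugates entirely. The cleanest route is probably the latter for the integral-closure statement and a short Galois-closure argument (valid since $e!$ invertible rules out wild ramification obstructions) for the sharp statement. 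I would also need to double-check the degenerate case where $S$ is not free over $R$, handled by localizing so that $S$ becomes free (generic freeness plus the fact that we may check the ideal containments at each prime of $R$).
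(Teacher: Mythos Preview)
Your core idea --- extract from each $x\in\sqrt{\q S}$ a monic relation over $R$ with non-leading coefficients in $\q$ --- is exactly what the paper uses, but you misidentify where the hypothesis on $e!$ enters and therefore overcomplicate both halves. The paper simply invokes the classical fact (Atiyah--Macdonald, Lemma~5.14): since $R$ is integrally closed and $x$ is integral over the ideal $\q$ (because some $x^N\in\q S$ and $S$ is module-finite over $R$), the minimal polynomial of $x$ over $\operatorname{Frac}(R)$ has all non-leading coefficients in $\sqrt{\q}=\q$. As its degree is at most $e$, this gives $x^e\in\q S$ \emph{unconditionally}. Your Galois-closure and Newton's-identities machinery is aimed at reproving exactly this lemma; it can be made to work, but it is unnecessary, and in any case the conclusion $x^e\in\q S$ does not require $e!$ to be invertible.

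The genuine role of $e!$ is different from what you propose. The difficulty is not in showing $x^e\in\q S$ but in passing from ``$x^e\in\q S$ for each $x$'' to ``$(\sqrt{\q S})^e\subseteq\q S$'', since the ideal $(\sqrt{\q S})^e$ is generated by \emph{products} $x_1\cdots x_e$, not by $e$-th powers. When $e!$ is a unit, the polarization identity expresses every such product as a linear combination of $e$-th powers of elements of $\sqrt{\q S}$, and the sharp containment follows. Without that hypothesis, the paper instead notes $(x_1\cdots x_e)^e=x_1^e\cdots x_e^e\in(\q S)^e$, which is an equation of integral dependence of $x_1\cdots x_e$ on the ideal $\q S$, so $x_1\cdots x_e\in\overline{\q S}$. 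Your proposed determinant trick on ``$x\cdot\sqrt{\q S}\subseteq\sqrt{\q S}$'' yields only that $x$ is integral over $R$ with no control on either the degree (which must be $\ls e$, coming from the field extension, not from a module-generator count) or the location of the coefficients; it does not give what you need.
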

\begin{proof}
Let $x \in \sqrt{\q S}$. There is a polynomial $f(T) = T^e+r_1T^{e-1} + \ldots + r_e$ such that $f(x) = 0$, and with $r_i \in \q$ for all $i$  \cite[Lemma 5.14 and Proposition 5.15]{Atiyah}. As a consequence,  $x^e \in \q S$. If $e!$ is invertible, we have that $\left(\sqrt{\q S} \right)^e = (x^e \mid x \in \sqrt{\q S}) \subseteq \q S$. 

We now consider the case where $e!$ is not invertible.  We have that elements of the form $x_1\cdots x_e$, for $x_i \in \sqrt{\q S}$, generate the ideal $\left(\sqrt{\q S}\right)^e$. Then, $(x_1\cdots x_e)^e = x_1^e \cdots x_e^e \in \left(\q S \right)^e$. This implies that $x_1\cdots x_e \in \overline{\q S}$, and thus, $\left(\sqrt{\q S}\right)^e \subseteq \overline{\q S}$.
\end{proof}

\begin{remark}\label{UBS remark}
If $R$ satisfies Hypothesis \ref{Hyp} and $\q^{(bn)} \subseteq \overline{\q^n}$, then there exists $t$ such that 
$\q^{b(t+1)n} \subseteq \q^n$ for all $n \gs 1$. This is a consequence of Theorem \ref{ThmUBS}.
\end{remark}

The following example shows that if the assumption that the extension is finite is dropped from Lemma \ref{lemma for descent}, there does not necessarily exist a uniform $c$ such that for all primes $\q$ in $R$, $\left( \sqrt{ \q S} \right)^c \subseteq \q S$. Computations using Macaulay2 \cite{M2} were crucial to find this example.

\begin{example}\label{uniform powers counterexample}
Let $R=K[a,b,c,d]/(ad-bc)$, which includes in $S=K[x,y,u,v]$ via the map $h: R \longrightarrow S$ given by $h(a) = xy, h(b) = xu, h(c) = yv, h(d) = uv$.

For each integer $A$, let $\q_A$ be the prime ideal in $R$ given by the kernel of the map $f_A\!: R \longrightarrow k[t]$, where $f_A$ is given by
$$f_A(a) = t^{4A}, f_A(b) = t^{4A+1}, f_A(c) = t^{8A+1}, f_A(d) = t^{8A+2}.$$
Let $Q_A = h \left( q_A \right) S$, and $g_A = x u^{4A+1} - v y^{4A+1}$. Then $\left( g_A \right)^{4A} \in Q_A$, but $\left( g_A \right)^{4A-1} \notin Q_A$. As a consequence, $\left( \sqrt{ Q_A} \right)^{4A-1} \nsubseteq Q_A S$.

To check this, fix $A$ and write $\q := \q_A$, $Q := Q_A$ and $g := g_A$. We note that

\begin{enumerate}[(1)]
\item $\q = \left(c-ab, d-b^2, b^{4A}-a^{4A+1}\right)$, so 
\item $Q = \left( y(v-x^2u), u(v-x^2u), x^{4A} \left(x y^{4A+1}-u^{4A} \right) \right)$.
\end{enumerate}

The fact that $g^{4A} \in Q$, but $g^{4A-1} \notin Q$ follows because $\lbrace y(v-x^2u), u(v-x^2u), x^{4A} \left( u^{4A} - y^{4A+1} \right) \rbrace$ is a Gr\"{o}bner basis for $Q$ with respect to the lexicographical order induced by the following order on the variables: $v > u > x > y$. This can be checked applying the Buchberger's algorithm on the generating set 
$$\lbrace f_1 = yv-yux^2, f_2 = u(v-x^2u) = vu-u^2x^2, f_3 = u^{4A} x^{4A} - y^{4A+1}x^{4A+1} \rbrace.$$

In this order, we now have that $\ini(Q) = \left( yv, uv, x^{4A}u^{4A} \right)$.
Moreover,
$$g^n =  \left( x u^{4A} - x^2 y^{4A+1} \right)^n = \sum_{i=0}^n c_i x^i u^{4Ai}x^{2(n-i)} y^{(4A+1)(n-i)}, \textrm{ where } c_i \in \mathbb{Z},$$
so that $g^n$ has leading term $x^n u^{4An}$. If $g^n \in Q$, then 
$$\ini \left( g^n \right) = x^{n} u^{(4A+1)n} \in \left( yv, uv, x^{4A}u^{4A} \right).$$
This happens if and only if $n \gs 4A$, meaning that $g^n \notin Q$ for all $n < 4A$. On the other hand,
$$g^{4A} = \left( x \left(u^{4A} - x y^{4A+1} \right) \right)^{4A} = x^{4A} \left(u^{4A} - x y^{4A+1} \right) \left(u^{4A} - x y^{4A+1} \right)^{4A-1} \in Q .$$
\end{example}

We now present the main result in this subsection.

\begin{theorem}[{Hunke-Katz-Validashti  \cite[
Corollary 3.4.]{HKVFinExt}}]\label{descent} 
\label{ThmFiniteExtensionHKV}
Let $R \subseteq S$ be a finite extension of domains, with $R$ integrally closed, such that both rings satisfy Hypothesis \ref{Hyp}. If $S$ has USTP, then $R$ has USTP.
\end{theorem}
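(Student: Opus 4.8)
The plan is to transfer a symbolic-power containment from $S$ down to $R$ using Lemma \ref{lemma for descent} and Remark \ref{UBS remark}. Suppose $S$ has USTP, with uniform constant $b = b(S)$, so that $\q^{(bn)} \subseteq \q^n$ for every prime $\q \subseteq S$ and all $n \gs 1$. Let $e = [S:R]$ be the degree of the fraction field extension. Fix a prime ideal $\p \subseteq R$; I want to produce a containment $\p^{(cn)} \subseteq \p^n$ with $c$ depending only on $R$ and $S$, not on $\p$.

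First I would compare $\p^{(m)}$ with symbolic powers of primes of $S$ lying over $\p$. If $\q_1, \dots, \q_r$ are the primes of $S$ contracting to $\p$, then by Exercise \ref{ExContractionAssPrimes} and the definition of symbolic powers, there is a containment of the form $\p^{(m)} S \subseteq \q_1^{(m)} \cap \cdots \cap \q_r^{(m)} =: I^{(m)}$, where $I = \sqrt{\p S} = \q_1 \cap \cdots \cap \q_r$ and $I^{(m)}$ denotes the intersection of the $m$-th symbolic powers of its minimal primes. Applying the USTP of $S$ to each $\q_j$ gives $I^{(bm)} \subseteq \q_1^m \cap \cdots \cap \q_r^m \subseteq (\q_1 \cap \cdots \cap \q_r)^m = \left(\sqrt{\p S}\right)^m$ up to an integral-closure or Artin-Rees adjustment. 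Then Lemma \ref{lemma for descent} enters: since $R$ is integrally closed and the extension is finite of degree $e$, we have $\left(\sqrt{\p S}\right)^e \subseteq \overline{\p S}$ (or $\subseteq \p S$ when $e!$ is invertible). Combining, $\p^{(ben)} S \subseteq I^{(ben)} \subseteq \left(\sqrt{\p S}\right)^{en} \subseteq \overline{(\p S)^n} = \overline{(\p^n) S}$.

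Next I would descend back to $R$. Since $R$ is integrally closed and $S$ is a finite (hence integral) extension, $\overline{(\p^n)S} \cap R = \overline{\p^n}$ — the contraction of the integral closure of an extended ideal is the integral closure, because $R$ is normal and $S$ is integral over $R$. Also $\p^{(ben)} \subseteq \p^{(ben)}S \cap R$. Hence $\p^{(ben)} \subseteq \overline{\p^n}$ for all $n \gs 1$, with the constant $be$ independent of $\p$. Finally, since $R$ satisfies Hypothesis \ref{Hyp}, Remark \ref{UBS remark} (an application of the Uniform Brian\c{c}on-Skoda Theorem \ref{ThmUBS}) converts the containment $\p^{(ben)} \subseteq \overline{\p^n}$ into $\p^{(be(t+1)n)} \subseteq \p^n$ for a fixed $t = t(R)$ and all $n \gs 1$. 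Setting $c := be(t+1)$ gives the USTP for $R$.

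The main obstacle I anticipate is justifying the intermediate containment $\p^{(m)}S \subseteq \q_1^{(m)} \cap \cdots \cap \q_r^{(m)}$ and, conversely, controlling $\q_1^{(bm)} \cap \cdots \cap \q_r^{(bm)}$ cleanly by $\left(\sqrt{\p S}\right)^m$; the intersection of the $m$-th powers is only contained in the $m$-th power of the intersection after passing to integral closure or invoking a uniform Artin-Rees bound (which is available since both rings satisfy Hypothesis \ref{Hyp}), and keeping all the resulting constants uniform in $\p$ requires care. A secondary technical point is the identity $\overline{(\p^n)S} \cap R = \overline{\p^n}$, which uses normality of $R$ together with integrality of $S/R$; this is where the hypothesis that $R$ is integrally closed is essential, and it must be checked that the relevant integral-closure operations commute with contraction along the finite extension.
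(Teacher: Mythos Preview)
Your overall strategy---push $\p^{(m)}$ up to $S$, invoke USTP there, apply Lemma \ref{lemma for descent} to pass from $\sqrt{\p S}$ to $\overline{\p S}$, contract back using normality of $R$, and finish with Uniform Brian\c{c}on--Skoda---is exactly the paper's plan. The containments $\p^{(m)} \subseteq \q_j^{(m)}$ and $\overline{(\p^n)S}\cap R = \overline{\p^n}$ that you flag as secondary concerns are fine and are used the same way in the paper.

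The genuine gap is the step you yourself single out: passing from $\q_1^m \cap \cdots \cap \q_r^m$ to $(\sqrt{\p S})^m$. The containment $\bigcap_j \q_j^m \subseteq (\bigcap_j \q_j)^m$ is false in general, and neither ``passing to integral closure'' nor ``uniform Artin--Rees'' repairs it in a way that is uniform in $\p$. Integral closure would require $\bigcap_j \q_j^m \subseteq \overline{(\bigcap_j \q_j)^m}$, but for a valuation $v$ one has $v(\bigcap_j \q_j) \gs \max_j v(\q_j)$, whereas $u \in \bigcap_j \q_j^m$ only yields $v(u) \gs m\cdot\max_j v(\q_j)$; there is no reason the two bounds match. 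Artin--Rees does not apply to this shape of statement at all.

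The paper circumvents this by first replacing $S$ with its normalization and then factoring the field extension as purely inseparable followed by separable (which may be enlarged to Galois). In the purely inseparable piece there is a unique prime over $\p$, so no intersection arises. In the Galois piece the decisive observation is that the element $u$ lies in $R$, hence is Galois-fixed; since the Galois group permutes the $\q_j$ transitively, $u$ lies in \emph{every} $\q_j^{n}$ simultaneously, and therefore $u^{r} \in \q_1^{n}\cdots \q_r^{n} \subseteq (\sqrt{\p S})^{rn}$. This trades the intersection for a product at the cost of an extra exponent $r$, but $r \ls e = [S:R]$ is uniform. Your outline is missing precisely this ``raise to a power to convert intersection to product'' maneuver; once you insert it (together with the tower decomposition), the rest of your argument goes through with constant $b e^2$ in place of $be$, and then Remark \ref{UBS remark} finishes as you indicate.
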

\begin{proof}
We first show that there exists $r$ such that for all $\p \in \Spec(S)$ the following is true: if $\p^{(bn)} \subseteq \p^n$ for all $n$ and  $\q = \p \cap R$, then $\q^{(rbn)} \subseteq \q^n$ for all $n$. Note that $\q^{(n)} \subseteq \p^{(n)}$. It suffices to show that there exists $r$, independent of $\p$, such that $\p^{rn} \cap R \subseteq \q^n$. Indeed, this gives
\[
\q^{(rbn)} \subseteq \p^{(rbn)} \cap R \subseteq \p^{rn} \cap R \subseteq \q^n.
\]
By replacing $S$ by $\overline{S}$, it is enough to show our claim for $S$ integrally closed. It also suffices to show this separately for $R \subseteq T$ and $T \subseteq S$ separately, where $T$ is the integral closure of $R$ in some intermediate field $E$ with $K \subseteq E \subseteq L$, where $K$ is the fraction field of $R$ and $L$ is the fraction field of $S$.

We have two cases to consider:
\begin{enumerate}[(a)]
\item $L$ is purely inseparable over $K$.
\item $L$ is separable over $K$.
\end{enumerate}

Write $e = [S:R]$.

\begin{enumerate}[(a)]
\item $L$ is purely inseparable over $K$.

For every element $u \in S$, $u^k \in R$ for some $k$, and thus $Q = \sqrt{\q S}$.

If $u \in Q^{en} \cap R = \left( Q^e \right)^n \cap R$, then $u \in \left( \overline{\q S} \right)^n \cap R \subseteq \overline{\q^n}$.
We have that  $\overline{\q^n} \subseteq \q^{n-t}$ for $t$ by Theorem \ref{ThmUBS}. The claim now follows from Remark \ref{UBS remark}.

\item By perhaps extending $L$, we can assume $L$ is Galois over $K$. Write $\sqrt{\q S} = \p_1 \cap \ldots \cap \p_k$, and notice that $k \ls e$, since the $P_i$ are permuted by the Galois group.

Let $u \in Q^{e^2 n} \cap R$. Then
\[
u^e \in \left( u^k \right) \subseteq \p_1^{en} \ldots \p_k^{en} = \left( \p_1 \ldots \p_k \right)^{en} \subseteq \left( \sqrt{\q S} \right)^{en},
\]
and by Lemma \ref{lemma for descent}, $\left( \sqrt{\q S} \right)^{en} \subseteq \left( \overline{\q S} \right)^n \cap R$. Then,
\[
u^e \in \left( \overline{\q S} \right)^n \cap R \subseteq \overline{\q^n} \subseteq \q^{n-t}.
\]
\end{enumerate}
\end{proof}

\subsection{Direct Summands of Polynomial rings}
In this subsection we discuss uniform bounds for direct summands of polynomial rings. This includes  affine toric normal rings \cite{MelMonomials}, and  rings on invariants.   Rings corresponding
to the cones of Grassmannian,  Veronese and Segre  varieties are also direct summands of a polynomial ring. If one assumes that the field has characteristic
zero, then the ring associated to the $t \times t$ minors of an $n \times n$ generic matrix is also one of these rings. We continue with the strategy used to prove the Zariski-Nagata Theorem in Subsection \ref{SecZariskiNagata}.
We start by recalling a property of direct summands that was used by \`{A}lvarez-Montaner, the fourth, and fifth author in their study of $D$-modules.

\begin{lemma}[{\cite{AMHNB}}]\label{LemmaRestriction}
Let $R\subseteq S$ let be two finitely generated $K$-algebras.
Let  $\beta\!: S\to R$ be any $R$-linear morphism. Then, for every $\delta\in D^n_K(S)$, we have that $\tilde{\delta}:=\beta\circ\delta_{|_{R}}\in D^n_K(R).$
\end{lemma}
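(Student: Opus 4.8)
The statement to prove is Lemma \ref{LemmaRestriction}: if $R \subseteq S$ are finitely generated $K$-algebras, $\beta\colon S \to R$ is $R$-linear, and $\delta \in D^n_K(S)$, then $\tilde\delta := \beta \circ \delta|_R$ lies in $D^n_K(R)$.

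\medskip

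The plan is to induct on the order $n$ of the differential operator, mirroring the inductive definition of $D^n$ used throughout the paper. First I would observe that $\tilde\delta = \beta \circ \delta|_R$ is $K$-linear, since $\beta$ is $R$-linear hence $K$-linear and $\delta$ is $K$-linear; also $\tilde\delta$ maps $R$ into $R$, so it is at least a candidate element of $\Hom_K(R,R)$. The base case $n = 0$: if $\delta \in D^0_S = S$, it is multiplication by some $s \in S$, and then for $r \in R$ we get $\tilde\delta(r) = \beta(sr) = \beta(s\cdot r)$; hmm, this is not obviously multiplication by an element of $R$ unless $s \in R$. So I need to be a little careful — actually the cleanest induction is on the commutator characterization directly, without trying to identify $\tilde\delta$ explicitly. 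The key computational identity I would establish is: for any $r \in R$, writing $\mu_r$ for multiplication by $r$ (on either ring), one has
\[
[\tilde\delta, \mu_r] = \beta \circ [\delta, \mu_r]|_R
\]
as operators on $R$. Indeed, for $a \in R$, $[\tilde\delta,\mu_r](a) = \beta(\delta(ra)) - r\beta(\delta(a)) = \beta(\delta(ra)) - \beta(r\delta(a)) = \beta\big((\delta\mu_r - \mu_r\delta)(a)\big) = \beta([\delta,\mu_r](a))$, where the second equality uses $R$-linearity of $\beta$ (pulling $r \in R$ out). This is exactly the restriction-composition construction applied to the operator $[\delta,\mu_r] \in D^{n-1}_K(S)$.

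\medskip

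With that identity in hand, the induction runs cleanly. For $n = 0$: $\delta \in D^0_S$ means $[\delta, \mu_s] = 0$ for all $s \in S$; in particular for all $s = r \in R$, so by the identity $[\tilde\delta, \mu_r] = \beta \circ 0 = 0$, hence $\tilde\delta$ commutes with multiplication by every element of $R$, i.e. $\tilde\delta \in D^0_R$ (an operator commuting with all multiplications is itself a multiplication — or one can simply take this as the order-zero condition). For the inductive step, suppose the claim holds for operators of order $\le n-1$ on any pair $R' \subseteq S'$ with any $R'$-linear retraction-type map. Let $\delta \in D^n_K(S)$. For each $r \in R$, $[\delta, \mu_r] \in D^{n-1}_K(S)$ by definition, and by the displayed identity $[\tilde\delta, \mu_r] = \beta \circ [\delta,\mu_r]|_R = \widetilde{[\delta,\mu_r]}$, which by the inductive hypothesis lies in $D^{n-1}_K(R)$. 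Since this holds for all $r \in R$, the inductive definition of $D^n_K(R)$ gives $\tilde\delta \in D^n_K(R)$, completing the induction.

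\medskip

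I expect the main (and really the only) subtlety is the base case and the precise bookkeeping in the commutator identity: one must use that $\beta$ is $R$-linear (not merely $K$-linear) exactly at the point where $r \in R$ is pulled through $\beta$, and one must restrict the source of $\delta$ to $R$ throughout so that all expressions make sense. Note also the hypothesis that $R, S$ are finitely generated $K$-algebras is not essential for this argument — it is inherited from the ambient setup of the section — so I would not dwell on it. No resolution-of-singularities or deep input is needed; this is a formal manipulation, and the proof is short once the commutator identity is isolated.
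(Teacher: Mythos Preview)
Your proof is correct. The commutator identity $[\tilde\delta,\mu_r] = \beta\circ[\delta,\mu_r]|_R$ (valid precisely because $\beta$ is $R$-linear and $r\in R$) is exactly the right engine, and the induction on order goes through cleanly. One small remark on your hesitation at the base case: you can in fact identify $\tilde\delta$ explicitly when $\delta=\mu_s$, since $R$-linearity of $\beta$ gives $\tilde\delta(r)=\beta(sr)=r\beta(s)=\mu_{\beta(s)}(r)$, so $\tilde\delta\in D^0_R$ directly; but your commutator argument for $n=0$ is equally valid and fits the inductive scheme more uniformly.

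As for comparison with the paper: the paper does not actually prove this lemma---it is stated with a citation to \cite{AMHNB} and used as input for Theorem~\ref{ThmZNDS}. Your argument is the standard one and matches what one finds in the cited source.
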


We now give a specific bound for the Chevalley Theorem for homogeneous ideals in a graded direct summand of a polynomial ring.

\begin{theorem}\label{ThmZNDS}
Let $K$ be a field, $S=K[x_1,\ldots,x_n]$,  $\eta=(x_1,\ldots,x_n)S$, $f_1,\ldots,f_\ell\in S$ be homogeneous polynomials, $R=K[f_1,\ldots,f_\ell]$,  $\m=(f_1,\ldots,f_\ell)R$, and $B=\max\{\deg(f_1),\ldots,\deg(f_\ell)\}$. 
Suppose that the inclusion, $R\subseteq S$, splits. Then,
$$
\q^{(Bn)} \subseteq \m^n
$$
for every homogeneous prime ideal $\q\subseteq R.$
\end{theorem}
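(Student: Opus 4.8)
The plan is to run the differential-operator proof of the Zariski--Nagata Theorem (as in Remark \ref{RemDiffPowerMax}) one level up, inside the polynomial ring $S$, and then transport the conclusion down to $R$ through the splitting by means of Lemma \ref{LemmaRestriction}. Concretely, I will establish the two containments
\[
\q^{(Bn)} \ \subseteq\ \eta^{Bn}\cap R \ \subseteq\ \m^n ,
\]
the first being the Zariski--Nagata-type input and the second a purely graded computation.

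For the first containment the initial step is to fix a well-behaved splitting: since $R\subseteq S$ is a degree-preserving inclusion of graded rings, any $R$-linear retraction $S\to R$ decomposes into graded components and its degree-zero component is again an $R$-linear retraction, so I may and will take the splitting $\beta\colon S\to R$ to be homogeneous of degree $0$ with $\beta|_R=\operatorname{id}_R$. Now let $f\in\q^{(Bn)}$. Because $\q^{\dsp{Bn}}$ is $\q$-primary by Proposition \ref{Prop DSP Primary} and contains $\q^{Bn}$ by Proposition \ref{LemmaPowerCont}, and $\q^{(Bn)}$ is the smallest $\q$-primary ideal containing $\q^{Bn}$, we get $\q^{(Bn)}\subseteq\q^{\dsp{Bn}}$; hence $\delta'(f)\in\q$ for every $\delta'\in D^{Bn-1}_R$. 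Given any $\delta\in D^{Bn-1}_S$, Lemma \ref{LemmaRestriction} shows $\widetilde\delta:=\beta\circ\delta|_R\in D^{Bn-1}_R$, so $\beta(\delta(f))=\widetilde\delta(f)\in\q$. Since $\q$ is a proper homogeneous ideal of $R$ and $R_0=K$, we have $\q\subseteq\m=R_+$, so $\beta(\delta(f))$ has vanishing degree-$0$ part; on the other hand, writing $\delta(f)=c+(\text{terms of positive degree})$ with $c\in S_0=K\subseteq R$, homogeneity of $\beta$ together with $\beta|_R=\operatorname{id}$ forces the degree-$0$ part of $\beta(\delta(f))$ to equal $\beta(c)=c$. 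Thus $c=0$, i.e.\ $\delta(f)\in\eta$; as $\delta$ was arbitrary, $f\in\eta^{\dsp{Bn}}$, which equals $\eta^{Bn}$ by Remark \ref{RemDiffPowerMax} applied to $S$.

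For the second containment, $\eta^{Bn}\cap R$ is a homogeneous ideal of $R$ whose degree-$d$ piece is $0$ for $d<Bn$ and lies in $R_d$ for $d\geq Bn$; and for $d\geq Bn$ any spanning monomial $f_1^{a_1}\cdots f_\ell^{a_\ell}$ of $R_d$ satisfies $\sum_i a_i\deg(f_i)=d\geq Bn$, hence $\sum_i a_i\geq n$ because $\deg(f_i)\leq B$, so it lies in $\m^n$; therefore $R_d\subseteq\m^n$ and $\eta^{Bn}\cap R\subseteq\m^n$. Combining the two containments finishes the proof. The step I expect to require the most care is the reduction to a \emph{homogeneous} splitting $\beta$: without it, $\beta$ could manufacture spurious constant terms and the identification of the degree-$0$ part of $\beta(\delta(f))$ with that of $\delta(f)$ would break down; once that is in place, the argument is a faithful transcription of the Zariski--Nagata proof.
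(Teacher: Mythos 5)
Your proof is correct, and it follows the same overall skeleton as the paper's: both establish the chain $\q^{(Bn)}\subseteq\eta^{Bn}\cap R\subseteq\m^n$, both invoke Lemma \ref{LemmaRestriction} to push a differential operator on $S$ down to one on $R$ via the retraction, and both finish with the identical degree count for the second containment. The difference lies in how the first containment is extracted. The paper argues contrapositively with a fixed \emph{homogeneous} $f\notin\eta^{Bn}\cap R$: since such an $f$ is homogeneous of degree $<Bn$, one can choose a single divided-power operator $\delta\in D_K^{Bn-1}(S)$ with $\delta(f)=1$, and then $\widetilde\delta(f)=\beta(1)=1\notin\q$ follows for \emph{any} $R$-linear retraction $\beta$, because $1\in K\subseteq R$ and $\beta|_R=\mathrm{id}$; no normalization of $\beta$ is needed. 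You instead quantify over all $\delta$ and argue directly, which forces you to first replace $\beta$ by its degree-$0$ graded component so that constant terms are preserved under $\beta$; you also spell out that $\q^{(Bn)}$ is the smallest $\q$-primary ideal containing $\q^{Bn}$ to get $\q^{(Bn)}\subseteq\q^{\dsp{Bn}}$ (the paper just asserts this). Your version is a bit heavier in setup, but it buys two things: it works for an arbitrary, not necessarily homogeneous, element of $\q^{(Bn)}$, and it isolates the clean structural fact — useful elsewhere — that a degree-preserving retraction fixes degree-$0$ parts. The homogeneous-splitting reduction you carry out is correct and is a genuinely valid normalization; it is simply a step the paper's choice of $\delta$ lets it sidestep.
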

\begin{proof}
We first show that $\q^{(Bn)}\subseteq \eta^{Bn}\cap R.$
Since $\q$ and $\eta^{Bn}\cap R$ are homogeneous ideals, it suffices to show that if a homogeneous element $f\not\in \eta^{Bn}\cap R,$
then $f\not\in \q^{(Bn)}.$

Since $\eta^{Bn}=\eta^\dsp{Bn}$ and $f$ is homogeneous, there exists an operator $\delta \in D^{n-1}_K(S)$ such that $\delta(f)=1.$ 
Then, 
$\beta\circ \delta (f)=1$ by Lemma \ref{LemmaRestriction}.
Since  
$\beta\circ \delta_{|_{R}} \in D^{n-1}_K(R)$, we have that $f\not\in \q^\dsp{Bn}.$  In particular, $f\not\in \q^{(Bn)}$, since $\q^{(Bn)}\subseteq \q^\dsp{Bn}$.

We now show that $\eta^{Bn}\cap R\subseteq \m^n.$
Let $g\in \eta^{Bn}\cap R.$
Then, $g$ is a linear combination of products $f^{\alpha_1}_1\cdots f^{\alpha_\ell}_\ell$ such that
$$
Bn\ls \deg(f)=\alpha_1\deg(f_1)+\ldots+\alpha_\ell\deg(f_\ell)\ls D(\alpha_1+\ldots + \alpha_\ell).
$$
Then, $\alpha_1+\ldots+\alpha_\ell\gs n.$
Hence, $g\in \m^n.$
We conclude that
$$
\q^{(Bn)}\subseteq  \eta^{Bn}\cap R\subseteq \m^n.
$$
\end{proof}

As a corollary of the previous result, we find that $2$ is a sufficient bound for determinantal rings.

\begin{corollary}\label{CorZNDet}
Let $X$ be a $n\times m$ generic matrix of variables, $K$ a field, $R=K[X]/I_t(X),$ and $\m=(x_{i,j})R.$ 
If either $t=2$ or $\Char(K)=0,$ then
$$
\p^{(2n)}\subseteq \m^n
$$
for every homogeneous prime ideal $\p\subseteq \m.$
\end{corollary}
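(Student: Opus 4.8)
The plan is to deduce this from Theorem \ref{ThmZNDS} by exhibiting determinantal rings as graded direct summands of a polynomial ring and controlling the generator degrees. First I would recall the classical fact that, under either hypothesis ($t=2$ arbitrary characteristic, or $\Char(K)=0$ arbitrary $t$), the determinantal ring $R=K[X]/I_t(X)$ is a normal affine semigroup ring or a ring of invariants that arises as a direct summand of a polynomial ring: concretely, $R$ is (up to the standard presentation) isomorphic to the subalgebra generated by the $t$-minors, i.e. the homogeneous coordinate ring of the appropriate Grassmannian/flag-type variety, which sits inside a polynomial ring $S=K[x_1,\ldots,x_N]$ with the inclusion $R\subseteq S$ split by the Reynolds operator (in characteristic zero) or by the explicit combinatorial splitting available for $t=2$ (the Segre product $K[x_i y_j]$, which is a normal semigroup ring and hence a direct summand of $K[x_i,y_j]$). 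The key numerical input is that under this realization $R=K[f_1,\ldots,f_\ell]$ where the $f_i$ can be taken to be the squarefree monomials corresponding to edges/chains, all of the \emph{same} degree, and in the $t=2$ case that common degree is exactly $2$ (the $x_iy_j$), while in the characteristic zero case one uses the Plücker-type embedding where again the generators have a single common degree $B$, but more carefully one rescales or re-presents so that $B=2$; alternatively one invokes the second Veronese description of the minors.

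Granting such a presentation, I would then simply apply Theorem \ref{ThmZNDS}: with $B=\max_i\deg(f_i)$ one gets $\q^{(Bn)}\subseteq\m^n$ for every homogeneous prime $\q$, and the content of the corollary is that one can choose the presentation so that $B=2$. For $t=2$ this is immediate since $R$ is the Segre product and the natural algebra generators $x_iy_j$ have degree $2$ in $S=K[x_i,y_j]$; the splitting of $R\hookrightarrow S$ holds because a normal affine semigroup ring is always a direct summand of the polynomial ring on its generators (a fact I would cite). For $\Char(K)=0$ and general $t$, $R$ is a direct summand of the polynomial ring via the Reynolds operator for the reductive group action ($GL$ or $SL$ acting on the generic matrix), and one must check that the minors, as elements of that polynomial ring, can be taken to have degree $2$ — here I would use that $R$ is generated in a single degree by the minors and then pass to the Veronese/Segre model where that degree is $2$, or note that for the containment $\p^{(2n)}\subseteq\m^n$ it suffices to have \emph{some} degree-$2$ generating set after a harmless change of the polynomial overring.

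The main obstacle I anticipate is the bookkeeping in the characteristic-zero case: making precise which polynomial ring $S$ one embeds the determinantal ring into so that (i) the inclusion splits $R$-linearly and (ii) the algebra generators all have degree exactly $2$. The degree-$2$ claim is really the assertion that the $t$-minors, viewed inside the coordinate ring of the relevant Grassmannian in its minimal projective embedding, generate a subring isomorphic to $R$ with generators sitting in the degree-$2$ part of a polynomial ring realizing that Grassmannian as a direct summand — this is classical but requires citing the right structural result (e.g. the ASL/Hodge algebra structure, or the $SL_t$-invariant description $R\cong K[\mathrm{Mat}]^{SL_t}$ and the first fundamental theorem of invariant theory). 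I would phrase the proof so that the only nontrivial external inputs are: normal semigroup rings are direct summands of polynomial rings (for $t=2$), and reductive-group invariant rings over a field of characteristic zero are direct summands via the Reynolds operator (for general $t$), together with the observation that in both realizations the algebra generators have degree two, after which Theorem \ref{ThmZNDS} finishes it verbatim.
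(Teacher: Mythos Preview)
Your overall strategy is the same as the paper's: exhibit $R=K[X]/I_t(X)$ as a graded direct summand of a polynomial ring with algebra generators of degree $2$, then invoke Theorem \ref{ThmZNDS} with $B=2$. Your treatment of the case $t=2$ is fine: the isomorphism $K[X]/I_2(X)\cong K[u_iv_j]\subseteq K[u_1,\ldots,u_n,v_1,\ldots,v_m]$ (the Segre embedding) gives degree-$2$ generators and the splitting holds because this is a normal affine semigroup ring.

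The characteristic-zero case, however, has a genuine gap. You repeatedly describe $R$ as ``the subalgebra generated by the $t$-minors'' and appeal to a Pl\"ucker/Grassmannian embedding. That is the wrong ring: the algebra generated by the maximal minors of a generic matrix is the homogeneous coordinate ring of a Grassmannian, not $K[X]/I_t(X)$, and its natural generators have degree $t$, not $2$. Your attempts to force $B=2$ by ``rescaling,'' a ``second Veronese description,'' or a ``harmless change of the polynomial overring'' do not work as stated. The realization the paper has in mind (and which you brush past only at the very end, with the wrong group) is the first fundamental theorem of invariant theory for $GL_{t-1}$: if $Y$ is a generic $n\times(t-1)$ matrix and $Z$ a generic $(t-1)\times m$ matrix, then
\[
K[X]/I_t(X)\;\cong\;K[Y,Z]^{GL_{t-1}(K)}\;=\;K\bigl[(YZ)_{ij}\bigr]\;\subseteq\;K[Y,Z],
\]
where the invariant ring is generated by the entries of $YZ$, each of which is a homogeneous polynomial of degree $2$ in the $y$'s and $z$'s. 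In characteristic zero $GL_{t-1}$ is linearly reductive, so the Reynolds operator splits the inclusion $R$-linearly. With this presentation Theorem \ref{ThmZNDS} applies directly with $B=2$. Replace your Grassmannian discussion with this and the proof is complete.
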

\begin{proof}
If either $t=2$ or $\Char(K)=0,$ then $R$ is a direct summand of a polynomial ring, and it is generated by homogeneous polynomials of degree $2.$ The rest is a consequence of Theorem \ref{ThmZNDS}.
\end{proof}

We now focus on Question \ref{question} for direct summands of polynomial rings.
In case of direct summands whose extension is finite, we have specific values for the uniform bounds given in Theorem \ref{ThmFiniteExtensionHKV}. 

\begin{theorem}\label{UniformDirectSummand}
Let $S=K[x_1,\ldots,x_n]$ and $R\subseteq S$ a direct summand.
Suppose that $S$ is a finitely generated $R$-module.
Let $\p\subseteq R$ a prime ideal and $h=\Ht(\p).$
If $k=[S:R],$ then
$$
\p^{(khn)}\subseteq \p^{n-d}
$$
for every positive integer $n$.
Furthermore, if  $k!$ is invertible in $R$, then
$$
\p^{(khn)}\subseteq \p^{n}.
$$
\end{theorem}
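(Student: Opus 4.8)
The plan is to push the Zariski--Nagata strategy of Subsection~\ref{SecZariskiNagata} through the finite extension $R\subseteq S$, combining it with Lemma~\ref{lemma for descent} in the same way as in the proof of Theorem~\ref{ThmFiniteExtensionHKV}, but now keeping track of the constants because the top ring $S=K[x_1,\dots,x_n]$ is a polynomial ring, for which the Ein--Lazarsfeld--Smith/Hochster--Huneke bound holds with the explicit constant ``height''. First a few preliminaries. Since $S$ is a normal domain and $R$ is an $R$-module direct summand of $S$, the ring $R$ is itself a normal domain; and since $R\subseteq S$ is module-finite it is integral, so going-down holds over the normal domain $R$. Let $P_1,\dots,P_r$ be the primes of $S$ lying over $\p$, so that $\sqrt{\p S}=P_1\cap\cdots\cap P_r$. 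Going-down (for ``$\Ht(P_i)\gs\Ht(\p)$'') together with incomparability for integral extensions (for ``$\Ht(P_i)\ls\Ht(\p)$'') shows that $\Ht(P_i)=\Ht(\p)=h$ for every $i$; equivalently, every associated prime of the radical ideal $\sqrt{\p S}$ has height $h$.

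The crucial comparison is $\p^{(N)}\subseteq\big(\sqrt{\p S}\big)^{(N)}$ for every $N\gs 1$. Indeed, if $f\in\p^{(N)}=\p^NR_\p\cap R$, then for each $i$ we have $P_i\cap R=\p$, hence a local map $R_\p\to S_{P_i}$ with $\p S_{P_i}\subseteq P_iS_{P_i}$; therefore $f\in\p^NR_\p\subseteq P_i^NS_{P_i}$, i.e.\ $f\in P_i^{(N)}$, and since $\big(\sqrt{\p S}\big)^{(N)}=\bigcap_i P_i^{(N)}$ the comparison follows. Now apply the Ein--Lazarsfeld--Smith and Hochster--Huneke theorems (Theorems~\ref{USP-Poly-CharZero} and~\ref{USP-Poly-CharP}) in the form valid for a radical ideal all of whose associated primes have height $h$, to the ideal $\sqrt{\p S}$ in the regular ring $S$: this gives $\big(\sqrt{\p S}\big)^{(hm)}\subseteq\big(\sqrt{\p S}\big)^m$ for all $m\gs 1$. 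Taking $N=khn$, $m=kn$, and invoking Lemma~\ref{lemma for descent} with $e=k$ (legitimate since $R$ is integrally closed), we obtain the chain
\[
\p^{(khn)}\ \subseteq\ \big(\sqrt{\p S}\big)^{(khn)}\ \subseteq\ \big(\sqrt{\p S}\big)^{kn}\ =\ \Big(\big(\sqrt{\p S}\big)^{k}\Big)^{n}\ \subseteq\ \big(\overline{\p S}\big)^{n}\ \subseteq\ \overline{(\p S)^{n}}\ =\ \overline{\p^{n}S}.
\]

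To finish the first statement, $S$ satisfies Hypothesis~\ref{Hyp}, so by the Uniform Brian\c{c}on--Skoda Theorem~\ref{ThmUBS} there is a constant $d=c(S)$ (one may take $d=\dim S$) with $\overline{J^{m+d}}\subseteq J^m$ for every ideal $J\subseteq S$; with $J=\p S$ this gives $\overline{\p^nS}\subseteq\p^{n-d}S$, hence $\p^{(khn)}\subseteq\p^{n-d}S$. Finally, since $R\subseteq S$ splits as $R$-modules, any $R$-linear retraction $\beta\colon S\to R$ satisfies $\beta(IS)=I$ and fixes $R$, so $IS\cap R=I$ for every ideal $I\subseteq R$; taking $I=\p^{n-d}$ and intersecting the containment above with $R$ yields $\p^{(khn)}\subseteq\p^{n-d}$. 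For the last assertion, when $k!$ is invertible in $R$ Lemma~\ref{lemma for descent} gives the sharper $\big(\sqrt{\p S}\big)^{k}\subseteq\p S$, so the displayed chain improves to $\p^{(khn)}\subseteq(\p S)^{n}=\p^{n}S$, and intersecting with $R$ gives $\p^{(khn)}\subseteq\p^{n}$ with no Brian\c{c}on--Skoda loss.

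The step I expect to be the main obstacle — or at least the one requiring care — is the reduction from $\p$ to $\sqrt{\p S}$: one must use the Ein--Lazarsfeld--Smith/Hochster--Huneke containment for the radical, typically non-prime, ideal $\sqrt{\p S}$ rather than only the prime case written in the text, and one must know that all primes of $S$ over $\p$ have height exactly $h$, which is where normality of $R$ (supplied for free by the direct-summand hypothesis) and going-down enter. The reason ``direct summand'' is needed and not merely ``module-finite'' is the clean contraction $IS\cap R=I$, which is precisely what allows the $k!$-invertible case to avoid losing a Brian\c{c}on--Skoda constant.
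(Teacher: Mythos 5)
Your argument is essentially the same as the paper's: localize past the primes $P_i$ of $S$ over $\p$ to land in $(\sqrt{\p S})^{(khn)}$, apply the Ein--Lazarsfeld--Smith/Hochster--Huneke containment with constant $h$ to the radical ideal $\sqrt{\p S}$, drop to $\overline{\p^n S}$ via Lemma~\ref{lemma for descent}, invoke Uniform Brian\c{c}on--Skoda, and contract back to $R$ using the splitting. You spell out a couple of points the paper leaves tacit (that all the $P_i$ have height exactly $h$, that ELS/HH is being invoked for a radical rather than prime ideal, and that $IS\cap R=I$ comes from the retraction), but the route and every key ingredient are identical.
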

\begin{proof}
Let $\q_1,\ldots,\q_\ell$ be the minimal primes of $\p S$.
Since $R\subseteq S$ is an integral extension, and $R$ is integrally closed, the going-up and going-down theorems apply. Then,
 $\q_i\cap R=\p.$
Let $V=S\setminus (\q_1\cup\ldots\cup \q_\ell)$.
Then, 
\begin{align*}
r\in R\setminus \p & \Longrightarrow r\not\in \q_i\cap R\\
& \Longrightarrow r\not\in \q_i \hbox{ for all }i  \\\
& \Longrightarrow r\in S\setminus (\q_1\cup\ldots\cup \q_\ell).\\
\end{align*}
Thus, $R\setminus \p\subseteq V.$

We have that
\begin{align*}
\p^{(khn)} &=\p^{knh} R_\p\cap R \subseteq \p^{khn}S_\p\cap S;\\
&\subseteq \p^{khn} V^{-1}S\cap S \hbox{ because }R\setminus \p\subseteq V;\\
&\subseteq \sqrt{\p S}^{khn}V^{-1}S\cap S=\sqrt{\p S}^{(knh)};\\
&\subseteq \sqrt{\p S}^{kn}\hbox{ by Theorems \ref{USP-Poly-CharZero} and \ref{USP-Poly-CharP}};\\
&=(\sqrt{\p S}^{k})^{n}\subseteq \overline{\p S}^{n}\hbox{ by applying Lemma \ref{lemma for descent}};\\
&\subseteq \p^{n-d}S\hbox{ by Uniform Brian\c con-Skoda (Theorem  \ref{ThmUBS}).}
\end{align*}
Then, $\p^{(khn)}\subseteq  \p^{n-d}S\cap R= \p^{n-d}.$

If $k!$ is invertible, the claim follows the same lines as before, but we use the second part of Lemma \ref{lemma for descent}.

\end{proof}

As a corollary of the previous result, we answer a question asked by Takagi to the fourth author.

\begin{corollary}\label{CorTakagi}
Let $S=K[x_1,\ldots,x_d]$, and $G$ a finite group that acts on $S$.
Let $R=S^G$ denote the ring of invariants.
Let $\p\subseteq R$ a prime ideal and $h=\Ht(\p).$
If $k=|G|$ is invertible in $K$, then
$$
\p^{(khn)}\subseteq \p^{n-d}
$$
for every positive integer $n$.
Furthermore, if  $k!$ is invertible in $K$, then
$$
\p^{(khn)}\subseteq \p^{n}.
$$
\end{corollary}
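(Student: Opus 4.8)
The plan is to deduce this corollary directly from Theorem \ref{UniformDirectSummand}, so almost all of the work has already been done; the only thing to verify is that the hypotheses of that theorem are met in the group-quotient setting. So first I would recall the classical fact that, when $K$ is a field and $G$ is a finite group acting on $S = K[x_1,\ldots,x_d]$ by $K$-algebra automorphisms with $|G|$ invertible in $K$, the ring of invariants $R = S^G$ is a direct summand of $S$ as an $R$-module: the Reynolds operator $\beta = \frac{1}{|G|}\sum_{g \in G} g$ is an $R$-linear retraction $S \to R$. This is exactly the splitting hypothesis ``$R \subseteq S$ a direct summand'' required by Theorem \ref{UniformDirectSummand}.

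Next I would check the remaining hypotheses of Theorem \ref{UniformDirectSummand}: that $S$ is module-finite over $R$, and the identification of the relevant degree $[S:R]$. Module-finiteness of $S$ over $S^G$ for a finite group $G$ is a standard consequence of Noether's theorem (each $x_i$ satisfies the monic polynomial $\prod_{g \in G}(T - g(x_i))$ over $R$, and $S$ is a finitely generated $R$-algebra that is integral over $R$, hence module-finite). For the degree of the fraction field extension, $\mathrm{Frac}(S)$ over $\mathrm{Frac}(S^G) = \mathrm{Frac}(S)^G$ is a Galois extension with group $G$ (Artin's theorem), so $[S:R] = |G| = k$. Thus with $h = \Ht(\p)$ and $k = |G|$, Theorem \ref{UniformDirectSummand} gives $\p^{(khn)} \subseteq \p^{n-d}$ when $k$ is invertible in $R$ (hence in $K$), and $\p^{(khn)} \subseteq \p^n$ when $k!$ is invertible in $R$.

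Concretely the proof would read: Since $k = |G|$ is invertible in $K$, the Reynolds operator $\frac{1}{|G|}\sum_{g\in G}g$ splits the inclusion $R = S^G \subseteq S$, so $R$ is a direct summand of $S$. Moreover $S$ is module-finite over $R$, and $[S:R] = |G| = k$ because $\mathrm{Frac}(S)/\mathrm{Frac}(R)$ is Galois with group $G$. Now apply Theorem \ref{UniformDirectSummand}: the first containment gives $\p^{(khn)} \subseteq \p^{n-d}$ for all $n$, and if additionally $k!$ is invertible in $K$ (equivalently in $R$, since $K \subseteq R$), the second part of that theorem gives $\p^{(khn)} \subseteq \p^n$.

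I do not anticipate a genuine obstacle here — the statement is a packaging of Theorem \ref{UniformDirectSummand}. The only point requiring a little care is making sure the invertibility hypothesis is phrased in the right ring: Theorem \ref{UniformDirectSummand} asks for $k$ (resp. $k!$) to be invertible in $R$, while the corollary states it in $K$; since $K$ is a subring of $R = S^G$, invertibility in $K$ implies invertibility in $R$, so there is no gap. A secondary subtlety, worth a sentence, is that one needs $R$ to be integrally closed for Theorem \ref{UniformDirectSummand} (via Lemma \ref{lemma for descent}) — and indeed $S^G$ is normal, being a direct summand of the normal ring $S$ (or: an intersection of $S$ with a subfield of $\mathrm{Frac}(S)$). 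With these remarks the corollary follows immediately.
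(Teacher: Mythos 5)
Your proof is correct and is essentially the argument the paper intends: the paper states Corollary \ref{CorTakagi} with no separate proof, as an immediate consequence of Theorem \ref{UniformDirectSummand}, and your verification that the hypotheses of that theorem hold (Reynolds operator splitting, module-finiteness, $[S:R]=|G|$ via Artin's theorem, normality of $S^G$) supplies exactly the missing details. The one thing worth noting is that $[S:R]=|G|$ requires the $G$-action to be faithful; without that one only has $[S:R]$ dividing $|G|$, but since the conclusion only weakens as $k$ increases the statement with $k=|G|$ is still valid.
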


To the best of our knowledge, Question \ref{question} is still open for direct summands $R$ of polynomial rings such that the extension $R\to S$ is infinite. For recent progress on toric rings see the work of Walker \cite{RobertUSTP}. 
\section{Symbolic powers of monomial ideals}

\subsection{Symbolic powers, monomial ideals and matroids}

Let $S = K[x_1, \ldots, x_n]$ denote the polynomial ring in $n$ variables over a field $K$, and $\m = \left( x_1, \ldots, x_n \right)$. There is a bijection between the squarefree monomial ideals in $S$ and  simplicial complexes in $n$ vertices, via the Stanley-Reisner correspondence. Some algebraic properties of such an ideal can be described via the combinatorial and topological properties of the corresponding simplicial complex, and vice-versa. Varbaro \cite{Varbaro} and Minh and Trung \cite{TrungMinhCM} have independently shown that the property that all the symbolic powers of a Stanley-Reisner ideal are Cohen-Macaulay is equivalent to a combinatorial condition on the corresponding simplicial complex, namely that the simplicial complex is a matroid.

\begin{definition} {\rm A \it simplicial complex \rm on the set $[n] := \left\lbrace 1, \ldots, n \right\rbrace$ is a collection $\Delta$ of subsets of $[n]$, called \it faces \rm of $\Delta$, that satisfies the following property: given a face $\sigma \in \Delta$, if $\theta \subseteq \sigma$, then $\theta \in \Delta$. A \it facet \rm is a face that is maximal under inclusion.}
\end{definition}  

Given a simplicial complex on $[n]$, we can define a square-free monomial ideal in $S$ corresponding to $\Delta$:

\begin{definition} {\rm Given a simplicial complex $\Delta$, the \it Stanley-Reisner ideal \rm of $\Delta$ is the following square-free monomial ideal:
$$I_{\Delta} = \left( x_{i_1} \cdots x_{i_s} : \left\lbrace i_1, \ldots, i_s \right\rbrace \notin \Delta \right).$$
The quotient $K\left[ \Delta \right] := S / I_{\Delta}$ is called the \it Stanley-Reisner ring \rm of $\Delta$.}
\end{definition}


On the other hand, given a square-free monomial ideal, we can recover the simplicial complex associated to it, giving us a bijective correspondence:

\begin{definition} {\rm Given a square free monomial ideal $I$ in $S$, the \it Stanley-Reisner complex \rm of $I$ is given by
$$\Delta = \left\lbrace \left\lbrace i_1, \ldots, i_s \right\rbrace \subseteq [n] \, | \, x_{i_1} \ldots x_{i_s} \notin I \right\rbrace.$$}
\end{definition}

For a more details about Stanley-Reisner theory, we refer to \cite{SRsurvey,MSAlgComb}.

\begin{definition} {\rm A simplicial complex $\Delta$ on $[n]$ is said to be a \it matroid \rm if for all facets $F, G \in \Delta$ and all $i \in F$, there exists $j \in G$ such that $\left( F \backslash \left\lbrace i \right\rbrace \right) \cup \left\lbrace j \right\rbrace \in \Delta$ is still a facet.}
\end{definition}

This turns out to be precisely the combinatorial condition that corresponds to the following property of the symbolic powers of the Stanley-Reisner ideal:

\begin{theorem}[{Varbaro \cite{Varbaro}, Minh-Trung \cite{TrungMinhCM}\footnote{See also \cite{TrungMinhCMCor}.}}]
\label{ThmCMMatroid}
Given a simplicial complex $\Delta$ on $[n]$, $S / I_{\Delta}^{(m)}$ is Cohen-Macaulay for all $m \geqslant 1$ if and only if $\Delta$ is a matroid.
\end{theorem}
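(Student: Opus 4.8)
The plan is to translate Cohen-Macaulayness of $S/I_\Delta^{(m)}$ into a statement about a simplicial complex and then exploit the combinatorics of matroids. First I would record the standard description of symbolic powers of a squarefree monomial ideal: writing $\p_W := (x_j : j\in W)$ and letting $F_1,\dots,F_s$ be the facets of $\Delta$, the ideal $I_\Delta = \bigcap_i \p_{[n]\setminus F_i}$ is radical and each $\p_{[n]\setminus F_i}^{m}$ is $\p_{[n]\setminus F_i}$-primary, so $I_\Delta^{(m)} = \bigcap_i \p_{[n]\setminus F_i}^{m}$; concretely a monomial $x^{\mathbf a}$ lies in $I_\Delta^{(m)}$ exactly when $\sum_{j\notin F_i} a_j \gs m$ for every facet $F_i$. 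Since $I_\Delta^{(m)}$ is a monomial ideal but is no longer squarefree once $m\gs 2$, the next step is to make it accessible to simplicial methods, either by polarizing it to a squarefree ideal $I_{\Gamma_m}$ on an enlarged vertex set --- polarization preserves depth and dimension, so $S/I_\Delta^{(m)}$ is Cohen-Macaulay iff $K[\Gamma_m]$ is, hence iff Reisner's criterion holds for $\Gamma_m$ --- or by working directly with Takayama's combinatorial formula for the $\ZZ^n$-graded local cohomology $H^t_{\m}(S/I_\Delta^{(m)})_{\mathbf a}$, which expresses this module through the reduced homology of an explicit ``degree complex'' $\Delta_{\mathbf a}$ built from $\Delta$ and the number $m$.

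For the implication ``$\Delta$ a matroid $\Rightarrow$ $S/I_\Delta^{(m)}$ Cohen-Macaulay for all $m$'', the plan is to show that every degree complex $\Delta_{\mathbf a}$ arising from $I_\Delta^{(m)}$ is itself Cohen-Macaulay --- so that its reduced homology is concentrated in top degree and Takayama's formula forces $H^t_{\m}(S/I_\Delta^{(m)})_{\mathbf a}=0$ for $t<\dim S/I_\Delta^{(m)}$. The key inputs are that the class of matroids is closed under deletion and contraction (so every minor of $\Delta$ is again a matroid) and that matroid complexes are Cohen-Macaulay, indeed shellable (Provan--Billera); one then identifies $\Delta_{\mathbf a}$ in terms of deletions, contractions and restrictions of $\Delta$ and runs an induction on $m$. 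Equivalently --- and perhaps more cleanly to present --- one polarizes and proves directly that each $\Gamma_m$ is shellable by peeling off polarization variables $x_{j,k}$: their links and deletions in $\Gamma_m$ are built from $\Gamma_m$ and $\Gamma_{m-1}$ for the matroid minors $\Delta\setminus j$ and $\Delta/j$, and the single-element exchange axiom is exactly what guarantees that consecutive facets in the resulting order meet the earlier ones along a pure codimension-one subcomplex.

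For the converse, assume $S/I_\Delta^{(m)}$ is Cohen-Macaulay for all $m\gs 1$. The case $m=1$ already gives that $\Delta$ is Cohen-Macaulay, in particular pure. Suppose, toward a contradiction, that $\Delta$ is not a matroid; unwinding the definition (and using purity) this yields facets $F,G$ and an element $i\in F\setminus G$ with $(F\setminus\{i\})\cup\{j\}\notin\Delta$ for every $j\in G\setminus F$. The plan is to turn this obstruction into a non-vanishing of local cohomology below the dimension: choose a multidegree $\mathbf a$ supported near $i$ and the symmetric difference $F\triangle G$, and an integer $m$ matched to it, so that the corresponding degree complex $\Delta_{\mathbf a}$ of $I_\Delta^{(m)}$ becomes disconnected (or otherwise acquires homology below its top degree) precisely because no single-element exchange from $F$ into $G$ is available. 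Takayama's formula then gives $H^t_{\m}(S/I_\Delta^{(m)})_{\mathbf a}\neq 0$ for some $t<\dim S/I_\Delta^{(m)}$, contradicting the depth-sensitivity of local cohomology and the Cohen-Macaulay hypothesis.

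The main obstacle is the combinatorial heart of the forward direction: establishing, \emph{uniformly in $m$}, that the degree complexes $\Delta_{\mathbf a}$ (equivalently, the polarized complexes $\Gamma_m$) are Cohen-Macaulay/shellable, which requires a careful bookkeeping of how matroid minors and the single-element exchange axiom assemble into the required vanishing or shelling order. On the converse side, the delicate step is the explicit engineering of the pair $(\mathbf a, m)$ for which Takayama's degree complex has homology in the wrong degree; once such a pair is produced, that direction is formal.
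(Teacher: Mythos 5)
This is a survey, and the paper does not prove Theorem~\ref{ThmCMMatroid}; it simply states it with citations to Varbaro and Minh--Trung and notes the Terai--Trung strengthening. So there is no internal proof to compare your attempt against. Judged on its own terms, your proposal is a reasonable high-level outline that tracks the Minh--Trung route: describe $I_\Delta^{(m)}$ as an intersection $\bigcap_i \p_{[n]\setminus F_i}^m$, use Takayama's formula to reduce Cohen--Macaulayness to vanishing of reduced homology of degree complexes $\Delta_{\mathbf a}$, and invoke Provan--Billera shellability of matroid complexes together with closure of matroids under deletion and contraction. The alternative proof you do not describe, Varbaro's, proceeds quite differently, via the symbolic Rees algebra of $I_\Delta$ (which, for a matroid, is a normal semigroup ring and hence Cohen--Macaulay) and properties of the associated graded/fiber cone; that route avoids Takayama's formula entirely.

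That said, what you have written is explicitly a plan, not a proof, and the two places you flag as ``obstacles'' are not technicalities but the entire content of the theorem. For the forward direction you still need to actually identify the degree complexes $\Delta_{\mathbf a}$ of $I_\Delta^{(m)}$ as matroid minors (or prove directly that they are Cohen--Macaulay uniformly in $m$ and $\mathbf a$); it is not obvious that the polarization $\Gamma_m$ is shellable, and to my knowledge no published proof goes through polarization, so that branch of your plan is speculative. For the converse you still need to produce, from the failure of a single exchange $(F\setminus\{i\})\cup\{j\}\notin\Delta$, a concrete pair $(\mathbf a,m)$ with $\widetilde H_{t-1}(\Delta_{\mathbf a};K)\neq 0$ for some $t<\dim S/I_\Delta^{(m)}$; this is precisely the delicate combinatorial construction carried out by Minh--Trung and sharpened by Terai--Trung (who get it already from a single $m\gs 3$), and without it the converse is an assertion, not an argument. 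In short: the outline is sound and matches one of the two published strategies, but the core lemmas are missing, so as a proof it is incomplete.
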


{\cb
We point out that Terai and Trung \cite{TeraiTrung} showed a more general result: If $S / I_{\Delta}^{(m)}$ is Cohen-Macaulay for some $m\geqslant 3,$ then $\Delta$ is a matroid.
}

\begin{example}
The figure below represents the well-known Fano matroid, in $7$ variables, where colinear points correspond to facets, considering the circle as a line. 
The Stanley-Reisner ideal of the Fano matroid in $\mathbb{F}_2 \left[ x_1, \ldots, x_7 \right]$ is given by
$${\cb I = \left( \begin{aligned} x_4 x_2 x_1, x_4 x_3 x_1, x_4 x_3 x_2, x_5 x_2 x_1, x_5 x_3 x_1, x_5 x_3 x_2, x_5 x_4 x_1,\\
      x_5 x_4 x_2, x_6 x_2 x_1, x_6 x_3 x_1, x_6 x_3 x_2, x_6 x_4 x_1, x_6 x_4 x_3, x_6 x_5 x_2,\\
      x_6 x_5 x_3, x_6 x_5 x_4, x_7 x_2 x_1, x_7 x_3 x_1, x_7 x_3 x_2, x_7 x_4 x_2, x_7 x_4 x_3,\\
      x_7 x_5 x_1, x_7 x_5 x_3, x_7 x_5 x_4, x_7 x_6 x_1, x_7 x_6 x_2, x_7 x_6 x_4, x_7 x_6 x_5 \end{aligned} \right).}$$
By Theorem \ref{ThmCMMatroid}, we know that the symbolic powers of $I$ are all Cohen-Macaulay. Using Macaulay2 \cite{M2} we can check that, for example, $I^{(2)} \neq I^2$ and $I^{(3)} \neq I^3$.
%
%
%

\begin{center}
\begin{tikzpicture}
\draw (0,0) node[left]{$x_1$} -- (4,0) node[right]{$x_5$};
\draw (4,0) -- (2,{2*sqrt(3)}) node[above]{$x_3$};
\draw (0,0) -- (2,{2*sqrt(3)});
\draw (2,{2*sqrt(3)/3}) circle ({2*sqrt(3)/3});
\draw (4,0) -- (1,{sqrt(3)}) node[left]{$x_2$};
\draw (0,0) -- (3,{sqrt(3)}) node[right]{$x_4$};
\draw (2,{2*sqrt(3)}) -- (2,0) node[below]{$x_6$};
\draw (2,{2*sqrt(3)/3}) node[right]{$\,\, x_7$};
\end{tikzpicture}
\end{center}

\end{example}

\subsection{The packing problem}
\label{SubsectionPackProb}
\bigskip

Although there are well-known instances in which the symbolic powers of an ideal are equal to
its usual powers, for example complete intersections, there are essentially no theorems which
give necessary and sufficient criteria for this equality to be true, except in a few cases. One of the most notable cases are the prime ideals defining
curves
which are licci \cite{HU}. In this latter case, being a complete intersection is both necessary and
sufficient for the symbolic powers and regular powers to be the same.  There is not even a good guess about what properties of an
ideal are necessary and sufficient to guarantee the equality of the symbolic powers and usual powers.
However, in the case of square-free monomial ideals, there is a beautiful conjecture, first
discovered by Conforti and  Cornu\'ejols \cite{CC} in the context of max-flow min-cut properties, which
was reworked by Gitler, Villarreal and others \cite {GRV},\cite{GVV} to place the conjecture within 
commutative algebra. We present this conjecture, and also introduce a new relative version of it. In addition, we give a proof of this relative version for graph ideals, and for the symbolic square. 
We first need to recall some definitions.

\begin{definition}{\rm  Let $S$ be a polynomial ring over a field. A square-free monomial ideal $I$ of height $c$ is \it K\"onig \rm if there exists a
regular sequence of monomials in $I$ of length $c$.  The ideal $I$ is said to have the \it packing property \rm if every ideal
obtained from $I$ by setting any number of variables equal to $0$ or $1$ is K\"onig.}
\end{definition}

 The conjecture of Conforti and Cornu\'ejols can be restated in this language to say  that the symbolic powers and usual powers of a square-free monomial ideal
coincide if and only if the ideal has the packing property.  This conjecture has been the subject of much scrutiny \cite{FHT,Cornuejols,CMS,CGM,HM,MV,TT}.  If $I$
is the edge ideal of a finite simple graph $G$, this conjecture is known \cite{GVV}; in fact in this case  $I^{(k)}=I^k$ for all
$k$ if and only if  $G$ is bipartite if and only if  $I$ has the packing property. 
In the graph case, the conjecture can also be reintrepreted in terms of well-known graph invariants. Namely, let $G$ be a graph.  Set $c(G)$ equal to the size of minimal vertex cover and $m(G)$ equal to the size of maximal set of disjoint edges. Then the height of $I$ is $c(G)$, and the length of a maximal sequence in $I$ of monomials is $m(G)$. 
Obviously,  $c(G) \geqslant m(G)$. In this language, $G$  has the K\"onig property if and only if $c(G) = m(G)$, while $G$ is packed if
and only if every minor of $G$ has the K\"onig property. 

One direction is not difficult.  Assume that $I^{(n)} = I^n$ for all $n$. One can prove this property is
preserved after setting variables equal to 0 or 1, so to prove that $I$ is packed, one only needs to prove
that $I$ is K\"onig. Moreover, by setting variables which do not appear in the minimal generators of
$I$ to $0$, one can further assume that every variable appears in some minimal prime of $I$. But
if the ring is $K[x_1,...,x_n]$, the monomial $m = x_1\cdots x_n$ is in  $\p^c$ for every minimal
prime $\p$ of $I$, since $I$ has height $c$. It follows that $m\in I^{(c)}$. By assumption, $m\in I^c$, and
this implies that there are monomials $m_1,...m_c\in I$ such that
$m_1\cdots m_c = m$. Since $m$ is square-free these monomials necessarily have disjoint support;  then $m_1,...,m_c$ form a regular sequence in $I$ of maximal length $c$.
The difficult direction is to prove that if $I$ is packed then the symbolic and usual powers agree. 

In this section we introduce a relative version of this conjecture. We make the following definition:

\begin{definition} {\rm Let $I$ be a a square-free monomial ideal. We say that $I$ is $k$-K\"onig if there is a regular sequence of monomials in
$I$ of length at least min$\{k, height(I)\}$. We say $I$ is $k$-packed if every square-free monomial ideal $J$ obtained from $I$ by setting variables
equal to $0$ or $1$ is $k$-K\"onig.}
\end{definition}

With this language, a natural extension of the question of Conforti and   Cornu\'ejols  is:

\begin{question}\label{mainques} Is $I$ $k$-packed if and only if $I^{(n)} = I^n$ for all $n \leqslant k$?
\end{question}

We prove  that if $I$ is the edge ideal of a graph, then Question \ref{mainques}  has a positive answer.  To achieve this, we prove
in our main theorem (see Theorem \ref{mainthm}) that
$I^{(k)}=I^k$ for $1 \leqslant k \leqslant n$ if and only if $G$ contains no odd cycles of length at most $2n-1$. In particular,
if $t$ is chosen to be the least integer such that $I^{(t)}\ne I^t$ (if such a $t$ exists), then $2t-1$ is the size of
the smallest induced cycle of $G$. Another  corollary of our
main theorem is that if 
$G$ is a finite graph with edge ideal  $I$ of height $c$, then  $I^{(k)}=I^k$ for $1 \leqslant k \leqslant c$ implies that  $G$ is bipartite.  We also
observe that Question \ref{mainques} has a positive answer if $k = 2$. We do not know whether
the question has a positive answer if either $I$ is generated by cubics, or for $k = 3$.

We begin our study of the graph ideal case by noting:

\begin{proposition}\label{easydir} Let $G$ be a finite simple graph.
\begin{enumerate}
\item Suppose $G$ has an odd cycle $v_1,\cdots, v_{2n-1}$. Then $f= \prod v_i \in I_G^{(n)} \setminus I_G^n$.
\item Assume that $c(G)>m(G)=n-1$. Then $G$ contains  an odd cycle of length at most $2n-1$. 
\end{enumerate}
\end{proposition}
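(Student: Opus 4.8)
The plan is to handle the two parts separately, both via elementary combinatorial arguments about monomials in the edge ideal $I_G = (v_iv_j : \{v_i,v_j\} \text{ is an edge})$, where I abuse notation and write $v_i$ for the variable associated to the vertex $v_i$.

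\emph{Part (1).} Let $f = v_1 v_2 \cdots v_{2n-1}$ be the product of the vertices of an odd cycle of length $2n-1$. To show $f \in I_G^{(n)}$, I will use that for a squarefree monomial ideal $I_G^{(n)} = \bigcap_{\p} \p^n$, the intersection running over the minimal primes $\p$ of $I_G$, which are exactly the (monomial) primes generated by the minimal vertex covers of $G$. Fix a minimal vertex cover $C$ of $G$, giving a minimal prime $\p_C$. Since each of the $2n-1$ edges $\{v_i, v_{i+1}\}$ of the cycle (indices mod $2n-1$) must be covered by $C$, the set $C \cap \{v_1, \ldots, v_{2n-1}\}$ is a vertex cover of the cycle $C_{2n-1}$; the minimum vertex cover of an odd cycle $C_{2n-1}$ has size $n$, so $C$ contains at least $n$ of the variables $v_1, \ldots, v_{2n-1}$. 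Hence $f$, being the product of all $2n-1$ of these variables, lies in $\p_C^n$. As $C$ was arbitrary, $f \in I_G^{(n)}$. For $f \notin I_G^n$: a monomial lies in $I_G^n$ iff it is divisible by a product of $n$ (not necessarily distinct) edge-monomials; such a product has degree $2n$, but $\deg f = 2n-1 < 2n$, so $f \notin I_G^n$. This gives $f \in I_G^{(n)} \setminus I_G^n$.

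\emph{Part (2).} Assume $c(G) > m(G) = n-1$, i.e. the maximum matching of $G$ has size $n-1$ while the minimum vertex cover has size $\geqslant n$. By the König--Egerváry theorem, a graph in which the maximum matching size is strictly less than the minimum vertex cover size cannot be bipartite, so $G$ contains an odd cycle; the task is to bound its length by $2n-1$. Take a maximum matching $M = \{e_1, \ldots, e_{n-1}\}$, and let $W$ be the set of $2(n-1)$ endpoints of edges in $M$. Since $M$ is maximum, every edge of $G$ meets $W$ (otherwise we could enlarge $M$); hence $W$ is a vertex cover, so $c(G) \leqslant 2(n-1)$. Now take a shortest odd cycle $\Gamma$ in $G$, of length $2k+1$. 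A shortest odd cycle is induced, and the $k$ edges $\{v_1v_2, v_3v_4, \ldots, v_{2k-1}v_{2k}\}$ form a matching of $G$ of size $k$; since the maximum matching size is $n-1$, we get $k \leqslant n-1$, i.e. $\Gamma$ has length $2k+1 \leqslant 2n-1$. This is exactly the claim.

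\emph{Main obstacle.} The genuinely nontrivial input is the identification of $I_G^{(n)}$ with $\bigcap_C \p_C^n$ over minimal vertex covers $C$, together with the fact that the minimum vertex cover of the odd cycle $C_{2n-1}$ has size exactly $n$; everything else is bookkeeping with matchings and the König--Egerváry theorem. I would state the description of symbolic powers of squarefree monomial ideals as a recalled fact (it follows from the definition of symbolic power together with the primary decomposition of a squarefree monomial ideal into its minimal primes), and keep the vertex-cover computation on $C_{2n-1}$ explicit since it is the crux of Part (1). For Part (2), the only subtlety is invoking that a shortest odd cycle is induced so that its alternate edges genuinely form a matching of $G$ (not merely of the subgraph $\Gamma$) — but since a matching in any subgraph is a matching in $G$, even this is automatic, and the argument goes through cleanly.
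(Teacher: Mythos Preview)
Your proof is correct and follows essentially the same approach as the paper: for Part~(1), degree count plus the observation that every minimal prime (vertex cover) of $I_G$ contains at least $n$ vertices of the odd $(2n-1)$-cycle; for Part~(2), K\"onig's theorem together with the matching of alternate edges in an odd cycle. Your Part~(2) is organized as a direct argument (non-bipartite $\Rightarrow$ shortest odd cycle exists $\Rightarrow$ bound its length via the matching) rather than the paper's contrapositive, but the content is the same; the sentence about $W$ being a vertex cover giving $c(G)\leqslant 2(n-1)$ is true but unused and can be dropped.
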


\begin{proof}$ $
 \begin{enumerate} \item We note that $f\notin I_G^n$ by degree reasons. But any minimal prime of $I_G$ must contain $n$ vertices of the cycle, so $f \in I_G^{(n)}$.
\item Suppose $G$ does not contain such cycle. Since any cycle of length at least $2n+1$ has a set of $n$ disjoint edges,  $G$ must not contain any cycle of odd length. Thus $G$ is bipartite, and K\"onig's theorem asserts that $c(G)= m(G)$, which gives a contradiction. 
\end{enumerate}
\end{proof}

Although our main concern in this section is with the edge ideals of graphs, our first reduction of the problem of
the equality of powers and symbolic powers works equally well for any square-free monomial ideal. We describe this
reduction in the discussion and remark below.

\smallskip
\begin{disc}{\rm  Let $J$ be a square-free monomial ideal, and fix a variable $x$.
We let  $I_x$ denote $J:(x)$ and $I$ be the ideal generated by the monomials in $J$ not involving $x$. We have  $J = I +xI_x$ and $I\subseteq I_x$. 
Suppose
we know that $I_x^{(n)} = I_x^n$ and $I^{(n)}=I^n$ (for example, if $J = I_G$, this would be the case if we know by induction
 that the symbolic powers and usual powers of $I_G$ are the same whenever we set variables equal to $0$ or $1$, provided
we do so for at least one variable).  

Clearly $J = I_x \cap (I,x)$. It follows that

\begin{align*}
J^{(n)} &= I_x^{(n)} \cap (I,x)^{(n)}\\
&= I_x^n\cap (I,x)^n\\
& = I^n  + x(I^{n-1}\cap I_x^n) + x^2(I^{n-2}\cap I_x^n) + \cdots  + x^nI_x^n.
\end{align*}

On the other hand, as $J = I+ xI_x$:

$$J^n  = I^n +  xI^{n-1}I_x + x^2I^{n-2}I_x^2 +\cdots + x^nI_x^n$$
As each term in this expression of $J^n$ is inside the corresponding term of $J^{(n)}$, the equality $J^{(n)} = J^n$ is equivalent to
\begin{equation}\label{EqPP}
 I^k\cap I_x^n = I^kI_x^{n-k} \quad\text{for} \quad 1 \leqslant k \leqslant n-1 
 \end{equation}

We can write $I_x=L+I$ where $L$ is generated by monomials of $J:(x)$ which are
not in $I$ (in the case $J = I_G$, $L$ is the ideal generated by the variables which correspond to neighbors of $x$).

The lefthand side of Equation \ref{EqPP} can be written as:

$$(I^k \cap \sum_{0 \leqslant i \leqslant k-1}I^iL^{n-i})  + \sum^n_{j=k}  I^jL^{n-j}.$$

We summarize this discussion in the following remark:}
\end{disc}

\smallskip
\begin{remark}\label{keyrmk}
{\rm Let $J$ be a square-free monomial ideal, and let $x$ be a variable. 
We let  $I_x$ denote $J:(x)$ and $I$ be the ideal generated by monomials in $J$ not involving $x$.  Suppose
we know that $I_x^{(n)} = I_x^n$ and $I^{(n)}=I^n$. Write $I_x=L+I$ where $L$ is generated by monomials of $J:(x)$ which are
not in $I$. 
Then $J^{(n)} = J^n$  if and only if for all $k$ with $0 \leqslant i < k \leqslant n$
$$ I^k \cap I^iL^{n-i} \subseteq \sum^n_{j=k} I^jL^{n-j}.$$
Note that the right hand side of this expression is precisely $I^kI_x^{n-k}$.}
\end{remark}

Before we begin the proof of  main result in this subsection, we point out that this result also follows from the recent work of Lam and Trung \cite[Corollary 4.5]{LamTrung}.

\begin{theorem}
\label{mainthm} Let $G$ be a finite simple graph, and let $I: = I_G$, the edge ideal of $G$. Then
$I^{(k)}=I^k$ for $1 \leqslant k \leqslant n$ if and only if $G$ contains no odd cycles of length at most $2n-1$. In particular,
if $t$ is chosen to be the least integer such that $I^{(t)}\ne I^t$ (if such a $t$ exists), then $2t-1$ is the size of
the smallest induced cycle of $G$.
\end{theorem}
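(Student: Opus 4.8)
The plan is to prove the equivalence in both directions; the final ``in particular'' sentence is then immediate. For the easy direction, suppose $G$ has an odd cycle of length $2m-1$ with $m\le n$; by Proposition~\ref{easydir}(1) the product of the vertices of this cycle lies in $I_G^{(m)}\setminus I_G^{m}$, so $I_G^{(m)}\ne I_G^{m}$ for some $m\le n$. Contrapositively, if $I_G^{(k)}=I_G^{k}$ for all $k\le n$ then $G$ has no odd cycle of length $\le 2n-1$. Granting the equivalence, if $t$ is least with $I_G^{(t)}\ne I_G^{t}$ then applying it with $n=t-1$ shows $G$ has no odd cycle of length $\le 2t-3$, and applying it with $n=t$ shows $G$ has an odd cycle of length $\le 2t-1$; hence the shortest odd cycle of $G$ has length exactly $2t-1$, and since a chord of an odd cycle would split it into a strictly shorter odd cycle, this shortest odd cycle has no chord, i.e.\ is an induced cycle.

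For the substantive direction I would argue by induction on $|V(G)|$ that ``$G$ has no odd cycle of length $\le 2n-1$'' implies $I_G^{(k)}=I_G^{k}$ for all $k\le n$. If $G$ has no edges this is trivial, so pick any vertex $x$ and set $J:=I_G$, $I:=I_{G\setminus x}$ (the ideal of monomials of $J$ not involving $x$), and $L:=(y\mid y\in N(x))$, so that $J:(x)=I+L=:I_x$ as in the Discussion. Every edge of $G\setminus x$ meeting $N(x)$ is a multiple of a generator of $L$, so in fact $I_x=L+I_{G\setminus N[x]}$, a sum of monomial ideals in disjoint sets of variables. Now $G\setminus x$ and $G\setminus N[x]$ are induced subgraphs of $G$, hence contain no odd cycle of length $\le 2n-1$, so by induction $I^{(k)}=I^{k}$ and $I_{G\setminus N[x]}^{(k)}=I_{G\setminus N[x]}^{k}$ for all $k\le n$; since $L$ is generated by variables we have $L^{(k)}=L^{k}$, and the standard identity $(\mathfrak a+\mathfrak b)^{(k)}=\sum_{i=0}^{k}\mathfrak a^{(i)}\mathfrak b^{(k-i)}$ for monomial ideals in disjoint variables (with $\mathfrak a^{(0)},\mathfrak b^{(0)}$ the unit ideals) then gives $I_x^{(k)}=I_x^{k}$ for all $k\le n$. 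By Remark~\ref{keyrmk} (applied with $n$ replaced by each $m\le n$) it remains to prove, for all $0\le i<k\le m\le n$, the containment
\[
I^{k}\cap I^{i}L^{m-i}\;\subseteq\;\sum_{j=k}^{m}I^{j}L^{m-j}\;=\;I^{k}I_x^{m-k}.
\]

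Everything up to this point is formal; the heart of the matter, and what I expect to be the main obstacle, is this containment, which is the only place the hypothesis on short odd cycles enters. It suffices to test it on a monomial $\mu$, so assume $\mu$ is divisible by a product $e_1\cdots e_k$ of edges of $G\setminus x$ and also by $f_1\cdots f_i\,y_1\cdots y_{m-i}$ with the $f_t$ edges of $G\setminus x$ and the $y_\ell\in N(x)$, while $\mu\notin I^{j}L^{m-j}$ for every $j\ge k$. Adjoining the vertex $x$, the monomial $\mu x^{\,m-i}$ is divisible by $f_1\cdots f_i\,(xy_1)\cdots(xy_{m-i})$, a product of $m$ edges of $G$; playing this certificate against $e_1\cdots e_k$ by an exchange/augmenting-walk argument on the two edge multisets, and using that $x$ lies on no odd cycle of length $\le 2n-1$, one shows $\mu$ must already be divisible by a product of at least $k$ edges of $G\setminus x$ together with $N(x)$-variables of total length $m$, i.e.\ $\mu\in I^{j}L^{m-j}$ for some $j\ge k$ --- a contradiction. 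Concretely I would take $\mu$ of minimal degree among potential counterexamples and extract from the obstruction a closed walk through $x$ of odd length at most $2n-1$; since a shortest odd closed walk through a vertex is an induced odd cycle, this contradicts the hypothesis. The mechanism is already visible when $n=2$: there the only obstruction to $L^{2}\cap I\subseteq I\,I_x$ is an edge $\{y_a,y_b\}$ joining two neighbors of $x$, i.e.\ a triangle through $x$; calibrating the length bound $2n-1$ in the general exchange step is the delicate point.
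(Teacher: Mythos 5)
Your framework is exactly the paper's: the easy direction via Proposition~\ref{easydir}(1), the derivation of the ``in particular'' clause, the induction on $|V(G)|$, the decomposition $J=I+xI_x$, the verification of the hypotheses of Remark~\ref{keyrmk}, and the reduction of the whole theorem to the single containment $I^{k}\cap I^{i}L^{m-i}\subseteq\sum_{j\ge k}I^{j}L^{m-j}$. (Your observation that $I_x=L+I_{G\setminus N[x]}$ is a sum of monomial ideals in disjoint variable sets, so that $I_x^{(k)}=I_x^k$ follows from the induction plus the disjoint-variable identity, is a clean way to justify the induction hypothesis that Remark~\ref{keyrmk} needs.)

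The gap is precisely where you flag it. The ``exchange/augmenting-walk argument'' that is supposed to turn a monomial counterexample into a short odd cycle is the entire content of the theorem, and you have not carried it out. The paper's version of this step is the unnamed Lemma inside the proof together with Lemma~\ref{oddpath}, and it is delicate: one has to take a counterexample $f$ of minimal degree, normalize the two monomial certificates, rule out extraneous variables one at a time (each time using the induction on $n$ \emph{and} the backward induction on $k$, neither of which appears in your sketch), and then run a separate induction on the number of auxiliary edges to produce an odd path of controlled length between two vertices of $D$, which is then closed up through $x$. None of that is present, and ``calibrating the length bound $2n-1$'' is exactly the hard part, not a loose end.

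One additional issue: the sentence ``since a shortest odd closed walk through a vertex is an induced odd cycle'' is false as stated (attach a pendant vertex $x$ to one vertex of a triangle: the shortest odd closed walk through $x$ has length $5$ and is not a cycle, and $x$ lies on no cycle at all). What is true, and what you actually need, is that any odd closed walk contains an odd cycle among its vertices --- not necessarily through $x$ and not necessarily a cycle of $G$ through $x$ --- and a shortest odd cycle is automatically chordless. So the intended contradiction with the global hypothesis ``$G$ has no odd cycle of length $\le 2n-1$'' can be rescued, but the statement you used to justify it would need to be replaced. In short: the scaffolding agrees with the paper, but the load-bearing combinatorial lemma is missing.
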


\begin{proof}  We first prove the last asserted statement, assuming the first.  Let $2s-1$ be the size of the
smallest induced odd cycle. Hence, $G$ has no odd cycles of
length at most $2s-3$, since any odd cycle contains an induced odd cycle of at most the same length. The first
statement of the theorem then proves that $I^{(k)}=I^k$ for $1 \leqslant k \leqslant s-1$. On the other hand,
since $G$ does contain an odd cycle of length $2s-1$, the other direction of the first statement
shows that $I^{(s)}\ne I^{s}$. Hence $t = s$, proving the second statement. We now prove the first
statement.

Proposition \ref{easydir} gives the ``only if" direction of this theorem, so only the ``if" direction needs to be proved. We shall prove this direction  by induction on the number of vertices.  It follows that we may assume that the symbolic and usual powers
agree up to $n$ for any ideal obtained from $I$ by setting variables equal to $0$ or $1$, provided at least one variable is
set equal to $0$ or $1$.  By Remark \ref{keyrmk}, the proof is finished by proving the following lemma (we adopt the
notation from the discussion and remark):

\begin{lemma}
Suppose that $G$ has no odd cycles of length up to $2i+3$. Then
$$I^k \cap I^iL^{n-i} \subseteq \sum^n_{j=k}  I^jL^{n-j} = I^n+I^{n-1}L+ \cdots + I^kL^{n-k}$$
for any $i<k \leqslant n$.
\end{lemma}

\begin{proof}
We use induction on $n$, then a backwards induction on $k$. For the second induction, note that if $k= n$, then the conclusion is satisfied
trivially. 
By way of contradiction, consider any minimal degree monomial  element $f\in I^k\cap I^iL^{n-i}$ such that $f\notin  \sum_k^{n}  I^jL^{n-j}$. 
We may write $f$ as the product of $k$ edges: $x_iy_i$ ($1 \leqslant i \leqslant a$, $x_i \in L$), $u_iv_i$ ($1\leqslant i\leqslant b$, $u_i,v_i \notin L$),  and collect the rest of
the variables dividing $f$ into two sets, a set $z_i$ ($1\leqslant i\leqslant c$, $z_i\in L$), and possibly  some extra variables, none of them in $L$. We denote this extra set of variables by $F$.  Note that there might well be repetition among the variables.  Also observe that no $y_j$ can be in $L$, since this would give
a 3-cycle in $G$. With this notation,
$$f = \prod_{1 \leqslant j \leqslant a}(x_jy_j)\cdot \prod_{1 \leqslant j \leqslant b}(u_jv_j)\cdot \prod_{1\leqslant j\leqslant c} z_j\cdot \prod_{t\in F}t,$$
where $a+b \geqslant k$. We can assume that $a+b = k$, since if the sum is strictly greater, $f$ would be in $I^{k+1}$, and we are done by the decreasing induction on $k$.
We call this expression for $f$ the \it first expression \rm of $f$ (it is  not necessarily unique).

Since by assumption $f\in  I^iL^{n-i}$, we may also write $$f = \prod_{1\leqslant j\leqslant i}m_j\cdot \prod_{1\leqslant j\leqslant l} z_j' \cdot\prod_{w\in W}w,$$
where each $m_j$ is a degree two monomial corresponding to an element in $I$, all $z_j'\in L$, and none of the extra variables $w$ are in $L$. Finally, $l$ is an integer such that $l\geqslant n-i$. We call this expression for $f$ the \it second expression \rm of $f$ (again,
it is not necessarily unique).

We make a general observation: since each $m_j$ divides $f$, if for some $j$,  $m_j= x_ry_r$ or $u_rv_r$ 
 in the first representation of $f$, we can cancel $m_j$ in the first representation so that
$f/m_j\in I^{k-1}\cap I^{i-1}L^{n-i} = I^{n-1} + I^{n-2}L + ... + I^{k-1}L^{n-k}$ by the induction on $n$, and then $f\in \sum_k^{n}  I^jL^{n-j}$, which gives 
a contradiction. Thus, without loss of generality we  assume that $m_j$ is not equal to any $x_ry_r$ or $u_rv_r$.

We claim that $c = 0$, and there are no extra variables $F$.  If not, consider first the variable $z_1$. It must appear among the variables in
the second expression for $f$ as an element of $ I^iL^{n-i}$. If $z_1$ is among the $z_j'$, we can cancel, and use the induction on $n$ to
reach a contradiction.  If not, then $z_1$ divides one of the $m_j$, say $m_1 = z_1s$. As $s$ appears among the variables in the first
expression, we simply combine it with $z_1$ and cancel $m_1$ from both sides. Notice
that $f/m_1\in I^{k-1}$, since the rearrangement affects at most one edge monomial in the first
expression. The induction then gives a contradiction, as we observed above.
  Thus, no $z_i$ can appear in the first expression for $f$. Now consider a
variable $t\in F$. Using the same reasoning, $t$ must appear in the second expression for $f$. It cannot be one of the $z_j'$, by assumption.
If $t$ appears in $W$, we can cancel it, and obtain that $f/t$ is a smaller counterexample, which gives a contradiction. Thus $t$ must divide some $m_j$. 
We can again recombine $t$ in the first expression for $f$ to replace possibly one edge monomial in $I$ by $m_j$. After canceling $m_j$
from each side and using induction, we reach a contradiction.  We have reached the situation in which
$\prod_{1\leqslant j\leqslant i}m_j\cdot \prod_{1\leqslant j\leqslant l} z_j'$ divides $$f = \prod_{1\leqslant j \leqslant a}(x_jy_j)\cdot \prod_{1\leqslant j\leqslant b}(u_jv_j).$$
Since none of the $u_j,v_j,y_j$ are in $L$, without loss of generality, $x_1 = z_1',...,x_{l} = z_{l}'$.  After canceling these elements we
obtain that $\prod_{1 \leqslant j \leqslant i}m_j$ divides $$(y_1y_2\cdots y_{l})\prod_{l+1 \leqslant j\leqslant a} (x_jy_j)\cdot \prod_{1\leqslant j\leqslant b}(u_jv_j).$$ 
Note that $a-l+b = k-l \leqslant k-(n-i) = i + (k-n) < i$ (recall that we may assume $k<n$ now as the base case $k=n$ was handled at the beginning of the proof).  

We next need to do the case $i = 0$ separately. In this case, our assumption is that $G$ has no $3$-cycles.  In this case $l = n$, and since
$x_1 = z_1',...,x_{l} = z_{l}'$, we must have $n$ edge-monomials $x_1y_1,...,x_ny_n$ in the first expression for $f$. This forces $f\in I^n$
(after our reductions), which gives the necessary conclusion. Thus in the remainder of the proof, we may assume that $i\geqslant 1$. An important
consequence of this reduction is that there cannot be an edge between any of the $y_j$, as this would give a $5$-cycle.

We let $D$ be the set of vertices $\{y_1,...,y_{l}\}$. 
By Lemma \ref{oddpath} below applied with $e = a+b-l = k-l <i$ (see above)  and $s_1t_1,...,s_et_e$ being the edges $x_jy_j$  for $l+1 \leqslant j \leqslant a$ together with all the $u_jv_j$, we can find an odd path $w_1,\cdots, w_l$ with $l \leqslant 2e+1$  and $w_1, w_l \in F$ (so they are neighbors of some $x$s). Renumbering if necessary, we have an odd cycle $x, x_1,  w_1, \cdots, w_l, x_2$ of length $l+3$.   Now there can be repetition, but any time we identify two non-adjacent vertices in that cycle, a new odd cycle appears. The proof is finished. \end{proof}

We finish the proof of the main theorem by proving Lemma \ref{oddpath} below.

\begin{lemma}\label{oddpath}
Let $e \geqslant 1$ be an integer. Suppose there are $e$ edges $s_1t_1, \cdots s_et_e$ and a set of vertices $D$, together with an auxilliary set $\{m_1,...m_{e+1}\}$ of
edges (repetitions are allowed in all of these)  such that $\prod_{1 \leqslant j \leqslant e+1} m_j$ divides
$\prod_{1 \leqslant j \leqslant e} (s_jt_j)\cdot \prod_{w\in D}w.$ Assume also that there are no edges between any of the elements in $D$.
 Then there is a path $w_1,\cdots, w_l$ with $l \leqslant 2e+1$ even (so that the path length is odd) and $w_1, w_l \in D$.
\end{lemma}

\begin{proof}
We use induction on $e$.  We claim that two of the edges $m_i$s must contain exactly one from $D$. The reason is simply because there are no edges in $D$, so any $m_i$ that contains some vertices in $D$ contains exactly one, and there has to be at least two such edges, as the product of the $m_i$s has degree $2e+2$, while the degree of the product of $x_jy_j$ is $2e$.

We may write these edges as  $m_1 = w_1u$ and $m_2 = w_2v$ where $w_1,w_2\in D$.
Consider $u$ and $v$. If $uv$ is an edge, then we have a length three path $w_1,u,v,w_2$ and we are done. Note that this settles the case $e=1$, as in that situation $uv$ has to be the edge $s_1t_1$. 

Hence without loss of generality we
may assume that $e>1$,  $u = s_1$ and $v= s_2$.  Now consider the set $D'$ obtained from $D$ by removing $w_1,w_2$ and including $t_1, t_2$, the $e-2$ edges
$s_3t_3,...,s_et_e$, and the set of $e-1$ auxilliary edges $m_3,...,m_{e+1}$.  We claim these sets satisfy the hypothesis of the lemma. We need to show
that there are no edges between the elements of $D'$. If $t_1t_2$ is an edge, then there is a path of length 5 from $w_1$ to $w_2$:
$w_1,s_1,t_1,t_2,s_2,w_2$. If, on the other hand there is a $w_3\in D$, apart from $w_1,w_2$ such that $w_3t_1$ is an edge, then we may
construct a path of length three from $w_1$ to $w_3$, namely $w_1,s_1,t_1,w_3$, and again we are done. Hence  there are
no edges between the vertices in $D'$. 
 By induction there is an odd length path of length at most $2(e-2)+1$ which begins and ends in vertices in $D'$.  Now if either end of this
path  is $t_1$ or $t_2$, we may augment the path by two edges, e.g. by $t_1s_1$, $s_1w_1$, so that the beginning and end of the
path are in $D$.  This adds at most $4$ edges to the path, so that the length, which is still odd,  is at most $2(e-2)+1 + 4 = 2e+1$, as required. \end{proof}

This completes the proof of Theorem \ref{mainthm}.
\end{proof}

\begin{remark}  We thank Susan Morey for pointing out that this result follows by putting together work in \cite{CMS, HM, MV}, although it is not explicitly
stated.  By combining the work in these papers one obtains the statement that a non-bipartite graph $G$ whose smallest odd cycle has length $2t-1$ has the property that the associated primes of the powers of the edge ideal $I$ of $G$ is equal to the minimal primes of $I$ for powers up to
$t-1$, and the $t^{th}$ power of $I$ has an embedded prime.
\end{remark}

\begin{corollary}
Let $G$ be a finite graph and $I$ be its edge ideal. Let $c= \height(I)$. If $I^{(k)}=I^k$ for $1 \leqslant k \leqslant c$ then $G$ is bipartite. 
\end{corollary}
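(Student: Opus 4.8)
The plan is to obtain this as a quick consequence of Proposition~\ref{easydir}(1) together with the standard identification $\height(I_G) = c(G)$, where $c(G)$ denotes the minimum size of a vertex cover of $G$. I would argue by contradiction: suppose $G$ is not bipartite, and choose an odd cycle $v_1, \ldots, v_{2t-1}$ in $G$ of shortest possible length. Since every cycle has at least three vertices, $t \gs 2$.

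The one point that needs checking is the inequality $t \ls c$. To see it, let $W$ be a minimum vertex cover of $G$, so that $|W| = c(G) = c$. Each edge of the cycle $v_1, \ldots, v_{2t-1}$ is an edge of $G$, hence has an endpoint in $W$; since both of its endpoints lie among $v_1, \ldots, v_{2t-1}$, the set $W \cap \{v_1, \ldots, v_{2t-1}\}$ is a vertex cover of the odd cycle on those $2t-1$ vertices. A minimum vertex cover of an odd cycle of length $2t-1$ has size $t$, so $c = |W| \gs |W \cap \{v_1, \ldots, v_{2t-1}\}| \gs t$, as claimed.

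Given this, I would apply Proposition~\ref{easydir}(1) with $n = t$ to the cycle $v_1, \ldots, v_{2t-1}$: the squarefree monomial $f = v_1 v_2 \cdots v_{2t-1}$ then lies in $I_G^{(t)} \setminus I_G^t$, so $I^{(t)} \ne I^t$. But $1 \ls t \ls c$, which contradicts the hypothesis that $I^{(k)} = I^k$ for all $1 \ls k \ls c$; hence $G$ must be bipartite. Alternatively, one could route the same argument through Theorem~\ref{mainthm}: the hypothesis forces $G$ to contain no odd cycle of length at most $2c - 1$, whereas a shortest odd cycle in a non-bipartite $G$ has length $2t - 1 \ls 2c - 1$ by the bound above. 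Either way, the only substantive step is the combinatorial inequality $t \ls c$; the rest is immediate from material already proved, so I do not expect any real obstacle here.
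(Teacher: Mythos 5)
Your proof is correct and is essentially the paper's argument. The paper also argues by taking a shortest odd cycle of length $2i+1$, notes that the edge ideal of that cycle has height $i+1$ so $c \gs i+1$ (the same fact you establish via your explicit vertex-cover argument), and then invokes Theorem~\ref{mainthm} to contradict the hypothesis — which is the ``alternative route'' you mention at the end; your primary route through Proposition~\ref{easydir}(1) is just the special case of Theorem~\ref{mainthm} actually being used.
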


\begin{proof} If $G$ is not bipartite, there is a minimal odd cycle, say $x_1,x_2,....,x_{2i+1}$. The height of the edge ideal of this cycle is $i+1$
since the vertices are distinct by minimality.  Hence $c\geqslant i+1$. By Theorem \ref{mainthm},  $G$ has no odd cycles of length at most
$2c-1$. But $c\geqslant i+1$ implies that $2i+1 \leqslant 2(c-1)+1 = 2c-1$, a we reach a contradiction. \end{proof}



We apply the work from above to give a positive answer to the relative version
of the conjecture of  Conforti and Cornu\'ejols which was given at the beginning of this subsection.  Recall that
Question \ref{mainques} asks if $I$ is $k$-packed if and only if $I^{(n)} = I^n$ for all $n \leqslant k$.
In the case of quadrics, i.e., for edge ideals of simple graphs, we can give a positive answer:

\begin{theorem}\label{k-packed edge ideals} Let $I$ be the edge ideal of a graph, i.e., a square-free ideal generated by quadrics. Then $I$ is $k$-packed for some
$k\geqslant 2$ if and only if $I^{(n)} = I^n$ for all $n \leqslant k$.
\end{theorem}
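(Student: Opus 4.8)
The plan is to deduce both implications from Theorem~\ref{mainthm}, which converts the condition ``$I_G^{(n)}=I_G^n$ for all $1\leqslant n\leqslant k$'' into the combinatorial statement that $G$ has no odd cycle of length at most $2k-1$, together with one self-contained lemma: \emph{if $L$ is a square-free monomial ideal with $L^{(n)}=L^n$ for all $n\leqslant m$, then $L$ contains a regular sequence of monomials of length $\min\{m,\height(L)\}$.} To prove this lemma I would set $c=\height(L)$, $\ell=\min\{m,c\}$, and let $\mu$ be the product of all variables occurring in the minimal generators of $L$; since each minimal prime $\p$ of $L$ is generated by at least $c\geqslant\ell$ of these variables, all dividing $\mu$, we get $\mu\in\p^{\ell}$ for every minimal prime $\p$, hence $\mu\in L^{(\ell)}=L^{\ell}$ (the symbolic power of a square-free monomial ideal being the intersection of the ordinary powers of its minimal primes). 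Writing $\mu=\nu g_1\cdots g_\ell$ with $g_i$ minimal generators and $\nu$ a monomial, squarefreeness of $\mu$ forces pairwise disjoint supports, so $\nu g_1,g_2,\dots,g_\ell$ is a regular sequence of monomials in $L$ of length $\ell$ (monomials forming a regular sequence exactly when pairwise coprime).

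For the implication ``$I_G^{(n)}=I_G^n$ for $n\leqslant k$ $\Longrightarrow$ $I_G$ is $k$-packed'', Theorem~\ref{mainthm} gives that $G$ has no odd cycle of length at most $2k-1$. Let $J$ be any ideal obtained from $I_G$ by setting variables to $0$ or $1$. An induction on the number of substitutions shows that $J$ is either the unit ideal (trivially $k$-K\"onig) or has the form $J=(z_1,\dots,z_r)+I_{G'}$, where the $z_i$ are distinct variables and $G'$ is an induced subgraph of $G$ on the remaining variables: setting a variable to $0$ deletes a vertex, while setting a variable $x$ to $1$ turns the neighbours of $x$ into linear generators and leaves the edge ideal of the induced subgraph on $V(G)\setminus N[x]$. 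Being induced in $G$, $G'$ also has no odd cycle of length at most $2k-1$, so $I_{G'}^{(n)}=I_{G'}^{n}$ for $n\leqslant k$ by Theorem~\ref{mainthm}. With $h=\height(I_{G'})$ we have $\height(J)=r+h$; if $r\geqslant k$ then $z_1,\dots,z_k$ is a regular sequence of monomials of length $k\geqslant\min\{k,\height(J)\}$, and if $r<k$ the lemma applied to $I_{G'}$ with $m=k-r$ produces a regular sequence of monomials $w_1,\dots,w_s$ in $I_{G'}$ with $s=\min\{k-r,h\}$; since the $w_j$ involve only variables other than the $z_i$, the concatenation $z_1,\dots,z_r,w_1,\dots,w_s$ is a regular sequence of monomials in $J$ of length $r+\min\{k-r,h\}=\min\{k,\height(J)\}$. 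Hence every minor of $I_G$ is $k$-K\"onig.

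For the converse I would argue by contraposition: if $I_G^{(n)}\neq I_G^{n}$ for some $n\leqslant k$, then by Theorem~\ref{mainthm} $G$ has an odd cycle of length at most $2k-1$; let $C$ be a shortest one, of length $2t-1$ with $t\leqslant k$. A shortest odd cycle has no chord (a chord splits it into two cycles of lengths summing to $2t+1$, one of which is then odd and strictly shorter than $2t-1$), so $C$ is induced, and setting to $0$ every variable outside $V(C)$ yields exactly $I_C$, the edge ideal of the cycle graph on $2t-1$ vertices. Here $\height(I_C)=t$, while any regular sequence of monomials in $I_C$ consists of monomials with pairwise disjoint supports, each divisible by an edge and hence using at least two of the $2t-1$ vertices; so such a sequence has length at most $t-1<t=\min\{k,\height(I_C)\}$. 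Therefore $I_C$, a minor of $I_G$, is not $k$-K\"onig, so $I_G$ is not $k$-packed, and this is exactly Question~\ref{mainques} answered affirmatively in the quadric case.

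The step I expect to be most delicate is the structural claim in the first implication — that every ideal obtained from an edge ideal by setting variables to $0$ or $1$ is a sum of variables plus the edge ideal of an induced subgraph — since one must track how iterated contractions create new linear generators, shrink the underlying induced subgraph, and occasionally collapse the ideal to the unit ideal, and then carefully assemble a regular sequence of monomials of length \emph{exactly} $\min\{k,\height(J)\}$ out of the two pieces. The König lemma and the odd-cycle argument of the converse are comparatively routine once Theorem~\ref{mainthm} is in hand.
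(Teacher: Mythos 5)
Your proof is correct, and it follows the same overall skeleton as the paper's: in both directions the workhorse is Theorem~\ref{mainthm}, the forward direction hinges on the product of variables lying in a symbolic power and factoring into pairwise coprime generators, and the converse restricts to a shortest odd cycle and counts variables. Your ``lemma'' (product of all variables in the support lies in $L^{(\ell)}=L^{\ell}$, hence factors into $\ell$ disjoint-support generators) is exactly the computation the paper runs in-line; the $\nu$ you attach to $g_1$ is unnecessary but harmless.

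The one place your route genuinely diverges from the paper's, and in a way worth noting, is the reduction to minors in the forward direction. The paper dispatches this in one sentence — ``both of these properties are preserved by setting variables equal to $0$ or $1$'' — leaning on a fact about square-free monomial ideals that is asserted (not proved) earlier in the section, and then only needs to verify that $G$ itself is $k$-König. You instead route everything through the combinatorial condition supplied by Theorem~\ref{mainthm}: ``no odd cycle of length $\leqslant 2k-1$'' is manifestly inherited by induced subgraphs, and you then give an explicit structural description of every minor as $(z_1,\dots,z_r)+I_{G'}$ with $G'$ induced and the $z_i$ outside $V(G')$, assembling the regular sequence by concatenation. That structural description checks out (setting $x=0$ deletes a vertex; setting $x=1$ produces $N(x)$ as linear generators plus the induced edge ideal on $V\setminus N[x]$; the $z_i$'s never collide with the current vertex set), and $\height(J)=r+\height(I_{G'})$, so the concatenated sequence has length exactly $\min\{k,\height(J)\}$. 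This costs a few more lines but makes the minor-reduction self-contained within the edge-ideal setting, rather than appealing to an unproven general principle. The converse is likewise the paper's argument cleaned up: your ``shortest odd cycle has no chord, hence is induced'' observation and the count $2s\leqslant 2t-1\Rightarrow s\leqslant t-1<t=\min\{k,t\}$ are precisely what is needed, and in fact fix a couple of off-by-one slips in the paper's own wording (which writes $2k+1$ where $2k-1$ is meant, and claims a regular sequence ``of length at least $j$'' where the contradiction requires $j+1$).
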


\begin{proof}  We first prove that if  $I^{(n)} = I^n$ for all $n \leqslant k$ then $I$ is $k$-packed. Since both of these properties are preserved
by setting variables equal to $0$ or $1$, it suffices to prove that $G$ is $k$-K\"onig. Suppose that the height of $I$ is $c$, but that $I$ does
not contain a regular sequence of monomials of length at least the minimum of $k$ and $c$. Let $m$ denote the product of all the variables.
Then $m\in I^{(c)}$, since at every minimal prime $\p$ of $I$, $m\in \p^c$.  If $k\geqslant c$, then by assumption $m\in I^c$. But then $m$ is a product
of $c$ elements of $I$ which are necessarily a regular sequence as $m$ is square-free. This is a contradiction. If $c > k$, then $m\in I^{(c)}\subset
I^{(k)} = I^k$, and again we reach a contradiction. Hence $I$ is $k$-packed. 

Conversely, assume that $I$ is $k$-packed but that $I^{(n)}\ne I^n$ for some $n \leqslant k$. By decreasing $k$ if necessary, we can assume that
$n = k$. By Theorem \ref{mainthm}, $G$ must contain an odd cycle of length at most $2k+1$. Choose a minimal odd cycle in $G$, say $x_1,x_2,...,x_{2j+1}$
of length
$2j+1$, where $j \leqslant k$.
We set all other variables equal to $0$, and apply the assumption that the resulting ideal $J$ is $k$-packed. Moreover, the height of $J$
is at least the height of the edge ideal of this cycle, which has height $j+1$. Therefore there is a regular sequence of length at least $j$ in $J$. This is
clearly impossible since there are only $2j+1$ variables, and such a regular sequence would require at least $2j$ variables.
\end{proof}

There are a few easy cases one can also do, which we record as a remark;

\begin{remark}\label{easycases} {\rm We observe the following: if $I$ is $k$-packed, and $J^{(n)} = J^n$ for all
$n \leqslant k$ and for all ideals $J$ which are obtained from $I$ by setting at least one variable equal to $0$ or $1$, and $I$ has height $k$,
then $I^{(n)} = I^n$ for all
$n \leqslant k$. To see this suppose not and let $f$ be a minimal monomial which is in some $I^{(n)}\setminus I^n$ for some $n \leqslant k$.  We may assume
that $f$ contains every variable, since if not, we can set a variable equal to $0$ without changing the fact that $f$ is in $I^{(n)}\setminus I^n$,
contradicting our assumption. We may also assume that $f$ is square-free. This is because the product of all the variables is clearly in
$\p^k$ for all primes $\p$ containing $I$, since $k$ is the height of $I$.  It follows that $f$ must be the product of all the variables. But since $I$
is $k$-packed there is a regular sequence of monomials in $I$ of length $k$, and the product of them must divide $f$ since they are disjoint.
Thus $f\in I^k$, contradiction.}
\end{remark}

A consequence of the above Remark is that Question \ref{mainques} has a positive answer for $k = 2$. 

\begin{corollary}\label{mermin}
$I$ is $2$-packed if and only if $I^{(2)}=I^2$.
\end{corollary}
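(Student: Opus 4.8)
The plan is to prove the two implications separately, obtaining the substantive direction (``$I$ $2$-packed $\Rightarrow I^{(2)}=I^2$'') by induction on the number of variables so that Remark \ref{easycases} becomes applicable.

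For the easy direction ``$I^{(2)}=I^2 \Rightarrow I$ is $2$-packed'', I would run the classical argument recalled just before Theorem \ref{mainthm}, restricted to $k=2$. Since the equality $I^{(n)}=I^n$ for $n\leqslant 2$ is preserved upon setting variables equal to $0$ or $1$ (standard; cf. that discussion), it suffices to check that any square-free monomial ideal $J$ with $J^{(2)}=J^2$ is $2$-K\"onig. If $c:=\height(J)\geqslant 2$, the product $N$ of all the variables satisfies $N\in\p^{\height(\p)}\subseteq\p^2$ for every minimal prime $\p$ of $J$, so $N\in J^{(2)}=J^2$; a factorization $N=ab$ with $a,b\in J$ monomials then has $a,b$ of disjoint support because $N$ is square-free, giving a regular sequence of monomials in $J$ of length $2=\min\{2,c\}$. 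If $c\leqslant 1$ there is nothing to prove.

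For the main direction I would induct on the number of variables $n$, the cases $n\leqslant 1$ being immediate. First I would dispose of the case $\height(I)=1$: then some variable $x_j$ divides every minimal generator of $I$, so $I=x_jI'$ where $I'$ is the image of $I$ under $x_j\mapsto 1$, a $2$-packed square-free monomial ideal in $n-1$ variables; by induction $(I')^{(2)}=(I')^2$, and since the minimal primes of $I$ are $(x_j)$ together with those of $I'$, a direct computation gives $I^{(2)}=x_j^2(I')^{(2)}=x_j^2(I')^2=I^2$. So I may assume $\height(I)\geqslant 2$. Now every square-free monomial ideal $J$ obtained from $I$ by setting at least one variable equal to $0$ or $1$ is again $2$-packed and lives in fewer variables, so $J^{(2)}=J^2$ by induction (and $J^{(1)}=J$ trivially); hence all the hypotheses of Remark \ref{easycases} hold with $k=2$ --- its proof in fact uses only $\height(I)\geqslant k$, not equality --- and it yields $I^{(2)}=I^2$.

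The only delicate point, needed in both directions and already implicit in Remark \ref{easycases}, is the bookkeeping around the substitution $x_j\mapsto 0$: one must know that it carries a monomial of $I^{(m)}$ not involving $x_j$ into $J^{(m)}$ and does not enlarge $J^m$. This is standard and follows from a short observation about vertex covers --- a minimal vertex cover of the hypergraph of $J$ becomes a vertex cover of the hypergraph of $I$ once the vertex $j$ is adjoined, hence contains a minimal one --- so I do not anticipate any real difficulty; the bulk of the work is organizing the induction and the height-one reduction so that Remark \ref{easycases} genuinely applies.
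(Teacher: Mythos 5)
Your proof is correct and takes essentially the same route as the paper's two-line argument (``height one is trivial; height two, the previous remark applies''), namely a reduction to Remark~\ref{easycases}. You have filled in the pieces the paper leaves implicit: the induction on the number of variables needed to supply the hypothesis $J^{(n)}=J^n$ in the remark, the observation that the remark's proof works verbatim for $\height(I)\geqslant k$ rather than only $\height(I)=k$, and the explicit $I=x_jI'$ reduction for the height-one case.
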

\begin{proof}
The case in which $I$ has height one is trivial. If $I$ has height two, the previous remark applies. 

\end{proof}

\vspace{.4cm}

\section*{Acknowledgments}
\vspace{.4cm}

We thank Jeff Mermin for many helpful conversations concerning the packing problem, and in particular
for discussions leading to  Remark \ref{easycases} and Corollary \ref{mermin}.
We thank Jonathan Monta\~{n}o, Andrew Conner, Jack Jeffries, and Robert Walker for helpful comments.
Part of this work was done when the second and fifth authors were at the University of Virginia. They wish to thank this institution for its hospitality.
Finally, the fifth author  thanks the organizing committee for the Brazil-Mexico 2nd meeting on Singularities in Salvador, Bahia, Brazil, where this project was initiated.


\bibliographystyle{alpha}

\bibliography{References}


\end{document}